\numberwithin{equation}{section}
\numberwithin{figure}{section}
\theoremstyle{plain}
\newtheorem{thm}{\protect\theoremname}[section]
\theoremstyle{plain}
\newtheorem{prop}[thm]{\protect\propositionname}
\theoremstyle{remark}
\newtheorem{notation}[thm]{\protect\notationname}
\theoremstyle{remark}
\newtheorem{rem}[thm]{\protect\remarkname}
\theoremstyle{plain}
\newtheorem{cor}[thm]{\protect\corollaryname}
\theoremstyle{plain}
\newtheorem{lem}[thm]{\protect\lemmaname}
\theoremstyle{remark}
\newtheorem*{acknowledgement*}{\protect\acknowledgementname}
\DeclareFontFamily{OMX}{MnSymbolE}{}
\DeclareFontShape{OMX}{MnSymbolE}{m}{n}{
    <-6>  MnSymbolE5
   <6-7>  MnSymbolE6
   <7-8>  MnSymbolE7
   <8-9>  MnSymbolE8
   <9-10> MnSymbolE9
  <10-12> MnSymbolE10
  <12->   MnSymbolE12}{}
\DeclareSymbolFont{mnlargesymbols}{OMX}{MnSymbolE}{m}{n}
\DeclareMathDelimiter{\llangle}{\mathopen}{mnlargesymbols}{'164}{mnlargesymbols}{'164}
\DeclareMathDelimiter{\rrangle}{\mathclose}{mnlargesymbols}{'171}{mnlargesymbols}{'171}
\providecommand{\acknowledgementname}{Acknowledgement}
\providecommand{\corollaryname}{Corollary}
\providecommand{\lemmaname}{Lemma}
\providecommand{\notationname}{Notation}
\providecommand{\propositionname}{Proposition}
\providecommand{\remarkname}{Remark}
\providecommand{\theoremname}{Theorem}
\begin{document}

\title{Condensation of non-reversible zero-range processes}

\author{Insuk Seo}

\address{Department of Mathematical Sciences and RIMS, Seoul National University
\\
27-212, Gwanak-Ro 1, Gwanak-Gu, Seoul 08826, Republic of Korea.}

\email{insuk.seo@snu.ac.kr}
\begin{abstract}
In this article, we investigate the condensation phenomena for a class
of non-reversible zero-range processes on a fixed finite set. By establishing
a novel inequality bounding the capacity between two sets, and by
developing a robust framework to perform quantitative analysis on
the metastability of non-reversible processes, we prove that the condensed
site of the corresponding zero-range processes approximately behaves
as a Markov chain on the underlying graph whose jump rate is proportional
to the capacity with respect to the underlying random walk. The results
presented in the current paper complete the generalization of the
work of Beltran and Landim \cite{BL3} on reversible zero-range processes,
and that of Landim \cite{Lan2} on totally asymmetric zero-range processes
on a one-dimensional discrete torus.
\end{abstract}

\keywords{Metastability, metastable behavior, condensation, zero-range process,
potential theory, non-reversible Markov chains }

\subjclass[2000]{60J28, 60K35, 82C22 }
\maketitle

\section{Introduction\label{sec1}}

Metastability is a generic phenomenon that occurs in several of models
in probability theory and statistical physics, such as random perturbations
of dynamical systems \cite{BEGK2,FW,LMS}, low-temperature ferromagnetic
spin systems \cite{BBI,BM,CGOV,LS2,NZ}, the stochastic partial differential
equations \cite{BG}, and the system of sticky particles \cite{BL3,BDG,GRV2,Lan2}.
For an extensive discussion of recent developments in this field,
certain monographs \cite{BdH,OV} can be referred to.

A crucial breakthrough was made by the phenomenal works \cite{BEGK1,BEGK2}
of Bovier, Eckhoff, Gayrard, and Klein. They connected potential theoretic
notions, such as the equilibrium potential and the capacity, and important
quantities related to metastable behavior of the system, such as transition
time and hitting probability. Based on this connection, they established
a robust framework for the quantitative investigation of the metastable
behavior of reversible random dynamics. This framework is now called
the potential theoretic approach, and has been successfully applied
to numerous metastable situations. For a detailed description of this
approach, we recommend referring to \cite{BdH}.

Under the presence of multiple metastable valleys, it is natural to
describe the successive metastable transition as a limiting Markov
chain after suitable time rescaling. Based on the same level of knowledge
of potential theoretic quantities as the potential theoretic approach
introduced above, Beltran and Landim \cite{BL1,BL2} developed a framework
for obtaining this description, and this approach is now known as
the martingale approach. One of the advantages of this approach is
that it can be used for successful analyses of condensing phenomena
for sticky interacting particle systems such as simple inclusion processes
\cite{BDG,GRV2} and zero-range processes \cite{BL3,Lan2}. The current
paper focuses on the latter, i.e., the condensation of sticky zero-range
processes. Specifically, we provide a complete generalization of the
previous results obtained in \cite{BL3,Lan2}, based on a novel methodology
for non-reversible dynamics.

From the perspectives of either the potential theoretic approach or
the martingale approach, investigation of metastability of non-reversible
dynamics is far more complicated and challenging than that of reversible
dynamics, mainly because of two reasons. First, accurate estimation
of the capacity between metastable valleys, which is a crucial step
in both approaches, involves taking into consideration the so-called
flow structure corresponding to the dynamics. For the reversible case,
the estimation of the capacity is carried out via the Dirichlet principle,
which expresses the capacity as a minimization problem in a space
of functions with a certain boundary condition, and the optimizer
is given by the equilibrium potential. For non-reversible processes,
the minimization and maximization problems for the capacity have been
obtained from \cite{GL} and \cite{Slo}, respectively. These principles
are now called the Dirichlet--Thomson principles for non-reversible
processes. These problems are defined in the space of flows, of which
the treatment is far more complicated than that of the space of functions.
Furthermore, these problems require all test flows to be divergence-free.
Divergence-free flows are delicate objects, and hence this additional
restriction is a major source of the technical difficulty of the problem.
In spite of this difficulty, several metastability results for non-reversible
processes have been provided. In \cite{Lan2}, Landim provided a detailed
analysis of the condensation of totally asymmetric zero-range process
on a one-dimensional discrete torus. This result is the first sharp
metastability result for non-reversible dynamics. This work confronts
a min--max problem for the capacity directly, instead of relying
on the Dirichlet--Thomson principles. For the results based on the
Dirichlet--Thomson principles, we recommend referring to \cite{LMS,LS1,LS2}.

Second, the so-called mean jump rate between metastable valleys is
difficult to estimate in the non-reversible case. Mean jump rate is
an essential notion in the Markov chain description of metastability,
in the spirit of the martingale approach, and thus needs to be estimated
precisely. For the reversible case, this mean jump rate is merely
obtained based on the capacities between valleys. By contrast, there
is no clear relationship between the capacity and mean jump rate in
the non-reversible case. In \cite{BL2}, the collapsed chain is introduced
as a potential tool to surmount this difficulty, and this possibility
has been confirmed for totally asymmetric zero-range processes \cite{Lan2}
and cyclic random walks in a potential field \cite{LS1}.

Our achievements in the current article can be divided into three
parts. First, we provide a generalized version of the Dirichlet and
Thomson principles (Theorem \ref{t04}) that is far more convenient
to apply in the asymptotic analysis of capacity than the classical
principle, as we have removed the divergence-free restriction.

The second achievement is the establishment of a general method to
deduce mean jump rate from the capacity estimate. This has been previously
addressed in \cite{LS1}, but the methodology described therein also
relies on the divergence-free flow. We remove this restriction. Consequently,
we can reduce the entire problem to constructions of certain approximating
functions and flows. This reduction is thoroughly explained in Section
\ref{sec6} and is believed to be model-independent. The model-dependent
part is the construction of approximating objects, and this part requires
a deep understanding of the flow structure.

The third achievement is the completion of the dynamical analysis
of the condensation of zero-range processes on a fixed finite set.
There have been several studies on the condensation of zero-range
processes \cite{AGL,BL3,GoL,GSS,JMP,Lan2,LLM}. Condensation is a
wide-spread phenomenon and it indicates that a macroscopically significant
portion of particles is concentrated on one site with dominating probability.
We recommend referring to \cite{EH,Lan2} and the references therein
for examples of condensation. Pertaining to the dynamical analysis
of condensation of zero-range processes, it has been conjectured within
the community that the transition of a condensed site occurs approximately
as a Markov chain whose jump rate is proportional to the capacity
between two sites for the underlying random walk. This has been confirmed
in \cite{BL3} for the reversible zero-range processes with $\alpha>2$,
and in \cite{Lan2} for the totally asymmetric zero-range processes
on the discrete torus with $\alpha>3$, where $\alpha$ is a parameter
that represents the stickiness of constituent particles. In this article,
we extend this result for any non-reversible zero-range process on
any fixed finite set for all $\alpha>2$, and finally verify that
the conjecture holds for this level of generality. This main result
is stated in Theorem \ref{t02}.

\subsubsection*{Organization of the article}

The rest of this paper is organized as follows. We introduce zero-range
processes and relevant notations in Section \ref{sec2}, and state
the main result regarding the condensation of zero-range processes
in Section \ref{sec3}. In Section \ref{sec4}, we introduce adjoint
dynamics and prove the sector condition. This sector condition is
not a crucial ingredient of the proof, but simplifies the proof remarkably
in some scenarios. In Section \ref{sec5}, we review the flow structure
and then formulate the generalized Dirichlet--Thomson principle,
which is one of the main achievements of the article. In Section \ref{sec6},
we develop the general framework for the quantitative analysis of
metastability of non-reversible processes, and prove the main result
stated in Section \ref{sec3}. In Sections \ref{sec7} and \ref{sec8},
we construct several approximating objects playing a central role
in the proof presented in Section \ref{sec6}.

\section{Zero-range processes\label{sec2}}

In this section, the zero-range process and several relevant notions
are introduced. Most notations are inspired from \cite{BL3} and hence
similar to those therein. However, different notations for several
sets and functions are used.

\subsection{\label{sec21}Underlying random walk}

A zero-range process is a system of interacting particles. Herein,
a Markov chain is introduced that describes the underlying movement
of the particles. The zero-range interaction mechanism among them
is explained in the next subsection.

Let $S$ be a finite set with $|S|=\kappa$, and let $\{X(t):t\ge0\}$
be a continuous-time, irreducible Markov chain on $S$, so that the
jump rate from a site $x\in S$ to $y\in S$ is given by $r(x,\,y)$
for some $r:S\times S\rightarrow[0,\,\infty)$. We assume that $r(x,\,x)=0$
for all $x\in S$. The invariant measure of Markov chain $X(\cdot)$
is denote by $m(\cdot)$, namely,
\begin{equation}
\sum_{y\in S}m(x)r(x,\,y)=\sum_{y\in S}m(y)r(y,\,x)\;\;\mbox{for all }x\in S\;.\label{inv}
\end{equation}
Let
\begin{equation}
M_{\star}=\max\{m(x):x\in S\}\;\;,\,S_{\star}=\{x\in S:m(x)=M_{\star}\}\;\text{\;and\;\;}\kappa_{\star}=|S_{\star}|\;.\label{not1}
\end{equation}
That is, $S_{\star}\subseteq S$ represents the set of sites with
maximum mass, with respect to the invariant measure $m(\cdot)$, and
$\kappa_{\star}$ denotes the number of these sites. The normalized
mass is defined by
\begin{equation}
m_{\star}(x)=\frac{m(x)}{M_{\star}}\in(0,\,1]\;\;;\;x\in S\;,\label{not2}
\end{equation}
so that $m_{\star}(x)=1$ for all $x\in S_{\star}$.

It is assumed that $\kappa_{\star}\ge2$, so that the zero-range process
corresponding to this Markov chain, defined in the next subsection,
exhibits the metastable behavior. In particular, in Theorem \ref{t01},
it is observed that under the invariant distribution of the zero-range
dynamics, most particles are concentrated at a site of $S_{\star}$,
with dominating probability.

For $f:S\rightarrow\mathbb{R}$, the generator $L_{X}$ and the Dirichlet
form $D_{X}(\cdot)$ associated with the Markov chain $X(\cdot)$
can be written as
\begin{align*}
 & (L_{X}f)(x)=\sum_{y\in S}r(y,\,x)(f(y)-f(x))\;\;;\;x\in S\;\;\text{and\;}\\
 & D_{X}(f)=\sum_{x\in S}m(x)f(x)(-L_{X}f)(x)=\frac{1}{2}\sum_{x,\,y\in S}m(x)r(x,\,y)\left[f(y)-f(x)\right]^{2}\;,
\end{align*}
respectively.

\subsection{Zero-range process}

A zero-range process is now defined that is an interacting system
of $N$ particles, where the particles follow the law of random walk
$X(\cdot)$ defined above, but interact through the zero-range interaction.

\subsubsection*{Definition of particle systems}

In the study of sticky zero-range process, a parameter $\alpha$ represent
the stickyness of constituent particles. In this article, we assume
that $\alpha>2$. Let $a:\mathbb{N}\rightarrow\mathbb{R}$ (the convention
$\mathbb{N}=\{0,\,1,\,2,\,\cdots\}$ is used) be a function defined
by
\[
a(n)=\begin{cases}
1 & \mbox{if }n=0\\
n^{\alpha} & \mbox{if }n\ge1\;.
\end{cases}
\]
Moreover, let $g:\mathbb{N}\rightarrow\mathbb{R}$ be a function defined
by
\[
g(n)=\begin{cases}
0 & \mbox{if }n=0\\
a(n)/a(n-1) & \mbox{if }n\ge1\;.
\end{cases}
\]
 For $N\in\mathbb{N}$, the set $\mathcal{\mathcal{H}}_{N}\subset\mathbb{N}^{S}$,
representing the set of configuration of $N$ particles on $S$, is
defined by
\[
\mathcal{\mathcal{H}}_{N}=\Bigl\{\eta=(\eta_{x})_{x\in S}\in\mathbb{N}^{S}:\sum_{x\in S}\eta_{x}=N\Bigr\}\;.
\]
The zero-range process $\{\eta_{N}(t):t\ge0\}$ is defined as a continuous-time
Markov chain on $\mathcal{H}_{N}$ associated with the generator
\[
(\mathscr{L}_{N}\mathbf{f})(\eta)=\sum_{x,\,y\in S}g(\eta_{x})r(x,\,y)(\mathbf{f}(\sigma^{x,\,y}\eta)-\mathbf{f}(\eta))\;\;;\;\eta\in\mathcal{H}_{N}\;,
\]
for $\mathbf{f}:\mathcal{H}_{N}\rightarrow\mathbb{R}$, where $\sigma^{x,\,y}\eta\in\mathcal{H}_{N}$
is the configuration obtained from $\eta$ by sending a particle at
site $x$ to $y$. More precisely, $\sigma^{x,\,y}\eta=\eta$ if $\eta_{x}=0$,
and if $\eta_{x}\ge1$, then
\[
(\sigma^{x,\,y}\eta)_{z}=\begin{cases}
\eta_{z}-1 & \mbox{if }z=x\\
\eta_{z}+1 & \mbox{if }z=y\\
\eta_{z} & \mbox{otherwise\;.}
\end{cases}
\]
For convenience $\sigma^{x,\,x}$ is regarded as the identity operator.
It is not difficult to verify that the zero-range process defined
above is irreducible. For $\eta\in\mathcal{H}_{N}$, let $\mathbb{P}_{\eta}^{N}$
be the law of the zero-range process $\eta_{N}(\cdot)$ starting from
$\eta$, and let $\mathbf{\mathbb{E}}_{\eta}^{N}$ be the corresponding
expectation.

In the particle dynamics defined above, each particle interacts only
with the particles at the same site through the function $g(\cdot)$.
Thus, it is called zero-range process. Moreover, in this model $g(n)$
is a decreasing function for $n\ge1$, and hence the movement of particles
is slowed down as the number of particle at the same site becomes
larger. This observation heuristically explains that the particles
are sticky, and this sticky behavior eventually causes their condensation.

\subsubsection*{Invariant measure and partition function}

For $\zeta\in\mathbb{N}^{S_{0}}$, $S_{0}\subseteq S$, let
\begin{equation}
m_{\star}^{\zeta}=\prod_{x\in S_{0}}m_{\star}(x)^{\zeta_{x}}\;\;\mbox{and\;\;}a(\zeta)=\prod_{x\in S_{0}}a(\zeta_{x})\;.\label{rec1}
\end{equation}
Then, the unique invariant measure $\mu_{N}(\cdot)$ on $\mathcal{H}_{N}$
of the zero-range process defined above is given by
\[
\mu_{N}(\eta)=\frac{N^{\alpha}}{Z_{N}}\frac{m_{\star}{}^{\eta}}{a(\eta)}\;\;;\;\eta\in\mathcal{H}_{N}\;,
\]
where $Z_{N}$ represents the partition function that turns $\mu_{N}$
into a probability measure, that is,
\[
Z_{N}=N^{\alpha}\sum_{\eta\in\mathcal{H}_{N}}\frac{m_{\star}^{\eta}}{a(\eta)}\;.
\]
Let
\[
\Gamma_{x}=\sum_{j=0}^{\infty}\frac{m_{\star}(x)^{j}}{a(j)}\;\;\text{for }x\in S\;,\;\;\mbox{and}\;\;\Gamma(\alpha)=\sum_{j=0}^{\infty}\frac{1}{a(j)}\;,
\]
so that $\Gamma_{x}=\Gamma(\alpha)$ for $x\in S_{\star}$. The series
converge because $\alpha>2$. Let now a constant be defined by
\begin{equation}
Z=\kappa_{\star}\Gamma(\alpha)^{\kappa_{\star}-1}\prod_{x\in S\setminus S_{\star}}\Gamma_{x}\;.\label{e20}
\end{equation}
The asymptotic result for the partition function is obtained as follows.
\begin{prop}
\label{e21}We have that
\[
\lim_{N\rightarrow\infty}Z_{N}=Z\;.
\]
\end{prop}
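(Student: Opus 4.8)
The plan is to show that the sum defining $Z_N$ is asymptotically carried by the configurations in which all but boundedly many of the $N$ particles accumulate on a single site of $S_{\star}$, that the contribution of those configurations converges to $Z$, and that everything else is negligible uniformly in $N$.

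Concretely, fix a large integer $L$ and, for $N>2L$, split $\mathcal{H}_N$ according to whether $\max_{x\in S}\eta_x\ge N-L$ or $\max_{x\in S}\eta_x\le N-L-1$. In the first case two sites cannot both carry at least $N-L>N/2$ particles, so there is a unique site $w$ with $\eta_w=N-k$ for some $0\le k\le L$, and the remaining $k$ particles form an arbitrary configuration $\xi$ on $S\setminus\{w\}$; this yields the exact identity
\[
N^{\alpha}\!\!\!\sum_{\eta:\,\max_x\eta_x\ge N-L}\!\!\!\frac{m_{\star}^{\,\eta}}{a(\eta)}\;=\;\sum_{w\in S}\sum_{k=0}^{L}\frac{N^{\alpha}\,m_{\star}(w)^{N-k}}{a(N-k)}\,b_{k}^{w},\qquad b_{k}^{w}:=\!\!\sum_{\substack{\xi\in\mathbb{N}^{S\setminus\{w\}}\\ |\xi|=k}}\,\prod_{x\ne w}\frac{m_{\star}(x)^{\xi_x}}{a(\xi_x)}.
\]
This is a finite sum, so I can let $N\to\infty$ with $L$ fixed: the terms with $w\notin S_{\star}$ vanish because $m_{\star}(w)<1$, while for $w\in S_{\star}$ one has $m_{\star}(w)=1$ and $a(N-k)=(N-k)^{\alpha}$, so the prefactor tends to $1$. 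Since $b_{k}^{w}$ depends only on the multiset of masses of $S\setminus\{w\}$, which for every $w\in S_{\star}$ consists of $\kappa_{\star}-1$ ones together with $\{m_{\star}(x):x\in S\setminus S_{\star}\}$, there is a common value $b_k$, and the right-hand side converges to $\kappa_{\star}\sum_{k=0}^{L}b_k$. Finally $\sum_{k\ge0}b_k=\Gamma(\alpha)^{\kappa_{\star}-1}\prod_{x\in S\setminus S_{\star}}\Gamma_x=Z/\kappa_{\star}$, the series converging because $\alpha>2$, so this quantity tends to $Z$ as $L\to\infty$.

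The second case is where the actual work lies, since it must be controlled uniformly in $N$. For such an $\eta$ pick a maximizer $w=w(\eta)$; then $\eta_w\ge N/\kappa$, hence $a(\eta_w)\ge(N/\kappa)^{\alpha}$ once $N\ge\kappa$, and bounding $m_{\star}(w)^{\eta_w}\le1$ gives $N^{\alpha}m_{\star}^{\,\eta}/a(\eta)\le\kappa^{\alpha}\prod_{x\ne w}m_{\star}(x)^{\eta_x}/a(\eta_x)$. Summing over all $\eta$ with $\max_x\eta_x\le N-L-1$, and crudely dropping the requirement that $w$ be a maximizer and the constraint $|\xi|\le N$, leaves
\[
N^{\alpha}\!\!\!\sum_{\eta:\,\max_x\eta_x\le N-L-1}\!\!\!\frac{m_{\star}^{\,\eta}}{a(\eta)}\;\le\;\kappa^{\alpha}\sum_{w\in S}\;\sum_{\substack{\xi\in\mathbb{N}^{S\setminus\{w\}}\\ |\xi|\ge L+1}}\;\prod_{x\ne w}\frac{m_{\star}(x)^{\xi_x}}{a(\xi_x)}\;=:\;R_{L},
\]
which does not depend on $N$ and tends to $0$ as $L\to\infty$, each inner sum being the tail of the convergent expansion of $\prod_{x\ne w}\Gamma_x$. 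Putting the two cases together gives $\limsup_{N\to\infty}|Z_N-Z|\le|\kappa_{\star}\sum_{k=0}^{L}b_k-Z|+R_{L}$ for every $L$, and letting $L\to\infty$ concludes.

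I expect the one genuinely delicate point to be the uniform-in-$N$ estimate in the last display, and the reason it goes through is that the condensate must hold at least $N/\kappa$ particles, so the factor $a(\eta_w)^{-1}$, of order $N^{-\alpha}$, exactly absorbs the prefactor $N^{\alpha}$, after which what remains is an $N$-free tail of a convergent series. The remaining ingredients — exchanging the finite sums with the limit in $N$, and identifying $\sum_k b_k$ with the product of the $\Gamma_x$ — are routine.
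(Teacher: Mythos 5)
Your proof is correct. The paper's own ``proof'' of Proposition \ref{e21} is simply a citation to \cite[Proposition 2.1]{BL3}, so there is no in-paper argument against which to compare; your proof fills in the expected argument from scratch. The decomposition by whether a condensate has formed (some $\eta_x\ge N-L$), the exact bijection with triples $(w,k,\xi)$ using the uniqueness of the maximizer when $N>2L$, the limit of the prefactor $N^\alpha m_\star(w)^{N-k}/a(N-k)$ (to $1$ for $w\in S_\star$, to $0$ otherwise), the identification $\kappa_\star\sum_k b_k=\kappa_\star\prod_{x\ne w}\Gamma_x=Z$, and the uniform-in-$N$ tail bound $R_L$ obtained by using $\eta_{w(\eta)}\ge N/\kappa$ to absorb $N^\alpha$ into $a(\eta_{w(\eta)})^{-1}$ are all sound; the step passing from a sum over $\eta$ to a sum over $(w,\xi)$ only over-counts and so gives a valid upper bound. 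This is precisely the argument one would expect behind the cited reference.
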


\begin{proof}
See \cite[Proposition 2.1]{BL3}.
\end{proof}

\subsubsection*{Dirichlet form}

For $\mathbf{f},\,\mathbf{g}:\mathcal{H}_{N}\rightarrow\mathbb{R}$,
the inner product $\left\langle \mathbf{f},\,\mathbf{g}\right\rangle _{\mu_{N}}$
is defined by
\[
\left\langle \mathbf{f},\,\mathbf{g}\right\rangle _{\mu_{N}}=\sum_{\eta\in\mathcal{H}_{N}}\mathbf{f}(\eta)\,\mathbf{g}(\eta)\,\mu_{N}(\eta)\;.
\]
Then, for $\mathbf{f}:\mathcal{H}_{N}\rightarrow\mathbb{R}$, the
Dirichlet form associated with the process $\eta_{N}(\cdot)$ is defined
by
\[
\mathscr{D}_{N}(\mathbf{f})=\left\langle \mathbf{f},\,-\mathscr{L}_{N}\mathbf{f}\right\rangle _{\mu_{N}}\;.
\]
By summation by parts, the Dirichlet form can be rewritten as
\[
\mathscr{D}_{N}(\mathbf{f})=\frac{1}{2}\sum_{x,\,y\in S}\mu(\eta)\,g(\eta_{x})\,r(x,\,y)\left[\mathbf{f}(\sigma^{x,\,y}\eta)-\mathbf{f}(\eta)\right]^{2}\;.
\]

\subsection{Equilibrium potential and capacity}

Two potential theoretic notions, namely, equilibrium potential and
capacity, related to the underlying random walk and the associated
zero-range process introduced above will now be explained. Denote
by $\tau_{A}$ the hitting times of the set $A\subset S$, namely
\[
\tau_{A}=\inf\left\{ t:X(t)\in A\right\} \;.
\]
The hitting time $\tau_{\mathcal{A}}$ of a set $\mathcal{A}\subset\mathcal{H}_{N}$
is defined analogously. It should be noted that standard Roman fonts
are used for representing subsets or elements of $S$, and calligraphic
fonts for representing subsets of $\mathcal{H}_{N}$. The configurations
in $\mathcal{H}_{N}$ are denoted by Greek letters.

Let $\mathbf{P}_{x}$, $x\in S$, denote the law of the underlying
Markov chain $X(\cdot)$ starting from a site $x.$ For two disjoint
and non-empty sets $A,\,B\subset S$, the equilibrium potential $h_{A,\,B}:S\rightarrow[0,\,1]$
for the process $X(\cdot)$ is defined by
\[
h_{A,\,B}(x)=\mathbf{P}_{x}[\tau_{A}<\tau_{B}]\;\;;\;x\in S\;.
\]
It is well known that the equilibrium potential $h_{A,B}$ satisfies
\begin{equation}
h_{A,\,B}\equiv1\;\mbox{on}\;A\;,\;\;h_{A,\,B}\equiv0\;\mbox{on}\;B\;,\;\mbox{and}\;\;L_{X}h_{A,\,B}\equiv0\;\mbox{on }(A\cup B)^{c}\;.\label{eqp1}
\end{equation}
Then, the capacity between $A$ and $B$ with respect to the process
$X(\cdot)$ is defined by
\[
\textup{cap}_{X}(A,\,B)=D_{X}(h_{A,\,B})\;.
\]
Thus, by \eqref{eqp1}, the following alternative representation of
capacity is obtained:
\begin{equation}
\textup{cap}_{X}(A,\,B)=-\sum_{x\in A}m(x)\,(L_{X}h_{A,\,B})(x)=\sum_{x\in B}m(x)\,(L_{X}h_{A,\,B})(x)\;.\label{eqp11}
\end{equation}

One can define the equilibrium potential and the capacity for zero-range
processes as well. For two disjoint and non-empty sets $\mathcal{A},\,\mathcal{B}\subset\mathcal{H}_{N}$,
the equilibrium potential is defined by
\[
\mathbf{h}_{\mathcal{A},\,\mathcal{B}}(\eta)=\mathbf{h}_{\mathcal{A},\,\mathcal{B}}^{N}(\eta):=\mathbb{P}_{\eta}^{N}\left[\tau_{\mathcal{A}}<\mathcal{\tau_{\mathcal{B}}}\right]\;\;;\;\eta\in\mathcal{H}_{N}
\]
Here and in the following the notation is simplified by dropping the
dependency on $N$. As in \eqref{eqp1}, the equilibrium potential
$\mathbf{h}_{\mathcal{A},\mathcal{\,B}}$ satisfies
\begin{equation}
\mathbf{h}_{\mathcal{A},\,\mathcal{B}}\equiv1\;\mbox{on}\;\mathcal{A}\;,\;\;\mathbf{h}_{\mathcal{A},\,\mathcal{B}}\equiv0\;\mbox{on}\;\mathcal{B}\;,\;\mbox{and}\;\;\mathscr{L}_{N}\mathbf{\,h}_{\mathcal{A},\,\mathcal{B}}\equiv0\;\mbox{on }(\mathcal{A}\cup\mathcal{B})^{c}\;.\label{eqp2}
\end{equation}
The capacity between $\mathcal{A}$ and $\mathcal{B}$ is defined
by
\begin{equation}
\textup{cap}_{N}(\mathcal{A},\,\mathcal{B})=\mathscr{D}_{N}(\mathbf{h}_{\mathcal{A},\,\mathcal{B}})=-\sum_{\eta\in\mathcal{A}}\mu_{N}(\eta)(\mathscr{L}_{N}\,\mathbf{h}_{\mathcal{A},\mathcal{\,B}})(\eta)=\sum_{\eta\in\mathcal{B}}\mu_{N}(\eta)(\mathscr{L}_{N}\,\mathbf{h}_{\mathcal{A},\mathcal{\,B}})(\eta)\;,\label{cap2}
\end{equation}
where the last two equalities follow from \eqref{eqp2}.

Finally, it should be remarked that, if a set in the definitions above
is a singleton, then the curly brackets will be dropped. For instance,
if $A=\{a\}$ and $B=\{b\}$, then the notation $h_{a,\,b}$ will
be used instead of $h_{\{a\},\,\{b\}}$.

\section{Main result\label{sec3}}

\textcolor{black}{In this section, the main result for the condensation
of non-reversible zero-range processes is presented. This phenomenon
can be understood as metastable behavior; hence, metastable valleys
around the condensed configurations are first defined in Section \ref{sec31}.
Then, in Section \ref{sec32}, the main result describing the metastable
behavior as a limiting Markov chain is presented.}

\subsection{\label{sec31}Metastable valleys}

\subsubsection*{Auxiliary sequences}

Several auxiliary sequences are introduced to concretely describe
metastability. For two sequences $(a_{N})_{N\in\mathbb{N}},\,(b_{N})_{N\in\mathbb{N}}$
of positive real numbers, the notation $a_{N}\ll b_{N}$ implies that
\[
\lim_{N\rightarrow\infty}\frac{b_{N}}{a_{N}}=\infty\;.
\]
A sequence of positive integers $(\pi_{N})_{N\in\mathbb{N}}$ is defined
by
\[
\pi_{N}=\left\lfloor N^{\frac{1}{\alpha}+\frac{1}{2}}\right\rfloor \ll N\;,
\]
where $\left\lfloor x\right\rfloor $ denotes the largest integer
not larger than $x$.

Let now $(\ell_{N})_{N\in\mathbb{N}}$ and $(b_{N}(z))_{N\in\mathbb{N}}$,
$z\in S\setminus S_{\star}$, be sequences of positive integer such
that\begin{equation}
\label{amp3}
\begin{aligned}
&1\ll\ell_{N}\ll\pi_{N}\;,\;\;1\ll b_{N}(z)\;\;\text{for all }z\in S\setminus S_{\star}\;,\;\;\text{and}\\
&\lim_{N\rightarrow\infty}\frac{\ell_{N}^{1+\alpha(\kappa-1)}}{N^{1+\alpha}}\prod_{z\in S\setminus S_{\star}}m_{\star}(z)^{-b_{N}(z)}=0\;.
\end{aligned}
\end{equation}For instance,
\[
\ell_{N}=\left\lfloor N^{\frac{1}{2(\kappa-1)}}\right\rfloor \;\;\text{and\;\;}b_{N}(z)=\left\lfloor \frac{\log N}{-2\kappa\log m_{\star}(z)}\right\rfloor \text{ for }z\in S\setminus S_{\star}\;,
\]
satisfy all the assumptions above

\subsubsection*{Metastable valleys}

For each $x\in S_{\star}$, the metastable valley representing the
set of configurations such that most particles are condensed at the
vertex $x$ is defined by
\[
\mathcal{E}_{N}^{x}=\left\{ \eta\in\mathcal{H}_{N}:\eta_{x}\ge N-\ell_{N}\;\mbox{and\;}\eta_{z}\le b_{N}(z)\;\mbox{for all }z\in S\setminus S_{\star}\right\} \;.
\]
For a non-empty set $A\subseteq S_{\star}$, define
\[
\mathcal{E}_{N}(A)=\bigcup_{x\in A}\mathcal{E}_{N}^{x}\;,
\]
and let
\[
\mathcal{E}_{N}=\mathcal{E}_{N}(S_{\star})\;\;\text{and\;\;}\Delta_{N}=\mathcal{H}_{N}\setminus\mathcal{E}_{N}\;.
\]
It should be emphasized that the definitions of the invariant measure
as well as the metastable valleys are identical to the reversible
zero-range process considered in \cite{BL3}. Hence the following
result on invariant measure is immediate from \cite[display (3.2)]{BL3}.
\begin{thm}
\label{t01}The invariant measure $\mu_{N}(\cdot)$ is concentrated
on the valleys defined above, in the sense that
\[
\lim_{N\rightarrow\infty}\mu_{N}(\mathcal{E}_{N}^{x})=\frac{1}{\kappa_{\star}}\text{ for all }x\in S_{\star}\;\;\text{and\;\;}\lim_{N\rightarrow\infty}\mu_{N}(\Delta_{N})=0\;.
\]
\end{thm}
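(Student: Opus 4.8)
The plan is to read off $\mu_N(\mathcal{E}_N^x)$ for $x\in S_\star$ directly from the explicit product formula for $\mu_N$ together with Proposition \ref{e21}, and then to obtain the assertion on $\Delta_N$ by complementation; since $\mu_N$ and the valleys $\mathcal{E}_N^x$ are defined exactly as in the reversible model of \cite{BL3}, the whole proof reduces to an elementary estimate on partition-function-type sums. First I would fix $x\in S_\star$ and write, for $\eta\in\mathcal{E}_N^x$, $k=k(\eta)=\sum_{y\ne x}\eta_y$, so that $0\le k\le\ell_N$ and hence $\eta_x=N-k\ge1$ for all large $N$; consequently $a(\eta)=(N-k)^\alpha\prod_{y\ne x}a(\eta_y)$ and, using $m_\star(x)=1$, $m_\star^{\,\eta}=\prod_{y\ne x}m_\star(y)^{\eta_y}$, which gives
\[
\mu_N(\mathcal{E}_N^x)=\frac{1}{Z_N}\sum_{\eta\in\mathcal{E}_N^x}\Bigl(\frac{N}{N-k}\Bigr)^{\!\alpha}\,\prod_{y\ne x}\frac{m_\star(y)^{\eta_y}}{a(\eta_y)}\;.
\]
Because $\ell_N\ll\pi_N\ll N$, we have $1\le (N/(N-k))^\alpha\le(1-\ell_N/N)^{-\alpha}=1+o(1)$ uniformly over $0\le k\le\ell_N$, so this factor can be replaced by $1$ at the cost of a global $1+o(1)$.

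It then remains to analyse
\[
\Sigma_N:=\sum_{\substack{(\eta_y)_{y\ne x}:\ \sum_{y\ne x}\eta_y\le\ell_N,\\ \eta_z\le b_N(z)\ \text{for all }z\in S\setminus S_\star}}\ \prod_{y\ne x}\frac{m_\star(y)^{\eta_y}}{a(\eta_y)}\;,
\]
for which I would prove $\Sigma_N\to\Gamma(\alpha)^{\kappa_\star-1}\prod_{z\in S\setminus S_\star}\Gamma_z$ by a squeeze. For the upper bound, discard the coupling constraint $\sum_{y\ne x}\eta_y\le\ell_N$ and let each individual sum run over all of $\mathbb{N}$: recalling $m_\star(y)=1$ for $y\in S_\star$, this equals $\Gamma(\alpha)^{\kappa_\star-1}\prod_{z\in S\setminus S_\star}\Gamma_z$ (all series converge since $\alpha>2$). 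For the lower bound, given $\varepsilon>0$ pick $L$ with $\sum_{j=0}^{L}1/a(j)\ge\Gamma(\alpha)-\varepsilon$ and $\sum_{j=0}^{L}m_\star(z)^j/a(j)\ge\Gamma_z-\varepsilon$ for every $z\in S\setminus S_\star$; since $1\ll\ell_N$ and $1\ll b_N(z)$ we have $\ell_N\ge(\kappa-1)L$ and $b_N(z)\ge L$ for all large $N$, so the box $\{\eta_y\le L\text{ for all }y\ne x\}$ is contained in the index set of $\Sigma_N$ and yields $\Sigma_N\ge(\Gamma(\alpha)-\varepsilon)^{\kappa_\star-1}\prod_{z\in S\setminus S_\star}(\Gamma_z-\varepsilon)$. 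Letting $N\to\infty$ and then $\varepsilon\downarrow0$ gives the claim. Combining with Proposition \ref{e21}, i.e. $Z_N\to Z=\kappa_\star\Gamma(\alpha)^{\kappa_\star-1}\prod_{z\in S\setminus S_\star}\Gamma_z$, we conclude $\mu_N(\mathcal{E}_N^x)\to1/\kappa_\star$.

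Finally, the valleys $\mathcal{E}_N^x$, $x\in S_\star$, are pairwise disjoint for all large $N$: if $\eta$ belonged to $\mathcal{E}_N^x\cap\mathcal{E}_N^{x'}$ with $x\ne x'$, then $\eta_x+\eta_{x'}\ge 2(N-\ell_N)>N$, contradicting $\sum_{w\in S}\eta_w=N$ (here $\ell_N\ll N$ is used once more). Hence $\mu_N(\mathcal{E}_N)=\sum_{x\in S_\star}\mu_N(\mathcal{E}_N^x)\to\kappa_\star\cdot\frac{1}{\kappa_\star}=1$, and therefore $\mu_N(\Delta_N)=1-\mu_N(\mathcal{E}_N)\to0$. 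No step here is genuinely difficult; the only delicate point is that the sum defining $\mu_N(\mathcal{E}_N^x)$ carries the coupled constraint $\sum_{y\ne x}\eta_y\le\ell_N$ rather than independent constraints on each coordinate, which is exactly why the evaluation of $\Sigma_N$ is set up as a squeeze between an unconstrained product (from above) and a product over small coordinate boxes (from below) instead of an exact factorization.
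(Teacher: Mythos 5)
Your proof is correct. The paper itself offers no argument here: since the invariant measure and the valleys $\mathcal{E}_N^x$ are defined exactly as in the reversible setting, the paper simply cites \cite[display (3.2)]{BL3}. Your proposal supplies the elementary computation that the paper delegates to that reference, and every step checks out: the reduction to $\Sigma_N$ via the product structure of $\mu_N$ and the fact that $m_\star(x)=1$; the uniform bound $1\le (N/(N-k))^\alpha\le(1-\ell_N/N)^{-\alpha}=1+o(1)$ on the valley $\{k\le\ell_N\}$; the squeeze for $\Sigma_N$ between the unconstrained product $\Gamma(\alpha)^{\kappa_\star-1}\prod_{z\in S\setminus S_\star}\Gamma_z$ and the product over the small box $\{\eta_y\le L\}$ (which is contained in the constraint set for large $N$ precisely because $1\ll\ell_N$ and $1\ll b_N(z)$); the combination with Proposition \ref{e21}; and the disjointness of the $\mathcal{E}_N^x$, which yields the statement for $\Delta_N$ by complementation. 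This is presumably the same calculation underlying the cited display in \cite{BL3}, so there is no genuine divergence of approach, only the difference that you spell it out.
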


This theorem explains the static condensation of zero-range process.
The main concern here is the dynamical analysis of this condensation
behavior. Suppose that almost all particles are condensed at a certain
site of $S_{\star}$. Then, after a sufficiently long time, the particles
are moved and condensed at another site, and this will be sequentially
repeated. This is a type of metastable behavior; hence, its analysis
lies on the framework of Beltran and Landim \cite{BL1, BL2}.

\subsection{\label{sec32}Condensation of zero-range processes}

The standard method for expressing the ing behavior in terms of the
convergence to a Markov chain under the presence of multiple metastable
valleys is the martingale approach developed in \cite{BL1, BL2} and
enhanced in \cite{LLM}. The main result of this study is explained
in the spirit of this approach to metastability. To this end, a projection
function $\Psi:\mathcal{H}_{N}\rightarrow S_{\star}\cup\{\mathfrak{0}\}$
is first defined by
\[
\Psi(\eta)=\begin{cases}
x & \mbox{if }x\in\mathcal{E}_{N}^{x}\;,\\
\mathfrak{0} & \mbox{if }x\in\Delta_{N}\;.
\end{cases}
\]
A projection of the zero-range process $\eta_{N}(\cdot)$ is then
defined by
\[
Y_{N}(t)=\Psi(\eta_{N}(t))\;.
\]
It should be noticed that the (non-Markov) process $Y_{N}(\cdot)$
on $S_{\star}\cup\{\mathfrak{0}\}$ and represents the valley at which
the zero-range process is staying at time $t$. The null state $\mathfrak{0}$
indicates the state at which the zero-range process does not exhibit
condensation. Then, metastability can be represented in terms of the
convergence of $Y_{N}(\cdot)$ to a Markov chain $Y(\cdot)$ on $S_{\star}\cup\{\mathfrak{0}\}$
defined below.

\subsubsection*{Limiting Markov chain describing metastable behavior}

Define a constant by
\begin{equation}
I_{\alpha}=\int_{0}^{1}u^{\alpha}(1-u)^{\alpha}du\;.\label{ialpha}
\end{equation}
The following remark on notation, which is valid throughout the paper,
is now in order.
\begin{notation}
\label{nott}The notation $u,\,v\in T$ or $\{u,\,v\}\subset T$ for
some set $T$ automatically implies that $u$ and $v$ are \textit{different}
elements of the set $T$.
\end{notation}

Let $\left\{ Y(t):t\ge0\right\} $ be a Markov chain on $S_{\star}\cup\{\mathfrak{0}\}$,
whose jump rate is given by
\begin{equation}
a(x,\,y)=\frac{1}{M_{\star}\,\Gamma(\alpha)\,I_{\alpha}}\,\textup{cap}_{X}(x,\,y)\;\;;\;x,\,y\in S_{\star}\;,\label{eax}
\end{equation}
and $a(x,\,y)=0$ otherwise. As the capacity is symmetric, it is easy
to verify that the invariant measure $\mu(\cdot)$ of this process
is given by
\begin{equation}
\mu(x)=\begin{cases}
1/\kappa_{\star} & \mbox{if }x\in S_{\star}\;,\\
0 & \mbox{if }x=\mathfrak{0}\;.
\end{cases}\label{mux}
\end{equation}
The Markov chain $Y(\cdot)$ is a long-range process in the sense
that $a(x,\,y)>0$ for all $x,\,y\in S_{\star}$. Let $\mathbf{Q}_{x}$
denote the law of the Markov chain $Y(\cdot)$ starting at $x\in S_{\star}$.
\begin{notation}
\label{rem32}Let $\{\widehat{Y}(t):t\ge0\}$ denote the Markov chain
on $S_{\star}$ with jump rate $a(\cdot,\,\cdot)$. This process is
obtained by neglecting the null-state $\mathfrak{0}$ in $Y(\cdot)$.
Then, one can verify by simple algebra that $\widehat{Y}(\cdot)$
is an irreducible Markov chain and is reversible with respect to the
invariant measure $\mu(\cdot)$ conditioned on $S_{\star}$.
\end{notation}

\subsubsection*{Main result}

The main result of this study is the following theorem, which describes
the metastable transition of the condensation of the zero-range processes
in a precise manner. For $t\ge0$, define
\[
W_{N}(t)=Y_{N}(N^{1+\alpha}t)\;.
\]

\begin{thm}
\label{t03}For all $x\in S_{\star}$ and for all $(\eta_{N})_{N\in\mathbb{N}}$
such that $\eta_{N}\in\mathcal{E}_{N}^{x}$ for all $N$, the finite
dimensional distributions of the process $W_{N}(\cdot)$ under $\mathbb{P}_{\eta_{N}}^{N}$
converges to that of the law $\mathbf{Q}_{x}$, as $N$ tends to infinity.
\end{thm}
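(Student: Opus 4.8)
The plan is to follow the martingale approach of Beltran and Landim \cite{BL1,BL2} as refined in \cite{LLM}, reducing the convergence of the finite-dimensional distributions of $W_N(\cdot)$ to three ingredients: (i) a sharp estimate of the capacity $\mathrm{cap}_N(\mathcal{E}_N^x,\mathcal{E}_N(S_\star\setminus\{x\}))$ and, more generally, $\mathrm{cap}_N(\mathcal{E}_N(A),\mathcal{E}_N(B))$ for disjoint $A,B\subset S_\star$; (ii) an identification of the mean jump rate of the trace process on $\mathcal{E}_N$, showing that after the time scale $N^{1+\alpha}$ it converges to $a(\cdot,\cdot)$; and (iii) control of the time spent in $\Delta_N$, ensuring that the null state $\mathfrak{0}$ is visited negligibly and that the process does not linger between valleys. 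Concretely, I would first invoke the general framework of Section~\ref{sec6} — which the introduction advertises as model-independent — to assert that convergence holds provided one supplies, for each pair $x,y\in S_\star$, approximating functions and approximating flows realizing the asymptotics
\begin{equation*}
\mathrm{cap}_N\big(\mathcal{E}_N^x,\ \mathcal{E}_N(S_\star\setminus\{x\})\big)\;=\;\frac{1}{N^{1+\alpha}}\,\frac{1}{M_\star\,\Gamma(\alpha)\,I_\alpha}\sum_{y\in S_\star\setminus\{x\}}\mathrm{cap}_X(x,y)\,\big(1+o(1)\big)\;,
\end{equation*}
together with the matching estimates for $\mathcal{E}_N(A)$ versus $\mathcal{E}_N(B)$; these are exactly the objects constructed in Sections~\ref{sec7} and~\ref{sec8}, and the constant $I_\alpha$ enters through the one-dimensional profile the condensed pile follows while a macroscopic block of particles is transferred from site $x$ to site $y$ along an optimal trajectory of the underlying walk.

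Next I would verify the hypotheses of the abstract theorem: using Theorem~\ref{t01} I would check that $\mu_N(\mathcal{E}_N^x)\to 1/\kappa_\star$ and $\mu_N(\Delta_N)\to0$, so the equilibrium measure of the limiting chain is the uniform $\mu(\cdot)$ of \eqref{mux}; then I would establish the two conditions usually labelled \textbf{(H0)} and \textbf{(H1)} in the Beltran--Landim scheme — negligibility of $\Delta_N$ for the time-rescaled dynamics and convergence of the trace-process jump rates. For the latter, the key identity is that the jump rate of the trace chain on $\mathcal{E}_N$ from $\mathcal{E}_N^x$ to $\mathcal{E}_N^y$ is, up to lower order, $\mathrm{cap}_N(\mathcal{E}_N^x,\mathcal{E}_N^y)/\mu_N(\mathcal{E}_N^x)$ once one shows the "no intermediate valley" property — that starting from $\mathcal{E}_N^x$ the process reaches $\mathcal{E}_N^y$ before any third valley with the correct probability — which again reduces to capacity comparisons of the form $\mathrm{cap}_N(\mathcal{E}_N^x,\mathcal{E}_N(S_\star\setminus\{x\}))\asymp\sum_{z\ne x}\mathrm{cap}_N(\mathcal{E}_N^x,\mathcal{E}_N^z)$. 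The passage from the capacity estimate to the mean jump rate is precisely the point the introduction flags as the second achievement; I would use the collapsed-chain device of \cite{BL2,LS1} in the divergence-free-free form developed here, so that no extra flow construction beyond those of (i) is needed.

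I expect the main obstacle to be step (i) in the non-reversible setting: producing the approximating flow that attains the capacity upper bound via the generalized Thomson principle (Theorem~\ref{t04}). One must exhibit, near the "saddle" configurations where $\eta_x$ and $\eta_y$ are both of order $N$, a flow on $\mathcal{H}_N$ whose projection along the macroscopic coordinate follows the $u^\alpha(1-u)^\alpha$ profile (yielding $I_\alpha$) and whose transverse part routes particles through the non-reversible random-walk structure so that the underlying-walk contribution is exactly $\mathrm{cap}_X(x,y)$ — and crucially this flow need no longer be divergence-free, which is what makes the construction tractable here but still requires a careful error analysis showing the divergence is supported away from the bottleneck and contributes only $o(N^{-(1+\alpha)})$. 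The complementary lower bound, via the generalized Dirichlet principle with a test function that equals the equilibrium potential $h_{x,y}$ of the underlying walk composed with a smoothed indicator of $\eta_x\ge \eta_y$, is comparatively routine but must be matched constant-for-constant. All remaining pieces — tightness in the Skorokhod sense (or the weaker statement here, convergence of finite-dimensional distributions, which sidesteps the need for a replacement lemma controlling short excursions), and the strong Markov computations identifying the limit with $\mathbf{Q}_x$ — are standard once (i)–(iii) are in hand, so the bulk of the work is the flow construction carried out in Sections~\ref{sec7}–\ref{sec8}.
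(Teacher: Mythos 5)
Your high-level plan matches the paper's: reduce to the Beltran--Landim / LLM hypotheses, establish the capacity asymptotics via the generalized Dirichlet--Thomson principles of Theorem~\ref{t04} with the test functions and flows of Sections~\ref{sec7}--\ref{sec8}, and pass from capacities to mean jump rates via the collapsed chain. However, there is a genuine gap in the step you flag as ``the key identity.'' The formula
$r_{N}(x,y)\approx\textup{cap}_{N}(\mathcal{E}_{N}^{x},\mathcal{E}_{N}^{y})/\mu_{N}(\mathcal{E}_{N}^{x})$
is \emph{false} as soon as $\kappa_{\star}\ge3$, even in the reversible case, where the correct relation is the three-term identity \eqref{er1}. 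The numbers do not match: $\textup{cap}_{N}(\mathcal{E}_{N}^{x},\mathcal{E}_{N}^{y})/\mu_{N}(\mathcal{E}_{N}^{x})\approx \kappa_{\star}N^{-(1+\alpha)}\textup{cap}_{Y}(x,y)$, but the target is $N^{-(1+\alpha)}a(x,y)$, and $\kappa_{\star}\,\textup{cap}_{Y}(x,y)\ne a(x,y)$ in general (for instance, $\kappa_{\star}=3$ with all $a(i,j)=1$ gives $\textup{cap}_{Y}(1,2)=1/2\ne 1/3$). A ``no intermediate valley'' observation cannot resolve this, because visits to third valleys genuinely contribute to $r_{N}(x,y)$ and are what produces the long-range structure of $\widehat{Y}$.

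The paper instead decomposes $r_{N}(x,y)=\lambda_{N}(x)\cdot\overline{\mathbb{P}}_{\mathfrak{o}}^{N}\bigl[\tau_{\mathcal{E}_{N}^{y}}<\tau_{\breve{\mathcal{E}}_{N}^{x,y}}\bigr]$ (displays \eqref{e610}--\eqref{e611}). The first factor follows from Corollary~\ref{c02} and Theorem~\ref{t01}. The second factor is \emph{not} a consequence of the capacity estimate for the original chain: it requires a separate sharp capacity estimate for the collapsed chain (Proposition~\ref{p613}, whose proof reruns the Dirichlet--Thomson argument for the collapsed objects and crucially uses Lemmas~\ref{lem67}--\ref{lem69} to control the collapsed flow norm), and then a variational comparison (Proposition~\ref{p618}) showing $\bigl\Vert\overline{\Psi}_{\overline{\mathbf{h}}_{\mathcal{E}(A),\mathcal{E}(B)}}-\overline{\Psi}_{\overline{\mathbf{V}}_{A,B}}\bigr\Vert_{\mathcal{C}}^{2}=o(N^{-(1+\alpha)})$ together with a lower bound on $\overline{\textup{cap}}_{N}^{\,s}(\mathfrak{o},\mathcal{E}_{N}(A\cup B))$ to extract the value of the collapsed equilibrium potential at $\mathfrak{o}$. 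You are right that this is the point the introduction advertises as the second achievement, but it is not automatic from the capacity estimate for the original chain, and your sketch omits the intermediate steps. A smaller point: you suggest finite-dimensional convergence is the weaker conclusion that ``sidesteps'' extra work, but in the LLM refinement it is the \emph{additional} hypothesis \eqref{H3} that upgrades soft-topology convergence to f.d.d.\ convergence, so \eqref{H3} needs to be verified separately, as the paper does using \cite[Lemma 2.4]{LLM}, the capacity lower bound \eqref{fd4}, and the sector condition.
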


The proof of this theorem is given in Section \ref{sec6}. It should
be stressed that, in the theorem above, the convergence of the finite
dimensional distributions can be replaced with that in the soft topology
\cite{Lan1}.

\section{\label{sec4}Adjoint dynamics and sector condition}

For the investigation of the metastability of non-reversible processes,
numerous computations are involved with both the original and the
adjoint dynamics simultaneously. In particular, the Dirichlet and
Thomson principles stated in Theorem \ref{dt} highlight this fact.
Accordingly, the notations related to the adjoint chains and the symmetrized
chains of the non-reversible zero-range processes are introduced in
Section \ref{sec41}. The sector condition for zero-range processes
is proved in Section \ref{sec42}

\subsection{\label{sec41}Adjoint dynamics and symmetrized dynamics}

For two sites $x,\,y\in S$, let
\[
r^{*}(x,\,y)=r(y,\,x)\,m(y)/m(x)\;.
\]
The adjoint generators of $L_{X}$ with respect to $L^{2}(m)$ is
define by, for all $f:S\rightarrow\mathbb{R}$,
\[
(L_{X}^{*}\,f)(x)=\sum_{y\in S}r^{*}(x,\,y)\,(f(y)-f(x))\;\;;\;x\in S\;.
\]
Analogously, the adjoint generator of $\mathscr{L}_{N}$ with respect
to $L^{2}(\mu_{N})$ is defined by, for all $\mathbf{f}:\mathcal{H}_{N}\rightarrow\mathbb{R}$,
\[
(\mathscr{L}_{N}^{*}\,\mathbf{f})(\eta)=\sum_{x,y\in S}g(\eta_{x})\,r^{*}(x,\,y)\,(\mathbf{f}(\sigma^{x,\,y}\eta)-\mathbf{f}(\eta))\;\;;\;\eta\in\mathcal{H}_{N}\;.
\]
Then, the processes generated by $L_{X}^{*}$ and $\mathscr{L}_{N}^{*}$
are denoted by $\{X^{*}(t):t\ge0\}$ and $\{\eta_{N}^{*}(t):t\ge0\}$,
respectively, and are called the adjoint dynamics. It should be noted
that the original and adjoint dynamics share the invariant measure
and the Dirichlet form.

The equilibrium potentials $h_{A,\,B}^{*},\,\mathbf{h}_{\mathcal{A},\,\mathcal{B}}^{*}$,
and the capacities $\textup{cap}_{X}^{*}(A,\,B)$, $\textup{cap}_{N}^{*}(\mathcal{A},\,\mathcal{B})$
for these adjoint dynamics are defined as before. It is known from
\cite[display (2.4)]{GL} that although the equilibrium potentials
for the original dynamics and the adjoint dynamics are quite different,
the corresponding capacities are the same, i.e., $\textup{cap}_{X}^{*}(A,\,B)=\textup{cap}_{X}(A,\,B)$
and $\textup{cap}_{N}^{*}(\mathcal{A},\mathcal{\,B})=\textup{cap}_{N}(\mathcal{A},\,\mathcal{B})$
for all $A,\,B\subset S$ and $\mathcal{A},\,\mathcal{B}\subset\mathcal{H}_{N}$.

Another process of interest is the symmetrized zero-range process
$\{\eta_{N}^{s}(t):t\ge0\}$ on $\mathcal{H}_{N}$ with generator
$\mathscr{L}_{N}^{s}=(1/2)(\mathscr{L}_{N}+\mathscr{L}_{N}^{*})$.
One can verify that this process is reversible with respect to the
invariant measure $\mu_{N}(\cdot)$. Hence, the process $\eta_{N}^{s}(\cdot)$
is that considered in \cite{BL3}. Let $\textup{cap}_{N}^{s}(\cdot,\,\cdot)$
denote the capacity with respect to the process $\eta_{N}^{s}(\cdot)$.

\subsection{\label{sec42}Sector condition for the zero-range processes}

In \cite{BL3}, several estimates were obtained in the context of
reversible zero-range processes, and can be employed in this study
using the so-called sector condition for the zero-range process $\eta_{N}(\cdot)$,
which is proved in Proposition \ref{psector} below. In particular,
Corollary \ref{csector} provides \textit{rough} estimates of the
capacity $\textup{cap}_{N}(\cdot,\,\cdot)$ via the estimates in \cite{BL3}
for the symmetrized capacity $\textup{cap}_{N}^{s}(\cdot,\,\cdot)$.
It should be emphasized that for \textit{sharp} estimates, an entirely
new idea is required.

For $u\in S$, let $\omega^{u}=(\omega_{x}^{u})_{x\in S}\in\mathcal{H}_{1}$
be the configuration with one particle at site $x$, namely,
\[
\omega_{x}^{u}=\begin{cases}
1 & \mbox{if }x=u\\
0 & \mbox{otherwise.}
\end{cases}
\]
For $u\in S$ and $\eta\in\mathcal{H}_{N}$, let $\eta+\omega^{u}\in\mathcal{H}_{N+1}$
be the configuration obtained from $\eta$ by adding a particle at
site $u$. The configuration $\eta-\omega^{u}\in\mathcal{H}_{N-1}$
can be defined similarly, provided that $\eta_{u}\ge1$. Remark that,
for $u\in S$ and $\eta\in\mathcal{H}_{N}$ such that $\eta_{u}\ge1$,
we have
\begin{equation}
\mu_{N}(\eta)\,g(\eta_{u})=a_{N}\,\mu_{N-1}(\eta-\omega^{u})\,m(u)\;,\label{u1}
\end{equation}
where $a_{N}$ is defined by
\[
a_{N}=\frac{N^{\alpha}\,Z_{N-1}}{(N-1)^{\alpha}\,Z_{N}\,M_{\star}}\;.
\]
By Proposition \ref{e21}, it is immediate that
\begin{equation}
\lim_{N\rightarrow\infty}a_{N}=M_{\star}^{-1}\;.\label{u2}
\end{equation}

\begin{rem}
\label{rem41}Henceforth, all constants are assumed to depend on the
set $S$, the underlying random walk $X(\cdot)$, and the parameter
$\alpha$. Later on, dependency on a new parameter $\epsilon$ will
be additionally allowed, and this will be explicitly stated.
\end{rem}

\begin{prop}[Sector condition for zero-range processes]
\label{psector}There exists a constant $C_{0}>0$ such that for
all $\mathbf{f},\,\mathbf{g}:\mathcal{H}_{N}\rightarrow\mathbb{R}$,
we have
\[
\left\langle \mathbf{g},\,-\mathcal{\mathscr{L}}_{N}\mathbf{f}\right\rangle _{\mu_{N}}^{2}\le C_{0}\mathscr{\,D}_{N}(\mathbf{f})\mathscr{\,D}_{N}(\mathbf{g})\;.
\]
\end{prop}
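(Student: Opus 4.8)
The plan is to reduce the assertion to a one-particle (i.e.\ underlying random walk) sector condition, using the factorization of the zero-range Dirichlet form into single-edge contributions. Writing the bilinear form explicitly,
\[
\langle \mathbf{g},\,-\mathscr{L}_N\mathbf{f}\rangle_{\mu_N}
=\frac12\sum_{x,\,y\in S}\sum_{\eta\in\mathcal{H}_N}\mu_N(\eta)\,g(\eta_x)\,r(x,\,y)\,
\big[\mathbf{g}(\sigma^{x,y}\eta)-\mathbf{g}(\eta)\big]\big[\mathbf{f}(\sigma^{x,y}\eta)-\mathbf{f}(\eta)\big],
\]
by antisymmetrizing $\eta\mapsto\sigma^{x,y}\eta$ in the sum over ordered pairs $(x,y)$. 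The first step is to apply Cauchy--Schwarz to this sum over $(\eta,x,y)$ with weight $\mu_N(\eta)g(\eta_x)r(x,y)$: this immediately gives
\[
\langle \mathbf{g},\,-\mathscr{L}_N\mathbf{f}\rangle_{\mu_N}^2
\le \Big(\tfrac12\sum_{x,y,\eta}\mu_N(\eta)g(\eta_x)r(x,y)[\mathbf{f}(\sigma^{x,y}\eta)-\mathbf{f}(\eta)]^2\Big)
\Big(\tfrac12\sum_{x,y,\eta}\mu_N(\eta)g(\eta_x)r(x,y)[\mathbf{g}(\sigma^{x,y}\eta)-\mathbf{g}(\eta)]^2\Big).
\]
The second factor is \emph{not} $\mathscr{D}_N(\mathbf{g})$, because the stated Dirichlet form uses the weight $\mu_N(\eta)g(\eta_x)r(x,y)$ on \emph{ordered} pairs, which is exactly the symmetrized rate $(1/2)(\mathscr{L}_N+\mathscr{L}_N^*)$ once we identify $\mu_N(\eta)g(\eta_x)r(x,y)=\mu_N(\sigma^{x,y}\eta)g((\sigma^{x,y}\eta)_y)r^*(y,x)$ via \eqref{u1}. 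So the right-hand side above is literally $\mathscr{D}_N(\mathbf{f})\,\mathscr{D}_N(\mathbf{g})$ (recall $\mathscr{D}_N=\mathscr{D}_N^*=\mathscr{D}_N^s$ since the original, adjoint and symmetrized processes share the Dirichlet form), and one gets the result with $C_0=1$.

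So in fact the cleanest route is: expand the bilinear form over ordered edge-pairs, symmetrize the summand, apply Cauchy--Schwarz with the natural weight, and recognize both factors as the common Dirichlet form $\mathscr{D}_N(\mathbf{f})=\mathscr{D}_N(\mathbf{g})$. The one delicate point — and the step I expect to require the most care — is the bookkeeping that turns the ordered-pair sum $\frac12\sum_{x,y}\sum_\eta \mu_N(\eta)g(\eta_x)r(x,y)[\cdots]^2$ into the displayed Dirichlet form $\mathscr{D}_N(\mathbf{f})$: one must check that summing over $\eta$ with a particle moved from $x$ to $y$ is, after the change of variables $\eta\leftrightarrow\sigma^{x,y}\eta$ and the detailed-balance-type identity \eqref{u1} relating $r$ and $r^*$, consistent with the stated formula $\mathscr{D}_N(\mathbf{f})=\frac12\sum_{x,y}\mu_N(\eta)g(\eta_x)r(x,y)[\mathbf{f}(\sigma^{x,y}\eta)-\mathbf{f}(\eta)]^2$, i.e.\ that no factor of $2$ or asymmetry between $r$ and $r^*$ is lost. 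Once that identification is pinned down, the inequality holds with constant $C_0=1$, and (as the surrounding text suggests) the hypothesis that $r$ comes from an irreducible chain with invariant measure $m$ is exactly what makes \eqref{u1} available; no finer structure of the zero-range rates $g$ is needed.
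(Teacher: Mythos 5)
Your proposal contains a fatal error at the very first step. The displayed identity
\[
\langle \mathbf{g},\,-\mathscr{L}_N\mathbf{f}\rangle_{\mu_N}
=\frac12\sum_{x,\,y\in S}\sum_{\eta\in\mathcal{H}_N}\mu_N(\eta)\,g(\eta_x)\,r(x,\,y)\,
\bigl[\mathbf{g}(\sigma^{x,y}\eta)-\mathbf{g}(\eta)\bigr]\bigl[\mathbf{f}(\sigma^{x,y}\eta)-\mathbf{f}(\eta)\bigr]
\]
is \emph{false} for a non-reversible $\mathscr{L}_N$. Its right-hand side is manifestly symmetric under $\mathbf{f}\leftrightarrow\mathbf{g}$, so the claimed identity would force $\langle\mathbf{g},-\mathscr{L}_N\mathbf{f}\rangle_{\mu_N}=\langle\mathbf{f},-\mathscr{L}_N\mathbf{g}\rangle_{\mu_N}$, i.e.\ self-adjointness. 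What the ``antisymmetrization'' $\eta\mapsto\sigma^{x,y}\eta$ actually produces is the bilinear form of the \emph{symmetrized} generator $\mathscr{L}_N^s=(\mathscr{L}_N+\mathscr{L}_N^*)/2$, namely $\langle\mathbf{g},-\mathscr{L}_N^s\mathbf{f}\rangle_{\mu_N}$. Carrying out the change of variables carefully, the true expression for $\langle\mathbf{g},-\mathscr{L}_N\mathbf{f}\rangle_{\mu_N}$ contains a second, antisymmetric term of the form $\frac12\sum_{(\eta,\zeta)}c_N^a(\eta,\zeta)[\mathbf{f}(\eta)-\mathbf{f}(\zeta)][\mathbf{g}(\eta)+\mathbf{g}(\zeta)]$ (with $c_N^a=(c_N(\eta,\zeta)-c_N(\zeta,\eta))/2$), and this is exactly what your argument silently discards. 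A concrete counterexample: take $N=1$, $S=\{1,2,3\}$ with $r$ a cycle, $m$ uniform, $\mathbf{f}=\mathbf{1}_{\{1\}}$, $\mathbf{g}=\mathbf{1}_{\{2\}}$; then $\langle\mathbf{g},-\mathscr{L}_1\mathbf{f}\rangle_{\mu_1}=0$ while $\langle\mathbf{f},-\mathscr{L}_1\mathbf{g}\rangle_{\mu_1}=-1/3$, so the bilinear form is not symmetric.

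The consequence of this error is severe: if the identity were correct, the sector condition would hold with $C_0=1$ for \emph{every} Markov chain, and there would be nothing to prove. In fact the sector condition fails for general non-reversible chains (e.g.\ large cycles). The entire content of the proposition is to control the antisymmetric part, and that control relies on structure you never invoke. The paper's proof passes to a copy of $\mathcal{H}_{N-1}$ via the identity \eqref{u1}, uses stationarity of $m$ in the form $\sum_{x,y}m(x)r(x,y)[\mathbf{f}(\zeta+\omega^x)-\mathbf{f}(\zeta+\omega^y)]=0$ to insert the shell average $\overline{\mathbf{g}}(\zeta)=\kappa^{-1}\sum_z\mathbf{g}(\zeta+\omega^z)$ for free, applies a weighted Cauchy--Schwarz, and then bounds the resulting $\mathbf{g}$-variance by the Dirichlet form via canonical paths in $S$. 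The constant $C_0$ depends on the geometry of $(S,r)$ through these paths and on $\kappa$; it is not $1$. Your remark that ``no finer structure of the zero-range rates $g$ is needed'' is correct, but the finiteness and structure of the \emph{base graph} $S$, plus stationarity of $m$, are essential — none of which your two-line argument uses.

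One part of your write-up is correct and worth keeping: the identification of
$\frac12\sum_{x,y,\eta}\mu_N(\eta)g(\eta_x)r(x,y)[\mathbf{f}(\sigma^{x,y}\eta)-\mathbf{f}(\eta)]^2$
with $\mathscr{D}_N(\mathbf{f})$ is legitimate, since the quadratic forms of $\mathscr{L}_N$ and $\mathscr{L}_N^s$ coincide. But this only helps once you have an honest decomposition of the bilinear form, which your first step does not provide.
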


\begin{proof}
By \eqref{u1} and the change of variable $\eta-\omega^{x}=\zeta$,
we have\begin{equation}
\label{ep41}
\begin{aligned}
\left\langle \mathbf{g},\,-\mathcal{\mathscr{L}}_{N}\mathbf{f}\right\rangle _{\mu_{N}}&=\sum_{\eta\in\mathcal{H}_{N}}\,\sum_{x,\,y\in S}\mu_{N}(\eta)\,g(\eta_{x})\,r(x,\,y)\left[\mathbf{f}(\eta)-\mathbf{f}(\sigma^{x,\,y}\eta)\right]\mathbf{g}(\eta)\\&=a_{N}\,\sum_{\zeta\in\mathcal{H}_{N-1}}\mu_{N-1}(\zeta)\,A(\mathbf{f},\,\mathbf{g};\zeta)\;,
\end{aligned}
\end{equation}where
\[
A(\mathbf{f},\,\mathbf{g};\zeta)=\sum_{x,\,y\in S}m(x)\,r(x,\,y)\left[\mathbf{f}(\zeta+\omega^{x})-\mathbf{f}(\zeta+\omega^{y})\right]\mathbf{g}(\zeta+\omega^{x})\;.
\]
By \eqref{inv},
\[
A(\mathbf{f},\,\mathbf{f};\zeta)=\frac{1}{2}\sum_{x,\,y\in S}m(x)\,r(x,\,y)\left[\mathbf{f}(\zeta+\omega^{x})-\mathbf{f}(\zeta+\omega^{y})\right]^{2}\;.
\]
Therefore, the Dirichlet form can be rewritten as
\begin{equation}
\mathscr{D}_{N}(\mathbf{f})=\frac{a_{N}}{2}\sum_{\zeta\in\mathcal{H}_{N-1}}\,\sum_{x,\,y\in S}\mu_{N-1}(\zeta)\,m(x)\,r(x,\,y)\left[\mathbf{f}(\zeta+\omega^{x})-\mathbf{f}(\zeta+\omega^{y})\right]^{2}\;.\label{edr}
\end{equation}

For $\zeta\in\mathcal{H}_{N-1}$, let
\[
\overline{\mathbf{g}}(\zeta)=\frac{1}{\kappa}\sum_{z\in S}\mathbf{g}(\zeta+\omega^{z})\;,
\]
where $\kappa=|S|$. By \eqref{inv}, it holds that
\begin{align*}
\sum_{x,\,y\in S}m(x)\,r(x,\,y)\left[\mathbf{f}(\zeta+\omega^{x})-\mathbf{f}(\zeta+\omega^{y})\right] & =0\;.
\end{align*}
From this identity, it follows that, for all $\lambda>0$, \begin{equation}
\label{ee3}
\begin{aligned}
\left|A(\mathbf{f},\,\mathbf{g};\zeta)\right|=&\,\Bigl|\sum_{x,\,y\in S}m(x)\,r(x,\,y)\left[\mathbf{f}(\zeta+\omega^{x})-\mathbf{f}(\zeta+\omega^{y})\right]\left[\mathbf{g}(\zeta+\omega^{x})-\overline{\mathbf{g}}(\zeta)\right]\Bigr|\\\le&\,\frac{\lambda}{2}\sum_{x,\,y\in S}m(x)\,r(x,\,y)\left[\mathbf{f}(\zeta+\omega^{x})-\mathbf{f}(\zeta+\omega^{y})\right]^{2}\\&\;+\frac{1}{2\lambda}\sum_{x,\,y\in S}m(x)\,r(x,\,y)\left[\mathbf{g}(\zeta+\omega^{x})-\overline{\mathbf{g}}(\zeta)\right]^{2}\;.
\end{aligned}
\end{equation}Let $E\subseteq S\times S$ be defined by $E=\{(x,\,y):r(x,\,y)>0\}$,
and let
\[
C_{1}=\min_{(x,\,y)\in E}m(x)r(x,\,y)\;\;,\;C_{2}=\max_{(x,\,y)\in E}m(x)r(x,\,y)\;.
\]
To each $u,\,v\in S$, a \textit{canonical path}
\[
u=z_{1}(u,\,v),\,z_{2}(u,\,v),\,\cdots,\,z_{k(u,\,v)}(u,\,v)=v
\]
is assigned such that
\[
(z_{i}(u,\,v),\,z_{i+1}(u,\,v))\in E\;\;\text{for all\;\;}1\le i\le k(u,\,v)-1\;.
\]
Here, it can be assumed that all $z_{i}(u,\,v)$, $1\le i\le k(u,\,v)$,
are different; hence, $k(u,\,v)\le\kappa$. The existence of such
a path is ensured by the irreducibility of $X(\cdot)$. Then, the
last summation of \eqref{ee3} can be bounded above by
\begin{equation}
C_{2}(\kappa-1)\sum_{x\in S}\left[\mathbf{g}(\zeta+\omega^{x})-\overline{\mathbf{g}}(\zeta)\right]^{2}=\frac{C_{2}(\kappa-1)}{\kappa}\sum_{u,\,v\in S}\left[\mathbf{g}(\zeta+\omega^{u})-\mathbf{g}(\zeta+\omega^{v})\right]^{2}\;.\label{ee1}
\end{equation}
By the Cauchy--Schwarz inequality, and the fact that $k(u,\,v)\le\kappa$,
the last summation can be bounded above by \begin{equation} \label{ee2} \begin{aligned}
& (\kappa-1)\,\sum_{u,\,v\in S}\,\sum_{i=1}^{k(u,\,v)-1}\left[\mathbf{g}(\zeta+\mathfrak{\omega}^{z_{i}(u,\,v)})-\mathbf{g}(\zeta+\omega^{z_{i+1}(u,\,v)})\right]^{2}\\
&\le\kappa^{2}(\kappa-1)\,\sum_{(x,\,y)\in E}\left[\mathbf{g}(\zeta+\omega^{x})-\mathbf{g}(\zeta+\omega^{y})\right]^{2}\\
&\le\frac{\kappa^{3}}{C_{1}}\,\sum_{(x,\,y)\in E}m(x)\,r(x,\,y)\left[\mathbf{g}(\zeta+\omega^{x})-\mathbf{g}(\zeta+\omega^{y})\right]^{2}\;.
\end{aligned} \end{equation} By \eqref{ee1} and \eqref{ee2}, there exists a constant $C>0$ such
that \begin{equation} \label{ee5} \begin{aligned}&\sum_{x,\,y\in S}m(x)\,r(x,\,y)\left[\mathbf{g}(\zeta+\omega^{x})-\overline{\mathbf{g}}(\zeta)\right]^{2}\\
&\le C\,\sum_{x,\,y\in S}m(x)\,r(x,\,y)\left[\mathbf{g}(\zeta+\omega^{x})-\mathbf{g}(\zeta+\omega^{y})\right]^{2}\;. \end{aligned} \end{equation} By \eqref{ep41}, \eqref{ee3}, and \eqref{ee5},
\[
\left|\left\langle \mathbf{g},\,-\mathcal{\mathscr{L}}_{N}\mathbf{f}\right\rangle _{\mu_{N}}\right|\le\lambda\mathcal{\mathscr{D}}_{N}(\mathbf{f})+\frac{C}{2\lambda}\mathcal{\mathscr{D}}_{N}(\mathbf{g})\;.
\]
The proof can be completed by optimizing over $\lambda>0$.
\end{proof}
Henceforth, the constant $C_{0}$ will always be used to denote the
constant appearing in Proposition \ref{psector}. The following corollary
is immediate from \cite[Lemmata 2.5 and 2.6]{GL}.
\begin{cor}
\label{csector}For any two disjoint, non-empty subsets $\mathcal{A},\,\mathcal{B}$
of $\mathcal{H}_{N}$, it holds that
\[
\textup{cap}_{N}^{s}(\mathcal{A},\,\mathcal{B})\le\textup{cap}_{N}(\mathcal{A},\,\mathcal{B})\le C_{0}\,\textup{cap}_{N}^{s}(\mathcal{A},\,\mathcal{B})\;.
\]
\end{cor}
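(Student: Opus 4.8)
The plan is to derive Corollary~\ref{csector} directly from Proposition~\ref{psector} together with the general variational principles for non-reversible Markov chains recorded in \cite[Lemmata~2.5 and 2.6]{GL}, applied to the zero-range generator $\mathscr{L}_{N}$ and its adjoint $\mathscr{L}_{N}^{*}$ on $L^{2}(\mu_{N})$. The first step is to recall the abstract setup: for a finite-state irreducible Markov generator with a sector condition, the capacity $\textup{cap}_{N}(\mathcal{A},\mathcal{B})$ between two disjoint non-empty sets, defined via the equilibrium potential and the Dirichlet form $\mathscr{D}_{N}$ as in \eqref{cap2}, coincides with the symmetrized capacity $\textup{cap}_{N}^{s}(\mathcal{A},\mathcal{B})$ up to constants controlled by the sector constant. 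Concretely, since $\mathscr{L}_{N}^{s}=(1/2)(\mathscr{L}_{N}+\mathscr{L}_{N}^{*})$ is the symmetric part, and since $\mathscr{D}_{N}(\mathbf{f})=\langle\mathbf{f},-\mathscr{L}_{N}\mathbf{f}\rangle_{\mu_{N}}=\langle\mathbf{f},-\mathscr{L}_{N}^{s}\mathbf{f}\rangle_{\mu_{N}}$ (the antisymmetric part contributes nothing to the quadratic form), the Dirichlet form of the non-reversible process is literally the Dirichlet form of the symmetrized process.

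The lower bound $\textup{cap}_{N}^{s}(\mathcal{A},\mathcal{B})\le\textup{cap}_{N}(\mathcal{A},\mathcal{B})$ is the elementary half: by the Dirichlet principle for the reversible process $\eta_{N}^{s}(\cdot)$, the symmetrized capacity is the infimum of $\mathscr{D}_{N}(\mathbf{g})$ over all $\mathbf{g}$ with $\mathbf{g}\equiv1$ on $\mathcal{A}$ and $\mathbf{g}\equiv0$ on $\mathcal{B}$; since the non-reversible equilibrium potential $\mathbf{h}_{\mathcal{A},\mathcal{B}}$ satisfies exactly these boundary conditions by \eqref{eqp2}, plugging it into the Dirichlet principle gives $\textup{cap}_{N}^{s}(\mathcal{A},\mathcal{B})\le\mathscr{D}_{N}(\mathbf{h}_{\mathcal{A},\mathcal{B}})=\textup{cap}_{N}(\mathcal{A},\mathcal{B})$. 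For the upper bound, one invokes the quantitative statement from \cite[Lemma~2.6]{GL}: under the sector condition $\langle\mathbf{g},-\mathscr{L}_{N}\mathbf{f}\rangle_{\mu_{N}}^{2}\le C_{0}\,\mathscr{D}_{N}(\mathbf{f})\,\mathscr{D}_{N}(\mathbf{g})$ proved in Proposition~\ref{psector}, one has $\textup{cap}_{N}(\mathcal{A},\mathcal{B})\le C_{0}\,\textup{cap}_{N}^{s}(\mathcal{A},\mathcal{B})$. The mechanism behind this is that the equilibrium potential $\mathbf{h}_{\mathcal{A},\mathcal{B}}$ of the non-reversible chain can be compared, using the sector condition as a near-symmetry estimate, with the reversible equilibrium potential $\mathbf{h}_{\mathcal{A},\mathcal{B}}^{s}$ which minimizes $\mathscr{D}_{N}$; one writes $\mathscr{D}_{N}(\mathbf{h}_{\mathcal{A},\mathcal{B}})$ in terms of $\langle\mathbf{h}_{\mathcal{A},\mathcal{B}},-\mathscr{L}_{N}\mathbf{h}_{\mathcal{A},\mathcal{B}}^{s}\rangle_{\mu_{N}}$ (exploiting that the two potentials agree on $\mathcal{A}\cup\mathcal{B}$ and that $\mathscr{L}_{N}\mathbf{h}_{\mathcal{A},\mathcal{B}}$ vanishes off $\mathcal{A}\cup\mathcal{B}$), applies the sector inequality, and divides out one factor of $\mathscr{D}_{N}(\mathbf{h}_{\mathcal{A},\mathcal{B}})^{1/2}$.

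Since both inequalities are precisely the content of \cite[Lemmata~2.5 and 2.6]{GL} once we know the sector condition holds with constant $C_{0}$, the proof reduces to a one-line citation after Proposition~\ref{psector}. There is essentially no obstacle here: the only thing to be careful about is that the hypotheses of the cited lemmata — finite irreducible state space, a reference to the symmetrized Dirichlet form, and the sector condition with the \emph{same} constant appearing in the final bound — are all in place, which they are by construction. Thus the proof is simply: \emph{Immediate from Proposition~\ref{psector} and \cite[Lemmata~2.5 and~2.6]{GL}.} If one wished to make the excerpt self-contained, the lower bound could additionally be spelled out via the Dirichlet principle as above, but given the paper's style of citing \cite{GL} for the abstract theory, a bare citation is the intended argument.
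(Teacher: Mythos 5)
Your proposal is correct and matches the paper's own proof, which is exactly the one-line citation you arrive at: the corollary is immediate from Proposition~\ref{psector} together with \cite[Lemmata~2.5 and 2.6]{GL}. The extra expository sketch you give (the elementary Dirichlet-principle argument for the lower bound and the sector-condition comparison for the upper bound) is a faithful account of what those lemmata prove, so there is no gap.
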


\section{\label{sec5}Generalized Dirichlet-Thomson Principles}

The major technical difficulty in the quantitative analysis of metastability,
in the spirit of the potential theoretic analysis of Bovier, Eckhoff,
Gayrard, and Klein \cite{BEGK1, BEGK2} or the martingale approach
of Beltran and Landim \cite{BL1, BL2}, is the sharp estimates of
the capacities between metastable valleys. For reversible processes,
the notable observation is that the Dirichlet principle expresses
the capacity as the infimum of a variational formula for a class of
functions whose minimum is achieved by the equilibrium potential between
valleys. Hence, the sharp upper bound of the capacity can be immediately
obtained if we are able to find a test function of the Dirichlet principle
that accurately approximates the equilibrium potential between valleys.
Moreover, the lower bound of capacity can be usually obtained by the
dimension reduction technique \cite{BEGK1, BEGK2} or by the Thomson
principle for reversible Markov chains \cite{LMT}. The reader is
referred to the recent monograph \cite{BdH} for a comprehensive discussion
on this matter.

Recently, a sharp analysis of the capacity for several non-reversible
dynamics has also been obtained in \cite{LMS, LS1, LS2}, based on
the Dirichlet principle \cite{GL} and Thomson principle \cite{Slo}
for non-reversible dynamics. These principles are stated in Theorem
\ref{dt}. As they express the capacity as infimum and supremum, respectively,
of certain variational formulas, and the optimizers for these formulas
are explicitly known, the same strategy as in the reversible case
can be used. However, this program is notoriously complicated because
these principles require the divergence-free flow as a test flow.
Obtaining a divergence-free flow approximating the optimal flow requires
deep intuition about the underlying processes as well as highly complicated
computations. In view of this difficulty, one of the main achievement
of this study is Theorem \ref{t04} that removes the divergence-free
restriction for the test flow, and in turn allows the use of any flow
to bound the capacity from below and above. To explain this new result,
the flow structure for the zero-range process is first explained in
Section \ref{sec51}. Then, in Section \ref{sec52}, the Dirichlet
and Thomson principles for non-reversible Markov chains are reviewed
and the proposed generalization is developed.

\subsection{\label{sec51}Flow structure}

Herein, the flow structure is interpreted in terms of the non-reversible
zero-range processes considered in this study. The reader is referred
to \cite{GL, Slo, LS1} for a summary of the general theory in the
context of Markov chains and to \cite{LMS} for diffusion processes.

The flow structure is constructed on a directed graph whose vertex
set is $\mathcal{H}_{N}$. Two configurations $\eta,\,\zeta\in\mathcal{H}_{N}$
are called adjacent and denoted as $\eta\sim\zeta$, if $\zeta$ can
be obtained by a legitimate jump of a particle in the configuration
$\eta$ or vice versa. That is, $\eta\sim\zeta$ if there exists $\xi\in\mathcal{H}_{N-1}$
and $x,\,y\in S$ satisfying $r(x\,,y)+r(y,\,x)>0$ such that $\eta=\xi+\mathfrak{\omega}^{x}$
and $\zeta=\xi+\mathfrak{\omega}^{y}$. It should be noted that $\eta\sim\zeta$
if and only if $\zeta\sim\eta$. Finally, the set of directed edges
is definied by
\[
\mathcal{H}{}_{N}^{\otimes}=\{(\eta,\,\zeta)\in\mathcal{H}_{N}\times\mathcal{H}_{N}:\eta\sim\zeta\}\;.
\]
It should be remarked that $(\eta,\,\zeta)\in\mathcal{H}{}_{N}^{\otimes}$
if and only if $(\zeta,\,\eta)\in\mathcal{H}{}_{N}^{\otimes}$; however,
these two elements must be distinguished.

The conductance, adjoint conductance, and symmetrized conductance
between $\eta$ and $\zeta=\sigma^{x,\,y}\eta$ for some $x,\,y$
satisfying $r(x,y)+r(y,x)>0$ are defined by
\begin{align}
 & c_{N}(\eta,\,\zeta)=\mu_{N}(\eta)\,g(\eta_{x})\,r(x,\,y)\;,\nonumber \\
 & c_{N}^{*}(\eta,\,\zeta)=\mu_{N}(\eta)\,g(\eta_{x})\,r^{*}(x,\,y)\;,\label{cond1}\\
 & c_{N}^{s}(\eta,\,\zeta)=(1/2)\left[c_{N}(\eta,\,\zeta)+c_{N}^{*}(\eta,\,\zeta)\right]\;,\nonumber
\end{align}
respectively. If $\eta=\xi+\omega^{x}$ and $\zeta=\xi+\mathfrak{\omega}^{y}$
for some $\xi\in\mathcal{H}_{N-1}$, then, by \eqref{u1}, these conductances
can be written as \begin{equation} \label{cond2} \begin{aligned}
&c_{N}(\eta,\,\zeta)=a_{N}\,\mu_{N-1}(\xi)\,m(x)\,r(x,\,y)\;,\\
&c_{N}^{*}(\eta,\,\zeta)=a_{N}\,\mu_{N-1}(\xi)\,m(y)\,r(y,\,x)\;. \end{aligned} \end{equation}From these expressions, it is apparent that $c_{N}^{*}(\eta,\,\zeta)=c_{N}(\zeta,\,\eta)$
and $c_{N}^{s}(\eta,\,\zeta)=c_{N}^{s}(\zeta,\,\eta)>0$. Thus, $c_{N}^{s}(\cdot,\,\cdot)$
is a symmetric, positive function on $\mathcal{H}{}_{N}^{\otimes}$.

An anti-symmetric real-valued function on $\mathcal{H}{}_{N}^{\otimes}$
is called a \textit{flow}, i.e., $\phi:\mathcal{H}{}_{N}^{\otimes}\rightarrow\mathbb{R}$
is a flow if and only if $\phi(\eta,\,\zeta)=-\phi(\zeta,\,\eta)$
for all $(\eta,\,\zeta)\in\mathcal{H}{}_{N}^{\otimes}$. Let $\mathfrak{F}_{N}$
denote the set of flows on $\mathcal{H}{}_{N}^{\otimes}$. On this
set, an inner product is defined by,
\begin{equation}
\left\llangle \phi,\,\psi\right\rrangle =\left\llangle \phi,\,\psi\right\rrangle _{\mathfrak{F}_{N}}=\frac{1}{2}\sum_{(\eta,\,\zeta)\in\mathcal{H}{}_{N}^{\otimes}}\frac{\phi(\eta,\,\zeta)\,\psi(\eta,\,\zeta)}{c_{N}^{s}(\eta,\,\zeta)}\;\;;\;\phi,\,\psi\in\mathfrak{F}_{N}\;.\label{inner}
\end{equation}
The flow norm is defined by $\left\Vert \phi\right\Vert ^{2}=\left\Vert \phi\right\Vert _{\mathfrak{F}_{N}}^{2}:=\left\llangle \phi,\,\phi\right\rrangle $
for $\phi\in\mathcal{\mathfrak{F}}_{N}$.

Another important notion related to flows is \textit{divergence}.
For each $\eta\in\mathcal{H}_{N}$ and $\phi\in\mathfrak{F}_{N}$,
the divergence of the flow $\phi$ at $\eta$ is defined by
\[
(\mbox{div}\,\phi)(\eta):=\sum_{\zeta:\eta\sim\zeta}\phi(\eta,\,\zeta)\;.
\]
The divergence of $\phi$ on a set $\mathcal{A}\subseteq\mathcal{H}_{N}$
is defined by
\[
(\mbox{div}\,\phi)(\mathcal{A})=\sum_{\eta\in\mathcal{A}}\,(\mbox{div}\,\phi)(\eta)\;.
\]
A flow $\phi\in\mathfrak{F}_{N}$ is called divergence-free at $\eta$
if $(\mbox{div}\,\phi)(\eta)=0$. It is called divergence-free on
$\mathcal{A}\subseteq\mathcal{H}_{N}$ if $(\mbox{div}\,\phi)(\eta)=0$
for all $\eta\in\mathcal{A}$.

For $\mathbf{f}:\mathcal{H}_{N}\rightarrow\mathbb{R}$ and for $(\eta,\,\zeta)\in\mathcal{H}{}_{N}^{\otimes}$,
the objects $\Phi_{\mathbf{f}}=\Phi_{\mathbf{f}}^{N}$, $\Phi_{\mathbf{f}}^{*}=\Phi_{\mathbf{f}}^{N,*}$,
and $\Psi_{\mathbf{f}}=\Psi_{\mathbf{f}}^{N}$ are defined by
\begin{align}
\Phi_{\mathbf{f}}(\eta,\,\zeta) & =\mathbf{f}(\eta)\,c_{N}(\eta,\,\zeta)-\mathbf{f}(\zeta)\,c_{N}(\zeta,\,\eta)\;,\nonumber \\
\Phi_{\mathbf{f}}^{*}(\eta,\,\zeta) & =\mathbf{f}(\eta)\,c_{N}(\zeta,\,\eta)-\mathbf{f}(\zeta)\,c_{N}(\eta,\,\zeta)\;,\label{flow}\\
\Psi_{\mathbf{f}}(\eta,\,\zeta) & =c_{N}^{s}(\eta,\,\zeta)\left[\mathbf{f}(\eta)-\mathbf{f}(\zeta)\right]=(1/2)(\Phi_{\mathbf{f}}+\Phi_{\mathbf{f}}^{*})(\eta,\,\zeta)\;.\nonumber
\end{align}
It is elementary to verify that these objects, as functions on $\mathcal{H}{}_{N}^{\otimes}$,
are anti-symmetric; hence, they are flows. The following properties
for these flows are well known and will be frequently used later.
\begin{prop}
\label{pflow}With notations as above, the followings hold.
\begin{enumerate}
\item For all $\mathbf{f}:\mathcal{H}_{N}\rightarrow\mathbb{R}$ and $\eta\in\mathcal{H}_{N}$,
\[
(\textup{div}\,\Phi_{\mathbf{f}})(\eta)=-\mu_{N}(\eta)\,(\mathscr{L}_{N}^{*}\mathbf{\,f})(\eta)\;\;\mbox{and\;\;}(\textup{div}\,\Phi_{\mathbf{f}}^{*})(\eta)=-\mu_{N}(\eta)\,(\mathscr{L}_{N}\mathbf{\,f})(\eta)\;.
\]
Therefore, for disjoint non-empty subsets $\mathcal{A},\,\mathcal{B}$
of $\mathcal{H}_{N}$, the flows $\Phi_{\mathbf{h}_{\mathcal{A},\mathcal{\,B}}^{*}}$
and $\Phi_{\mathbf{h}_{\mathcal{A},\,\mathcal{B}}}^{*}$ are divergence-free
on $(\mathcal{A}\cup\mathcal{B})^{c}$.
\item For all $\mathbf{f},\,\mathbf{g}:\mathcal{H}_{N}\rightarrow\mathbb{R}$,
\[
\left\llangle \Psi_{\mathbf{f}},\,\Phi_{\mathbf{g}}\right\rrangle =\left\langle -\mathscr{L}_{N}\mathbf{f},\,\mathbf{g}\right\rangle _{\mu_{N}}\;\;\text{and}\;\;\left\llangle \Psi_{\mathbf{f}},\,\Phi_{\mathbf{g}}^{*}\right\rrangle =\left\langle -\mathscr{L}_{N}^{*}\mathbf{f},\,\mathbf{g}\right\rangle _{\mu_{N}}\;.
\]
\item For all $\mathbf{f}:\mathcal{H}_{N}\rightarrow\mathbb{R}$ and $\phi\in\mathfrak{F}_{N}$,
\[
\left\llangle \Psi_{\mathbf{\mathbf{f}}},\,\phi\right\rrangle =\sum_{\eta\in\mathcal{H}_{N}}\mathbf{f}(\eta)(\mbox{\textup{div} }\phi)(\eta)\;.
\]
\item For all $\mathbf{f}:\mathcal{H}_{N}\rightarrow\mathbb{R}$, it holds
that $||\Psi_{\mathbf{f}}||^{2}=\mathcal{\mathscr{D}}_{N}(\mathbf{f})$.
Therefore,
\[
\bigl\Vert\Psi_{\mathbf{h}_{\mathcal{A},\mathcal{\,B}}}\bigr\Vert^{2}=\bigl\Vert\Psi_{\mathbf{h}_{\mathcal{A},\,\mathcal{B}}^{*}}\bigr\Vert^{2}=\textup{cap}_{N}(\mathcal{A},\,\mathcal{B})\;.
\]
\end{enumerate}
\end{prop}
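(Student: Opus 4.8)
The plan is to verify each of the four assertions by direct computation from the definitions in \eqref{cond1}--\eqref{flow}, organizing the work so that (2) and (3) follow from (1), and (4) is a special case of (2) together with \eqref{cond2}.

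For item (1), I would start from the definition of divergence and of $\Phi_{\mathbf{f}}$. Fix $\eta\in\mathcal{H}_N$ and write each neighbor $\zeta\sim\eta$ uniquely as $\zeta=\sigma^{x,y}\eta$ (equivalently $\eta=\xi+\omega^x$, $\zeta=\xi+\omega^y$ for $\xi\in\mathcal{H}_{N-1}$). Then
\[
(\mathrm{div}\,\Phi_{\mathbf{f}})(\eta)=\sum_{\zeta:\eta\sim\zeta}\bigl[\mathbf{f}(\eta)\,c_N(\eta,\zeta)-\mathbf{f}(\zeta)\,c_N(\zeta,\eta)\bigr].
\]
Using $c_N(\eta,\zeta)=\mu_N(\eta)g(\eta_x)r(x,y)$ and $c_N(\zeta,\eta)=c_N^*(\eta,\zeta)=\mu_N(\eta)g(\eta_x)r^*(x,y)$ (the latter identity being the content of \eqref{cond2}, or directly of \eqref{u1} together with the definition $r^*(x,y)=r(y,x)m(y)/m(x)$), the sum becomes
\[
\mu_N(\eta)\sum_{x,y\in S}g(\eta_x)\,r(x,y)\,\mathbf{f}(\eta)-\mu_N(\eta)\sum_{x,y}g(\eta_x)\,r^*(x,y)\,\mathbf{f}(\sigma^{x,y}\eta),
\]
which, after adding and subtracting $\mu_N(\eta)\sum g(\eta_x)r^*(x,y)\mathbf{f}(\eta)$ and using $\sum_y r(x,y)=\sum_y r^*(x,y)$ (a consequence of \eqref{inv}), equals $-\mu_N(\eta)(\mathscr{L}_N^*\mathbf{f})(\eta)$. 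The identity for $\Phi_{\mathbf{f}}^*$ is obtained by swapping the roles of $r$ and $r^*$ (equivalently, by noting $\Phi_{\mathbf{f}}^*$ is the flow built from the adjoint conductances). The divergence-free consequence is then immediate from \eqref{eqp2}: $\mathscr{L}_N\mathbf{h}_{\mathcal{A},\mathcal{B}}\equiv 0$ on $(\mathcal{A}\cup\mathcal{B})^c$ gives $(\mathrm{div}\,\Phi_{\mathbf{h}_{\mathcal{A},\mathcal{B}}}^*)(\eta)=0$ there, and similarly $\mathscr{L}_N^*\mathbf{h}_{\mathcal{A},\mathcal{B}}^*\equiv 0$ on $(\mathcal{A}\cup\mathcal{B})^c$ gives the statement for $\Phi_{\mathbf{h}_{\mathcal{A},\mathcal{B}}^*}$.

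For item (3), I would expand $\left\llangle \Psi_{\mathbf{f}},\,\phi\right\rrangle$ using \eqref{inner} and the definition $\Psi_{\mathbf{f}}(\eta,\zeta)=c_N^s(\eta,\zeta)[\mathbf{f}(\eta)-\mathbf{f}(\zeta)]$; the conductance cancels with the $1/c_N^s$ weight, leaving $\tfrac12\sum_{(\eta,\zeta)}[\mathbf{f}(\eta)-\mathbf{f}(\zeta)]\phi(\eta,\zeta)$, and regrouping the double sum by the first coordinate together with antisymmetry of $\phi$ yields $\sum_\eta \mathbf{f}(\eta)\sum_{\zeta:\eta\sim\zeta}\phi(\eta,\zeta)=\sum_\eta\mathbf{f}(\eta)(\mathrm{div}\,\phi)(\eta)$. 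Item (2) then follows by applying (3) with $\phi=\Phi_{\mathbf{g}}$ (resp. $\phi=\Phi_{\mathbf{g}}^*$) and invoking (1): $\left\llangle \Psi_{\mathbf{f}},\Phi_{\mathbf{g}}\right\rrangle=\sum_\eta\mathbf{f}(\eta)(\mathrm{div}\,\Phi_{\mathbf{g}})(\eta)=-\sum_\eta\mathbf{f}(\eta)\mu_N(\eta)(\mathscr{L}_N^*\mathbf{g})(\eta)=\langle\mathbf{f},-\mathscr{L}_N^*\mathbf{g}\rangle_{\mu_N}=\langle-\mathscr{L}_N\mathbf{f},\mathbf{g}\rangle_{\mu_N}$, where the last step uses that $\mathscr{L}_N^*$ is the $L^2(\mu_N)$-adjoint of $\mathscr{L}_N$. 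Finally, for item (4), taking $\mathbf{g}=\mathbf{f}$ in the first identity of (2) gives $\left\llangle \Psi_{\mathbf{f}},\Phi_{\mathbf{f}}\right\rrangle=\langle-\mathscr{L}_N\mathbf{f},\mathbf{f}\rangle_{\mu_N}=\mathscr{D}_N(\mathbf{f})$; on the other hand, from \eqref{inner} and \eqref{flow}, $\|\Psi_{\mathbf{f}}\|^2=\tfrac12\sum_{(\eta,\zeta)}c_N^s(\eta,\zeta)[\mathbf{f}(\eta)-\mathbf{f}(\zeta)]^2$, which is exactly the symmetrized-Dirichlet-form expression for $\mathscr{D}_N(\mathbf{f})$ (this matches the summation-by-parts formula for $\mathscr{D}_N$ recorded in Section \ref{sec2}, since $c_N^s$ symmetrizes the conductance). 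Hence $\|\Psi_{\mathbf{f}}\|^2=\mathscr{D}_N(\mathbf{f})$, and specializing to $\mathbf{f}=\mathbf{h}_{\mathcal{A},\mathcal{B}}$ and $\mathbf{f}=\mathbf{h}_{\mathcal{A},\mathcal{B}}^*$ gives the last display via \eqref{cap2} and the equality $\textup{cap}_N^*=\textup{cap}_N$.

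The only mildly delicate point is bookkeeping in item (1): one must be careful that the sum over neighbors $\zeta\sim\eta$ correctly corresponds to the double sum over ordered pairs $(x,y)$ with $r(x,y)+r(y,x)>0$, and that terms with $\eta_x=0$ (where $g(\eta_x)=0$) do not cause trouble — they simply vanish, consistently with $\sigma^{x,y}\eta=\eta$ in that case. Everything else is routine rearrangement of finite sums, so I do not anticipate a genuine obstacle.
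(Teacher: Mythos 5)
Your computation is correct and matches the paper's approach, which defers to \cite{Slo} with the remark that the proof is elementary algebra: the key step is the identity $c_{N}(\zeta,\,\eta)=\mu_{N}(\eta)\,g(\eta_{x})\,r^{*}(x,\,y)$ for $\zeta=\sigma^{x,\,y}\eta$ (via \eqref{u1}), combined with $\sum_{y}r(x,\,y)=\sum_{y}r^{*}(x,\,y)$ from \eqref{inv}, and the remaining parts follow by the cancellation of $c_{N}^{s}$ in \eqref{inner}, antisymmetry, and adjointness of $\mathscr{L}_{N}^{*}$. One small tidying point in item (4): the first observation, $\left\llangle \Psi_{\mathbf{f}},\,\Phi_{\mathbf{f}}\right\rrangle =\mathscr{D}_{N}(\mathbf{f})$, does not by itself yield $\left\Vert \Psi_{\mathbf{f}}\right\Vert ^{2}=\mathscr{D}_{N}(\mathbf{f})$ (you would also need $\left\llangle \Psi_{\mathbf{f}},\,\Phi_{\mathbf{f}}^{*}\right\rrangle =\mathscr{D}_{N}(\mathbf{f})$ together with $\Psi_{\mathbf{f}}=\tfrac{1}{2}(\Phi_{\mathbf{f}}+\Phi_{\mathbf{f}}^{*})$), but your second, direct expansion of $\left\Vert \Psi_{\mathbf{f}}\right\Vert ^{2}$ is self-contained and closes the argument, so this is only a cosmetic redundancy.
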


\begin{proof}
The proof follows by elementary algebra. The reader is referred to
\cite{Slo} for the proof.
\end{proof}

\subsection{\label{sec52}Generalization of the Dirichlet--Thomson principles}

Several classes of functions and flows are defined to explain the
Dirichlet and the Thomson principles for non-reversible Markov chains
and their generalizations. Fix two disjoint non-empty subsets $\mathcal{A},\,\mathcal{B}$
of $\mathcal{H}_{N}$ and $a,\,b\in\mathbb{R}$. In the definitions
below, the dependency on $N$ will be neglected as there is no risk
of confusion.
\begin{itemize}
\item Let $C_{a,\,b}(\mathcal{A},\,\mathcal{B})$ be the class of real-valued
functions $\mathbf{f}$ on $\mathcal{H}_{N}$ satisfying $\mathbf{f}|_{\mathcal{A}}\equiv a$
and $\mathbf{f}|_{\mathcal{B}}\equiv b$, i.e.,
\[
C_{a,\,b}(\mathcal{A},\,\mathcal{B})=\left\{ \mathbf{f}:\mathcal{H}_{N}\rightarrow\mathbb{R}:\mathbf{f}(\eta)=a,\,\forall\eta\in\mathcal{A}\;\mbox{and}\;\mathbf{f}(\eta)=b,\,\forall\eta\in\mathcal{B}\right\} \;.
\]
\item Let $\mathfrak{S}_{a}(\mathcal{A},\,\mathcal{B})\subset\mathcal{\mathfrak{F}}_{N}$
be the set of flows whose divergence on $\mathcal{A}$ is $a$, i.e.,
\[
\mathfrak{\mathfrak{S}}_{a}(\mathcal{A},\,\mathcal{B})=\left\{ \phi\in\mathcal{\mathfrak{F}}_{N}:(\textup{div }\phi)(\mathcal{A})=a\right\}
\]
\item Let $\mathfrak{DF}_{a}(\mathcal{A},\,\mathcal{B})\subset\mathfrak{\mathfrak{S}}_{a}(\mathcal{A},\,\mathcal{B})$
be the set of divergence-free flows from $\mathcal{A}$ to $\mathcal{B}$
of strength $a$, i.e.,
\[
\mathfrak{DF}_{a}(\mathcal{A},\,\mathcal{B})=\left\{ \phi\in\mathfrak{S}_{a}(\mathcal{A},\,\mathcal{B}):(\textup{div }\phi)(\eta)=0\;\mbox{for all }\eta\in(\mathcal{A}\cup\mathcal{B})^{c}\right\}
\]
It should be noticed that $\phi\in\mathfrak{DF}_{a}(\mathcal{A},\,\mathcal{B})$
implies that $(\textup{div }\phi)(\mathcal{B})=-a$.
\end{itemize}
In the following theorem, the Dirichlet and the Thomson principles
are stated for non-reversible Markov chains.
\begin{thm}
\label{dt}Let $\mathcal{A}$ and $\mathcal{B}$ be two disjoint and
non-empty subsets of $\mathcal{H}_{N}$. Then, the capacity between
$\mathcal{A}$ and $\mathcal{B}$ satisfies the following variational
formulas:
\begin{align}
\textup{cap}_{N}(\mathcal{A},\,\mathcal{B}) & =\inf_{\mathbf{f}\in C_{1,\,0}(\mathcal{A},\,\mathcal{B}),\;\phi\in\mathfrak{DF}_{0}(\mathcal{A},\,\mathcal{B})}\,||\Phi_{\mathbf{f}}-\phi||^{2}\label{DP}\\
 & =\sup_{\mathbf{g}\in C_{0,\,0}(\mathcal{A},\,\mathcal{B}),\;\psi\in\mathfrak{DF}_{1}(\mathcal{A},\,\mathcal{B})}\,\frac{1}{||\Phi_{\mathbf{g}}-\psi||^{2}}\;.\label{TP}
\end{align}
Furthermore, the unique optimizer of the first variational formula
is
\[
(\mathbf{f}_{0},\,\phi_{0})=\left(\frac{\mathbf{h}_{\mathcal{A},\,\mathcal{B}}+\mathbf{h}_{\mathcal{A},\,\mathcal{B}}^{*}}{2},\,\frac{\Phi_{\mathbf{h}_{\mathcal{A},\mathcal{\,B}}^{*}}-\Phi_{\mathbf{h}_{\mathcal{A},\mathcal{\,B}}}^{*}}{2}\right)\;,
\]
and of the second variational formula is
\[
(\mathbf{g}_{0},\,\psi_{0})=\left(\frac{\mathbf{h}_{\mathcal{A},\mathcal{\,B}}^{*}-\mathbf{h}_{\mathcal{A},\mathcal{\,B}}}{2\,\textup{cap}_{N}(\mathcal{A},\,\mathcal{B})},\,\frac{\Phi_{\mathbf{h}_{\mathcal{A},\mathcal{\,B}}^{*}}+\Phi_{\mathbf{h}_{\mathcal{A},\,\mathcal{B}}}^{*}}{2\,\textup{cap}_{N}(\mathcal{A},\,\mathcal{B})}\right)\;.
\]
\end{thm}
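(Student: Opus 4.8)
The plan is to follow the strategy of \cite{GL,Slo}, but to reorganize it so that every step rests on Proposition \ref{pflow} alone. Throughout, write $\mathbf{h}=\mathbf{h}_{\mathcal{A},\,\mathcal{B}}$, $\mathbf{h}^{*}=\mathbf{h}_{\mathcal{A},\,\mathcal{B}}^{*}$, and $\mathfrak{c}=\textup{cap}_{N}(\mathcal{A},\,\mathcal{B})$. The entire proof turns on the single reference flow $\Psi_{\mathbf{h}}$, which by part (4) of Proposition \ref{pflow} satisfies $\Vert\Psi_{\mathbf{h}}\Vert^{2}=\mathfrak{c}$. The computation I would carry out first is that, for \emph{every} competitor in either variational problem, the tested flow has a prescribed inner product against $\Psi_{\mathbf{h}}$: by parts (2) and (3) of Proposition \ref{pflow} and the symmetry of $\llangle\cdot,\,\cdot\rrangle$, one has $\llangle\Phi_{\mathbf{f}},\,\Psi_{\mathbf{h}}\rrangle=\langle-\mathscr{L}_{N}\mathbf{h},\,\mathbf{f}\rangle_{\mu_{N}}$ and $\llangle\phi,\,\Psi_{\mathbf{h}}\rrangle=\sum_{\eta}\mathbf{h}(\eta)\,(\textup{div}\,\phi)(\eta)$ for arbitrary $\mathbf{f}$ and $\phi\in\mathfrak{F}_{N}$. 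Using that $\mathscr{L}_{N}\mathbf{h}$ is supported on $\mathcal{A}\cup\mathcal{B}$ by \eqref{eqp2}, that $\textup{div}\,\phi$ is supported there whenever $\phi$ is divergence-free off $\mathcal{A}\cup\mathcal{B}$, and the boundary values of $\mathbf{h}$, these collapse via \eqref{cap2} to $\llangle\Phi_{\mathbf{f}},\,\Psi_{\mathbf{h}}\rrangle=\mathfrak{c}$ for $\mathbf{f}\in C_{1,\,0}(\mathcal{A},\,\mathcal{B})$ and $\llangle\phi,\,\Psi_{\mathbf{h}}\rrangle=(\textup{div}\,\phi)(\mathcal{A})$, which is $0$ on $\mathfrak{DF}_{0}(\mathcal{A},\,\mathcal{B})$ and $1$ on $\mathfrak{DF}_{1}(\mathcal{A},\,\mathcal{B})$.

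Granting this, the Dirichlet principle \eqref{DP} drops out of one Cauchy--Schwarz inequality. For admissible $\mathbf{f}\in C_{1,\,0}$ and $\phi\in\mathfrak{DF}_{0}$ the previous step gives $\llangle\Phi_{\mathbf{f}}-\phi,\,\Psi_{\mathbf{h}}\rrangle=\mathfrak{c}$, so $\mathfrak{c}^{2}\le\Vert\Phi_{\mathbf{f}}-\phi\Vert^{2}\,\Vert\Psi_{\mathbf{h}}\Vert^{2}=\mathfrak{c}\,\Vert\Phi_{\mathbf{f}}-\phi\Vert^{2}$, whence $\Vert\Phi_{\mathbf{f}}-\phi\Vert^{2}\ge\mathfrak{c}$, with equality if and only if $\Phi_{\mathbf{f}}-\phi$ is a scalar multiple of $\Psi_{\mathbf{h}}$, the scalar being forced to $1$ by the computed inner product. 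That the advertised pair $(\mathbf{f}_{0},\,\phi_{0})$ realizes equality is the one-line identity $\Phi_{\mathbf{f}_{0}}-\phi_{0}=\tfrac{1}{2}(\Phi_{\mathbf{h}}+\Phi_{\mathbf{h}^{*}})-\tfrac{1}{2}(\Phi_{\mathbf{h}^{*}}-\Phi_{\mathbf{h}}^{*})=\tfrac{1}{2}(\Phi_{\mathbf{h}}+\Phi_{\mathbf{h}}^{*})=\Psi_{\mathbf{h}}$, using $\Psi_{\mathbf{f}}=\tfrac{1}{2}(\Phi_{\mathbf{f}}+\Phi_{\mathbf{f}}^{*})$ and linearity of $\mathbf{f}\mapsto\Phi_{\mathbf{f}}$. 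The Thomson principle \eqref{TP} is the mirror image with the \emph{same} reference flow: for $\mathbf{g}\in C_{0,\,0}$ one gets $\llangle\Phi_{\mathbf{g}},\,\Psi_{\mathbf{h}}\rrangle=0$ since $\mathbf{g}$ vanishes on $\mathcal{A}\cup\mathcal{B}$, and $\llangle\psi,\,\Psi_{\mathbf{h}}\rrangle=1$ for $\psi\in\mathfrak{DF}_{1}$, hence $\llangle\Phi_{\mathbf{g}}-\psi,\,\Psi_{\mathbf{h}}\rrangle=-1$ and in particular $\Phi_{\mathbf{g}}-\psi\ne0$; Cauchy--Schwarz now yields $1\le\mathfrak{c}\,\Vert\Phi_{\mathbf{g}}-\psi\Vert^{2}$, i.e. $1/\Vert\Phi_{\mathbf{g}}-\psi\Vert^{2}\le\mathfrak{c}$, with equality if and only if $\Phi_{\mathbf{g}}-\psi=-\Psi_{\mathbf{h}}/\mathfrak{c}$, and the pair $(\mathbf{g}_{0},\,\psi_{0})$ satisfies $\Phi_{\mathbf{g}_{0}}-\psi_{0}=-\tfrac{1}{2\mathfrak{c}}(\Phi_{\mathbf{h}}+\Phi_{\mathbf{h}}^{*})=-\Psi_{\mathbf{h}}/\mathfrak{c}$ by the same algebra.

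Two verifications remain. First, admissibility of the optimizers: $\mathbf{f}_{0}$ and $\mathbf{g}_{0}$ plainly have the prescribed boundary values, and for $\phi_{0}$ and $\psi_{0}$ one invokes part (1) of Proposition \ref{pflow} to see that $\Phi_{\mathbf{h}^{*}}$ and $\Phi_{\mathbf{h}}^{*}$ are divergence-free off $\mathcal{A}\cup\mathcal{B}$, together with $(\textup{div}\,\Phi_{\mathbf{h}^{*}})(\mathcal{A})=\textup{cap}_{N}^{*}(\mathcal{A},\,\mathcal{B})$ and $(\textup{div}\,\Phi_{\mathbf{h}}^{*})(\mathcal{A})=\mathfrak{c}$ from the analog of \eqref{cap2} for the adjoint dynamics; since $\textup{cap}_{N}^{*}=\textup{cap}_{N}$ (recalled in Section \ref{sec41}) these two numbers coincide, giving $(\textup{div}\,\phi_{0})(\mathcal{A})=0$ and $(\textup{div}\,\psi_{0})(\mathcal{A})=1$. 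Second, uniqueness: if two admissible pairs both attain the Dirichlet minimum, then $\Phi_{\mathbf{f}_{1}-\mathbf{f}_{2}}=\phi_{1}-\phi_{2}$, with $\mathbf{f}_{1}-\mathbf{f}_{2}$ vanishing on $\mathcal{A}\cup\mathcal{B}$ and the right-hand side divergence-free off $\mathcal{A}\cup\mathcal{B}$; part (1) of Proposition \ref{pflow} then forces $\mathscr{L}_{N}^{*}(\mathbf{f}_{1}-\mathbf{f}_{2})\equiv0$ off $\mathcal{A}\cup\mathcal{B}$, and since the adjoint chain $\eta_{N}^{*}(\cdot)$ is irreducible the maximum principle gives $\mathbf{f}_{1}=\mathbf{f}_{2}$, whence $\phi_{1}=\phi_{2}$; the Thomson case is identical.

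All of this is routine bookkeeping with Proposition \ref{pflow} and one Cauchy--Schwarz inequality; the only steps that genuinely need something extra are the two just listed. I expect the admissibility check for the optimizers to be the main potential pitfall, as it is the one place where one must leave the formal algebra of flows and appeal to the non-obvious identity $\textup{cap}_{N}^{*}=\textup{cap}_{N}$ between the forward and adjoint capacities; the uniqueness step is a close second, since it rests on a maximum principle for the non-reversible adjoint dynamics rather than on any reversibility.
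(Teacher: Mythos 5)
The paper does not prove Theorem \ref{dt}: it cites \cite{GL} for the Dirichlet principle \eqref{DP} and \cite{Slo} for the Thomson principle \eqref{TP}. Your proposal is therefore not competing with a proof in the paper, but it is worth comparing to the paper's own proof of Theorem \ref{t04} (the generalized version), since that proof runs on exactly the mechanism you use: anchor everything on the single reference flow $\Psi_{\mathbf{h}_{\mathcal{A},\mathcal{B}}}$, compute its inner product against arbitrary competitors via parts (2)--(3) of Proposition \ref{pflow}, and then apply Cauchy--Schwarz with part (4). In that sense you have essentially reconstructed the paper's argument for Theorem \ref{t04} and specialized it back to the divergence-free setting, where the inner products $\llangle\Phi_{\mathbf{f}}-\phi,\Psi_{\mathbf{h}}\rrangle$ and $\llangle\Phi_{\mathbf{g}}-\psi,\Psi_{\mathbf{h}}\rrangle$ become exact constants rather than constants plus error terms; this is why you can promote the inequalities to the equality cases \eqref{DP}--\eqref{TP} with identified optimizers, which Theorem \ref{t04} does not attempt.

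Your argument is correct as written. The key checks all go through: the inner-product identities reduce via \eqref{eqp2}, \eqref{cap2}, and the boundary values to $\mathfrak{c}$, $0$, and $1$ as claimed; the one-line algebra $\Phi_{\mathbf{f}_{0}}-\phi_{0}=\tfrac{1}{2}(\Phi_{\mathbf{h}}+\Phi_{\mathbf{h}}^{*})=\Psi_{\mathbf{h}}$ and its Thomson analogue are right by linearity of $\mathbf{f}\mapsto\Phi_{\mathbf{f}}$ and the third line of \eqref{flow}; the admissibility of $\phi_{0},\psi_{0}$ correctly uses part (1) of Proposition \ref{pflow} together with $\textup{cap}_{N}^{*}=\textup{cap}_{N}$, exactly as you flag; and the uniqueness step via $\mathscr{L}_{N}^{*}(\mathbf{f}_{1}-\mathbf{f}_{2})\equiv0$ off $\mathcal{A}\cup\mathcal{B}$ plus irreducibility is sound (this is the adjoint-chain maximum principle, which holds with no reversibility assumption). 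Two places you pass over quickly but which hold without incident: you are implicitly using $\mathfrak{c}>0$ to divide through in Cauchy--Schwarz and to pin the scalar in the equality case, which is guaranteed by irreducibility and $\mathcal{A}\cap\mathcal{B}=\emptyset$; and you invoke the symmetry of $\llangle\cdot,\cdot\rrangle$ to rewrite $\llangle\Psi_{\mathbf{h}},\Phi_{\mathbf{f}}\rrangle$ as $\llangle\Phi_{\mathbf{f}},\Psi_{\mathbf{h}}\rrangle$, which is immediate from \eqref{inner}. So the proposal is a complete, self-contained proof, effectively the one in \cite{GL,Slo} reorganized around Proposition \ref{pflow}, and structurally the same Cauchy--Schwarz argument the paper runs for Theorem \ref{t04}.
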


In the previous theorem, the first variational formula \eqref{DP}
was established in \cite{GL} and is called the Dirichlet principle.
The second formula \eqref{TP} was developed in \cite{Slo} and is
known as the Thomson principle.

To use Dirichlet principle, a test function $\mathbf{f}\in C_{1,\,0}(\mathcal{A},\,\mathcal{B})$
and a test flow $\phi\in\mathfrak{DF}_{0}(\mathcal{A},\,\mathcal{B})$
should be suitably chosen so that the upper bound $\textup{cap}_{N}(\mathcal{A},\,\mathcal{B})\le||\Phi_{\mathbf{f}}-\phi||^{2}$
is obtained. The sharpness of this bound is closely related to the
fact that $(\mathbf{f,\,\phi)}$ approximates $(\mathbf{f}_{0},\,\phi_{0})$.
Indeed, determining $\mathbf{f}\in C_{1,\,0}(\mathcal{A},\,\mathcal{B})$
that approximates the optimizer $\mathbf{f}_{0}$ especially when
$\mathcal{A}$ and $\mathcal{B}$ are metastable valleys is not usually
a difficult task. As mentioned earlier, the technical obstacle appears
in the construction of a test flow $\phi\in\mathfrak{DF}_{0}(\mathcal{A},\,\mathcal{B})$
properly approximating the optimizing flow $\phi_{0}$. The requirement
that $(\textup{div }\phi)(\eta)=0$\textit{ }for all $\eta\in(\mathcal{A}\cup\mathcal{B})^{c}$
is a severe restriction in usual applications. In particular, the
underlying graph for the flow structure, which is $\mathcal{H}_{N}^{\otimes}$
for the present model, is complicated, and the problem is thus particularly
diffucult. The following Theorem eliminates this divergence-free restriction
and hence widens the range of potential applications.
\begin{thm}
\label{t04}Let $\mathcal{A}$ and $\mathcal{B}$ be two disjoint
and non-empty subsets of $\mathcal{H}_{N}$, and let $\varepsilon$
be any real number.
\begin{enumerate}
\item For $\mathbf{f}\in C_{1,\,0}(\mathcal{A},\,\mathcal{B})$ and $\phi\in\mathfrak{\mathfrak{S}}_{\varepsilon}(\mathcal{A},\,\mathcal{B})$,
we have that
\begin{equation}
\textup{cap}_{N}(\mathcal{A},\,\mathcal{B})\le||\Phi_{\mathbf{f}}-\phi||^{2}+2\varepsilon+2\sum_{\eta\in(\mathcal{A}\cup\mathcal{B})^{c}}\mathbf{\mathbf{h}_{\mathcal{A},\mathcal{\,B}}}(\eta)\,(\mbox{\textup{div} }\phi)(\eta)\;.\label{DPG}
\end{equation}
\item For $\mathbf{g}\in C_{0,\,0}(\mathcal{A},\,\mathcal{B})$ and $\psi\in\mathfrak{\mathfrak{S}}_{1+\varepsilon}(\mathcal{A},\,\mathcal{B})$,
we have that
\begin{equation}
\textup{cap}_{N}(\mathcal{A},\,\mathcal{B})\ge\frac{1}{||\Phi_{\mathbf{g}}-\psi||^{2}}\Bigl[1-2\varepsilon-2\sum_{\eta\in(\mathcal{A}\cup\mathcal{B})^{c}}\mathbf{\mathbf{h}_{\mathcal{A},\,\mathcal{B}}}(\eta)\,(\mbox{\textup{div} }\psi)(\eta)\Bigr]\;.\label{TPG}
\end{equation}
\end{enumerate}
\end{thm}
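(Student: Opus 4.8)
The plan is to derive both inequalities directly from the ingredients of the classical Dirichlet--Thomson principles, namely Proposition \ref{pflow}, the defining relations \eqref{eqp2} and \eqref{cap2} of the equilibrium potential $\mathbf{h}=\mathbf{h}_{\mathcal{A},\mathcal{B}}$, and the Cauchy--Schwarz inequality on the flow space $\mathfrak{F}_{N}$, by testing an arbitrary competitor flow against the distinguished flow $\Psi_{\mathbf{h}}$. The key fact is Proposition \ref{pflow}(4): $\|\Psi_{\mathbf{h}}\|^{2}=\textup{cap}_{N}(\mathcal{A},\mathcal{B})$, so $\Psi_{\mathbf{h}}$ (which, up to scaling, is the common value $\Phi_{\mathbf{f}_{0}}-\phi_{0}$ of the two optimizers in Theorem \ref{dt}) serves as the optimal direction. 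The divergence-free hypothesis of Theorem \ref{dt} is never invoked; it is replaced by the explicit divergence corrections that surface in \eqref{DPG} and \eqref{TPG}.

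For part (1), I would first compute the pairing $\llangle\Phi_{\mathbf{f}}-\phi,\Psi_{\mathbf{h}}\rrangle$ exactly, for $\mathbf{f}\in C_{1,\,0}(\mathcal{A},\mathcal{B})$ and $\phi\in\mathfrak{S}_{\varepsilon}(\mathcal{A},\mathcal{B})$. By Proposition \ref{pflow}(2) and the symmetry of $\llangle\cdot,\cdot\rrangle$, one has $\llangle\Phi_{\mathbf{f}},\Psi_{\mathbf{h}}\rrangle=\langle-\mathscr{L}_{N}\mathbf{h},\mathbf{f}\rangle_{\mu_{N}}$; since $\mathscr{L}_{N}\mathbf{h}\equiv0$ on $(\mathcal{A}\cup\mathcal{B})^{c}$ by \eqref{eqp2} while $\mathbf{f}\equiv1$ on $\mathcal{A}$ and $\mathbf{f}\equiv0$ on $\mathcal{B}$, this equals $\textup{cap}_{N}(\mathcal{A},\mathcal{B})$ by \eqref{cap2}. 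By Proposition \ref{pflow}(3), $\llangle\phi,\Psi_{\mathbf{h}}\rrangle=\sum_{\eta}\mathbf{h}(\eta)\,(\textup{div}\,\phi)(\eta)$, and splitting the sum over $\mathcal{A}$, $\mathcal{B}$, $(\mathcal{A}\cup\mathcal{B})^{c}$ and using $\mathbf{h}|_{\mathcal{A}}\equiv1$, $\mathbf{h}|_{\mathcal{B}}\equiv0$, $(\textup{div}\,\phi)(\mathcal{A})=\varepsilon$ gives $\llangle\phi,\Psi_{\mathbf{h}}\rrangle=\varepsilon+\sum_{\eta\in(\mathcal{A}\cup\mathcal{B})^{c}}\mathbf{h}(\eta)\,(\textup{div}\,\phi)(\eta)$. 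Hence $\llangle\Phi_{\mathbf{f}}-\phi,\Psi_{\mathbf{h}}\rrangle=\textup{cap}_{N}(\mathcal{A},\mathcal{B})-E$ with $E:=\varepsilon+\sum_{\eta\in(\mathcal{A}\cup\mathcal{B})^{c}}\mathbf{h}(\eta)\,(\textup{div}\,\phi)(\eta)$. Cauchy--Schwarz then yields $(\,\textup{cap}_{N}(\mathcal{A},\mathcal{B})-E\,)^{2}\le\|\Phi_{\mathbf{f}}-\phi\|^{2}\,\textup{cap}_{N}(\mathcal{A},\mathcal{B})$; since $\textup{cap}_{N}(\mathcal{A},\mathcal{B})>0$, dividing by it and discarding the nonnegative term $E^{2}/\textup{cap}_{N}(\mathcal{A},\mathcal{B})$ gives $\textup{cap}_{N}(\mathcal{A},\mathcal{B})\le\|\Phi_{\mathbf{f}}-\phi\|^{2}+2E$, which is \eqref{DPG}; the degenerate regime where $E$ already exceeds half the capacity is handled trivially since $\|\Phi_{\mathbf{f}}-\phi\|^{2}\ge0$.

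For part (2) I would run the identical computation with $\mathbf{g}\in C_{0,\,0}(\mathcal{A},\mathcal{B})$ and $\psi\in\mathfrak{S}_{1+\varepsilon}(\mathcal{A},\mathcal{B})$: now $\llangle\Phi_{\mathbf{g}},\Psi_{\mathbf{h}}\rrangle=\langle-\mathscr{L}_{N}\mathbf{h},\mathbf{g}\rangle_{\mu_{N}}=0$ because $\mathbf{g}$ vanishes on $\mathcal{A}\cup\mathcal{B}$ while $\mathscr{L}_{N}\mathbf{h}$ vanishes off it, and $\llangle\psi,\Psi_{\mathbf{h}}\rrangle=(1+\varepsilon)+\sum_{\eta\in(\mathcal{A}\cup\mathcal{B})^{c}}\mathbf{h}(\eta)\,(\textup{div}\,\psi)(\eta)=:F$ as before. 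Thus $\llangle\Phi_{\mathbf{g}}-\psi,\Psi_{\mathbf{h}}\rrangle=-F$, so Cauchy--Schwarz gives $F^{2}\le\|\Phi_{\mathbf{g}}-\psi\|^{2}\,\textup{cap}_{N}(\mathcal{A},\mathcal{B})$; applying the elementary inequality $F^{2}\ge 2F-1$ and rearranging (again discarding a nonnegative square) produces \eqref{TPG}. The whole argument is short, so the only real obstacle is conceptual: recognizing that $\Psi_{\mathbf{h}_{\mathcal{A},\mathcal{B}}}$ is the correct reference flow and that the divergence-free constraint in Theorem \ref{dt} enters solely through the two exact identities above, so that relaxing it merely reinstates the divergence and boundary sums as visible error terms; beyond that, only careful bookkeeping of signs and of the degenerate cases is required.
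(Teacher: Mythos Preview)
Your proposal is correct and follows essentially the same route as the paper's proof: both arguments test the competitor $\Phi_{\mathbf{f}}-\phi$ (resp.\ $\Phi_{\mathbf{g}}-\psi$) against the reference flow $\Psi_{\mathbf{h}_{\mathcal{A},\mathcal{B}}}$, evaluate the pairing exactly via Proposition~\ref{pflow}(2),(3) together with \eqref{eqp2}--\eqref{cap2}, apply Cauchy--Schwarz using $\|\Psi_{\mathbf{h}_{\mathcal{A},\mathcal{B}}}\|^{2}=\textup{cap}_{N}(\mathcal{A},\mathcal{B})$, and discard the nonnegative square term. One bookkeeping remark: in part~(2) your computation actually yields the lower bound $1+2\varepsilon+2\sum$ rather than the $1-2\varepsilon-2\sum$ displayed in \eqref{TPG}; this is the same sign slip that appears in the paper's own displayed identity for $\llangle\Phi_{\mathbf{g}}-\psi,\Psi_{\mathbf{h}_{\mathcal{A},\mathcal{B}}}\rrangle$, and is immaterial in all applications (where $\varepsilon$ and the divergence sum are $o(1)$).
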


\begin{proof}
By (2) of Proposition \ref{pflow} and the fact that $\mathscr{L}_{N}\mathbf{\,h}_{\mathcal{A},\mathcal{\,B}}\equiv0$
on $(\mathcal{A}\cup\mathcal{B})^{c}$, we have
\[
\left\llangle \Psi_{\mathbf{\mathbf{h}_{\mathcal{A},\,\mathcal{B}}}},\,\Phi_{\mathbf{f}}\right\rrangle =\left\langle -\mathscr{L}_{N}\mathbf{\mathbf{\,h}_{\mathcal{A},\,\mathcal{B}}},\,\mathbf{f}\right\rangle _{\mu_{N}}=-\sum_{\eta\in\mathcal{A\cup B}}\mu_{N}(\eta)\,(\mathcal{\mathscr{L}}_{N}\mathbf{\,h}_{\mathcal{A},\mathcal{\,B}})(\eta)\mathbf{f}(\eta)\;.
\]
Hence, by \eqref{cap2},
\begin{equation}
\left\llangle \Psi_{\mathbf{h}_{\mathcal{A},\mathcal{\,B}}},\,\Phi_{\mathbf{f}}\right\rrangle =a\,\textup{cap}_{N}(\mathcal{A},\,\mathcal{B})\;\;\mbox{for all }\mathbf{f}\in C_{a,\,0}(\mathcal{A},\,\mathcal{B})\;.\label{ob1}
\end{equation}
Moreover, by (3) of Proposition \ref{pflow},
\begin{equation}
\left\llangle \Psi_{\mathbf{\mathbf{h}_{\mathcal{A},\mathcal{\,B}}}},\,\phi\right\rrangle =b+\sum_{\eta\in(\mathcal{A}\cup\mathcal{B})^{c}}\mathbf{\mathbf{h}_{\mathcal{A},\mathcal{\,B}}}(\eta)\,(\mbox{div }\phi)(\eta)\;\;\mbox{for all }\phi\in\mathfrak{\mathfrak{S}}_{b}(\mathcal{A},\mathcal{\,B})\;.\label{ob2}
\end{equation}

For part (1), let $\mathbf{f}\in C_{1,\,0}(\mathcal{A},\,\mathcal{B})$
and $\phi\in\mathfrak{\mathfrak{S}}_{\varepsilon}(\mathcal{A},\,\mathcal{B})$.
Then, by \eqref{ob1} and \eqref{ob2},
\begin{equation}
\left\llangle \Phi_{\mathbf{f}}-\phi,\,\Psi_{\mathbf{h}_{\mathcal{A},\mathcal{\,B}}}\right\rrangle =\textup{cap}_{N}(\mathcal{\mathcal{A}},\,\mathcal{B})-\Bigl[\varepsilon+\sum_{\eta\in(\mathcal{A}\cup\mathcal{B})^{c}}\mathbf{h}_{\mathcal{A},\,\mathcal{B}}(\eta)\,(\mbox{div }\phi)(\eta)\Bigr]\;,\label{e451}
\end{equation}
Furthermore, by the Cauchy-Schwarz inequality and (4) of Proposition
\ref{pflow},
\begin{equation}
\left\llangle \Phi_{\mathbf{f}}-\phi,\,\Psi_{\mathbf{h}_{\mathcal{A},\mathcal{\,B}}}\right\rrangle ^{2}\le\left\Vert \Phi_{\mathbf{f}}-\phi\right\Vert ^{2}\left\Vert \Psi_{\mathbf{h}_{\mathcal{A},\mathcal{\,B}}}\right\Vert ^{2}=\left\Vert \Phi_{\mathbf{f}}-\phi\right\Vert ^{2}\textup{cap}_{N}(\mathcal{\mathcal{A}},\,\mathcal{B})\;.\label{e452}
\end{equation}
By \eqref{e451} and \eqref{e452},
\[
\left\Vert \Phi_{\mathbf{f}}-\phi\right\Vert ^{2}\textup{cap}_{N}(\mathcal{\mathcal{A}},\,\mathcal{B})\ge\textup{cap}_{N}(\mathcal{\mathcal{A}},\,\mathcal{B})^{2}-2\textup{\,cap}_{N}(\mathcal{\mathcal{A}},\,\mathcal{B})\Bigl[\varepsilon+\sum_{\eta\in(\mathcal{A}\cup\mathcal{B})^{c}}\mathbf{h}_{\mathcal{A},\mathcal{\,B}}(\eta)\,(\mbox{div }\phi)(\eta)\Bigr]\;.
\]
Thus, part (1) is proved. The proof of part (2) is similar. For $\mathbf{g}\in C_{0,0}(\mathcal{A},\,\mathcal{B})$
and $\psi\in\mathfrak{\mathfrak{S}}_{1+\varepsilon}(\mathcal{A},\,\mathcal{B})$,
again by \eqref{ob1}, \eqref{ob2}, we have
\[
\left\llangle \Phi_{\mathbf{g}}-\psi,\,\Psi_{\mathbf{h}_{\mathcal{A},\,\mathcal{B}}}\right\rrangle =1-\varepsilon-\sum_{\eta\in(\mathcal{A}\cup\mathcal{B})^{c}}\mathbf{h}_{\mathcal{A},\mathcal{\,B}}(\eta)\,(\mbox{div }\psi)(\eta)\;.
\]
Hence, by computations as before, the proof of part (2) is completed.
\end{proof}
Based on this theorem, one can prove that $\textup{cap}_{N}(\mathcal{A},\,\mathcal{B})\simeq\,a_{N}$
for some sequence $(a_{N})_{N\in\mathbb{N}}$ as follows. The essential
part is to determine $\mathbf{f}\in C_{1,\,0}(\mathcal{A},\,\mathcal{B})$
and $\phi\in\mathfrak{\mathfrak{S}}_{\varepsilon_{N}}(\mathcal{A},\,\mathcal{B})$
where $\varepsilon_{N}\ll a_{N}$, such that
\begin{equation}
\sum_{\eta\in(\mathcal{A}\cup\mathcal{B})^{c}}\mathbf{\mathbf{h}_{\mathcal{A},\mathcal{\,B}}}(\eta)(\mbox{\textup{div} }\phi)(\eta)\ll a_{N}\;\;\text{and\;}\;||\Phi_{\mathbf{f}}-\phi||^{2}\simeq a_{N}\;.\label{dpg1}
\end{equation}
Then, by (1) of Theorem \ref{t04}, we obtain $\textup{cap}_{N}(\mathcal{A},\,\mathcal{B})\apprle a_{N}$.
In a similar manner, \eqref{TPG} is used to obtain $\textup{cap}_{N}(\mathcal{A},\,\mathcal{B})\apprge a_{N}$
and completes the estimate. Moreover, the first condition of \eqref{dpg1}
is valid if
\begin{equation}
\sum_{\eta\in(\mathcal{A}\cup\mathcal{B})^{c}}\left|(\mbox{\textup{div} }\phi)(\eta)\right|\ll a_{N}\;.\label{dpg2}
\end{equation}
This condition usually holds if $\mathcal{A}$ and $\mathcal{B}$
contain all the valleys. Otherwise, \eqref{dpg2} is not easy to verify,
and the summation in \eqref{dpg1} involving equilibrium potentials
should be handled directly. This can be achieved using a general argument
presented in Lemma \ref{lem85}.

\section{\label{sec6}Metastability of non-reversible zero-range processes}

In this section Theorem \ref{t02} is proved. Most arguments presented
here are not model-dependent; the special feature of zero-range dynamics
hardly plays a role. The model-dependent part, which is the construction
processes for approximating objects, is postponed to Sections \ref{sec7} and \ref{sec8}.

\subsection{\label{sec61}Brief review of the martingale approach to metastability}

A summary of the general results obtained in \cite{BL1, BL2, LLM}
regarding the metastability is first presented. In \cite{BL1, BL2},
Beltran and Landim demonstrated that, up to several technical estimates,
obtaining the sharp asymptotics for the so-called \textit{mean jump
rates} between metastable valleys is crucial and sufficient for describing
the metastable or ing behavior in terms of the convergence to the
Markov chain, after a suitable time rescaling. This approach is called
the martingale approach to metastability. The mode of convergence
for this original work is the soft topology developed in \cite{Lan1}.
Recently, Landim, Loulakis and Mourragui in \cite{LLM} showed that
the finite dimensional convergence can be proved by establishing an
additional estimate. For the present model, this estimate corresponds
to \eqref{H3} below. In this subsection, these results are briefly
summarized in terms of non-reversible zero-range processes.

The trace chain of the zero-range process $\eta_{N}(\cdot)$ on the
set $\mathcal{E}_{N}$ is first defined. For $t\ge0$, let
\[
T^{\mathcal{E}_{N}}(t)=\int_{0}^{t}\mathbf{1}\left\{ \eta_{N}(s)\in\mathcal{E}_{N}\right\} ds\;,
\]
which represents the amount of time for which the zero-range process
stays in one of the valleys up to time $t$. Let $S^{\mathcal{E}_{N}}(t)$
be the generalized inverse of $T^{\mathcal{E}_{N}}(t)$, i.e.,
\[
S^{\mathcal{E}_{N}}(t)=\sup\left\{ s\ge0:T^{\mathcal{E}_{N}}(s)\le t\right\} \;.
\]
The trace chain of $\eta_{N}(\cdot)$ on $\mathcal{E}_{N}$ is defined
by $\eta_{N}^{\mathcal{E}_{N}}(t)=\eta_{N}(S^{\mathcal{E}_{N}}(t))$,
$t\ge0$. Then, it is known that $\eta_{N}^{\mathcal{E}_{N}}(\cdot)$
is a Markov chain on $\mathcal{E}_{N}$ with stationary measure $\mu_{N}(\cdot)/\mu_{N}(\mathcal{E}_{N})$.
For two configurations $\eta,\,\zeta\in\mathcal{E}_{N}$, let $j_{N}(\eta,\,\zeta)$
be the jump rate between $\eta$ and $\zeta$ for the chain $\eta_{N}^{\mathcal{E}_{N}}(\cdot)$.
Finally, for $x,\,y\in S_{\star}$, the mean jump rate between two
valleys $\mathcal{E}_{N}^{x}$ and $\mathcal{E}_{N}^{y}$ is defined
by
\[
r_{N}(x,\,y)=\frac{1}{\mu_{N}(\mathcal{E}_{N}^{x})}\,\sum_{\eta\in\mathcal{E}_{N}^{x}}\,\sum_{\zeta\in\mathcal{E}_{N}^{y}}\mu_{N}(\eta)\,j_{N}(\eta,\,\zeta)\;.
\]

For each $x\in S_{\star}$, let $\xi_{N}^{x}\in\mathcal{H}_{N}$ be
the configuration such that all particles are concentrated at site
$x$. In addition, define
\begin{align*}
 & \breve{\mathcal{E}}_{N}^{x}=\mathcal{E}_{N}\setminus\mathcal{E}_{N}^{x}=\mathcal{E}_{N}(S_{\star}\setminus\{x\})\;\;;\;x\in S_{\star}\\
 & \breve{\mathcal{E}}_{N}^{x,\,y}=\mathcal{E}_{N}\setminus\left(\mathcal{E}_{N}^{x}\cup\mathcal{E}_{N}^{y}\right)=\mathcal{E}_{N}(S_{\star}\setminus\{x,\,y\})\;\;;\;x,\,y\in S_{\star}\;.
\end{align*}
The following theorem is proved in \cite[Theorem 2.1]{BL2} and in
\cite[Proposition 1.1]{LLM}.
\begin{thm}
Suppose that
\begin{align}
 & \lim_{N\rightarrow\infty}N^{1+\alpha}\,r_{N}(x,\,y)=a(x,\,y)\;\;\mbox{for all}\;x,\,y\in S_{\star}\;,\label{H0}\\
 & \lim_{N\rightarrow\infty}\sup_{\eta\in\mathcal{E}_{N}^{x},\,\eta\neq\xi_{N}^{x}}\frac{\textup{cap}_{N}(\mathcal{E}_{N}^{x},\,\breve{\mathcal{E}}_{N}^{x})}{\textup{cap}_{N}(\eta,\,\xi_{N}^{x})}=0\;\;\mbox{for all }x\in S_{\star}\;\text{,\,and}\label{H1}\\
 & \lim_{N\rightarrow\infty}\frac{\mu_{N}(\Delta_{N})}{\mu_{N}(\mathcal{E}_{N}^{x})}=0\;\;\mbox{for all }x\in S_{\star}\;.\label{H2}
\end{align}
Then, for all $x\in S_{\star}$ and for all sequences $(\eta_{N})_{N=1}^{\infty}$
such that $\eta_{N}\in\mathcal{E}_{N}^{x}$ for all $N$, the process
$W_{N}(\cdot)$ under $\mathbb{P}_{\eta_{N}}^{N}$ converges to $\mathbf{Q}_{x}$
with respect to the soft topology developed in \cite{Lan1}. In addition,
suppose that
\begin{equation}
\lim_{\delta\rightarrow0}\limsup_{N\rightarrow\infty}\max_{\eta\in\mathcal{E}_{N}^{x}}\sup_{2\delta\le s\le3\delta}\mathbb{P}_{\eta}^{N}\left[\eta_{N}(N^{1+\alpha}\,s)\in\Delta_{N}\right]=0\;\;\mbox{for all }x\in S_{\star}\;.\label{H3}
\end{equation}
Then, the finite dimensional distributions of the process $W_{N}(\cdot)$
under $\mathbb{P}_{\eta_{N}}^{N}$ converge to the those of $\mathbf{Q}_{x}$.
\end{thm}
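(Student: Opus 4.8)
This is the abstract martingale-approach theorem of Beltran--Landim, complemented by the observation of Landim--Loulakis--Mourragui that allows the passage to finite-dimensional distributions; the zero-range structure enters only through the three hypotheses, so the plan is to run that model-independent machinery. The first step is to replace $Y_N(\cdot)$ by the projection $\widehat W_N(t):=\Psi\bigl(\eta_N^{\mathcal E_N}(N^{1+\alpha}t)\bigr)$ of the (time-rescaled) trace chain on $\mathcal E_N$, which is legitimate because $\eta_N^{\mathcal E_N}(\cdot)$ is Markov with stationary measure $\mu_N(\cdot)/\mu_N(\mathcal E_N)$: hypothesis \eqref{H2}, together with standard occupation-time estimates, forces the time spent in $\Delta_N$ up to time $N^{1+\alpha}t$ to be $o_{\mathbb P}(N^{1+\alpha}t)$, so that the clock change $S^{\mathcal E_N}$ is asymptotically the identity on compact intervals and the behaviour of $W_N$ can be read off from that of $\widehat W_N$.

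The heart of the argument is a ``collapse inside each valley'' step, which is where \eqref{H1} is used and which I expect to be the main obstacle. Using a last-exit/renewal decomposition together with a potential-theoretic bound on the hitting probability $\mathbb P_\eta[\tau_{\breve{\mathcal E}_N^x}<\tau_{\xi_N^x}]$ in terms of capacities --- in the present non-reversible setting the relevant capacities are controlled via the comparison in Corollary \ref{csector} --- one shows that this probability is dominated, uniformly in $\eta\in\mathcal E_N^x$, by the ratio appearing in \eqref{H1}, and hence tends to $0$. Thus, during each sojourn in $\mathcal E_N^x$ the trace chain reaches the ground configuration $\xi_N^x$ before leaving the valley, with probability tending to $1$; consequently the law of the valley entered at the next jump is asymptotically independent of the entrance configuration and may be computed starting from $\mu_N(\cdot\mid\mathcal E_N^x)$. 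This is precisely the mechanism that converts the stationary quantity $r_N(x,y)$ into a genuine transition rate.

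With this replacement available, I would apply Dynkin's formula to $F\circ\Psi$ under the trace dynamics, for an arbitrary $F:S_\star\to\mathbb R$, and use the previous step to replace the exact exit rates by the mean rates, obtaining that
\[
F(\widehat W_N(t))-F(\widehat W_N(0))-\int_0^t\sum_{y\in S_\star}N^{1+\alpha}\,r_N\bigl(\widehat W_N(s),\,y\bigr)\,\bigl[F(y)-F(\widehat W_N(s))\bigr]\,ds
\]
is asymptotically a martingale. By \eqref{H0} its integrand converges to $\sum_{y\in S_\star}a(\widehat W_N(s),y)\,[F(y)-F(\widehat W_N(s))]$, the action of the generator of $\widehat Y$. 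Since $S_\star$ is finite and the soft topology of \cite{Lan1} is designed so that tightness is automatic, every subsequential limit solves the (well-posed) martingale problem for $\widehat Y$, which is an irreducible and reversible Markov chain (see Notation \ref{rem32}); this identifies the limit and, together with the first step, proves the soft-topology statement.

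It remains to upgrade this to convergence of finite-dimensional distributions using \eqref{H3}. Because the soft topology disregards only the short excursions of $W_N$ into $\mathfrak 0$, the missing ingredient is the bound $\mathbb P_{\eta_N}^N[\eta_N(N^{1+\alpha}t)\in\Delta_N]\to0$ for every fixed $t>0$. To get it, fix $\delta>0$ small, condition at time $N^{1+\alpha}(t-s)$ for a suitably chosen $s\in[2\delta,3\delta]$, use the already-established convergence to conclude that the configuration at that time lies in some valley with probability tending to $1$, and then apply \eqref{H3} (for each $x\in S_\star$, and sum over the finitely many valleys) to obtain $\limsup_N\mathbb P_{\eta_N}^N[\eta_N(N^{1+\alpha}t)\in\Delta_N]=o_\delta(1)$; letting $\delta\downarrow0$ and invoking the Markov property then upgrades the soft-topology convergence to convergence of all finite-dimensional distributions. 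The genuinely delicate point remains the collapse step of the second paragraph: it is the smallness of the capacity ratio in \eqref{H1} that drives the proof here, and verifying such smallness is exactly what forces the sharp capacity estimates obtained in the rest of the paper via Theorem \ref{t04}.
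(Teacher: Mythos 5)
The paper does not supply a proof of this theorem: it is imported directly from \cite[Theorem 2.1]{BL2} for the soft-topology statement and \cite[Proposition 1.1]{LLM} for the upgrade under \eqref{H3}, and the author's work consists in verifying the hypotheses \eqref{H0}--\eqref{H3} for the zero-range process. So you are reconstructing the cited argument rather than the paper's own. Your outline captures the essential skeleton correctly: pass to the trace chain on $\mathcal{E}_N$ with stationary measure $\mu_N(\cdot)/\mu_N(\mathcal{E}_N)$; the valley-collapse mechanism using the capacity-ratio bound on $\mathbb{P}_\eta[\tau_{\breve{\mathcal{E}}_N^x}<\tau_{\xi_N^x}]$, which is precisely the quantity controlled by \eqref{H1}; identification of the limiting generator from the mean jump rates via \eqref{H0}; negligibility of the time spent outside $\mathcal{E}_N$ via \eqref{H2}; and the \cite{LLM} step for finite-dimensional distributions via \eqref{H3}.

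Two small imprecisions are worth flagging. First, \eqref{H2} is a static estimate on the invariant measure; to conclude that the process started at $\eta_N\in\mathcal{E}_N^x$ spends a vanishing fraction of $[0,N^{1+\alpha}t]$ in $\Delta_N$, one must compare the deterministic start with the stationary start, which is part of the $(\mathbf{H2})$ analysis in \cite{BL1,BL2} --- calling it a ``standard occupation-time estimate'' glosses over a nontrivial step. Second, inside the proof of this theorem \eqref{H1} is an \emph{assumption}, so the sector condition (Corollary \ref{csector}) plays no role there; it is only invoked later, when the paper verifies \eqref{H1} by reducing it to the reversible estimate \eqref{gg1} from \cite{BL3}. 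Mentioning the sector condition in the collapse step conflates the abstract metastability theorem with the model-specific verification of its hypotheses.
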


It should be remarked that the conditions \eqref{H0}, \eqref{H1} and \eqref{H2}
are called (\textbf{H0}), (\textbf{H1}) and (\textbf{H2}), respectively,
in \cite{BL1, BL2}. By Theorem \ref{t01}, the condition \eqref{H2}
is immediate. The condition \eqref{H1} has been verified in \cite[Section 6]{BL3}
for \textit{reversible} zero-range process, namely,
\begin{equation}
\lim_{N\rightarrow\infty}\sup_{\eta\in\mathcal{E}_{N}^{x},\,\eta\neq\xi_{N}^{x}}\frac{\textup{cap}_{N}^{s}(\mathcal{E}_{N}^{x},\,\breve{\mathcal{E}}_{N}^{x})}{\textup{cap}_{N}^{s}(\eta,\,\xi_{N}^{x})}=0\;.\label{gg1}
\end{equation}
Hence, \eqref{H1} is an immediate consequence of this result and
the sector condition, i.e., Corollary \ref{csector}.

The condition \eqref{H3} will now be investigated. The proof is similar
to that in \cite[Example 4.2]{LLM}.
\begin{prop}
The condition \eqref{H3} holds.
\end{prop}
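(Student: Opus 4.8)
The plan is to follow the strategy of \cite[Example 4.2]{LLM}, reducing the bound \eqref{H3} to two estimates: a spectral-gap-type bound showing that the symmetrized zero-range process $\eta_N^s(\cdot)$ (equivalently, the reversible dynamics of \cite{BL3}) equilibrates fast enough that it spends only a negligible fraction of time in $\Delta_N$, and a time-change comparison transferring this to the non-reversible chain $\eta_N(\cdot)$. First I would recall that, since $\eta_N(\cdot)$ and $\eta_N^s(\cdot)$ share the invariant measure $\mu_N$ and the Dirichlet form, the probability estimate in \eqref{H3} is amenable to the potential-theoretic bound
\[
\mathbb{P}_\eta^N\left[\eta_N(N^{1+\alpha}s)\in\Delta_N\right]\le \frac{1}{\mu_N(\eta)}\int \text{(something involving the heat kernel)}\,,
\]
but the cleaner route is the standard inequality: for a stationary reversible chain one has, for any starting configuration $\eta$ and any target set $\mathcal{A}$,
\[
\mathbb{P}_\eta^N\left[\eta_N^s(t)\in\mathcal{A}\right]\le \sqrt{\frac{\mu_N(\mathcal{A})}{\mu_N(\eta)}}\;e^{-t\,\mathfrak{g}_N}+\mu_N(\mathcal{A})^{1/2}\,,
\]
where $\mathfrak{g}_N$ is the spectral gap; here $\mathcal{A}=\Delta_N$, and by Theorem \ref{t01} we have $\mu_N(\Delta_N)\to0$. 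The difficulty is that $\mu_N(\eta)$ can be exponentially small in $N$ for $\eta\in\mathcal{E}_N^x$ away from $\xi_N^x$, so the crude union bound is not good enough on the diffusive time scale unless the exponential decay beats it.

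The key steps, in order, would be: (i) invoke the sector condition (Corollary \ref{csector}) once more to replace all capacity and relaxation statements about $\eta_N(\cdot)$ with the corresponding ones for $\eta_N^s(\cdot)$, so the entire analysis may be carried out in the reversible setting of \cite{BL3}; (ii) use the Markov inequality together with the occupation-time identity
\[
\mathbb{E}_{\mu_N}\!\left[\int_0^{T}\mathbf{1}\{\eta_N(s)\in\Delta_N\}\,ds\right]=T\,\mu_N(\Delta_N)\,,
\]
to control, on average over the stationary start, the time spent in $\Delta_N$; (iii) upgrade from a stationary start to the uniform-over-$\mathcal{E}_N^x$ start by a standard argument bounding $\sup_{\eta\in\mathcal{E}_N^x}\mathbb{P}_\eta^N[\,\cdot\,]$ in terms of $\mathbb{P}_{\mu_N}[\,\cdot\,]$ at a slightly earlier time, using that the chain started in $\mathcal{E}_N^x$ reaches $\xi_N^x$ (hence near-stationarity inside the valley) on a time scale much shorter than $\delta N^{1+\alpha}$ — this is exactly where condition \eqref{H1} (already established via \eqref{gg1}) enters, since it guarantees $\textup{cap}_N(\mathcal{E}_N^x,\breve{\mathcal{E}}_N^x)\ll \textup{cap}_N(\eta,\xi_N^x)$, i.e. the chain collapses to $\xi_N^x$ before leaving the valley; (iv) combine (ii)–(iii) with the order of the jump rates $r_N(x,y)\asymp N^{-(1+\alpha)}$ to conclude that on the window $2\delta\le s\le 3\delta$ the process is, with probability tending to $1$, sitting in some $\mathcal{E}_N^y$, so the probability of being in $\Delta_N$ is $o(1)$ uniformly, and finally let $\delta\to0$.

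The main obstacle I anticipate is step (iii): transferring the stationary (or averaged) estimate to a genuinely uniform-over-$\mathcal{E}_N^x$ estimate, because a configuration $\eta\in\mathcal{E}_N^x$ with $\eta\ne\xi_N^x$ has $\mu_N$-mass polynomially or exponentially smaller than that of $\xi_N^x$, so naive reweighting loses too much. The resolution should be to exploit the fast equilibration \emph{inside} the valley: by \eqref{H1}/\eqref{gg1} the hitting time of $\xi_N^x$ from any $\eta\in\mathcal{E}_N^x$ is $o(N^{1+\alpha})$, indeed $o(\delta N^{1+\alpha})$, with overwhelming probability, so one may condition on this hitting event, after which the process has (approximately) the law of one started at $\xi_N^x$, where $\mu_N(\xi_N^x)$ is of order $N^{-\alpha}$ and the crude bound in (ii) is affordable. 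A secondary technical point is justifying the spectral-gap or relaxation estimate on the diffusive scale: one needs that the \emph{restricted} dynamics within $\mathcal{E}_N^x$ relaxes on a scale $\ll \delta N^{1+\alpha}$, which again follows from the capacity comparison in \eqref{H1} plus the Poincaré-type inequalities of \cite{BL3}. Once these two points are in place the remaining estimates are routine, and the $\delta\to0$ limit is immediate because all bounds are uniform in the window $[2\delta,3\delta]$ with the relevant error terms tending to $0$ as $N\to\infty$ for each fixed $\delta$.
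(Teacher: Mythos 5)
Your strategy is the right one and is essentially the same as the paper's: invoke \cite[Lemma 2.4]{LLM} to reduce \eqref{H3} to (a) a uniform-over-$\mathcal{E}_N^x$ bound on $\mathbb{P}_\eta^N[\tau_{\xi_N^x}>N^{1+\alpha}\delta]$ and (b) a bound on $\sup_{\delta<t<3\delta}\mathbb{P}_{\xi_N^x}^N[\eta_N(N^{1+\alpha}t)\in\Delta_N]$, where (a) exploits fast collapse to $\xi_N^x$ inside the valley and (b) exploits that the starting point $\xi_N^x$ carries non-negligible mass. You correctly identify (a) as the crux and the role of \eqref{H1}/\eqref{gg1}. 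However, two points need fixing. First, you assert $\mu_N(\xi_N^x)\asymp N^{-\alpha}$; this is wrong. Since $x\in S_\star$ implies $m_\star(x)=1$ and $a(\xi_N^x)=a(N)=N^\alpha$, one has $\mu_N(\xi_N^x)=N^\alpha/(Z_N\,a(N))=1/Z_N\to 1/Z$, a positive constant. This matters: with the correct value, (b) is a one-line comparison to stationarity,
\[
\mathbb{P}_{\xi_N^x}^N\left[\eta_N(N^{1+\alpha}t)\in\Delta_N\right]\le\frac{\mathbb{P}_{\mu_N}^N\left[\eta_N(N^{1+\alpha}t)\in\Delta_N\right]}{\mu_N(\xi_N^x)}=Z_N\,\mu_N(\Delta_N)\to 0\,,
\]
and no spectral-gap bound, occupation-time identity, or Poincar\'e inequality is needed. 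With your incorrect $N^{-\alpha}$ value, the same comparison would require $\mu_N(\Delta_N)\ll N^{-\alpha}$, which is not provided by Theorem \ref{t01}, so your route would not actually close without additional quantitative work. Second, for (a), you should make the hitting-time bound explicit rather than leaving it at a conditioning heuristic: by the Markov inequality and \cite[Proposition 6.2]{BL2} with $g\equiv1$, $\mathbb{P}_\eta^N[\tau_{\xi_N^x}>N^{1+\alpha}\delta]\le (N^{1+\alpha}\delta)^{-1}\mathbb{E}_\eta^N[\tau_{\xi_N^x}]\le (N^{1+\alpha}\delta\,\textup{cap}_N(\eta,\xi_N^x))^{-1}$; combining this with the lower bound $\textup{cap}_N^s(\mathcal{E}_N^x,\breve{\mathcal{E}}_N^x)\ge CN^{-(1+\alpha)}$ from \cite[Theorem 2.2]{BL3} and the sector condition (Corollary \ref{csector}) gives a bound $\frac{C}{\delta}\,\textup{cap}_N^s(\mathcal{E}_N^x,\breve{\mathcal{E}}_N^x)/\textup{cap}_N^s(\eta,\xi_N^x)$, which vanishes uniformly over $\eta\in\mathcal{E}_N^x\setminus\{\xi_N^x\}$ by \eqref{gg1}. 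With these two corrections your outline matches the paper's proof.
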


\begin{proof}
By \cite[Lemma 2.4]{LLM}, it suffices to verify that for all $x\in S_{\star}$,
we have
\begin{align}
 & \lim_{N\rightarrow\infty}\sup_{\eta\in\mathcal{E}_{N}^{x}}\mathbb{P}_{\eta}^{N}\left[\tau_{\xi_{N}^{x}}>N^{1+\alpha}\,\delta\right]=0\;\;\text{for all \ensuremath{\delta>0\;\text{and}}}\label{fd1}\\
 & \lim_{\delta\rightarrow0}\limsup_{N\rightarrow\infty}\sup_{\delta<t<3\delta}\mathbb{P}_{\xi_{N}^{x}}^{N}\left[\eta_{N}(N^{1+\alpha}\,t)\in\Delta_{N}\right]=0\;.\label{fd2}
\end{align}
For \eqref{fd1}, by the Markov inequality and \cite[Proposition 6.2]{BL2}
with $g\equiv1$, we have
\begin{equation}
\mathbb{P}_{\eta}^{N}\left[\tau_{\xi_{N}^{x}}>N^{1+\alpha}\,\delta\right]\le\frac{1}{N^{1+\alpha}\,\delta}\mathbb{E}_{\eta}^{N}\left[\tau_{\xi_{N}^{x}}\right]\le\frac{1}{N^{1+\alpha}\,\delta}\frac{1}{\textup{cap}_{N}(\eta,\,\xi_{N}^{x})}\;.\label{fd3}
\end{equation}
It follows from \cite[Theorem 2.2]{BL3} that
\begin{equation}
\textup{cap}_{N}^{s}(\mathcal{E}_{N}^{x},\,\breve{\mathcal{E}}_{N}^{x})\ge C\,N^{-(1+\alpha)}\;.\label{fd4}
\end{equation}
By \eqref{fd3}, \eqref{fd4} and  Corollary \ref{csector}, it can
be concluded that
\[
\mathbb{P}_{\eta}^{N}\left[\tau_{\xi_{N}^{x}}>N^{1+\alpha}\,\delta\right]\le\frac{C}{\delta}\frac{\textup{cap}_{N}^{s}(\mathcal{E}_{N}^{x},\,\breve{\mathcal{E}}_{N}^{x})}{\textup{cap}_{N}^{s}(\eta,\,\xi_{N}^{x})}\;.
\]
Hence, the estimate of \eqref{fd1} follows from \eqref{gg1}.

The second requirement \eqref{fd2} is now considered. Remark from
the definition of $\mu_{N}$ that we have $\mu_{N}(\xi_{N}^{x})=Z_{N}^{-1}$.
Hence, for $t>0$,
\[
\mathbb{P}_{\xi_{N}^{x}}^{N}\left[\eta_{N}(N^{1+\alpha}\,t)\in\Delta_{N}\right]\le\frac{\mathbb{P}_{\mu_{N}}^{N}\left[\eta_{N}(N^{1+\alpha}\,t)\in\Delta_{N}\right]}{\mu_{N}(\xi_{N}^{x})}=\frac{\mu_{N}(\Delta_{N})}{\mu_{N}(\xi_{N}^{x})}=Z_{N}\,\mu_{N}(\Delta_{N})\;.
\]
Hence, \eqref{fd2} is obtained by Proposition \ref{e21} and Theorem \ref{t01}.
\end{proof}
Therefore, the proof of Theorem \ref{t03} is reduced to the asymptotic
estimate \eqref{H0}, and this estimate is the core of the entire
problem. In particular, this estimate is reduced to the estimate of
the capacity between metastable valleys in the reversible case, owing
to the following identity for reversible Markov chains:
\begin{equation}
r_{N}(x,\,y)=\frac{1}{2}\left[\textup{cap}_{N}(\mathcal{E}_{N}^{x},\,\breve{\mathcal{E}}_{N}^{x})+\textup{cap}_{N}(\mathcal{E}_{N}^{y},\,\breve{\mathcal{E}}_{N}^{y})-\textup{cap}_{N}(\mathcal{E}_{N}^{x}\cup\mathcal{E}_{N}^{y},\,\breve{\mathcal{E}}_{N}^{x,\,y})\right]\;\;;\;x,\,y\in S_{\star}\;.\label{er1}
\end{equation}
Unfortunately, this relation is no longer valid in the non-reversible
case; hence, the estimation of $r_{N}(x,\,y)$ becomes a more delicate
task. The general strategy developed in \cite{LS1} can be summarized
as follows.
\begin{enumerate}
\item Let the mean holding rate be defined by
\[
\lambda_{N}(x)=\sum_{y\in S_{\star}\setminus\{x\}}r_{N}(x,\,y)\;.
\]
Then, the estimate of $\lambda_{N}(x)$ follows from the capacity
estimate. More precisely, it is known from \cite[display (A.8)]{BL2}
that
\begin{equation}
\lambda_{N}(x)=\frac{\textup{cap}_{N}(\mathcal{E}_{N}^{x},\,\breve{\mathcal{E}}_{N}^{x})}{\mu_{N}(\mathcal{E}_{N}^{x})}\;.\label{e610}
\end{equation}
Then, by Theorem \ref{t01}, it suffices to estimate $\textup{cap}_{N}(\mathcal{E}_{N}^{x},\,\breve{\mathcal{E}}_{N}^{x})$
to obtain the sharp asymptotics of $\lambda_{N}(x)$. This estimate
of the capacity between valleys is obtained in Corollary \ref{c02}
below.
\item The second step is to compute the sharp asymptotics of $r_{N}(x,\,y)/\lambda_{N}(x)$
using the so-called collapsed chain introduced in \cite{GL}. This
can be briefly explained as follows. Fix a point $x\in S_{\star}$.
Then, the collapsed chain is obtained from the original chain $\eta_{N}(\cdot)$
by carefully collapsing the set $\mathcal{E}_{N}^{x}$ into a single
point $\mathfrak{o}$. The precise definition and basic properties
of the collapsed chain are presented in Section \ref{sec64}. If $\overline{\mathbb{P}}_{\mathfrak{o}}^{N}$
denotes the law of this collapsed chain starting from $\mathfrak{o}$,
then it has been proven in \cite[Proposition 4.2]{BL2} that
\begin{equation}
\frac{r_{N}(x,\,y)}{\lambda_{N}(x)}=\overline{\mathbb{P}}_{\mathfrak{o}}^{N}\bigl[\tau_{\mathcal{E}_{N}^{y}}<\tau_{\breve{\mathcal{E}}_{N}^{x,\,y}}\bigr]\;.\label{e611}
\end{equation}
The right-hand side of the previous equality can be regarded as the
value of the equilibrium potential between $\mathcal{E}_{N}^{y}$
and $\breve{\mathcal{E}}_{N}^{x,\,y}$ at the collapsed state $\mathfrak{o}$,
with respect to the collapsed chain. The estimate of this value is
based on the capacity estimate for the collapsed chain, the sector
condition of the collapsed chain, and a careful investigation of the
relation between the original and the collapsed chain. This argument
is explained in detail in Sections \ref{sec64}, \ref{sec65}, and \ref{sec66}.
\end{enumerate}
The proof of Theorem \ref{t03} based on this strategy is also given
in Section \ref{sec66}.

\subsection{\label{sec62}Capacity estimates}

Herein, the main capacity estimates are provided. To this end, certain
potential-theoretic notations for the Markov chain $\widehat{Y}(\cdot)$
on $S_{\star}$ of Notation \ref{rem32} should be first introduced.
Notice here that the Markov chain $\widehat{Y}(\cdot)$ describes
the limiting metastable behavior of the present model.

\subsubsection*{Limiting Markov chain $\widehat{Y}(\cdot)$}

For $f:S_{\star}\rightarrow\mathbb{R}$, the generator of the Markov
chain $\widehat{Y}(\cdot)$ on $S_{\star}$ can be written as
\begin{equation}
(\mathfrak{L}_{Y}f)(x)=\sum_{y\in S_{\star}\setminus\{x\}}\,\frac{\textup{cap}_{X}(x,\,y)}{M_{\star}\,\Gamma(\alpha)\,I_{\alpha}}\left[f(y)-f(x)\right]\;\;;\;x\in S_{\star}\;.\label{hu3}
\end{equation}
The invariant measure for $\widehat{Y}(\cdot)$ is the uniform measure
$\mu(\cdot)$ on $S_{\star}$, i.e.,
\[
\mu(x)=1/\kappa_{\star}\;\;\text{for all }\;x\in S_{\star}\;.
\]
Thus, for $f:S_{\star}\rightarrow\mathbb{R}$, the Dirichlet form
can be written as
\[
\mathfrak{D}_{Y}(f)=\sum_{x\in S_{\star}}\mu(x)\,f(x)\left[-(\mathfrak{L}_{Y}\,f)(x)\right]=\frac{1}{2}\sum_{x,\,y\in S_{\star}}\frac{\textup{cap}_{X}(x,\,y)}{M_{\star}\,\Gamma(\alpha)\,I_{\alpha}\,\kappa_{\star}}\left[f(y)-f(x)\right]^{2}\;
\]
Let $\widehat{\mathbf{Q}}_{x}$ denote the law of chain $\widehat{Y}(\cdot)$
starting from $x\in S_{\star}$. Then, for two disjoint non-empty
sets $A,\,B\subseteq S_{\star}$, the equilibrium potential and capacity
between them with respect to the chain $\widehat{Y}(\cdot)$ are defined
by
\begin{equation}
\mathfrak{h}_{A,B}(x)=\mathbf{\widehat{Q}}_{x}(\tau_{A}<\tau_{B})\;\;\text{for}\;x\in S_{\star}\;\;\text{and\;\;}\textup{cap}_{Y}(A,\,B)=\mathfrak{D}_{Y}(\mathfrak{h}_{A,\,B})\;,\label{hu4}
\end{equation}
respectively.

\subsubsection*{Main capacity estimates}

The main capacity estimates are now stated.
\begin{thm}
\label{t02}For disjoint, non-empty subsets $A,\,B$ of $S_{\star}$,
we have that
\[
\lim_{N\rightarrow\infty}N^{1+\alpha}\,\textup{cap}_{N}(\mathcal{E}_{N}(A),\,\mathcal{E}_{N}(B))=\textup{cap}_{Y}(A,\,B)\;.
\]
\end{thm}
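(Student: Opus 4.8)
The plan is to establish separately the two matching one-sided bounds $\limsup_{N}N^{1+\alpha}\textup{cap}_N(\mathcal{E}_N(A),\mathcal{E}_N(B))\le\textup{cap}_Y(A,B)$ and the reverse $\liminf$-inequality, feeding suitable test objects into parts (1) and (2) of the generalized Dirichlet--Thomson principle (Theorem \ref{t04}). The value of the limit is explained by the following heuristic: to leading order the dynamics between the valleys reduces to an electrical network on the node set $S_\star$, in which a transition of the condensate from $x$ to $y$ proceeds by moving particles one at a time along the underlying graph, so that the number $k=\eta_x$ of particles still at $x$ behaves as a slow one-dimensional birth--death variable with stationary weight $\propto1/(a(k)\,a(N-k))$ and, at level $k$, effective (symmetrized) conductance $\propto a_N\,\textup{cap}_X(x,y)/(a(k-1)\,a(N-k))$ --- the single escaped particle performing the underlying walk on the lightly occupied sites of $S$ and thereby realizing the underlying-walk capacity $\textup{cap}_X(x,y)$. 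Summing the level resistances, using $\sum_k a(k)\,a(N-k)\sim N^{2\alpha+1}I_\alpha$ (which produces the constant $I_\alpha$) together with $Z_N,Z_{N-1}\to Z$ (Proposition \ref{e21}), $a_N\to M_\star^{-1}$, and $\mu_N(\mathcal{E}_N^x)\to1/\kappa_\star$ (Theorem \ref{t01}), one finds that the $\{x,y\}$-edge acquires effective conductance $\sim N^{-(1+\alpha)}\textup{cap}_X(x,y)/(M_\star\kappa_\star\Gamma(\alpha)I_\alpha)$, which is exactly the conductance between $x$ and $y$ in the Dirichlet form $\mathfrak{D}_Y$; since $\Delta_N$ and the null state $\mathfrak{0}$ are negligible by Theorem \ref{t01}, the limiting network capacity between $A$ and $B$ is $\textup{cap}_Y(A,B)$.

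For the upper bound I would apply part (1) of Theorem \ref{t04} with $\mathcal{A}=\mathcal{E}_N(A)$, $\mathcal{B}=\mathcal{E}_N(B)$ and a pair $(\mathbf{f}_N,\phi_N)$, $\mathbf{f}_N\in C_{1,0}(\mathcal{A},\mathcal{B})$, $\phi_N\in\mathfrak{S}_{\varepsilon_N}(\mathcal{A},\mathcal{B})$ with $\varepsilon_N\ll N^{-(1+\alpha)}$. The test function $\mathbf{f}_N$ approximates the optimizer $\mathbf{f}_0=(\mathbf{h}_{\mathcal{A},\mathcal{B}}+\mathbf{h}_{\mathcal{A},\mathcal{B}}^{*})/2$: it equals $\mathfrak{h}_{A,B}(x)$ on $\mathcal{E}_N^x$ and, along the transition tube between $x$ and $y$ (parametrized by $k=\eta_x$), interpolates between $\mathfrak{h}_{A,B}(y)$ and $\mathfrak{h}_{A,B}(x)$ through the one-dimensional equilibrium profile $w_N(k)=\frac{\sum_{j=1}^{k}a(j-1)\,a(N-j)}{\sum_{j=1}^{N}a(j-1)\,a(N-j)}$, extended arbitrarily over the rest of $\Delta_N$ (whose $\mu_N$-mass is negligible). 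The test flow $\phi_N$ approximates the optimizer $\phi_0=(\Phi_{\mathbf{h}_{\mathcal{A},\mathcal{B}}^{*}}-\Phi_{\mathbf{h}_{\mathcal{A},\mathcal{B}}}^{*})/2$ --- note that $\Phi_{\mathbf{f}_0}-\phi_0=\Psi_{\mathbf{h}_{\mathcal{A},\mathcal{B}}}$, with $\|\Psi_{\mathbf{h}_{\mathcal{A},\mathcal{B}}}\|^2=\textup{cap}_N(\mathcal{A},\mathcal{B})$ by part (4) of Proposition \ref{pflow}, so the pair is near-optimal --- and is built by lifting to each tube a flow on the underlying graph $S$ that attains $\textup{cap}_X(x,y)$ and weighting it along the tube by $\mathbf{f}_N$ and the stationary profile. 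A direct evaluation of $\|\Phi_{\mathbf{f}_N}-\phi_N\|^2$ from the explicit formulas then gives $N^{1+\alpha}\|\Phi_{\mathbf{f}_N}-\phi_N\|^2\to\textup{cap}_Y(A,B)$; most of the accompanying bookkeeping coincides with that for the reversible zero-range process with underlying rates $(1/2)(r+r^{*})$ treated in \cite{BL3}, the genuinely new contribution being the non-reversible correction carried by $\phi_N$. Finally, the error term $2\varepsilon_N+2\sum_{\eta\notin\mathcal{A}\cup\mathcal{B}}\mathbf{h}_{\mathcal{A},\mathcal{B}}(\eta)\,(\textup{div}\,\phi_N)(\eta)$ must be shown to be $o(N^{-(1+\alpha)})$: if $A\cup B=S_\star$ this follows from the stronger estimate $\sum_\eta|(\textup{div}\,\phi_N)(\eta)|\ll N^{-(1+\alpha)}$, whereas if $A\cup B\subsetneq S_\star$ the flow $\phi_N$ inevitably carries nonnegligible divergence near the valleys in $S_\star\setminus(A\cup B)$ and one exploits that $\mathbf{h}_{\mathcal{A},\mathcal{B}}\approx\mathfrak{h}_{A,B}$ on those valleys, via the general estimate of Lemma \ref{lem85}. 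Part (1) of Theorem \ref{t04} then yields the upper bound.

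For the lower bound I would apply part (2) of Theorem \ref{t04} with a pair $(\mathbf{g}_N,\psi_N)$, $\mathbf{g}_N\in C_{0,0}(\mathcal{A},\mathcal{B})$, $\psi_N\in\mathfrak{S}_{1+\varepsilon_N}(\mathcal{A},\mathcal{B})$ with $\varepsilon_N\to0$, where $\psi_N$ approximates the optimizing current $\psi_0=(\Phi_{\mathbf{h}_{\mathcal{A},\mathcal{B}}^{*}}+\Phi_{\mathbf{h}_{\mathcal{A},\mathcal{B}}}^{*})/(2\textup{cap}_N(\mathcal{A},\mathcal{B}))$ --- a near-unit flow from $\mathcal{A}$ to $\mathcal{B}$ running along the transition tubes with the same one-dimensional profile, lifted from a unit flow between $x$ and $y$ on $S$ --- and $\mathbf{g}_N$ approximates $\mathbf{g}_0=(\mathbf{h}_{\mathcal{A},\mathcal{B}}^{*}-\mathbf{h}_{\mathcal{A},\mathcal{B}})/(2\textup{cap}_N(\mathcal{A},\mathcal{B}))$, which supplies the non-reversible correction. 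One then verifies $N^{-(1+\alpha)}\textup{cap}_Y(A,B)^{-1}\|\Phi_{\mathbf{g}_N}-\psi_N\|^2\to1$ (again borrowing the symmetrized computation from \cite{BL3}) and controls $2\varepsilon_N+2\sum_{\eta\notin\mathcal{A}\cup\mathcal{B}}\mathbf{h}_{\mathcal{A},\mathcal{B}}(\eta)\,(\textup{div}\,\psi_N)(\eta)\to0$ by the same device as above. Combining the two bounds completes the proof. It is worth stressing that the sector condition (Corollary \ref{csector}) does not suffice on its own even for one inequality: it only yields $\liminf_N N^{1+\alpha}\textup{cap}_N\ge$ (the analogue of $\textup{cap}_Y(A,B)$ with $\textup{cap}_X$ replaced by the capacity of the symmetrized underlying walk), which is strictly smaller than $\textup{cap}_Y(A,B)$ whenever the dynamics is genuinely non-reversible; hence the flow-based arguments above are indispensable.

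The main obstacle is the construction and analysis of the test flows $\phi_N$ and $\psi_N$ --- together with the correction objects $\mathbf{f}_N$ and $\mathbf{g}_N$ --- carried out in Sections \ref{sec7} and \ref{sec8}. In contrast with the reversible case, where a single test function (resp.\ a single unit flow) is enough, here the genuinely non-reversible, divergence-carrying pieces have to be produced on the intricate graph $\mathcal{H}_{N}^{\otimes}$ so that, simultaneously, $\Phi_{\mathbf{f}_N}-\phi_N$ (resp.\ $\Phi_{\mathbf{g}_N}-\psi_N$) has the correct flow norm, its divergence on $\mathcal{A}$ is negligible, and the boundary sum $\sum_{\eta\notin\mathcal{A}\cup\mathcal{B}}\mathbf{h}_{\mathcal{A},\mathcal{B}}(\eta)\,(\textup{div}\,\phi_N)(\eta)$ is of smaller order than $N^{-(1+\alpha)}$. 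The framework of Section \ref{sec6} reduces this task to producing flows on the underlying graph $S$ that attain $\textup{cap}_X(x,y)$, and the sector condition of Proposition \ref{psector} is then invoked repeatedly to absorb the many lower-order errors generated when these objects are lifted to the tubes and glued together.
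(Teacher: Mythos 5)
Your high-level strategy is exactly the paper's: bound $\textup{cap}_N(\mathcal{E}_N(A),\mathcal{E}_N(B))$ from above via part~(1) of Theorem~\ref{t04} with a near-optimal pair $(\mathbf{f},\phi)$, from below via part~(2), and control the boundary divergence sum by means of Lemma~\ref{lem85} when $A\cup B\subsetneq S_{\star}$. Your parenthetical remark that the sector condition alone would only give the symmetrized-walk capacity is also correct and is the reason the flow-based argument is indispensable. However, there is one concrete inaccuracy in your description of the test function that would produce the wrong constant if implemented literally. You propose a test function that, on the saddle tube between $x$ and $y$, depends only on the single coordinate $k=\eta_{x}$ through the one-dimensional profile $w_{N}(k)$. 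A function of $\eta_{x}$ alone jumps only under moves into or out of site $x$, so its Dirichlet form on the tube assembles the total exit rate $\sum_{w}m(x)r(x,w)+\sum_{z}m(z)r(z,x)$ rather than $\textup{cap}_{X}(x,y)$. What the paper (and \cite{BL3}) actually uses is the multi-dimensional profile $\mathbf{W}_{x,y}(\eta)=\sum_{i=1}^{\kappa-1}\bigl[h_{x,y}(z_{i})-h_{x,y}(z_{i+1})\bigr]H(\eta^{(i)}/N)$, with $z_{1},\dots,z_{\kappa}$ ordered by the underlying-walk equilibrium potential and $\eta^{(i)}=\eta_{z_{1}}+\cdots+\eta_{z_{i}}$. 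It is precisely the $h_{x,y}$-weighted telescoping over the partial-sum coordinates that reproduces $\frac12\sum_{i,j}m(z_{i})r(z_{i},z_{j})\bigl[h_{x,y}(z_{i})-h_{x,y}(z_{j})\bigr]^{2}=\textup{cap}_{X}(x,y)$ in Lemma~\ref{lem72}, and this cannot be delegated to the flow: the identity $\Phi_{\mathbf{f}}-\phi=\Psi_{\mathbf{V}_{A,B}}+(\Theta_{N}-\Theta_{N}^{*})/2$ in the proof shows the flow norm collapses to $\mathscr{D}_{N}(\mathbf{V}_{A,B})$, so the capacity must be encoded in the function. Relatedly, the paper does not construct $\phi$ by ``lifting a flow on $S$'': it takes $\Phi_{\mathbf{V}_{A,B}}^{*}$ and applies an explicit divergence-correction flow $\chi_{x,y}$ (Section~\ref{sec8}) that transports the small-but-nonnegligible divergence of order $\mu_{N}(\eta)\pi_{N}/N^{2}$ from the saddle tubes into the valleys $\mathcal{E}^{x},\mathcal{E}^{y}$. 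Finally, a minor point: the sector condition is not what absorbs the lower-order errors in the flow constructions; those are handled directly by the boundary-mass estimates (Lemma~\ref{lem71}) and by Proposition~\ref{p84}.
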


The proof of this result is given in the next subsection. In addition,
if $(A,\,B)$ is a partition of $S_{\star}$, i.e., $A\cup B=S_{\star}$,
the equilibrium potential $\mathfrak{h}_{A,\,B}$ for $\widehat{Y}(\cdot)$
becomes the indicator function on $A$; hence, the following corollary
is obtained.
\begin{cor}
\label{c02}Suppose that two disjoint, non-empty subsets $A,\,B$
of $S_{\star}$ satisfy $A\cup B=S_{\star}$. Then,
\[
\lim_{N\rightarrow\infty}N^{1+\alpha}\,\textup{cap}_{N}(\mathcal{E}_{N}(A),\,\mathcal{E}_{N}(B))=\frac{1}{M_{\star}\,\kappa_{\star}\,\Gamma(\alpha)\,I_{\alpha}}\,\sum_{x\in A,\,y\in B}\textup{cap}_{X}(x,\,y)\;.
\]
\end{cor}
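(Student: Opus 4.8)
The plan is to deduce Corollary \ref{c02} directly from Theorem \ref{t02} by computing the right-hand side of Theorem \ref{t02} explicitly when $(A,\,B)$ partitions $S_\star$. The key observation is that when $A\cup B=S_\star$, the sets $A$ and $B$ are complementary within the state space $S_\star$ of the chain $\widehat Y(\cdot)$, so the equilibrium potential $\mathfrak h_{A,\,B}$ satisfies $\mathfrak h_{A,\,B}\equiv 1$ on $A$ and $\mathfrak h_{A,\,B}\equiv 0$ on $B=S_\star\setminus A$; there are no intermediate states. Hence $\mathfrak h_{A,\,B}=\mathbf 1_{A}$, the indicator function of $A$ on $S_\star$.

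Next I would substitute $\mathfrak h_{A,\,B}=\mathbf 1_A$ into the formula for the Dirichlet form $\mathfrak D_Y$ recorded just before \eqref{hu4}. Since $\textup{cap}_Y(A,\,B)=\mathfrak D_Y(\mathfrak h_{A,\,B})=\mathfrak D_Y(\mathbf 1_A)$, and $[\mathbf 1_A(y)-\mathbf 1_A(x)]^2$ equals $1$ precisely when exactly one of $x,\,y$ lies in $A$ and vanishes otherwise, the sum $\frac12\sum_{x,\,y\in S_\star}\frac{\textup{cap}_X(x,\,y)}{M_\star\,\Gamma(\alpha)\,I_\alpha\,\kappa_\star}[\mathbf 1_A(y)-\mathbf 1_A(x)]^2$ collapses to a sum over ordered pairs $(x,\,y)$ with $x\in A$, $y\in B$ (together with the symmetric pairs $x\in B$, $y\in A$, which by the symmetry of $\textup{cap}_X$ contribute the same amount and cancel the factor $\frac12$). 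This yields
\[
\textup{cap}_Y(A,\,B)=\frac{1}{M_\star\,\kappa_\star\,\Gamma(\alpha)\,I_\alpha}\sum_{x\in A,\,y\in B}\textup{cap}_X(x,\,y)\;.
\]
Combining this identity with the limit in Theorem \ref{t02} gives the stated conclusion.

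This is an entirely routine computation with no genuine obstacle; the only point requiring the slightest care is bookkeeping the symmetry of $\textup{cap}_X(\cdot,\,\cdot)$ against the factor $\tfrac12$ in the Dirichlet form, exactly as in the corresponding reversible computation, and correctly identifying $\mathfrak h_{A,\,B}$ with $\mathbf 1_A$ using the fact that $(A\cup B)^c\cap S_\star=\emptyset$ so that \eqref{eqp1} applied to the chain $\widehat Y(\cdot)$ forces $\mathfrak h_{A,\,B}$ to be harmonic on the empty set — i.e., imposes no further constraint — leaving only the boundary values.
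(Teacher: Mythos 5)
Your proof is correct and is essentially the same as the paper's: the paper also notes that when $(A,\,B)$ partitions $S_{\star}$ the equilibrium potential $\mathfrak{h}_{A,\,B}$ reduces to $\mathbf 1_{A}$ (there being no interior states on which harmonicity could impose a constraint), and then reads off $\textup{cap}_{Y}(A,\,B)=\mathfrak{D}_{Y}(\mathbf 1_{A})$ from the Dirichlet form displayed just before \eqref{hu4}. The symmetry bookkeeping against the factor $\tfrac12$ that you flag is exactly the minor point involved.
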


\subsection{\label{sec63}Approximation of optimal flows and proof of Theorem
\ref{t02}}

Herein, the proof of Theorem \ref{t02} is provided based on the generalized
Dirichlet and Thomson principles of Theorem \ref{t04}. Several technical
details about the construction of approximations of equilibrium potentials
and optimal flows are postponed to Sections \ref{sec7} and \ref{sec8}.

Another parameter $\epsilon>0$ that denotes a sufficiently small
number is now introduced. In particular, we shall assume that $\epsilon\in(0,\,\epsilon_{0})$
where $\epsilon_{0}$ is a sufficiently small number to be introduced
in Lemma \ref{bg}.
\begin{notation}
Henceforth the constant term $C$ will be allowed to depend on this
new parameter $\epsilon$. Furthermore, $o_{N}(1)$ is used for representing
a term that vanishes as $N$ tends to $\infty$, and for $o_{\epsilon}(1)$
expressing a term that vanishes as $\epsilon$ tends to $0$. It should
be noted that the term $o_{N}(1)$ may depend on $\epsilon$, whereas
the term $o_{\epsilon}(1)$ does not depend on $N$. These dependencies
of the constant term $C$ and the error term $o_{N}(1)$ on the parameter
$\epsilon$ do not incur any technical problem, as we always take
$N\rightarrow\infty$ first and then $\epsilon\rightarrow0$.
\end{notation}

Throughout this subsection, let two disjoint non-empty subsets $A,\,B$
of $S_{\star}$ be fixed. In Section \ref{sec7}, for $\epsilon\in(0,\,\epsilon_{0})$
and sufficiently large $N\in\mathbb{N}$, two real-valued functions
$\mathbf{V}_{A,\,B}=\mathbf{V}_{A,\,B}^{N,\,\epsilon}$ and $\mathbf{V}_{A,\,B}^{*}=\mathbf{V}_{A,\,B}^{*,N,\,\epsilon}$
on $\mathcal{H}_{N}$ are constructed that approximate the equilibrium
potentials $\mathbf{h}_{\mathcal{E}_{N}(A),\mathcal{\,E}_{N}(B)}$
and $\mathbf{h}_{\mathcal{E}_{N}(A),\,\mathcal{E}_{N}(B)}^{*}$, respectively.
Furthermore, in Section \ref{sec75} the following properties of these
approximating objects are verified.
\begin{prop}
\label{p63}For $\epsilon\in(0,\,\epsilon_{0})$ and sufficiently
large $N\in\mathbb{N}$, there are two functions $\mathbf{V}_{A,\,B}$
and $\mathbf{V}_{A,\,B}^{*}$ satisfying the following properties:
\begin{enumerate}
\item For all $x\in S_{\star}$ and $\eta\in\mathcal{E}_{N}^{x}$, it holds
that $\mathbf{V}_{A,\,B}(\eta)=\mathbf{V}_{A,\,B}^{*}(\eta)=\mathfrak{h}_{A,\,B}(x)$.
That is, these functions are constant and equal to $\mathfrak{h}_{A,\,B}(x)$
on each valley $\mathcal{E}_{N}^{x}$, $x\in S_{\star}$.
\item It holds that
\[
N^{1+\alpha}\mathscr{D}_{N}(\mathbf{V}_{A,\,B})\le\left(1+o_{N}(1)+o_{\epsilon}(1)\right)\,\textup{cap}_{Y}(A,\,B)\;.
\]
The same inequality holds for $\mathscr{D}_{N}(\mathbf{V}_{A,B}^{*})$
as well.
\end{enumerate}
\end{prop}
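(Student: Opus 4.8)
The plan is to construct $\mathbf{V}_{A,\,B}$ and $\mathbf{V}_{A,\,B}^*$ by pasting together local pieces on each valley and on the "tube" regions connecting them. On each valley $\mathcal{E}_N^x$ one simply sets the value to the constant $\mathfrak{h}_{A,\,B}(x)$; this automatically gives part (1), and since the equilibrium potential $\mathfrak{h}_{A,\,B}$ for $\widehat{Y}(\cdot)$ is itself harmonic for $\mathfrak{L}_Y$ off $A\cup B$, these boundary values are the right ones. The interesting part is defining the functions on the bulk $\Delta_N$ so as to interpolate between these constants with the correct Dirichlet cost. The guiding heuristic is the standard one for condensing zero-range processes: the dominant contribution to the capacity comes from configurations in which essentially all particles sit at two competing sites $x,y\in S_\star$ with a small number $k$ of "free" particles executing the underlying random walk $X(\cdot)$, and the bottleneck is crossed when $k$ particles move from the neighborhood of one maximal site to another. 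Concretely, for each ordered pair $x,y$ one introduces a coordinate describing how the $k$ free particles are distributed among $S$, and one defines $\mathbf{V}_{A,\,B}$ on this slice as $\mathfrak{h}_{A,\,B}(x)$ plus $[\mathfrak{h}_{A,\,B}(y)-\mathfrak{h}_{A,\,B}(x)]$ times a function that solves (or approximately solves) the relevant one-particle-type Dirichlet problem — this is where the equilibrium potential $h_{x,y}$ of the underlying walk and the Beta-type integral $I_\alpha$ in \eqref{ialpha} will enter.

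The second step is the Dirichlet form computation proving part (2). Here I would decompose $\mathscr{D}_N(\mathbf{V}_{A,\,B})$, using the representation \eqref{edr} in terms of the $(N-1)$-particle measure, into: (i) contributions from jumps internal to a single valley, which vanish since $\mathbf{V}_{A,\,B}$ is constant there; (ii) contributions from jumps within a fixed "two-site tube" for a pair $x,y$, which should produce, after the change of variables and using the asymptotics $\mu_N(\xi_N^x)=Z_N^{-1}\to Z^{-1}$ together with the definition \eqref{e20} of $Z$, a term of the form $N^{-(1+\alpha)}\,(M_\star\Gamma(\alpha)I_\alpha)^{-1}\,\textup{cap}_X(x,y)\,[\mathfrak{h}_{A,\,B}(y)-\mathfrak{h}_{A,\,B}(x)]^2$ up to $(1+o_N(1)+o_\epsilon(1))$; and (iii) cross-terms and boundary/mismatch terms coming from the overlap regions where two tubes meet, or where a tube meets a valley, which one must show are of lower order, i.e. $o_\epsilon(1)$ times the leading quantity. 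Summing (ii) over all unordered pairs $\{x,y\}\subset S_\star$ reproduces exactly $2\kappa_\star\,\mathfrak{D}_Y(\mathfrak{h}_{A,\,B})=2\,\textup{cap}_Y(A,\,B)$ — wait, one must be careful with the factor $\kappa_\star$ coming from $\mu(x)=1/\kappa_\star$ versus the factor from $Z=\kappa_\star\Gamma(\alpha)^{\kappa_\star-1}\prod\Gamma_x$; these combinatorial factors are arranged precisely so that the answer is $\textup{cap}_Y(A,\,B)$, and checking this bookkeeping carefully is essential.

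For the adjoint function $\mathbf{V}_{A,\,B}^*$ I would run the same construction with $h_{x,y}$ replaced by the adjoint equilibrium potential $h_{x,y}^*$ of the underlying walk. The key point making the identical bound hold is that $\textup{cap}_X^*(x,y)=\textup{cap}_X(x,y)$ (recorded in Section \ref{sec4}), so the leading term is unchanged; the sector condition for $X(\cdot)$ (which follows from finiteness of $S$) or Proposition \ref{psector} can be invoked if one needs to control cross terms between $\Phi$-type and $\Psi$-type flows, but at the level of $\mathscr{D}_N$ alone — which is symmetric under passing to the adjoint — no extra work is needed beyond observing that the construction is symmetric.

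The main obstacle I anticipate is item (iii): controlling the error incurred where the local profiles are glued together. Because the underlying graph $\mathcal{H}_N^{\otimes}$ is genuinely high-dimensional and the "tubes" for different pairs $\{x,y\}$ are not disjoint (they all share configurations near the fully-condensed points $\xi_N^x$, and near configurations with, say, three sites heavily occupied), one must choose the cutoff parameters $\ell_N$, $b_N(z)$ and the new parameter $\epsilon$ so that the measure $\mu_N$ of the overlap and mismatch regions is negligible relative to $N^{-(1+\alpha)}$. The scaling conditions \eqref{amp3} — in particular $\ell_N^{1+\alpha(\kappa-1)}/N^{1+\alpha}\prod_z m_\star(z)^{-b_N(z)}\to 0$ and $\ell_N\ll\pi_N=\lfloor N^{1/\alpha+1/2}\rfloor$ — are exactly what one needs here, and verifying that the chosen interpolation respects them, while still giving the sharp constant, is the delicate part of Section \ref{sec7}. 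A secondary subtlety is making the functions genuinely well-defined (single-valued) on all of $\mathcal{H}_N$ despite being defined piecewise over overlapping regions; this is handled by fixing, for each configuration in $\Delta_N$, a canonical "dominant pair" $(x,y)$ and using a partition-of-unity-type or monotone-interpolation construction so that the formula is unambiguous and the resulting function is close enough to both $\mathbf{h}$ and $\mathbf{h}^*$ in the relevant sense.
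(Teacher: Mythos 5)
Your overall architecture—constant on valleys, one-dimensional interpolation along tubes, error control at the seams—is the right one, and you correctly anticipate that the interpolating profile should involve $h_{x,y}$ and the Beta integral $I_\alpha$. But you misidentify the central geometric obstacle, and this misidentification would lead you down a substantially harder path. You write that the tubes for different pairs $\{x,y\}$ overlap (near $\xi_N^x$ and near triple-occupancy configurations) and propose a partition-of-unity-type construction to handle this. The paper avoids this entirely by an elementary but crucial observation: once one introduces the \emph{enlarged} valleys $\mathcal{D}^x=\{\eta:\eta_x\ge N(1-2\epsilon)\}$, the \emph{saddle tubes} $\mathcal{J}^{x,y}=\mathcal{T}^{x,y}\setminus(\mathcal{D}^x\cup\mathcal{D}^y)$ become pairwise disjoint, because $\mathcal{T}^{x,y}\cap\mathcal{T}^{x,z}\subset\mathcal{D}^x$ (a one-line inequality). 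So the decomposition $\mathcal{G}=\bigcup_x\mathcal{D}^x\cup\bigcup_{\{x,y\}}\mathcal{J}^{x,y}$ is into genuinely disjoint pieces, the function is constant on each $\mathcal{D}^x$ and on $\mathcal{G}^c$, and the only gluing errors are along the one-layer inner/outer boundaries of $\mathcal{G}$, whose $\mu_N$-measure is shown to be $o_N(1)N^{-(1+\alpha)}$ by direct computation. No partition of unity, no ambiguity in the piecewise definition, and no triple-occupancy issues (those configurations simply live in $\mathcal{G}^c$ and are negligible). Your plan, lacking this observation, would have to estimate overlaps of smooth cutoffs against $\mu_N$ on genuinely high-dimensional overlap regions—doable in principle but considerably more delicate, and not what the paper does.

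A secondary gap: you leave the interpolating profile essentially unspecified ("a function that solves the relevant one-particle-type Dirichlet problem"). The paper's explicit choice $\mathbf{W}_{x,y}(\eta)=\sum_{i=1}^{\kappa-1}[h_{x,y}(z_i)-h_{x,y}(z_{i+1})]\,H(\eta^{(i)}/N)$—with $S$ enumerated so that $h_{x,y}(z_1)\ge\cdots\ge h_{x,y}(z_\kappa)$, $\eta^{(i)}$ the partial sums, and $H=V\circ\gamma_\epsilon$ a mollified version of $V(t)=I_\alpha^{-1}\int_0^t s^\alpha(1-s)^\alpha\,ds$—is what makes the Dirichlet form estimate (Lemma \ref{lem72}) close with the sharp constant, and the telescoping structure in $\eta^{(i)}$ is needed so that a jump $\sigma^{z_i,z_j}\eta$ produces a clean difference $h_{x,y}(z_i)-h_{x,y}(z_j)$. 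You also conflate the scale parameters: $\ell_N,b_N(z)$ define the valleys $\mathcal{E}_N^x$ (and feed into part (1)), while the Dirichlet form analysis in part (2) is governed by $\pi_N$ (tube width) and $N\epsilon$ (enlarged-valley radius); the condition \eqref{amp3} is not the constraint that controls the mismatch estimates here.
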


The next step is to construct test flow to approximate $\Phi_{\mathbf{h}_{\mathcal{E}_{N}(A),\mathcal{\,E}_{N}(B)}}^{*}$
and $\Phi_{\mathbf{h}_{\mathcal{E}_{N}(A),\mathcal{\,E}_{N}(B)}^{*}}$.
Of course, the natural candidates are $\Phi_{\mathbf{V}_{A,\,B}}^{*}$
and $\Phi_{\mathbf{V}_{A,\,B}^{*}}$. One may expect that Theorem
\ref{t04} may be used to estimate the capacity based on these objects.
Unfortunately, a technical issue arises around the saddle tube defined
in Section \ref{sec8}, at which the divergence of these flows is
not negligible. This problem is resolved by a systematic correction
procedure developed in Section \ref{sec8}, which cleans out the non-negligible
flow and in turn allows the application of Theorem \ref{t04}. The
consequences of this correction procedure can be summarized as follows.
\begin{prop}
\label{p64}For $\epsilon\in(0,\,\epsilon_{0})$ and sufficiently
large $N\in\mathbb{N}$, there exists a flow $\Phi_{A,\,B}=\Phi_{A,\,B}^{N,\,\epsilon}\in\mathfrak{F}_{N}$
satisfying the following properties.
\begin{enumerate}
\item The flow $\Phi_{A,\,B}$ approximates $\Phi_{\mathbf{V}_{A,\,B}}^{*}$
in the sense that
\[
\left\Vert \Phi_{A,\,B}-\Phi_{\mathbf{V}_{A,\,B}}^{*}\right\Vert ^{2}=\left(o_{N}(1)+o_{\epsilon}(1)\right)N^{-(1+\alpha)}\;.
\]
\item The divergence of $\Phi_{A,\,B}$ is negligible on $\Delta_{N}$ in
the sense that
\[
\sum_{\eta\in\Delta_{N}}\left|(\textup{div }\Phi_{A,\,B})(\eta)\right|=o_{N}(1)\,N^{-(1+\alpha)}\;.
\]
\item The divergence of $\Phi_{A,B}$ is negligible on $\mathcal{E}_{N}^{x}$,
$x\in S_{\star}\setminus(A\cup B)$, in the sense that
\begin{align*}
 & (\textup{div }\Phi_{A,\,B})(\mathcal{E}_{N}^{x})=o_{N}(1)\,N^{-(1+\alpha)}\;\;\text{and}\\
 & \sum_{\eta\in\mathcal{E}_{N}^{x}}\mathbf{\mathbf{h}}_{\mathcal{E}_{N}(A),\,\mathcal{E}_{N}(B)}(\eta)\,(\textup{div }\Phi_{A,\,B})(\eta)=o_{N}(1)\,N^{-(1+\alpha)}\;.
\end{align*}
\item The divergence of $\Phi_{A,B}$ satisfies
\begin{align*}
 & (\textup{div }\Phi_{A,B})(\mathcal{E}_{N}(A))=\left(1+o_{N}(1)\right)N^{-(1+\alpha)}\,\textup{cap}_{Y}(A,\,B)\;\;\mbox{and}\\
 & (\textup{div }\Phi_{A,B})(\mathcal{E}_{N}(B))=-\left(1+o_{N}(1)\right)N^{-(1+\alpha)}\,\textup{cap}_{Y}(A,\,B)\;.
\end{align*}
\end{enumerate}
There also exists $\Phi_{A,\,B}^{*}=\Phi_{A,\,B}^{*,\,N,\,\epsilon}\in\mathfrak{F}_{N}$
that approximates $\Phi_{\mathbf{V}_{A,\,B}^{*}}$ and satisfies the
four properties above.
\end{prop}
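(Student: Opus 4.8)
The plan is to take $\Phi_{A,\,B}=\Phi_{\mathbf{V}_{A,\,B}}^{*}+\mathfrak{c}_{A,\,B}$, where $\mathfrak{c}_{A,\,B}\in\mathfrak{F}_{N}$ is a correction flow supported near the saddle tube. By (1) of Proposition \ref{pflow}, $(\textup{div}\,\Phi_{\mathbf{V}_{A,\,B}}^{*})(\eta)=-\mu_{N}(\eta)\,(\mathscr{L}_{N}\mathbf{V}_{A,\,B})(\eta)$; since $\mathbf{V}_{A,\,B}$ is constant on each valley by (1) of Proposition \ref{p63} and is built in Section \ref{sec7} to approximate $\mathbf{h}_{\mathcal{E}_{N}(A),\,\mathcal{E}_{N}(B)}$, which is $\mathscr{L}_{N}$-harmonic off the valleys, this divergence vanishes on the interior of every valley and ought to be $o_{N}(1)\,N^{-(1+\alpha)}$ in total elsewhere — except on a thin region $\mathcal{T}_{N}$, the saddle tube, where the non-reversible part $c_{N}-c_{N}^{*}$ of the conductance, coupled to the slow variation of $\mathbf{V}_{A,\,B}$, produces a residual divergence of order $N^{-(1+\alpha)}$ per configuration. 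So the first task is to make this dichotomy quantitative: to record the precise expression of $(\textup{div}\,\Phi_{\mathbf{V}_{A,\,B}}^{*})$ on $\mathcal{T}_{N}$, and to verify — from the explicit form of $\mathbf{V}_{A,\,B}$ — that on $\Delta_{N}\setminus\mathcal{T}_{N}$ and on the intermediate valleys both $\sum|\textup{div}\,\Phi_{\mathbf{V}_{A,\,B}}^{*}|$ and its weighting against $\mathbf{h}_{\mathcal{E}_{N}(A),\,\mathcal{E}_{N}(B)}$ are $o_{N}(1)\,N^{-(1+\alpha)}$.

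Next I would build the correction. For a pair $x,\,y\in S_{\star}$, the saddle tube is, to the order that matters, a one-dimensional corridor parametrised by the macroscopic split $u=\eta_{y}/N\in[\epsilon,\,1-\epsilon]$, with a thin transverse cross-section. After averaging the residual divergence over each cross-section one obtains a divergence defect on a path; I would let $\mathfrak{c}_{A,\,B}$ be the minimal-energy flow on that path whose divergence at each interior level cancels the averaged defect there, with the net accumulated defect discharged into the two mouths of the tube, which lie inside valleys — so that this discharge contributes to $\textup{div}\,\Phi_{A,\,B}$ only on valleys in $S_{\star}$ — together with a small transverse part that recombines each cross-section. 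Explicitly, the component of $\mathfrak{c}_{A,\,B}$ across the edge joining levels $k$ and $k+1$ is the partial sum of the defects over levels $\le k$. Replacing $\mathbf{V}_{A,\,B}$ by $\mathbf{V}_{A,\,B}^{*}$ throughout produces $\Phi_{A,\,B}^{*}$.

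It remains to verify the four properties. Properties (2) and (3) are then immediate: by construction $\textup{div}\,\mathfrak{c}_{A,\,B}$ cancels the residual divergence on the interior of $\mathcal{T}_{N}$, while on $\Delta_{N}\setminus\mathcal{T}_{N}$ and on $\mathcal{E}_{N}^{x}$ with $x\notin A\cup B$ the divergence of $\Phi_{\mathbf{V}_{A,\,B}}^{*}$ was already controlled in the first step and $\mathfrak{c}_{A,\,B}$ either vanishes there or discharges only a negligible amount, the weighted sums being handled by the same bounds and $0\le\mathbf{h}_{\mathcal{E}_{N}(A),\,\mathcal{E}_{N}(B)}\le1$. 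Property (4) follows because $\sum_{\eta}(\textup{div}\,\Phi_{A,\,B})(\eta)=0$, the divergence is negligible off $\mathcal{E}_{N}(A)\cup\mathcal{E}_{N}(B)$, and the flux of $\Phi_{\mathbf{V}_{A,\,B}}^{*}$ out of $\mathcal{E}_{N}(A)$ — which for the true equilibrium potential equals $\textup{cap}_{N}(\mathcal{E}_{N}(A),\,\mathcal{E}_{N}(B))$ by \eqref{cap2} — is computed directly from the construction of $\mathbf{V}_{A,\,B}$ to be $(1+o_{N}(1))\,N^{-(1+\alpha)}\,\textup{cap}_{Y}(A,\,B)$, with $\mathfrak{c}_{A,\,B}$ adding only a lower-order term. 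The main obstacle is property (1): $\Vert\mathfrak{c}_{A,\,B}\Vert^{2}=\frac{1}{2}\sum\mathfrak{c}_{A,\,B}(\eta,\,\zeta)^{2}/c_{N}^{s}(\eta,\,\zeta)$ must be $o_{N}(1)\,N^{-(1+\alpha)}$, yet the resistance along the tube is already of order $N^{1+\alpha}$; hence the accumulated defect carried by $\mathfrak{c}_{A,\,B}$ must be shown to be $o(N^{-(1+\alpha)})$ uniformly along the tube, i.e. the level-wise residual divergences, though individually of size $N^{-(1+\alpha)}$, must exhibit almost complete cancellation in their partial sums. Establishing this is the technical heart of Section \ref{sec8}: it rests on the fact that $\mathbf{V}_{A,\,B}$ restricted to $\mathcal{T}_{N}$ is, to the relevant order, the one-dimensional equilibrium profile, so that $\Phi_{\mathbf{V}_{A,\,B}}^{*}$ is divergence-free up to the second-order transverse and boundary corrections that $\mathfrak{c}_{A,\,B}$ is designed to absorb, and the bound must be obtained uniformly in $\epsilon$.
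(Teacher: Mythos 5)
Your overall plan---take $\Phi_{A,B}=\Phi_{\mathbf{V}_{A,B}}^{*}+\mathfrak{c}_{A,B}$ with a correction flow supported near the saddle tubes---is the right one, but the proposal contains two concrete errors in the divergence accounting that would sink the argument, and a third gap that you flag but dismiss.

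First, and most seriously, for property (4) you assert that the flux of $\Phi_{\mathbf{V}_{A,B}}^{*}$ out of $\mathcal{E}_{N}(A)$ ``is computed directly from the construction of $\mathbf{V}_{A,B}$ to be $(1+o_N(1))N^{-(1+\alpha)}\textup{cap}_Y(A,B)$, with $\mathfrak{c}_{A,B}$ adding only a lower-order term.'' This is exactly backwards. Since $\mathbf{V}_{A,B}$ is constant on the enlarged valley $\mathcal{D}^{x}\supset\mathcal{E}^{x}$ and $\mathcal{E}^{x}$ lies well inside $\mathcal{D}^{x}$, Proposition \ref{pflow}(1) gives $(\textup{div}\,\Phi_{\mathbf{V}_{A,B}}^{*})(\eta)=0$ for every $\eta\in\mathcal{E}^{x}$, hence $(\textup{div}\,\Phi_{\mathbf{V}_{A,B}}^{*})(\mathcal{E}(A))=0$. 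The entire flux required in property (4) must be carried by the correction flow; that is the whole point of the construction (in the paper it is delivered by the $\mathbf{C}(\eta)$ term inside $\chi^{(2)}_{x,y}$). A correction that cancels defects inside the tube and ``discharges only a negligible amount'' into the valleys, as you describe, would yield $(\textup{div}\,\Phi_{A,B})(\mathcal{E}(A))=o_N(1)N^{-(1+\alpha)}$ and be useless in Theorem \ref{t04}.

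Second, for property (3) you handle the weighted sum by ``the same bounds and $0\le\mathbf{h}_{\mathcal{E}_N(A),\mathcal{E}_N(B)}\le1$.'' This fails: for $x\notin A\cup B$, each $\chi_{x,y}$ pumps a flux of definite sign and full order $N^{-(1+\alpha)}$ into $\mathcal{E}^{x}$, so $\sum_{\eta\in\mathcal{E}^{x}}|(\textup{div}\,\Phi_{A,B})(\eta)|$ is $\Theta(N^{-(1+\alpha)})$, not $o_N(1)N^{-(1+\alpha)}$, and bounding $|\mathbf{h}|$ by $1$ gives nothing. The net divergence on $\mathcal{E}^{x}$ is small only because $\sum_{y}[\mathfrak{h}_{A,B}(x)-\mathfrak{h}_{A,B}(y)]\,\textup{cap}_X(x,y)$ cancels by harmonicity of $\mathfrak{h}_{A,B}$ at $x$. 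Passing from the net bound to the weighted bound requires knowing that $\mathbf{h}_{\mathcal{E}(A),\mathcal{E}(B)}$ is nearly constant on $\mathcal{E}^{x}$, which is precisely Lemma \ref{lem85}; the constant part then pairs with the small net divergence and the $o_N(1)$ oscillation pairs with the $O(N^{-(1+\alpha)})$ $L^{1}$ bound.

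Third, your partial-sum construction of $\mathfrak{c}$ along the tube requires the accumulated defect, level by level, to be $o(N^{-(1+\alpha)})$ uniformly. You identify this as ``the technical heart'' but supply no argument, and given the sign error above there is no reason to believe such cancellation in partial sums actually occurs. The paper avoids accumulation entirely by a local construction: $\chi_{x,y}$ moves flow across single edges, its per-edge magnitude is directly small (one piece of order $\mu_N(\eta)\pi_N/N^2$ and one of order $\mu_N(\eta)\,(H'-U)(\eta_x/N)/N$, controlled via the mollified profile $H$ and Lemmas \ref{lemH}, \ref{lemH1}), and the pointwise cancellation of divergence follows from the algebraic identity Lemma \ref{le1}, itself a consequence of the harmonicity of $h_{x,y}$. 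Without a substitute for this structure, your claimed cancellation in the partial sums is unsubstantiated.
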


In particular, by (2) and (3) of the previous proposition, we have
the following estimate that enables the application of the generalized
Dirichlet and Thomson principles.
\begin{lem}
\label{lem66}We have that
\[
\sum_{\eta\in(\mathcal{E}_{N}(A\cup B))^{c}}\mathbf{\mathbf{h}}_{\mathcal{E}_{N}(A),\,\mathcal{E}(B)}(\eta)\,(\textup{div }\Phi_{A,\,B})(\eta)=o_{N}(1)\,N^{-(1+\alpha)}\;.
\]
\end{lem}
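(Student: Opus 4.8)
The plan is to split the sum over $(\mathcal{E}_N(A\cup B))^c = \mathcal{H}_N \setminus \mathcal{E}_N(A\cup B)$ into the region $\Delta_N$ and the valleys $\mathcal{E}_N^x$ for $x \in S_\star \setminus (A\cup B)$, and bound each piece using the properties collected in Proposition \ref{p64}. Concretely, writing
\[
(\mathcal{E}_N(A\cup B))^c = \Delta_N \;\cup\; \bigcup_{x\in S_\star\setminus(A\cup B)} \mathcal{E}_N^x\;,
\]
the sum decomposes as a sum over $\Delta_N$ plus $\sum_{x\in S_\star\setminus(A\cup B)}$ of the sum over $\mathcal{E}_N^x$.

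\begin{proof}
Since $(\mathcal{E}_N(A\cup B))^c$ is the disjoint union of $\Delta_N$ and the valleys $\mathcal{E}_N^x$ with $x\in S_\star\setminus(A\cup B)$, we split
\[
\sum_{\eta\in(\mathcal{E}_N(A\cup B))^c}\mathbf{h}_{\mathcal{E}_N(A),\,\mathcal{E}_N(B)}(\eta)\,(\textup{div }\Phi_{A,\,B})(\eta)
= \mathrm{I} + \sum_{x\in S_\star\setminus(A\cup B)}\mathrm{II}_x\;,
\]
where $\mathrm{I}$ is the sum over $\Delta_N$ and $\mathrm{II}_x$ the sum over $\mathcal{E}_N^x$. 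For $\mathrm{I}$, since $0\le\mathbf{h}_{\mathcal{E}_N(A),\,\mathcal{E}_N(B)}\le 1$, we have
\[
|\mathrm{I}| \le \sum_{\eta\in\Delta_N}\left|(\textup{div }\Phi_{A,\,B})(\eta)\right| = o_N(1)\,N^{-(1+\alpha)}
\]
by property (2) of Proposition \ref{p64}. For each $\mathrm{II}_x$, property (3) of Proposition \ref{p64} gives directly
\[
\mathrm{II}_x = \sum_{\eta\in\mathcal{E}_N^x}\mathbf{h}_{\mathcal{E}_N(A),\,\mathcal{E}_N(B)}(\eta)\,(\textup{div }\Phi_{A,\,B})(\eta) = o_N(1)\,N^{-(1+\alpha)}\;.
\]
Since $S_\star\setminus(A\cup B)$ is a fixed finite set (its cardinality is at most $\kappa_\star$, independent of $N$), summing finitely many $o_N(1)\,N^{-(1+\alpha)}$ terms yields $o_N(1)\,N^{-(1+\alpha)}$, and adding $\mathrm{I}$ completes the proof.
\end{proof}

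There is essentially no obstacle here: the lemma is a bookkeeping consequence of Proposition \ref{p64}, whose properties (2) and (3) were precisely designed to make this decomposition work. The only point to be slightly careful about is that property (3) is stated per-valley, so one must invoke the finiteness of $S_\star\setminus(A\cup B)$ to sum the contributions; this is immediate since $|S_\star| = \kappa_\star$ is fixed and does not grow with $N$. The substantive work — the construction of the corrected flow $\Phi_{A,\,B}$ and the verification of its divergence estimates on $\Delta_N$ and on the intermediate valleys — is deferred to Section \ref{sec8} and is where the real difficulty of the argument lies.
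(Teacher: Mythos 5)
Your proof is correct and follows essentially the same route as the paper: decompose $(\mathcal{E}_N(A\cup B))^c$ into $\Delta_N$ and the valleys $\mathcal{E}_N^x$ with $x\in S_\star\setminus(A\cup B)$, bound the $\Delta_N$ part via $|\mathbf{h}_{\mathcal{E}_N(A),\,\mathcal{E}_N(B)}|\le 1$ and part (2) of Proposition \ref{p64}, and bound each valley contribution by the second estimate in part (3). The finiteness remark you added is implicit in the paper but harmless to make explicit.
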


\begin{proof}
The summation on the left-hand side can be divided as
\[
\sum_{\eta\in\Delta_{N}}+\sum_{x\notin A\cup B}\,\,\sum_{\eta\in\mathcal{E}_{N}^{x}}\;.
\]
As $|\mathbf{\mathbf{h}}_{\mathcal{E}_{N}(A),\,\mathcal{E}(B)}|\le1$,
the absolute value of the first summation is $o_{N}(1)\,N^{-(1+\alpha)}$
by (2) of Proposition \ref{p64}. The second summation is $o_{N}(1)\,N^{-(1+\alpha)}$
by the second estimate of (3) of Proposition \ref{p64}.
\end{proof}
Theorem \ref{t02} may now be proved.
\begin{proof}[Proof of Theorem \ref{t02}]
The upper bound of the capacity is first considered. Let \begin{equation} \label{ec1} \begin{aligned}
&\mathbf{f}=\frac{\mathbf{V}_{A,\,B}+\mathbf{V}_{A,\,B}^{*}}{2}\in C_{1,\,0}(\mathcal{E}_{N}(A),\,\mathcal{\mathcal{E}}_{N}(B))\;\;\mbox{and}\\
&\phi=\frac{\Phi_{A,\,B}^{*}-\Phi_{A,\,B}}{2}\in\mathfrak{\mathfrak{S}}_{\alpha_{N}}(\mathcal{E}_{N}(A),\,\mathcal{\mathcal{E}}_{N}(B))\;\;\text{for some }\alpha_{N}=o_{N}(1)\,N^{-(1+\alpha)}\;.
\end{aligned} \end{equation} Note that $\alpha=o_{N}(1)N^{-(1+\alpha)}$ follows from part (4)
(for $\Phi_{A,\,B}$ and $\Phi_{A,\,B}^{*}$) of Proposition \ref{p64}.
Then, by part (1) of Theorem \ref{t04} and Lemma \ref{lem66}, we
have
\begin{equation}
\textup{cap}_{N}(\mathcal{E}_{N}(A),\,\mathcal{E}_{N}(B))\le\left\Vert \Phi_{\mathbf{f}}-\phi\right\Vert ^{2}+o_{N}(1)\,N^{-(1+\alpha)}.\label{e630}
\end{equation}
Let
\begin{equation}
\Phi_{A,\,B}=\Phi_{\mathbf{V}_{A,\,B}}^{*}+\Theta_{N}\;\;\mbox{and}\;\;\Phi_{A,\,B}^{*}=\Phi_{\mathbf{V}_{A,\,B}^{*}}+\Theta_{N}^{*}\;.\label{ec2}
\end{equation}
Then, we have
\begin{equation}
\Phi_{\mathbf{f}}-\phi=\Phi_{(\mathbf{V}_{A,\,B}+\mathbf{V}_{A,\,B}^{*})/2}-\frac{\Phi_{\mathbf{V}_{A,\,B}^{*}}-\Phi_{\mathbf{V}_{A,\,B}}^{*}}{2}+\frac{\Theta_{N}-\Theta_{N}^{*}}{2}=\Psi_{\mathbf{V}_{A,\,B}}+\frac{\Theta_{N}-\Theta_{N}^{*}}{2}\;.\label{e631}
\end{equation}
By (1) and (2) of Proposition \ref{p63}, \begin{equation} \label{e632} \begin{aligned}
&\left\Vert \Psi_{\mathbf{V}_{A,\,B}}\right\Vert ^{2}=\mathscr{D}_{N}(\mathbf{V}_{A,B})\le\left(1+o_{N}(1)+o_{\epsilon}(1)\right)N^{-(1+\alpha)}\,\textup{cap}_{Y}(A,\,B)\;,\\&\left\Vert \frac{\Theta_{N}-\Theta_{N}^{*}}{2}\right\Vert ^{2}=\left(o_{N}(1)+o_{\epsilon}(1)\right)N^{-(1+\alpha)}\;.\end{aligned} \end{equation}Therefore, by \eqref{e631},   \eqref{e632} and the triangle inequality,
\begin{equation}
\left\Vert \Phi_{\mathbf{f}}-\phi\right\Vert ^{2}\le\left(1+o_{N}(1)+o_{\epsilon}(1)\right)N^{-(1+\alpha)}\,\textup{cap}_{Y}(A,\,B)\;.\label{e64up}
\end{equation}
By \eqref{e630} and \eqref{e64up}, we obtain the upper bound
\begin{equation}
\textup{cap}_{N}(\mathcal{E}_{N}(A),\,\mathcal{E}_{N}(B))\le\left(1+o_{N}(1)+o_{\epsilon}(1)\right)N^{-(1+\alpha)}\,\textup{cap}_{Y}(A,\,B)\;.\label{e64u}
\end{equation}

To obtain the lower bound, part (2) of Theorem \ref{t04} is used.
To this end, let\begin{equation} \label{ec3} \begin{aligned}
&\mathbf{g}=\frac{\mathbf{V}_{A,\,B}^{*}-\mathbf{V}_{A,\,B}}{2\,N^{-(1+\alpha)}\,\textup{cap}_{Y}(A,\,B)}\in C_{0,\,0}(\mathcal{E}_{N}(A),\,\mathcal{\mathcal{E}}_{N}(B))\;,\;\mbox{and}\\
&\psi=\frac{\Phi_{A,\,B}^{*}+\Phi_{A,\,B}}{2\,N^{-(1+\alpha)}\,\textup{cap}_{Y}(A,\,B)}\in\mathfrak{\mathfrak{S}}_{1+o_{N}(1)}(\mathcal{E}_{N}(A),\,\mathcal{\mathcal{E}}_{N}(B))\;.
\end{aligned} \end{equation}Then, by Theorem \ref{t04} and Lemma \ref{lem66}, we have
\begin{equation}
\textup{cap}_{N}(\mathcal{E}_{N}(A),\,\mathcal{E}_{N}(B))\ge\frac{1}{\left\Vert \Phi_{\mathbf{g}}-\psi\right\Vert ^{2}}\left(1+o_{N}(1)+o_{\epsilon}(1)\right)\;.\label{e644}
\end{equation}
As we can write
\[
\Phi_{\mathbf{g}}-\psi=-\frac{1}{N^{-(1+\alpha)}\,\textup{cap}_{Y}(A,\,B)}\left[\Psi_{\mathbf{V}_{A,\,B}}-\frac{\Theta_{N}+\Theta_{N}^{*}}{2}\right]\;,
\]
by similar computations as in the upper bound, we obtain
\begin{equation}
||\Phi_{\mathbf{g}}-\psi||^{2}\le\left(1+o_{N}(1)+o_{\epsilon}(1)\right)\frac{1}{N^{-(1+\alpha)}\,\textup{cap}_{Y}(A,\,B)}\;.\label{e645}
\end{equation}
Combining \eqref{e644} and \eqref{e645}, we have
\begin{equation}
\textup{cap}_{N}(\mathcal{E}_{N}(A),\,\mathcal{E}_{N}(B))\ge\left(1+o_{N}(1)+o_{\epsilon}(1)\right)N^{-(1+\alpha)}\,\textup{cap}_{Y}(A,\,B)\;.\label{e64lo}
\end{equation}
By the upper bound \eqref{e64u} and the lower bound \eqref{e64lo},
the proof is completed.
\end{proof}
By a careful reading of the previous proof, the estimate obtained
in Proposition \ref{p63} can be strengthened as follows.
\begin{cor}
\label{cor66}We have that
\[
\mathscr{D}_{N}(\mathbf{V}_{A,\,B})=\left(1+o_{N}(1)+o_{\epsilon}(1)\right)N^{-(1+\alpha)}\,\textup{cap}_{Y}(A,\,B)\;.
\]
\end{cor}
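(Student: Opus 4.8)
Since part (2) of Proposition \ref{p63} already provides the upper bound $\mathscr{D}_{N}(\mathbf{V}_{A,\,B}) \le (1 + o_{N}(1) + o_{\epsilon}(1))\,N^{-(1+\alpha)}\,\textup{cap}_{Y}(A,\,B)$, the plan is to extract the matching lower bound from the estimates already assembled in the proof of Theorem \ref{t02}. The point of departure is the identity \eqref{e631}, which with the test pair $(\mathbf{f},\,\phi)$ of \eqref{ec1} reads $\Phi_{\mathbf{f}} - \phi = \Psi_{\mathbf{V}_{A,\,B}} + (\Theta_{N} - \Theta_{N}^{*})/2$: thus $\Psi_{\mathbf{V}_{A,\,B}}$ differs from $\Phi_{\mathbf{f}} - \phi$ only by the flow $(\Theta_{N} - \Theta_{N}^{*})/2$, whose norm-square is $(o_{N}(1) + o_{\epsilon}(1))\,N^{-(1+\alpha)}$ by \eqref{e632}.

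The first step is to observe that \eqref{e630} is, after rearrangement, a \emph{lower} bound for $\Vert \Phi_{\mathbf{f}} - \phi \Vert^{2}$: inserting the asymptotics $\textup{cap}_{N}(\mathcal{E}_{N}(A),\,\mathcal{E}_{N}(B)) = (1 + o_{N}(1) + o_{\epsilon}(1))\,N^{-(1+\alpha)}\,\textup{cap}_{Y}(A,\,B)$ from Theorem \ref{t02} gives
\[
\Vert \Phi_{\mathbf{f}} - \phi \Vert^{2} \ge \textup{cap}_{N}(\mathcal{E}_{N}(A),\,\mathcal{E}_{N}(B)) - o_{N}(1)\,N^{-(1+\alpha)} = (1 + o_{N}(1) + o_{\epsilon}(1))\,N^{-(1+\alpha)}\,\textup{cap}_{Y}(A,\,B)\;,
\]
while \eqref{e64up} yields the complementary bound $\Vert \Phi_{\mathbf{f}} - \phi \Vert = O(N^{-(1+\alpha)/2})$. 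Then, using the triangle inequality $\Vert \Psi_{\mathbf{V}_{A,\,B}} \Vert \ge \Vert \Phi_{\mathbf{f}} - \phi \Vert - \Vert (\Theta_{N} - \Theta_{N}^{*})/2 \Vert$ together with these bounds and $\Vert (\Theta_{N} - \Theta_{N}^{*})/2 \Vert = (o_{N}(1) + o_{\epsilon}(1))^{1/2}\,N^{-(1+\alpha)/2}$, the cross term $2\,\Vert \Phi_{\mathbf{f}} - \phi \Vert\,\Vert (\Theta_{N} - \Theta_{N}^{*})/2 \Vert$ is $(o_{N}(1) + o_{\epsilon}(1))\,N^{-(1+\alpha)}$ (using $\sqrt{o(1)} = o(1)$ and that $\textup{cap}_{Y}(A,\,B)$ is a fixed positive constant), whence by part (4) of Proposition \ref{pflow},
\[
\mathscr{D}_{N}(\mathbf{V}_{A,\,B}) = \Vert \Psi_{\mathbf{V}_{A,\,B}} \Vert^{2} \ge \Vert \Phi_{\mathbf{f}} - \phi \Vert^{2} - (o_{N}(1) + o_{\epsilon}(1))\,N^{-(1+\alpha)} \ge (1 + o_{N}(1) + o_{\epsilon}(1))\,N^{-(1+\alpha)}\,\textup{cap}_{Y}(A,\,B)\;.
\]
Combining this with part (2) of Proposition \ref{p63} gives the claimed equality.

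I do not anticipate a genuine obstacle, since every ingredient -- the generalized Dirichlet-principle estimate \eqref{e630}, the capacity asymptotics of Theorem \ref{t02}, the decomposition \eqref{e631}, and the smallness \eqref{e632} of the correction flow -- is already in hand. The only point demanding attention is the bookkeeping of the $o_{N}(1)$ and $o_{\epsilon}(1)$ terms, specifically verifying that the cross term, being the product of a quantity of order $N^{-(1+\alpha)/2}$ with the square root of a quantity of order $o(1)\,N^{-(1+\alpha)}$, is indeed $o(1)\,N^{-(1+\alpha)}$, so that no leading constant is lost when the inequality of Proposition \ref{p63} is sharpened into an equality. (An equivalent route is to pair $\Psi_{\mathbf{V}_{A,\,B}}$ with $\Psi_{\mathbf{h}_{\mathcal{E}_{N}(A),\,\mathcal{E}_{N}(B)}}$ via \eqref{e451} and Lemma \ref{lem66}, and then apply the Cauchy--Schwarz inequality in the form $\mathscr{D}_{N}(\mathbf{V}_{A,\,B}) \ge \llangle \Psi_{\mathbf{V}_{A,\,B}},\,\Psi_{\mathbf{h}_{\mathcal{E}_{N}(A),\,\mathcal{E}_{N}(B)}} \rrangle^{2} / \textup{cap}_{N}(\mathcal{E}_{N}(A),\,\mathcal{E}_{N}(B))$.)
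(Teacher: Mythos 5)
Your proof is correct and carries out exactly what the paper intends by its terse ``by a careful reading of the previous proof'': you take the upper bound already recorded in Proposition \ref{p63}(2), observe that \eqref{e630} read backwards gives a lower bound on $\Vert \Phi_{\mathbf{f}}-\phi\Vert^{2}$ once the capacity asymptotics of Theorem \ref{t02} are inserted, and then transfer this to $\Vert\Psi_{\mathbf{V}_{A,B}}\Vert^{2}=\mathscr{D}_{N}(\mathbf{V}_{A,B})$ via \eqref{e631}, \eqref{e632}, and the triangle inequality, with the cross term controlled as you describe. One very minor inaccuracy: Theorem \ref{t02} gives $\textup{cap}_{N}(\mathcal{E}_{N}(A),\mathcal{E}_{N}(B))=(1+o_{N}(1))N^{-(1+\alpha)}\textup{cap}_{Y}(A,B)$ with no $o_{\epsilon}(1)$ term, since $\textup{cap}_{N}$ is independent of $\epsilon$ -- but writing the weaker $(1+o_{N}(1)+o_{\epsilon}(1))$ as you do is still a valid lower bound and does not affect the conclusion.
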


\subsection{\label{sec64}Collapsed chain}

The importance of the collapsed chain in the context of metastability
has been noticed in \cite{GL, BL2, LS1}. The collapsed chain can
be regarded as a special case of lumped Markov chain (cf. \cite[Section 9.3]{BdH}).
In \cite{GL} the collapsed chain was used for establishing the Dirichlet
principle for non-reversible Markov chains on countable spaces. In
\cite{BL2}, the relation \eqref{e611} between the mean jump rate
and the collapsed chain was obtained, opening up the possibility of
rigorous investigation of metastability of non-reversible processes.
In \cite[Section 8]{LS1}, a method of estimating the right-hand side
of \eqref{e611} was obtained and applied to a cyclic random walk
in a potential field. In this method, the construction of divergence-free
flows was assumed, which is not true in the present case. Accordingly,
in this study the method is properly modified to obtain the sharp
asymptotics of the mean jump rate. To explain this process, certain
well known results on the collapsed chains are presented in this section,
in the context of the zero-range processes. All the proofs are elementary
and given in \cite[Section 8.2]{LS1}.

\subsubsection*{Definition of collapsed chain}

Let $x\in S_{\star}$ and $\overline{\mathcal{H}}_{N}=(\mathcal{H}_{N}\setminus\mathcal{E}_{N}^{x})\cup\{\mathfrak{o}\}$,
where $\mathfrak{o}$ is a new single point. We can regard $\overline{\mathcal{H}}_{N}$
as the set obtained from $\mathcal{H}_{N}$ by collapsing the set
$\mathcal{E}_{N}^{x}$ into a single point $\mathfrak{o}$. Let $R_{N}(\cdot,\,\cdot)$
be the jump rate of the chain $\eta_{N}(\cdot)$, i.e.,
\[
R_{N}(\eta,\,\zeta)=\begin{cases}
g(\eta_{x})r(x,\,y) & \mbox{if }\zeta=\sigma^{x,\,y}\eta\mbox{ for some }x,\,y\in S,\\
0 & \mbox{otherwise\;.}
\end{cases}
\]
The corresponding jump rate on $\overline{\mathcal{H}}_{N}$ is defined
by
\begin{align*}
 & \overline{R}_{N}(\eta,\,\zeta)=R_{N}(\eta,\,\zeta)\;\;\mbox{for all\;}\eta,\,\zeta\in\mathcal{H}_{N}\setminus\mathcal{E}_{N}^{x}\;,\\
 & \overline{R}_{N}(\eta,\,\mathfrak{o})=\sum_{\zeta\in\mathcal{E}_{N}^{x}}R_{N}(\eta,\,\zeta)\;\;\mbox{for all\;}\eta\in\mathcal{H}_{N}\setminus\mathcal{E}_{N}^{x}\;,\;\text{and}\\
 & \overline{R}_{N}(\mathfrak{o},\,\zeta)=\frac{1}{\mu_{N}(\mathcal{E}_{N}^{x})}\sum_{\eta\in\mathcal{E}_{N}^{x}}\mu_{N}(\eta)R_{N}(\eta,\,\zeta)\;\;\mbox{for all\;}\zeta\in\mathcal{H}_{N}\setminus\mathcal{E}_{N}^{x}\;.
\end{align*}
Then, the collapsed chain $\{\overline{\eta}_{N}(t):t\ge0\}$ is defined
as the Markov chain on $\overline{\mathcal{H}}_{N}$ whose jump rate
is $\overline{R}_{N}(\cdot,\,\cdot)$. Let $\mathscr{\overline{L}}_{N}$
denote the generator of the collapsed chain $\overline{\eta}_{N}(\cdot)$,
and let $\mathscr{\overline{L}}_{N}^{\,*}$ and $\overline{\mathscr{L}}_{N}^{\,s}$
denote the generators of the adjoint chain and the symmetrized chain
of the collapsed chain, respectively. Let $\overline{\mathscr{D}}_{N}(\cdot)$
be the Dirichlet form associated with these generators. Denote by
$\overline{\mathbb{P}}_{\eta}^{N}$, $\eta\in\overline{\mathcal{H}}_{N}$,
the law of chain $\overline{\eta}_{N}(\cdot)$ starting from $\eta$.
\begin{lem}
The Markov chain $\overline{\eta}_{N}(\cdot)$ is irreducible on $\overline{\mathcal{H}}_{N}$,
and the unique invariant measure $\overline{\mu}_{N}(\cdot)$ is given
by
\[
\overline{\mu}_{N}(\eta)=\mu_{N}(\eta)\mbox{\;\;for\;\;}\eta\in\mathcal{H}_{N}\setminus\mathcal{E}_{N}^{x}\mbox{\;,\;\;and\;\;}\overline{\mu}_{N}(\mathfrak{o})=\mu_{N}(\mathcal{E}_{N}^{x})\;.
\]
\end{lem}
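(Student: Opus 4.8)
The plan is to deduce everything from the fact, established earlier, that $\mu_N$ is invariant for $\eta_N(\cdot)$, by substituting the definitions of $\overline{R}_N$ and $\overline{\mu}_N$ directly into the balance equations of the collapsed chain; irreducibility and uniqueness are handled separately by elementary path arguments.

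First I would prove irreducibility. Since $\mathcal{E}_N^x\subsetneq\mathcal{H}_N$ (which holds for large $N$ because $\kappa_\star\ge 2$ and $\ell_N<N$), the irreducibility of $\eta_N(\cdot)$ provides $\xi\in\mathcal{E}_N^x$ and $\zeta\in\mathcal{H}_N\setminus\mathcal{E}_N^x$ with $R_N(\xi,\zeta)>0$, and hence $\overline{R}_N(\mathfrak{o},\zeta)\ge\mu_N(\xi)\,\mu_N(\mathcal{E}_N^x)^{-1}\,R_N(\xi,\zeta)>0$. For any two configurations $\eta,\zeta\in\mathcal{H}_N\setminus\mathcal{E}_N^x$ I would take a path in $\mathcal{H}_N$ along which $R_N>0$ joining them: if it avoids $\mathcal{E}_N^x$ it is already a path in $\overline{\mathcal{H}}_N$, and otherwise the segments before the first and after the last visit to $\mathcal{E}_N^x$ can be spliced through $\mathfrak{o}$, because the collapsed rates entering and leaving $\mathfrak{o}$ dominate the corresponding $R_N$-rates up to the positive factors $\mu_N(\cdot)\,\mu_N(\mathcal{E}_N^x)^{-1}$. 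The same construction joins any $\eta\in\mathcal{H}_N\setminus\mathcal{E}_N^x$ to $\mathfrak{o}$, using a path from $\eta$ into $\mathcal{E}_N^x$. This proves irreducibility of $\overline{\eta}_N(\cdot)$ on the finite set $\overline{\mathcal{H}}_N$, whence the invariant measure, once shown to exist, is unique.

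To check invariance I would verify $\sum_{\zeta}\overline{\mu}_N(\zeta)\,\overline{R}_N(\zeta,\eta)=\overline{\mu}_N(\eta)\sum_{\zeta}\overline{R}_N(\eta,\zeta)$ for each $\eta\in\overline{\mathcal{H}}_N$. For $\eta\in\mathcal{H}_N\setminus\mathcal{E}_N^x$, substituting the definitions and recombining the $\mathfrak{o}$-terms turns the two sides back into $\sum_{\zeta\in\mathcal{H}_N}\mu_N(\zeta)\,R_N(\zeta,\eta)$ and $\mu_N(\eta)\sum_{\zeta\in\mathcal{H}_N}R_N(\eta,\zeta)$, so the identity is exactly the $\mu_N$-balance equation at $\eta$. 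For $\eta=\mathfrak{o}$, unwinding the definitions reduces the claim to
\[
\sum_{\xi\in\mathcal{E}_N^x}\ \sum_{\zeta\in\mathcal{H}_N\setminus\mathcal{E}_N^x}\mu_N(\zeta)\,R_N(\zeta,\xi)\ =\ \sum_{\xi\in\mathcal{E}_N^x}\mu_N(\xi)\sum_{\zeta\in\mathcal{H}_N\setminus\mathcal{E}_N^x}R_N(\xi,\zeta)\;,
\]
which I would obtain by summing the $\mu_N$-balance equations over $\xi\in\mathcal{E}_N^x$ and noting that the contributions of pairs $\xi,\zeta$ with both lying in $\mathcal{E}_N^x$ coincide on the two sides and cancel. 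Together with the irreducibility above, this identifies $\overline{\mu}_N$ as the unique invariant measure.

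The computation is routine; the only delicate point is this last step, where one must correctly account for the jumps internal to $\mathcal{E}_N^x$ that are erased by the collapsing operation, and confirm that their total $\mu_N$-weighted in-flow equals their total $\mu_N$-weighted out-flow once the balance equations are summed over $\mathcal{E}_N^x$. Everything else is a direct substitution of the definitions of $\overline{R}_N$ and $\overline{\mu}_N$.
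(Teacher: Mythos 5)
Your proof is correct, and it carries out exactly the elementary verification the paper leaves to the reader: direct substitution of the definitions of $\overline{R}_N$ and $\overline{\mu}_N$ into the balance equations, recombining the $\mathfrak{o}$-terms back into sums over $\mathcal{E}_N^x$, and for $\eta=\mathfrak{o}$ summing the original balance equations over $\mathcal{E}_N^x$ and cancelling the internal jumps by symmetry of the double sum; the path-splicing argument for irreducibility is also the standard one. There is nothing to compare against since the paper omits the proof, but your write-up is a complete and accurate filling-in.
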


The proof is based on elementary computations and is left to the reader.

\subsubsection*{Flow structure of collapsed chains}

As $\overline{\eta}_{N}(\cdot)$ is a Markov chain on $\overline{\mathcal{H}}_{N}$,
the flow structure can be induced, and then potential theory can be
developed in the same manner as in Section \ref{sec51}. The flow
structure of the Markov chain $\overline{\eta}_{N}(\cdot)$ as well
as its relation to that of the original chain are summarized now.

The conductance between $\eta,\,\zeta\in\overline{\mathcal{H}}_{N}$
of the collapsed chain is defined by
\[
\overline{c}_{N}(\eta,\,\zeta)=\overline{\mu}_{N}(\eta)\,\overline{R}_{N}(\eta,\,\zeta)\;.
\]
It can be verified that $\overline{c}_{N}(\eta,\,\zeta)=c_{N}(\eta,\,\zeta)$
if $\eta,\,\zeta\neq\mathfrak{o}$, and that
\[
\overline{c}_{N}(\eta,\,\mathfrak{o})=\sum_{\zeta\in\mathcal{E}_{N}^{x}}c_{N}(\eta,\,\zeta)\;\;\mbox{and\;\;}\overline{c}_{N}(\mathfrak{o},\,\zeta)=\sum_{\eta\in\mathcal{E}_{N}^{x}}c_{N}(\eta,\,\zeta)\;.
\]
This linearity is fundamental in the relation between the original
and collapsed chain. Define the symmetrized conductance by
\[
\overline{c}_{N}^{\,s}(\eta,\,\zeta)=(1/2)\,(\,\overline{c}_{N}(\eta,\,\zeta)+\overline{c}_{N}(\zeta,\,\eta))\;\;;\;\eta,\,\zeta\in\overline{\mathcal{H}}_{N}\;,
\]
and the edge set by
\[
\overline{\mathcal{H}}_{N}^{\otimes}=\left\{ (\eta,\,\zeta)\in\overline{\mathcal{H}}_{N}\times\overline{\mathcal{H}}_{N}:\overline{c}_{N}^{\,s}(\eta,\,\zeta)>0\right\} \;.
\]
Then, flows are defined by an anti-symmetric real-valued function
on $\overline{\mathcal{H}}_{N}^{\otimes}$, and the set of flows is
denoted by $\overline{\mathfrak{F}}_{N}$. The inner product and the
norm on the flow structure can be defined in the same manner as before
and are denoted by $\llangle\cdot,\,\cdot\rrangle_{\mathcal{C}}$
and $||\cdot||_{\mathcal{C}}$, respectively. The divergence of a
flow is also defined similarly.

\subsubsection*{Collapsed objects: flows, functions, equilibrium potential, and capacity}

For $\phi\in\mathfrak{F}_{N}$, the collapsed flow $\overline{\phi}\in\overline{\mathfrak{F}}_{N}$
is defined by
\begin{align}
 & \overline{\phi}(\eta,\,\zeta)=\phi(\eta,\,\zeta)\;\;\text{\ensuremath{\forall}\ensuremath{\eta,\,\zeta\neq\mathfrak{o}\;}},\;\;\overline{\phi}(\eta,\,\mathfrak{o})=\sum_{\zeta\in\mathcal{E}_{N}^{x}}\phi(\eta,\,\zeta)\;,\text{and}\;\;\overline{\phi}(\mathfrak{o},\,\zeta)=\sum_{\eta\in\mathcal{E}_{N}^{x}}\phi(\eta,\,\zeta)\;.\label{eff1}
\end{align}
Then, the following results are known.
\begin{lem}
\label{lem67}For $\phi\in\mathfrak{F}_{N}$, the flow norm of $\overline{\phi}$
satisfies $||\overline{\phi}||_{\mathcal{C}}\le||\phi||$, and the
equality holds if and only if,
\begin{align}
 & \phi(\eta,\,\zeta)=0\mbox{\;\;for all\;}\;(\eta,\,\zeta)\in\mathcal{H}_{N}^{\otimes}\;\mbox{such that }\mbox{\ensuremath{\eta},\,\ensuremath{\zeta}\ensuremath{\in}}\mathcal{E}_{N}^{x}\;\mbox{and}\label{hu1}\\
 & \frac{\phi(\eta,\,\zeta)}{c_{N}^{s}(\eta,\,\zeta)}=\frac{\phi(\eta,\,\zeta')}{c_{N}^{s}(\eta,\,\zeta')}\mbox{ \;for all}\;(\eta,\,\zeta),\,(\eta,\,\zeta')\in\mathcal{H}_{N}^{\otimes}\;\mbox{such that }\zeta,\,\zeta'\in\mathcal{E}_{N}^{x}\;.\label{hu2}
\end{align}
\end{lem}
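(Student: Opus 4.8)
The plan is to compare the two flow norms edge by edge after partitioning the directed edge set $\mathcal{H}_N^{\otimes}$ according to how each edge behaves under the collapsing map. First I would split $\mathcal{H}_N^{\otimes}$ into three classes: (i) pairs $(\eta,\,\zeta)$ with $\eta,\,\zeta\notin\mathcal{E}_N^x$; (ii) pairs with exactly one endpoint in $\mathcal{E}_N^x$; and (iii) pairs with both endpoints in $\mathcal{E}_N^x$. On class (i) the collapsed conductance and the collapsed flow coincide with the originals by the very definition of $\overline{c}_N$ and of $\overline{\phi}$ in \eqref{eff1}, so these terms contribute identically to $\|\phi\|^2$ and to $\|\overline{\phi}\|_{\mathcal{C}}^2$. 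On class (iii) both endpoints are mapped to $\mathfrak{o}$; since the collapsed chain carries no self-loop at $\mathfrak{o}$, these edges are absent from $\overline{\mathcal{H}}_N^{\otimes}$, so they contribute the nonnegative quantity $\tfrac12\sum \phi(\eta,\,\zeta)^2/c_N^s(\eta,\,\zeta)$ to $\|\phi\|^2$ and nothing to $\|\overline{\phi}\|_{\mathcal{C}}^2$. This term vanishes precisely when $\phi\equiv0$ on all such edges, which is condition \eqref{hu1}.

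The core of the argument is class (ii). Fix $\eta\notin\mathcal{E}_N^x$ with $\eta\sim\mathfrak{o}$; the class-(ii) edges incident to $\eta$ collapse to the single edge $(\eta,\,\mathfrak{o})$, with $\overline{\phi}(\eta,\,\mathfrak{o})=\sum_{\zeta\in\mathcal{E}_N^x}\phi(\eta,\,\zeta)$ and $\overline{c}_N^{\,s}(\eta,\,\mathfrak{o})=\sum_{\zeta\in\mathcal{E}_N^x}c_N^{s}(\eta,\,\zeta)$, the sums effectively running over $\zeta\in\mathcal{E}_N^x$ with $\eta\sim\zeta$. Writing $\phi(\eta,\,\zeta)=\bigl[\phi(\eta,\,\zeta)/\sqrt{c_N^{s}(\eta,\,\zeta)}\bigr]\cdot\sqrt{c_N^{s}(\eta,\,\zeta)}$ and applying Cauchy--Schwarz gives $\overline{\phi}(\eta,\,\mathfrak{o})^2\le\bigl(\sum_{\zeta}\phi(\eta,\,\zeta)^2/c_N^{s}(\eta,\,\zeta)\bigr)\,\overline{c}_N^{\,s}(\eta,\,\mathfrak{o})$, that is, $\overline{\phi}(\eta,\,\mathfrak{o})^2/\overline{c}_N^{\,s}(\eta,\,\mathfrak{o})\le\sum_{\zeta\in\mathcal{E}_N^x}\phi(\eta,\,\zeta)^2/c_N^{s}(\eta,\,\zeta)$. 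Summing this over $\eta\notin\mathcal{E}_N^x$ and adding the contributions of classes (i) and (iii) yields $\|\overline{\phi}\|_{\mathcal{C}}^2\le\|\phi\|^2$. Equality in this inequality forces equality in each Cauchy--Schwarz step and the vanishing of the class-(iii) term; the former says $\phi(\eta,\,\zeta)/c_N^{s}(\eta,\,\zeta)$ is independent of $\zeta$ as $\zeta$ ranges over the neighbours of $\eta$ inside $\mathcal{E}_N^x$, which is exactly \eqref{hu2}, while the latter is \eqref{hu1}; conversely, if \eqref{hu1} and \eqref{hu2} both hold, every step is an equality and $\|\overline{\phi}\|_{\mathcal{C}}=\|\phi\|$.

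The remaining points are bookkeeping. Since the inner product \eqref{inner} runs over oriented edges with a compensating factor $1/2$, I would check consistency with anti-symmetry: $\overline{\phi}(\mathfrak{o},\,\eta)=\sum_{\zeta}\phi(\zeta,\,\eta)=-\overline{\phi}(\eta,\,\mathfrak{o})$ and $\overline{c}_N^{\,s}(\mathfrak{o},\,\eta)=\overline{c}_N^{\,s}(\eta,\,\mathfrak{o})$, so each collapsed edge at $\mathfrak{o}$ is still counted twice with the correct sign, and the class-(ii) comparison above matches term by term. If $\overline{c}_N^{\,s}(\eta,\,\mathfrak{o})=0$ then $\eta$ has no class-(ii) edge and there is nothing to estimate. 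None of this is a genuine obstacle; the only place that needs care is matching the equality case to the precise statements \eqref{hu1} and \eqref{hu2}, in particular remembering that the index $\zeta$ in \eqref{hu2} is restricted to neighbours of $\eta$ lying in $\mathcal{E}_N^x$.
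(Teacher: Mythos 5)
Your Cauchy--Schwarz argument on the partitioned edge set is correct and is the standard proof that the paper's cited reference \cite[Lemma 8.2]{LS1} uses. The one small clarification worth making in the equality case is that the Cauchy--Schwarz condition gives \eqref{hu2} only for $\eta\notin\mathcal{E}_N^x$, while \eqref{hu1} makes \eqref{hu2} hold trivially (both ratios vanish) whenever $\eta\in\mathcal{E}_N^x$, so the combination of your two conditions does match the lemma's statement exactly.
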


\begin{proof}
See \cite[Lemma 8.2]{LS1}.
\end{proof}
\begin{lem}
\label{lem68}For $\phi\in\mathfrak{F}_{N}$, the divergence of $\overline{\phi}$
satisfies
\[
(\textup{div }\overline{\phi})(\eta)=\begin{cases}
(\textup{div }\phi)(\eta) & \mbox{if }\eta\neq\mathfrak{o}\;,\\
(\textup{div }\phi)(\mathcal{E}_{N}^{x}) & \mbox{if }\eta=\mathfrak{o\;.}
\end{cases}
\]
\end{lem}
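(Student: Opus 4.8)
The plan is to expand $(\textup{div}\,\overline{\phi})(\eta)$ directly from its definition on the collapsed graph $\overline{\mathcal{H}}_N^{\otimes}$, treating separately the two cases $\eta\neq\mathfrak{o}$ and $\eta=\mathfrak{o}$, and in each case to recognize the resulting sum as $(\textup{div}\,\phi)(\eta)$, respectively $(\textup{div}\,\phi)(\mathcal{E}_N^x)$, via the definition \eqref{eff1} of the collapsed flow. Throughout it is convenient to adopt the convention that $\phi(\eta,\,\zeta)=0$ whenever $\eta\not\sim\zeta$, so that all sums appearing in \eqref{eff1} and in the divergence may be freely extended to $\mathcal{H}_N$ or to $\mathcal{E}_N^x$ without bookkeeping of adjacency constraints.

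As a preliminary I would record the elementary adjacency fact that, for $\eta\in\mathcal{H}_N\setminus\mathcal{E}_N^x$, one has $\eta\sim\mathfrak{o}$ in $\overline{\mathcal{H}}_N$ if and only if $\eta\sim\zeta$ in $\mathcal{H}_N$ for some $\zeta\in\mathcal{E}_N^x$: indeed $\overline{c}_N^{\,s}(\eta,\,\mathfrak{o})=\frac{1}{2}\sum_{\zeta\in\mathcal{E}_N^x}(c_N(\eta,\,\zeta)+c_N(\zeta,\,\eta))$ is a sum of nonnegative terms that is strictly positive precisely when some $c_N^s(\eta,\,\zeta)$ with $\zeta\in\mathcal{E}_N^x$ is positive. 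Hence, for $\eta\neq\mathfrak{o}$, the neighbours of $\eta$ in $\overline{\mathcal{H}}_N$ are exactly its neighbours in $\mathcal{H}_N\setminus\mathcal{E}_N^x$ together with $\mathfrak{o}$ (when applicable), and since $\overline{\phi}(\eta,\,\mathfrak{o})=\sum_{\zeta\in\mathcal{E}_N^x}\phi(\eta,\,\zeta)$ vanishes when $\eta$ is not adjacent to $\mathcal{E}_N^x$, one obtains directly
\[
(\textup{div}\,\overline{\phi})(\eta)=\sum_{\zeta\in\mathcal{H}_N\setminus\mathcal{E}_N^x}\phi(\eta,\,\zeta)+\sum_{\zeta\in\mathcal{E}_N^x}\phi(\eta,\,\zeta)=\sum_{\zeta\in\mathcal{H}_N}\phi(\eta,\,\zeta)=(\textup{div}\,\phi)(\eta)\;.
\]

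For the case $\eta=\mathfrak{o}$, the neighbours of $\mathfrak{o}$ are the $\zeta\in\mathcal{H}_N\setminus\mathcal{E}_N^x$ adjacent in $\mathcal{H}_N$ to some element of $\mathcal{E}_N^x$, so expanding and using $\overline{\phi}(\mathfrak{o},\,\zeta)=\sum_{\eta'\in\mathcal{E}_N^x}\phi(\eta',\,\zeta)$ gives
\[
(\textup{div}\,\overline{\phi})(\mathfrak{o})=\sum_{\zeta\in\mathcal{H}_N\setminus\mathcal{E}_N^x}\sum_{\eta'\in\mathcal{E}_N^x}\phi(\eta',\,\zeta)=\sum_{\eta'\in\mathcal{E}_N^x}\sum_{\zeta\in\mathcal{H}_N\setminus\mathcal{E}_N^x}\phi(\eta',\,\zeta)\;.
\]
On the other hand $(\textup{div}\,\phi)(\mathcal{E}_N^x)=\sum_{\eta'\in\mathcal{E}_N^x}\sum_{\zeta\in\mathcal{H}_N}\phi(\eta',\,\zeta)$; splitting the inner sum according to whether $\zeta\in\mathcal{E}_N^x$ or not, the contribution of the pairs $\eta',\,\zeta$ both lying in $\mathcal{E}_N^x$ is a sum over a set of ordered edges of $\mathcal{H}_N^{\otimes}$ that is stable under transposition, and therefore vanishes by the antisymmetry $\phi(\eta',\,\zeta)=-\phi(\zeta,\,\eta')$. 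What remains is exactly $\sum_{\eta'\in\mathcal{E}_N^x}\sum_{\zeta\notin\mathcal{E}_N^x}\phi(\eta',\,\zeta)$, which matches $(\textup{div}\,\overline{\phi})(\mathfrak{o})$, completing the proof.

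There is no genuine obstacle here; the argument is pure bookkeeping. The two points that warrant a moment's care are the cancellation of the edges internal to $\mathcal{E}_N^x$ in the case $\eta=\mathfrak{o}$, which uses precisely the antisymmetry of flows, and the preliminary adjacency observation, which ensures that no edge of $\mathcal{H}_N^{\otimes}$ incident to $\mathcal{E}_N^x$ is lost or double-counted when passing to the collapsed graph $\overline{\mathcal{H}}_N^{\otimes}$.
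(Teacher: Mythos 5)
Your proof is correct and complete; it is the natural direct calculation for this lemma. The paper itself just delegates the proof to \cite[display (8.7)]{LS1}, and your argument—expanding the divergence from the definition of the collapsed flow, splitting the sum into edges touching $\mathcal{E}_{N}^{x}$ and the rest, and invoking antisymmetry to cancel the edges internal to $\mathcal{E}_{N}^{x}$—is exactly the bookkeeping one expects to find there.
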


\begin{proof}
See \cite[display (8.7)]{LS1}.
\end{proof}
The collapse of functions is now considered. Suppose that a function
$\mathbf{f}:\mathcal{H}_{N}\rightarrow\mathbb{R}$ satisfies $\mathbf{f}(\eta)=a$
for all $\eta\in\mathcal{E}_{N}^{x}$, for some $a\in\mathbb{R}$.
Then, the collapsed function $\overline{\mathbf{f}}:\overline{\mathcal{H}}_{N}\rightarrow\mathbb{R}$
is defined by
\[
\overline{\mathbf{f}}(\eta)=\begin{cases}
\mathbf{f}(\eta) & \mbox{if }\eta\neq\mathfrak{o}\;,\\
\mathbf{f}(a) & \mbox{if }\eta=\mathfrak{o\;.}
\end{cases}
\]
As in \eqref{flow}, for $\mathbf{g}:\overline{\mathcal{H}}_{N}\rightarrow\mathbb{R}$,
three flows can be defined by
\begin{align}
\overline{\Phi}_{\mathbf{g}}(\eta,\,\zeta) & =\mathbf{g}(\eta)\,\overline{c}_{N}(\eta,\,\zeta)-\mathbf{g}(\zeta)\,\overline{c}_{N}(\zeta,\,\eta)\;,\nonumber \\
\overline{\Phi}_{\mathbf{g}}^{\,*}(\eta,\,\zeta) & =\mathbf{g}(\eta)\,\overline{c}_{N}(\zeta,\,\eta)-\mathbf{g}(\zeta)\,\overline{c}_{N}(\eta,\,\zeta)\;,\label{cflow}\\
\overline{\Psi}_{\mathbf{g}}(\eta,\,\zeta) & =\overline{c}_{N}^{\,s}(\eta,\,\zeta)\left[\mathbf{g}(\eta)-\mathbf{g}(\zeta)\right]\;.\nonumber
\end{align}

\begin{lem}
\label{lem69}Suppose that a function $\mathbf{f}:\mathcal{H}_{N}\rightarrow\mathbb{R}$
is constant on $\mathcal{E}_{N}^{x}$ so that the collapsed function
$\overline{\mathbf{f}}$ can be defined. Then,
\[
\overline{\Phi_{\mathbf{f}}}=\overline{\Phi}_{\overline{\mathbf{f}}}\;,\;\;\overline{\Phi_{\mathbf{f}}^{*}}=\overline{\Phi}_{\overline{\mathbf{f}}}^{\,*}\;,\;\;\mbox{and\;\;}\overline{\Psi_{\mathbf{f}}}=\overline{\Psi}_{\overline{\mathbf{f}}}\;,
\]
where $\overline{\Phi_{\mathbf{f}}}$, $\overline{\Phi_{\mathbf{f}}^{*}}$
and $\overline{\Psi_{\mathbf{f}}}$ represent the collapsed flows
of $\Phi_{\mathbf{f}}$, $\Phi_{\mathbf{f}}^{*}$, and $\Psi_{\mathbf{f}}$,
in the sense of \eqref{eff1}, respectively.
\end{lem}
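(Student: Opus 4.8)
The plan is to verify the three claimed identities directly, edge by edge, on the collapsed graph $\overline{\mathcal{H}}_N^{\otimes}$. Its edges come in two types: ordinary edges $(\eta,\zeta)$ with $\eta,\zeta\neq\mathfrak{o}$, and the edges incident to the collapsed vertex, namely $(\eta,\mathfrak{o})$ and $(\mathfrak{o},\eta)$ for $\eta\in\mathcal{H}_N\setminus\mathcal{E}_N^x$. The three flows $\Phi_{\mathbf{f}}$, $\Phi_{\mathbf{f}}^{*}$, $\Psi_{\mathbf{f}}$ are treated in exactly the same way, so I would spell out the argument for $\Phi_{\mathbf{f}}$ and indicate the trivial modifications needed for the other two.

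On an ordinary edge there is nothing to check: by \eqref{eff1} the collapsed flow $\overline{\Phi_{\mathbf{f}}}$ coincides with $\Phi_{\mathbf{f}}$ there, the collapsed conductance $\overline{c}_N$ coincides with $c_N$, and $\overline{\mathbf{f}}$ coincides with $\mathbf{f}$; hence the defining formula \eqref{cflow} for $\overline{\Phi}_{\overline{\mathbf{f}}}$ reduces to \eqref{flow} for $\Phi_{\mathbf{f}}$. The substantive case is the edges incident to $\mathfrak{o}$. For $\eta\neq\mathfrak{o}$, unfolding \eqref{eff1} and \eqref{flow} gives $\overline{\Phi_{\mathbf{f}}}(\eta,\mathfrak{o})=\sum_{\zeta\in\mathcal{E}_N^x}\bigl[\mathbf{f}(\eta)\,c_N(\eta,\zeta)-\mathbf{f}(\zeta)\,c_N(\zeta,\eta)\bigr]$; using the hypothesis $\mathbf{f}\equiv a$ on $\mathcal{E}_N^x$ to pull the constant $\mathbf{f}(\zeta)=a=\overline{\mathbf{f}}(\mathfrak{o})$ out of the sum, and then the linearity identities $\overline{c}_N(\eta,\mathfrak{o})=\sum_{\zeta\in\mathcal{E}_N^x}c_N(\eta,\zeta)$ and $\overline{c}_N(\mathfrak{o},\eta)=\sum_{\zeta\in\mathcal{E}_N^x}c_N(\zeta,\eta)$, this becomes precisely $\overline{\mathbf{f}}(\eta)\,\overline{c}_N(\eta,\mathfrak{o})-\overline{\mathbf{f}}(\mathfrak{o})\,\overline{c}_N(\mathfrak{o},\eta)=\overline{\Phi}_{\overline{\mathbf{f}}}(\eta,\mathfrak{o})$. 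The edge $(\mathfrak{o},\eta)$ then follows by antisymmetry (or by the same computation). For $\Phi_{\mathbf{f}}^{*}$ one repeats the computation with $c_N(\eta,\zeta)$ and $c_N(\zeta,\eta)$ exchanged; for $\Psi_{\mathbf{f}}$ one additionally uses $\overline{c}_N^{\,s}(\eta,\mathfrak{o})=\sum_{\zeta\in\mathcal{E}_N^x}c_N^{s}(\eta,\zeta)$, which is immediate from the two linearity identities above and the definition of the symmetrized conductance.

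I do not expect a genuine obstacle here. The only step requiring any attention — and the only place the hypothesis is used — is extracting the constant value $a$ from the sum over $\mathcal{E}_N^x$ when evaluating the flows on the $\mathfrak{o}$-edges; this is exactly the property that makes $\overline{\mathbf{f}}(\mathfrak{o})$ well defined in the first place. Everything else is bookkeeping with \eqref{flow}, \eqref{cflow}, \eqref{eff1} and the linearity of $\overline{c}_N$ under collapse, which is the computation carried out in \cite[Section 8.2]{LS1}.
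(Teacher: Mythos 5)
Your proof is correct and complete; the paper itself does not give an argument but simply cites \cite[Lemma 8.3]{LS1}, and your edge-by-edge verification (trivial on ordinary edges, pulling the constant value $a$ out of the sum on $\mathfrak{o}$-edges, together with the linearity of $\overline{c}_N$ and hence of $\overline{c}_N^{\,s}$ under collapse) is exactly the computation that citation refers to. In short, you have supplied the proof the paper delegates to the reference.
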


\begin{proof}
See \cite[Lemma 8.3]{LS1}.
\end{proof}
For two disjoint non-empty subsets $\mathcal{A}$ and $\mathcal{B}$
of $\overline{\mathcal{H}}_{N}$, the equilibrium potential $\overline{\mathbf{h}}_{\mathcal{A},\mathcal{\,B}}$
and the capacity $\overline{\textup{cap}}_{N}(\mathcal{A},\,\mathcal{B})$
for the collapsed chain $\overline{\eta}_{N}(\cdot)$ can be defined.
Moreover, $\overline{C}_{a,\,b}(\mathcal{A},\,\mathcal{B})$ and $\mathfrak{\overline{S}}_{a}(\mathcal{A},\,\mathcal{B})$
are defined as the natural collapsed versions of $C_{a,\,b}(\mathcal{A},\,\mathcal{B})$
and $\mathfrak{\mathfrak{S}}_{a}(\mathcal{A},\,\mathcal{B})$ introduced
in Section \ref{sec52}, respectively.
\begin{rem}
If $\mathcal{A},\,\mathcal{B}\subset\mathcal{H}_{N}\setminus\mathcal{E}_{N}^{x}$,
then the equilibrium potential $\mathbf{h}_{\mathcal{A},\,\mathcal{B}}$
with respect to the original dynamics can be considered. In this case,
$\mathbf{h}_{\mathcal{A},\,\mathcal{B}}$ conditioned on $\mathcal{E}_{N}^{x}$
may not be a constant function; hence, the collapsed function of $\mathbf{h}_{\mathcal{A},\,\mathcal{B}}$
may not be defined. Thus, we should not regard $\overline{\mathbf{h}}_{\mathcal{A},\mathcal{\,B}}$
as a collapse of function $\mathbf{h}_{\mathcal{A},\,\mathcal{B}}$
in the sense explained above.
\end{rem}

Let $\overline{\textup{cap}}_{N}^{\,s}(\cdot,\,\mathcal{\cdot})$
denote the capacity corresponding to the dynamics associated with
the generator $\overline{\mathscr{L}}_{N}^{\,s}$. It is known that
the sector condition of the collapsed chain is inherited from the
original chain; therefore, Corollary \ref{csector} is valid for the
collapsed dynamics as well.
\begin{lem}
\label{lem612}For two disjoint non-empty subsets $\mathcal{A},\,\mathcal{B}$
of $\overline{\mathcal{H}}_{N}$, we have
\[
\overline{\textup{cap}}_{N}^{\,s}(\mathcal{A},\,\mathcal{B})\le\textup{\ensuremath{\overline{\textup{cap}}}}_{N}(\mathcal{A},\,\mathcal{B})\le C_{0}\,\textup{\ensuremath{\overline{\textup{cap}}}}_{N}^{\,s}(\mathcal{A},\,\mathcal{B})\;.
\]
\end{lem}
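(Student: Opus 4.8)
The plan is to deduce the sandwich inequality from a sector condition for the collapsed chain $\overline{\eta}_N(\cdot)$, exactly as Corollary \ref{csector} was deduced from Proposition \ref{psector}. Concretely, once one knows that $\overline{\eta}_N(\cdot)$ satisfies the sector condition
\[
\left\langle \mathbf{g},\,-\overline{\mathscr{L}}_N\mathbf{f}\right\rangle _{\overline{\mu}_N}^{2}\le C_0\,\overline{\mathscr{D}}_N(\mathbf{f})\,\overline{\mathscr{D}}_N(\mathbf{g})\qquad\text{for all }\;\mathbf{f},\,\mathbf{g}:\overline{\mathcal{H}}_N\rightarrow\mathbb{R}
\]
with the \emph{same} constant $C_0$ as in Proposition \ref{psector}, an application of \cite[Lemmata 2.5 and 2.6]{GL} to $\overline{\eta}_N(\cdot)$ and to its symmetrization $\overline{\mathscr{L}}_N^{\,s}$ yields $\overline{\textup{cap}}_N^{\,s}(\mathcal{A},\,\mathcal{B})\le\overline{\textup{cap}}_N(\mathcal{A},\,\mathcal{B})\le C_0\,\overline{\textup{cap}}_N^{\,s}(\mathcal{A},\,\mathcal{B})$ verbatim, using the irreducibility of $\overline{\eta}_N(\cdot)$ established above. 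Thus the whole content of the lemma is the displayed sector condition.

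To establish it, I would lift functions from $\overline{\mathcal{H}}_N$ back to $\mathcal{H}_N$. For $\mathbf{f}:\overline{\mathcal{H}}_N\rightarrow\mathbb{R}$ let $\widetilde{\mathbf{f}}:\mathcal{H}_N\rightarrow\mathbb{R}$ be the function that agrees with $\mathbf{f}$ on $\mathcal{H}_N\setminus\mathcal{E}_N^x$ and is constant equal to $\mathbf{f}(\mathfrak{o})$ on $\mathcal{E}_N^x$. Writing the bilinear form in the edge form $\left\langle \mathbf{g},\,-\overline{\mathscr{L}}_N\mathbf{f}\right\rangle _{\overline{\mu}_N}=\sum_{\eta,\,\zeta}\overline{c}_N(\eta,\,\zeta)\,\mathbf{g}(\eta)\,[\mathbf{f}(\eta)-\mathbf{f}(\zeta)]$ and splitting the sum according to whether $\mathfrak{o}$ occurs as the first argument, the second argument, or neither, one uses $\overline{c}_N(\eta,\,\zeta)=c_N(\eta,\,\zeta)$ for $\eta,\,\zeta\neq\mathfrak{o}$, together with $\overline{c}_N(\eta,\,\mathfrak{o})=\sum_{\zeta\in\mathcal{E}_N^x}c_N(\eta,\,\zeta)$ and $\overline{c}_N(\mathfrak{o},\,\zeta)=\sum_{\eta\in\mathcal{E}_N^x}c_N(\eta,\,\zeta)$, to recognize the result as $\sum_{\eta,\,\zeta\in\mathcal{H}_N}c_N(\eta,\,\zeta)\,\widetilde{\mathbf{g}}(\eta)\,[\widetilde{\mathbf{f}}(\eta)-\widetilde{\mathbf{f}}(\zeta)]=\left\langle \widetilde{\mathbf{g}},\,-\mathscr{L}_N\widetilde{\mathbf{f}}\right\rangle _{\mu_N}$; the edges internal to $\mathcal{E}_N^x$, which are the only ones introduced in this last identification, contribute nothing because $\widetilde{\mathbf{f}}$ is constant on $\mathcal{E}_N^x$. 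Taking $\mathbf{g}=\mathbf{f}$ gives in particular $\overline{\mathscr{D}}_N(\mathbf{f})=\mathscr{D}_N(\widetilde{\mathbf{f}})$. Feeding these two identities into Proposition \ref{psector} applied to $\widetilde{\mathbf{f}},\,\widetilde{\mathbf{g}}$ gives
\[
\left\langle \mathbf{g},\,-\overline{\mathscr{L}}_N\mathbf{f}\right\rangle _{\overline{\mu}_N}^{2}=\left\langle \widetilde{\mathbf{g}},\,-\mathscr{L}_N\widetilde{\mathbf{f}}\right\rangle _{\mu_N}^{2}\le C_0\,\mathscr{D}_N(\widetilde{\mathbf{f}})\,\mathscr{D}_N(\widetilde{\mathbf{g}})=C_0\,\overline{\mathscr{D}}_N(\mathbf{f})\,\overline{\mathscr{D}}_N(\mathbf{g})\;,
\]
which is the desired sector condition.

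I do not expect any genuine difficulty: the argument is pure bookkeeping with the definitions of $\overline{R}_N$, $\overline{\mu}_N$, and $\overline{c}_N$, and it rests only on the linearity of the collapsed conductances isolated in Section \ref{sec64}. The one point meriting attention is the accounting around the collapsed vertex $\mathfrak{o}$: it is the normalization $\overline{\mu}_N(\mathfrak{o})=\mu_N(\mathcal{E}_N^x)$ that makes $\overline{c}_N(\mathfrak{o},\,\cdot)$ equal to the exact sum of the original conductances leaving $\mathcal{E}_N^x$, while it is the lift to functions \emph{constant} on $\mathcal{E}_N^x$ that annihilates the contribution of the edges internal to $\mathcal{E}_N^x$. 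Alternatively, one may simply invoke \cite[Section 8.2]{LS1}, where the inheritance of the sector condition by collapsed chains is recorded in the general Markov-chain setting, and read off the claim as its specialization to $\overline{\eta}_N(\cdot)$.
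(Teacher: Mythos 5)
Your proposal is correct and follows essentially the same approach as the paper: the paper's proof consists solely of the citation ``See \cite[Lemma 8.6]{LS1}'', and that cited lemma is precisely the statement that the sector condition is inherited by collapsed chains and hence so is the capacity sandwich. You have simply unfolded the argument that the citation encapsulates; your lift $\widetilde{\mathbf{f}}$ (constant on $\mathcal{E}_N^x$, equal to $\mathbf{f}(\mathfrak{o})$ there) and the edge-by-edge verification that $\langle \mathbf{g},-\overline{\mathscr{L}}_N\mathbf{f}\rangle_{\overline{\mu}_N}=\langle \widetilde{\mathbf{g}},-\mathscr{L}_N\widetilde{\mathbf{f}}\rangle_{\mu_N}$ (hence $\overline{\mathscr{D}}_N(\mathbf{f})=\mathscr{D}_N(\widetilde{\mathbf{f}})$) are exactly the mechanism by which the constant $C_0$ of Proposition \ref{psector} carries over to the collapsed dynamics, after which \cite[Lemmata 2.5--2.6]{GL} give the two-sided capacity bound.
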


\begin{proof}
See \cite[Lemma 8.6]{LS1}.
\end{proof}

\subsection{\label{sec65}Capacity estimates for collapsed chains}

The capacity estimates for the collapsed chain are an essential ingredient
for estimating the mean jump rate. It should be first noted that the
capacity for the collapsed process is easy to compute when one of
the sets involved is $\{\mathfrak{o}\}$.
\begin{lem}
\label{lem613}For all non-empty subsets $\mathcal{A}$ of $\mathcal{H}_{N}\setminus\mathcal{E}_{N}^{x}$,
\[
\textup{\ensuremath{\overline{\textup{cap}}}}_{N}(\mathcal{A},\,\{\mathfrak{o}\})=\textup{cap}_{N}(\mathcal{A},\,\mathcal{E}_{N}^{x})\;.
\]
\end{lem}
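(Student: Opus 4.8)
The plan is to realize both capacities as squared flow norms and then match them using the collapsing lemmas of Section \ref{sec64}. First observe that, by the boundary condition in \eqref{eqp2}, the equilibrium potential $\mathbf{h}_{\mathcal{A},\,\mathcal{E}_N^x}$ of the original chain vanishes identically on $\mathcal{E}_N^x$; hence it is constant there and its collapsed function $\overline{\mathbf{h}_{\mathcal{A},\,\mathcal{E}_N^x}}$ is well defined in the sense of Section \ref{sec64}. I claim that
\[
\overline{\mathbf{h}_{\mathcal{A},\,\mathcal{E}_N^x}}=\overline{\mathbf{h}}_{\mathcal{A},\,\{\mathfrak{o}\}}\;.
\]
Both functions equal $1$ on $\mathcal{A}$ and $0$ at $\mathfrak{o}$, so by uniqueness of the equilibrium potential it suffices to check that $\overline{\mathbf{h}_{\mathcal{A},\,\mathcal{E}_N^x}}$ is $\overline{\mathscr{L}}_N$-harmonic on $\overline{\mathcal{H}}_N\setminus(\mathcal{A}\cup\{\mathfrak{o}\})$; this is a one-line computation using $\overline{R}_N(\eta,\,\zeta)=R_N(\eta,\,\zeta)$ for $\zeta\neq\mathfrak{o}$, $\overline{R}_N(\eta,\,\mathfrak{o})=\sum_{\xi\in\mathcal{E}_N^x}R_N(\eta,\,\xi)$, and the vanishing of $\mathbf{h}_{\mathcal{A},\,\mathcal{E}_N^x}$ on $\mathcal{E}_N^x$. (Equivalently, one may argue probabilistically: before reaching $\mathcal{E}_N^x$ the original chain coincides in law with the collapsed chain before it reaches $\mathfrak{o}$, and $\mathcal{A}\cap\mathcal{E}_N^x=\emptyset$, so the two hitting-probability functions agree off $\mathfrak{o}$.)

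Next I would use the flow representation. By the analogue of part (4) of Proposition \ref{pflow} for the collapsed chain together with Lemma \ref{lem69} applied to $\mathbf{f}=\mathbf{h}_{\mathcal{A},\,\mathcal{E}_N^x}$,
\[
\overline{\textup{cap}}_N(\mathcal{A},\,\{\mathfrak{o}\})=\overline{\mathscr{D}}_N(\overline{\mathbf{h}}_{\mathcal{A},\,\{\mathfrak{o}\}})=\bigl\|\overline{\Psi}_{\overline{\mathbf{h}}_{\mathcal{A},\,\{\mathfrak{o}\}}}\bigr\|_{\mathcal{C}}^2=\bigl\|\overline{\Psi_{\mathbf{h}_{\mathcal{A},\,\mathcal{E}_N^x}}}\bigr\|_{\mathcal{C}}^2\;,
\]
where $\overline{\Psi_{\mathbf{h}_{\mathcal{A},\,\mathcal{E}_N^x}}}$ is the collapse of the flow $\Psi_{\mathbf{h}_{\mathcal{A},\,\mathcal{E}_N^x}}$ in the sense of \eqref{eff1}. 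On the other hand, part (4) of Proposition \ref{pflow} gives $\|\Psi_{\mathbf{h}_{\mathcal{A},\,\mathcal{E}_N^x}}\|^2=\mathscr{D}_N(\mathbf{h}_{\mathcal{A},\,\mathcal{E}_N^x})=\textup{cap}_N(\mathcal{A},\,\mathcal{E}_N^x)$. Thus the assertion reduces to showing that collapsing leaves the norm of this particular flow unchanged, i.e. that equality holds in Lemma \ref{lem67} for the choice $\phi=\Psi_{\mathbf{h}_{\mathcal{A},\,\mathcal{E}_N^x}}$.

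Finally I would verify the two equality conditions \eqref{hu1} and \eqref{hu2} of Lemma \ref{lem67}. Since $\Psi_{\mathbf{f}}(\eta,\,\zeta)=c_N^s(\eta,\,\zeta)[\mathbf{f}(\eta)-\mathbf{f}(\zeta)]$ and $\mathbf{f}=\mathbf{h}_{\mathcal{A},\,\mathcal{E}_N^x}$ vanishes on $\mathcal{E}_N^x$: for $(\eta,\,\zeta)\in\mathcal{H}_N^{\otimes}$ with $\eta,\,\zeta\in\mathcal{E}_N^x$ one gets $\Psi_{\mathbf{f}}(\eta,\,\zeta)=0$, which is \eqref{hu1}; and for $(\eta,\,\zeta),(\eta,\,\zeta')\in\mathcal{H}_N^{\otimes}$ with $\zeta,\,\zeta'\in\mathcal{E}_N^x$ one gets $\Psi_{\mathbf{f}}(\eta,\,\zeta)/c_N^s(\eta,\,\zeta)=\mathbf{f}(\eta)=\Psi_{\mathbf{f}}(\eta,\,\zeta')/c_N^s(\eta,\,\zeta')$, which is \eqref{hu2}. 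Hence $\|\overline{\Psi_{\mathbf{h}_{\mathcal{A},\,\mathcal{E}_N^x}}}\|_{\mathcal{C}}=\|\Psi_{\mathbf{h}_{\mathcal{A},\,\mathcal{E}_N^x}}\|$, and chaining the displays above gives $\overline{\textup{cap}}_N(\mathcal{A},\,\{\mathfrak{o}\})=\textup{cap}_N(\mathcal{A},\,\mathcal{E}_N^x)$. I expect no genuine obstacle here: the whole argument is elementary, and the only place demanding (minimal) care is that both equality conditions in Lemma \ref{lem67} hold, which is precisely where the fact that the equilibrium potential is constant on the collapsed set $\mathcal{E}_N^x$ is used. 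A purely probabilistic alternative, avoiding the flow machinery, would instead start from the identity $\textup{cap}_N(\mathcal{A},\,\mathcal{B})=\sum_{\eta\in\mathcal{A}}\mu_N(\eta)\lambda_N(\eta)\,\mathbb{P}^N_\eta[\tau_{\mathcal{B}}<\tau_{\mathcal{A}}^+]$ (a consequence of \eqref{cap2}) and use that the collapsed and original chains coincide up to the first visit to $\mathcal{E}_N^x$.
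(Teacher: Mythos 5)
Your proof is correct. The paper itself offers no argument here — it simply cites \cite[display (3.10)]{GL} and \cite[Theorem 9.7]{BdH} — so what you have produced is a self-contained reconstruction of the standard collapsing argument, rendered in the flow language of Section \ref{sec51}. All three steps are sound: (i) $\mathbf{h}_{\mathcal{A},\,\mathcal{E}_N^x}$ vanishes on $\mathcal{E}_N^x$, so its collapse is well defined, and it is $\overline{\mathscr{L}}_N$-harmonic off $\mathcal{A}\cup\{\mathfrak{o}\}$ precisely because $\overline{R}_N(\eta,\,\mathfrak{o})=\sum_{\xi\in\mathcal{E}_N^x}R_N(\eta,\,\xi)$ and the potential is $0$ on $\mathcal{E}_N^x$, which by uniqueness identifies $\overline{\mathbf{h}_{\mathcal{A},\,\mathcal{E}_N^x}}$ with $\overline{\mathbf{h}}_{\mathcal{A},\,\{\mathfrak{o}\}}$; (ii) Lemma \ref{lem69} then rewrites $\overline{\Psi}_{\,\overline{\mathbf{h}}_{\mathcal{A},\,\{\mathfrak{o}\}}}$ as the collapse of $\Psi_{\mathbf{h}_{\mathcal{A},\,\mathcal{E}_N^x}}$; and (iii) the equality conditions \eqref{hu1} and \eqref{hu2} of Lemma \ref{lem67} hold for $\Psi_{\mathbf{f}}$ whenever $\mathbf{f}$ is constant on $\mathcal{E}_N^x$, since $\Psi_{\mathbf{f}}(\eta,\,\zeta)/c_N^s(\eta,\,\zeta)=\mathbf{f}(\eta)-\mathbf{f}(\zeta)$ depends on $\zeta\in\mathcal{E}_N^x$ only through its constant value. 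One small economy worth noting: step (iii) is logically the same as the observation that $\overline{\mathscr{D}}_N(\overline{\mathbf{f}})=\mathscr{D}_N(\mathbf{f})$ for any $\mathbf{f}$ constant on $\mathcal{E}_N^x$, which follows directly from the linearity $\overline{c}_N^{\,s}(\eta,\,\mathfrak{o})=\sum_{\xi\in\mathcal{E}_N^x}c_N^s(\eta,\,\xi)$ without invoking the flow norm at all; applying this to $\mathbf{h}_{\mathcal{A},\,\mathcal{E}_N^x}$ once (i) is established closes the argument in one line. Your flow-theoretic version is an equivalent packaging, and the probabilistic alternative you sketch at the end is likewise standard and correct.
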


\begin{proof}
See \cite[display (3.10)]{GL} or \cite[Theorem 9.7]{BdH}.
\end{proof}
A similar result holds for $\textup{\ensuremath{\overline{\textup{cap}}}}_{N}(\mathcal{A},\,\mathcal{B})$
as well when either $\mathcal{A}$ or $\mathcal{B}$ contains $\mathfrak{o}$.
However, to the best of the author's knowledge, there is no trivial
method for comparing $\textup{cap}_{N}(\mathcal{A},\,\mathcal{B})$
and $\textup{\ensuremath{\overline{\textup{cap}}}}_{N}(\mathcal{A},\,\mathcal{B})$
when $\mathfrak{o}\notin\mathcal{A}\cup\mathcal{B}$. In view of this
remark, the following estimate is a non-trivial result.
\begin{prop}
\label{p613}For two disjoint and non-empty subsets $A$ and $B$
of $S_{\star}\setminus\{x\}$ that satisfy $A\cup B=S_{\star}\setminus\{x\}$,
it holds that
\[
\textup{\ensuremath{\overline{\textup{cap}}}}_{N}(\mathcal{E}_{N}(A),\,\mathcal{E}_{N}(B))=\left(1+o_{N}(1)+o_{\epsilon}(1)\right)\,N^{-(1+\alpha)}\,\textup{cap}_{Y}(A,\,B)\;.
\]
\end{prop}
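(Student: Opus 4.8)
The plan is to transcribe the proof of Theorem~\ref{t02} (Section~\ref{sec63}) to the collapsed chain $\overline{\eta}_N(\cdot)$, replacing each object there by its collapsed counterpart. The first observation is that the generalized Dirichlet and Thomson principles of Theorem~\ref{t04} --- like the identities of Proposition~\ref{pflow} on which their proof rests --- are formal consequences of the flow-structure axioms alone, and hence hold verbatim for any finite irreducible Markov chain, in particular for $\overline{\eta}_N(\cdot)$, with $\mathcal{H}_N^{\otimes}$, $c_N^s$, $\mathscr{L}_N$, $\mathfrak{F}_N$ and $\textup{cap}_N$ replaced by $\overline{\mathcal{H}}_N^{\otimes}$, $\overline{c}_N^{\,s}$, $\overline{\mathscr{L}}_N$, $\overline{\mathfrak{F}}_N$ and $\overline{\textup{cap}}_N$.

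Next, I would apply the constructions of Sections~\ref{sec7} and~\ref{sec8} to the fixed pair $(A,\,B)$. Since $A$ and $B$ are disjoint non-empty subsets of $S_\star$ (lying, moreover, inside $S_\star\setminus\{x\}$), these constructions produce the functions $\mathbf{V}_{A,\,B},\,\mathbf{V}_{A,\,B}^{*}$ of Proposition~\ref{p63} and the flows $\Phi_{A,\,B},\,\Phi_{A,\,B}^{*}$ of Proposition~\ref{p64}. By part~(1) of Proposition~\ref{p63}, $\mathbf{V}_{A,\,B}$ and $\mathbf{V}_{A,\,B}^{*}$ are constant on $\mathcal{E}_N^x$, with common value $\mathfrak{h}_{A,\,B}(x)\in[0,\,1]$; hence the collapsed functions $\overline{\mathbf{V}}_{A,\,B},\,\overline{\mathbf{V}}_{A,\,B}^{*}$ on $\overline{\mathcal{H}}_N$ are well defined, and, since $\mathcal{E}_N(A),\,\mathcal{E}_N(B)\subseteq\mathcal{H}_N\setminus\mathcal{E}_N^x$, they belong to $\overline{C}_{1,\,0}(\mathcal{E}_N(A),\,\mathcal{E}_N(B))$. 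Write $\overline{\Phi}_{A,\,B}$ and $\overline{\Phi}_{A,\,B}^{*}$ for the collapses of $\Phi_{A,\,B}$ and $\Phi_{A,\,B}^{*}$ in the sense of \eqref{eff1}. Note also that $\textup{cap}_Y(A,\,B)>0$ by the irreducibility of $\widehat{Y}$.

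For the upper bound I would take $\overline{\mathbf{f}}=(\overline{\mathbf{V}}_{A,\,B}+\overline{\mathbf{V}}_{A,\,B}^{*})/2\in\overline{C}_{1,\,0}(\mathcal{E}_N(A),\,\mathcal{E}_N(B))$ and $\overline{\phi}=(\overline{\Phi}_{A,\,B}^{*}-\overline{\Phi}_{A,\,B})/2$, and feed them into the collapsed version of part~(1) of Theorem~\ref{t04}. Three ingredients make this effective. First, by linearity of the collapse operation and Lemma~\ref{lem69}, $\overline{\Phi}_{\overline{\mathbf{f}}}-\overline{\phi}$ is the collapse of the flow $\Phi_{\mathbf{f}}-\phi=\Psi_{\mathbf{V}_{A,\,B}}+(\Theta_N-\Theta_N^{*})/2$ occurring in the proof of Theorem~\ref{t02}, so Lemma~\ref{lem67} and \eqref{e64up} give $\|\overline{\Phi}_{\overline{\mathbf{f}}}-\overline{\phi}\|_{\mathcal{C}}^{2}\le\|\Phi_{\mathbf{f}}-\phi\|^{2}\le(1+o_N(1)+o_\epsilon(1))\,N^{-(1+\alpha)}\,\textup{cap}_Y(A,\,B)$. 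Second, because $A\cup B=S_\star\setminus\{x\}$, the set $(\mathcal{E}_N(A)\cup\mathcal{E}_N(B))^c$ inside $\overline{\mathcal{H}}_N$ equals $\{\mathfrak{o}\}\cup\Delta_N$; by Lemma~\ref{lem68}, $(\textup{div}\,\overline{\phi})(\mathfrak{o})=(\textup{div}\,\phi)(\mathcal{E}_N^x)$, which is $o_N(1)\,N^{-(1+\alpha)}$ by part~(3) of Proposition~\ref{p64} (applicable precisely because $x\in S_\star\setminus(A\cup B)$), while $\sum_{\eta\in\Delta_N}|(\textup{div}\,\overline{\phi})(\eta)|=\sum_{\eta\in\Delta_N}|(\textup{div}\,\phi)(\eta)|=o_N(1)\,N^{-(1+\alpha)}$ by part~(2); since $|\overline{\mathbf{h}}_{\mathcal{E}_N(A),\,\mathcal{E}_N(B)}|\le1$, the divergence-correction term is $o_N(1)\,N^{-(1+\alpha)}$. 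Third, part~(4) of Proposition~\ref{p64} with Lemma~\ref{lem68} shows $(\textup{div}\,\overline{\phi})(\mathcal{E}_N(A))=o_N(1)\,N^{-(1+\alpha)}$, so $\overline{\phi}$ has divergence strength $o_N(1)\,N^{-(1+\alpha)}$ on $\mathcal{E}_N(A)$. Feeding these estimates into the collapsed Dirichlet principle yields $\overline{\textup{cap}}_N(\mathcal{E}_N(A),\,\mathcal{E}_N(B))\le(1+o_N(1)+o_\epsilon(1))\,N^{-(1+\alpha)}\,\textup{cap}_Y(A,\,B)$.

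The lower bound I would obtain in exactly the same manner: take the collapses $\overline{\mathbf{g}},\,\overline{\psi}$ of the function $\mathbf{g}$ and flow $\psi$ of \eqref{ec3} (here $\mathbf{g}$ is identically $0$ on $\mathcal{E}_N^x$, so $\overline{\mathbf{g}}$ is defined and lies in $\overline{C}_{0,\,0}(\mathcal{E}_N(A),\,\mathcal{E}_N(B))$), feed them into the collapsed version of part~(2) of Theorem~\ref{t04}, and use \eqref{e645} and Lemma~\ref{lem67} together with the same divergence estimates; this gives $\overline{\textup{cap}}_N(\mathcal{E}_N(A),\,\mathcal{E}_N(B))\ge(1+o_N(1)+o_\epsilon(1))\,N^{-(1+\alpha)}\,\textup{cap}_Y(A,\,B)$, which together with the upper bound completes the proof. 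The only model-dependent input is Proposition~\ref{p64}, and the one delicate point --- what I would flag as the crux of the transcription --- is that its part~(3), stated for \emph{every} valley $\mathcal{E}_N^y$ with $y\notin A\cup B$, is exactly what annihilates the divergence of the collapsed flow at the single vertex $\mathfrak{o}$; the hypothesis $A\cup B=S_\star\setminus\{x\}$ is used precisely to guarantee that, after the collapse, nothing but $\mathfrak{o}$ and $\Delta_N$ remains outside $\mathcal{E}_N(A)\cup\mathcal{E}_N(B)$.
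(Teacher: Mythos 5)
Your proposal is correct and follows essentially the same route as the paper: it feeds the collapsed versions of the test pairs $(\mathbf{f},\,\phi)$ and $(\mathbf{g},\,\psi)$ from \eqref{ec1} and \eqref{ec3} into the (collapsed) generalized Dirichlet and Thomson principles, controls the flow norms via Lemma \ref{lem67} and the bounds already obtained in the proof of Theorem \ref{t02}, and uses Lemma \ref{lem68} together with parts (2)--(4) of Proposition \ref{p64} to kill the divergence-correction term at $\mathfrak{o}$ and on $\Delta_N$, invoking $A\cup B=S_\star\setminus\{x\}$ exactly as the paper does. The only cosmetic difference is that you bound $\|\overline{\Phi_{\mathbf{f}}-\phi}\|_{\mathcal{C}}$ directly by the contraction property of the collapse rather than decomposing via \eqref{e6131} and invoking Lemma \ref{lem615}, and you re-derive the content of Lemma \ref{lem617} inline instead of citing it.
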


\begin{rem}
In fact, the condition $A\cup B=S_{\star}\setminus\{x\}$ in the previous
proposition is redundant. However, the more general result without
this condition is not required in the present study. Moreover, its
proof is more complicated and is thus omitted here.
\end{rem}

Proposition \ref{p613} is proved at the end of the this subsection.
Throughout this subsection, we fix two sets $A,\,B$ satisfying the
condition of Proposition \ref{p613}. Recall the functions $\mathbf{V}_{A,\,B}$
and $\mathbf{V}_{A,\,B}^{*}$ from Proposition \ref{p63} and the
flows $\Phi_{A,\,B}$ and $\Phi_{A,\,B}^{*}$ from Proposition \ref{p64}.
It should be noted that $\mathbf{V}_{A,\,B}(\eta)=\mathfrak{h}_{A,\,B}(x)$
for all $\eta\in\mathcal{E}_{N}^{x}$; thus the collapsed function
$\overline{\mathbf{V}}_{A,\,B}:=\overline{\mathbf{V}_{A,\,B}}$, satisfying
$\overline{\mathbf{V}}_{A,\,B}(\mathfrak{o})=\mathfrak{h}_{A,\,B}(x)$,
can be defined.
\begin{lem}
\label{lem615} It holds that
\[
\bigl\Vert\overline{\Psi}_{\,\overline{\mathbf{V}}_{A,\,B}}\bigr\Vert_{\mathcal{C}}^{2}=\left(1+o_{N}(1)+o_{\epsilon}(1)\right)N^{-(1+\alpha)}\textup{\,cap}_{Y}(A,\,B)\;.
\]
\end{lem}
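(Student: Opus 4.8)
The plan is to relate the collapsed flow norm $\|\overline{\Psi}_{\overline{\mathbf V}_{A,B}}\|_{\mathcal C}^2$ directly to the Dirichlet form $\mathscr D_N(\mathbf V_{A,B})$ already estimated in Proposition \ref{p63} and sharpened in Corollary \ref{cor66}. First I would invoke Lemma \ref{lem69} with $\mathbf f = \mathbf V_{A,B}$: since $\mathbf V_{A,B}$ is constant and equal to $\mathfrak h_{A,B}(x)$ on $\mathcal E_N^x$ by part (1) of Proposition \ref{p63}, the collapsed function $\overline{\mathbf V}_{A,B}$ is well-defined and $\overline{\Psi}_{\overline{\mathbf V}_{A,B}} = \overline{\Psi_{\mathbf V_{A,B}}}$, i.e. the ``symmetrized flow of the collapsed function'' coincides with the ``collapse of the symmetrized flow.'' This reduces the question to comparing $\|\overline{\Psi_{\mathbf V_{A,B}}}\|_{\mathcal C}$ with $\|\Psi_{\mathbf V_{A,B}}\|$.

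Next I would apply Lemma \ref{lem67} to the flow $\phi = \Psi_{\mathbf V_{A,B}}$. That lemma gives the inequality $\|\overline{\Psi_{\mathbf V_{A,B}}}\|_{\mathcal C}\le \|\Psi_{\mathbf V_{A,B}}\|$ for free, and by part (4) of Proposition \ref{pflow} together with Corollary \ref{cor66} we get
\[
\bigl\Vert\overline{\Psi_{\mathbf V_{A,B}}}\bigr\Vert_{\mathcal C}^2 \;\le\; \bigl\Vert\Psi_{\mathbf V_{A,B}}\bigr\Vert^2 \;=\; \mathscr D_N(\mathbf V_{A,B}) \;=\; \bigl(1+o_N(1)+o_\epsilon(1)\bigr)N^{-(1+\alpha)}\,\textup{cap}_Y(A,B)\;,
\]
which is the upper half of the claimed estimate. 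The substance of the lemma is therefore the matching lower bound, and this is the step I expect to be the main obstacle. The natural route is to test the collapsed version of the Thomson (or Dirichlet) principle for the collapsed chain: by Lemma \ref{lem613} (with the roles of the two sides played by the collapsed valleys) and Proposition \ref{p613}, we know $\overline{\textup{cap}}_N(\mathcal E_N(A),\mathcal E_N(B)) = (1+o_N(1)+o_\epsilon(1))N^{-(1+\alpha)}\textup{cap}_Y(A,B)$; meanwhile $\Psi_{\mathbf V_{A,B}}$ is (up to the error flows $\Theta_N,\Theta_N^*$ of Proposition \ref{p64}) a legitimate test object for the collapsed chain whose divergence outside $\mathcal E_N(A)\cup\mathcal E_N(B)$ is negligible by parts (2)--(3) of Proposition \ref{p64} and Lemma \ref{lem68}. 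Applying the generalized Dirichlet principle Theorem \ref{t04}(1) on the collapsed chain to $\overline{\mathbf V}_{A,B}$ paired with $\overline{\Psi_{\mathbf V_{A,B}}}$ then yields $\overline{\textup{cap}}_N(\mathcal E_N(A),\mathcal E_N(B)) \le \|\overline{\Psi_{\mathbf V_{A,B}}}\|_{\mathcal C}^2 + (\text{negligible})$ after recognizing (via the collapsed analogue of \eqref{ob1}--\eqref{ob2}) that $\overline{\Psi_{\mathbf V_{A,B}}}$ carries the correct unit strength between the two collapsed valleys up to $o_N(1)N^{-(1+\alpha)}$.

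Combining the two bounds gives
\[
\bigl\Vert\overline{\Psi}_{\overline{\mathbf V}_{A,B}}\bigr\Vert_{\mathcal C}^2 \;=\; \bigl(1+o_N(1)+o_\epsilon(1)\bigr)N^{-(1+\alpha)}\,\textup{cap}_Y(A,B)\;,
\]
as claimed. The delicate points to watch are: (i) verifying that the strength of $\overline{\Psi_{\mathbf V_{A,B}}}$ between $\mathfrak o$-collapsed $\mathcal E_N(A)$ and $\mathcal E_N(B)$ is indeed $(1+o_N(1))$ times the value one needs, which should follow from $\mathbf V_{A,B}$ approximating the equilibrium potential and from $\mathscr L_N\mathbf V_{A,B}$ being small off the valleys in the appropriate weighted sense — here I would lean on the identity $\langle\Psi_{\mathbf f},\phi\rangle = \sum_{\eta}\mathbf f(\eta)(\mathrm{div}\,\phi)(\eta)$ of Proposition \ref{pflow}(3) and the divergence control from Proposition \ref{p64}; and (ii) transferring the generalized Dirichlet/Thomson machinery of Theorem \ref{t04} to the collapsed chain, which is legitimate since the collapsed chain is again an irreducible Markov chain with its own flow structure as set up in Section \ref{sec64} and satisfies the sector condition by Lemma \ref{lem612}. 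I expect the bookkeeping of the two error scales $o_N(1)$ and $o_\epsilon(1)$ to be routine once these two points are in place.
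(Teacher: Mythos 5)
Your upper bound is exactly the paper's argument, but your proposed lower bound introduces a genuine circularity. You invoke Proposition \ref{p613} (the asymptotics for $\overline{\textup{cap}}_N(\mathcal E_N(A),\mathcal E_N(B))$) as the anchor for the matching lower bound, but the paper's proof of Proposition \ref{p613} itself cites Lemma \ref{lem615} to obtain inequality \eqref{epp1}. So Lemma \ref{lem615} must be established \emph{before} Proposition \ref{p613} becomes available, and your route is not usable as written.

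The key observation you missed is that Lemma \ref{lem67} is not merely an inequality here: it comes with an explicit characterization (conditions \eqref{hu1}, \eqref{hu2}) of when $\|\overline{\phi}\|_{\mathcal C}=\|\phi\|$, and the flow $\phi=\Psi_{\mathbf V_{A,B}}$ satisfies both. Indeed, writing $\Psi_{\mathbf V_{A,B}}(\eta,\zeta)=c_N^s(\eta,\zeta)\bigl[\mathbf V_{A,B}(\eta)-\mathbf V_{A,B}(\zeta)\bigr]$ and using that $\mathbf V_{A,B}$ is identically $\mathfrak h_{A,B}(x)$ on $\mathcal E_N^x$ (Proposition \ref{p63}(1)): for $\eta,\zeta\in\mathcal E_N^x$ the flow vanishes, which is \eqref{hu1}; and for fixed $\eta$ with $\zeta,\zeta'\in\mathcal E_N^x$ the ratios $\Psi_{\mathbf V_{A,B}}(\eta,\zeta)/c_N^s(\eta,\zeta)$ and $\Psi_{\mathbf V_{A,B}}(\eta,\zeta')/c_N^s(\eta,\zeta')$ both equal $\mathbf V_{A,B}(\eta)-\mathfrak h_{A,B}(x)$, giving \eqref{hu2}. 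Therefore $\bigl\Vert\overline{\Psi_{\mathbf V_{A,B}}}\bigr\Vert_{\mathcal C}^2=\bigl\Vert\Psi_{\mathbf V_{A,B}}\bigr\Vert^2$ is an \emph{equality}, and together with Lemma \ref{lem69}, Proposition \ref{pflow}(4), and Corollary \ref{cor66} this immediately gives both the upper and lower halves of the claimed estimate, with no recourse to the Dirichlet--Thomson machinery on the collapsed chain at all.
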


\begin{proof}
By Lemma \ref{lem69} and Lemma \ref{lem67}, we obtain

\[
\bigl\Vert\overline{\Psi}_{\,\overline{\mathbf{V}}_{A,\,B}}\bigr\Vert_{\mathcal{C}}^{2}=\bigl\Vert\overline{\Psi_{\mathbf{V}_{A,\,B}}}\bigr\Vert_{\mathcal{C}}^{2}=\bigl\Vert\Psi_{\mathbf{V}_{A,\,B}}\bigr\Vert^{2}\;,
\]
where the second equality follows since by the elementary calculation
we are able to check that the flow $\Psi_{\mathbf{V}_{A,\,B}}$ satisfies
the equality conditions \eqref{hu1} and \eqref{hu2} of Lemma \ref{lem67}.
It now suffices to invoke Corollary \ref{cor66} to finish the proof.

Let $\overline{\Phi}_{A,\,B}:=\overline{\Phi_{A,\,B}}$ be the collapsed
flow of $\Phi_{A,\,B}$.
\end{proof}
\begin{lem}
\label{lem617}For two disjoint and non-empty subsets $A$ and $B$
of $S_{\star}\setminus\{x\}$ that satisfy $A\cup B=S_{\star}\setminus\{x\}$,
it holds that
\[
\sum_{\eta\in\mathcal{\overline{H}}_{N}\setminus\mathcal{E}_{N}(A\cup B)}\overline{\mathbf{\mathbf{h}}}_{\mathcal{E}_{N}(A),\,\mathcal{E}_{N}(B)}(\eta)(\textup{\mbox{div }}\overline{\Phi}_{A,\,B})(\eta)=o_{N}(1)\,N^{-(1+\alpha)}\;.
\]
\end{lem}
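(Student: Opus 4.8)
The plan is to reduce the sum to two pieces dictated by the structure of the collapsed state space, and then to read off both from the divergence estimates of Proposition~\ref{p64}. First I would observe that, because $A\cup B=S_\star\setminus\{x\}$, the set $\overline{\mathcal{H}}_N\setminus\mathcal{E}_N(A\cup B)$ is exactly $\Delta_N\cup\{\mathfrak o\}$: indeed $\mathcal{E}_N(A\cup B)$ embeds in $\overline{\mathcal{H}}_N$ (since $x\notin A\cup B$, the valley $\mathcal{E}_N^x$ does not meet $\mathcal{E}_N(A\cup B)$), and $\mathcal{E}_N^x\cup\mathcal{E}_N(A\cup B)=\mathcal{E}_N(S_\star)=\mathcal{E}_N$, whose complement in $\mathcal{H}_N$ is $\Delta_N$, while the collapsed point $\mathfrak o$ accounts for the remaining element. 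Hence the left-hand side of the lemma equals
\[
\sum_{\eta\in\Delta_N}\overline{\mathbf{h}}_{\mathcal{E}_N(A),\,\mathcal{E}_N(B)}(\eta)\,(\textup{div}\,\overline{\Phi}_{A,\,B})(\eta)+\overline{\mathbf{h}}_{\mathcal{E}_N(A),\,\mathcal{E}_N(B)}(\mathfrak o)\,(\textup{div}\,\overline{\Phi}_{A,\,B})(\mathfrak o).
\]

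Next I would invoke Lemma~\ref{lem68}, which gives $(\textup{div}\,\overline{\Phi}_{A,\,B})(\eta)=(\textup{div}\,\Phi_{A,\,B})(\eta)$ for $\eta\neq\mathfrak o$ and $(\textup{div}\,\overline{\Phi}_{A,\,B})(\mathfrak o)=(\textup{div}\,\Phi_{A,\,B})(\mathcal{E}_N^x)$. For the sum over $\Delta_N$, since $\overline{\mathbf{h}}_{\mathcal{E}_N(A),\,\mathcal{E}_N(B)}$ is an equilibrium potential of the collapsed chain it is bounded by $1$ in absolute value, so by the triangle inequality the sum is dominated by $\sum_{\eta\in\Delta_N}|(\textup{div}\,\Phi_{A,\,B})(\eta)|$, which is $o_N(1)\,N^{-(1+\alpha)}$ by part~(2) of Proposition~\ref{p64}. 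For the remaining term, again $|\overline{\mathbf{h}}_{\mathcal{E}_N(A),\,\mathcal{E}_N(B)}(\mathfrak o)|\le1$, while $|(\textup{div}\,\Phi_{A,\,B})(\mathcal{E}_N^x)|=o_N(1)\,N^{-(1+\alpha)}$ by the first estimate in part~(3) of Proposition~\ref{p64}; this is precisely where the hypothesis $x\in S_\star\setminus(A\cup B)$ (that is, $A\cup B=S_\star\setminus\{x\}$) enters, since part~(3) only controls the divergence over valleys $\mathcal{E}_N^z$ with $z\notin A\cup B$. Adding the two bounds yields the claim.

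There is no serious obstacle here; the statement is essentially a bookkeeping step. The only points requiring care are the identification $\overline{\mathcal{H}}_N\setminus\mathcal{E}_N(A\cup B)=\Delta_N\cup\{\mathfrak o\}$ (so that no uncontrolled valley is left over in the sum) and the observation that the collapsed point $\mathfrak o$ corresponds to the valley $\mathcal{E}_N^x$ of a site $x$ lying outside $A\cup B$, which is exactly what makes part~(3) of Proposition~\ref{p64} applicable to it. One should also bear in mind, by the remark following Lemma~\ref{lem69}, that $\overline{\mathbf{h}}_{\mathcal{E}_N(A),\,\mathcal{E}_N(B)}$ is \emph{not} the collapse of the original $\mathbf{h}_{\mathcal{E}_N(A),\,\mathcal{E}_N(B)}$; this is immaterial, however, since only the crude bound $|\overline{\mathbf{h}}_{\mathcal{E}_N(A),\,\mathcal{E}_N(B)}|\le1$ is used.
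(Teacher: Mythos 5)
Your proof is correct and follows essentially the same route as the paper: identify $\overline{\mathcal{H}}_N\setminus\mathcal{E}_N(A\cup B)=\Delta_N\cup\{\mathfrak{o}\}$, pass from collapsed to original divergences via Lemma~\ref{lem68}, bound $|\overline{\mathbf{h}}_{\mathcal{E}_N(A),\,\mathcal{E}_N(B)}|\le1$, and invoke parts (2) and (3) of Proposition~\ref{p64}. Your aside explaining why $x\notin A\cup B$ is exactly what makes part (3) applicable, and the caveat about $\overline{\mathbf{h}}$ not being a collapse of $\mathbf{h}$, are both accurate and welcome elaborations.
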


\begin{proof}
As $\mathcal{\overline{H}}_{N}\setminus\mathcal{E}_{N}(A\cup B)=\Delta_{N}\cup\{\mathfrak{o}\}$,
Lemma \ref{lem68} implies that the absolute value of the left-hand
side is bounded above by
\[
\sum_{\eta\in\Delta_{N}}\left|(\textup{div }\overline{\Phi}_{A,\,B})(\eta)\right|+\left|(\textup{div }\overline{\Phi}_{A,\,B})(\mathfrak{o})\right|=\sum_{\eta\in\Delta_{N}}\left|(\textup{div }\Phi_{A,\,B})(\eta)\right|+\left|(\textup{div }\Phi_{A,\,B})(\mathcal{E}_{N}^{x})\right|\;.
\]
The last expression is $o_{N}(1)\,N^{-(1+\alpha)}$ by (2) and (3)
of Proposition \ref{p64}.
\end{proof}

\begin{proof}[Proof of Proposition \ref{p613}]
The proof is similar to that of Theorem \ref{t02}. We start by recalling
the functions $\mathbf{f},\,\mathbf{g}$ and the flows $\phi,\,\psi$
from \eqref{ec1} and \eqref{ec3}. Then, by the definition of the
collapsing procedure, it can be verified that
\[
\overline{\mathbf{f}}\in\overline{C}_{1,\,0}(\mathcal{E}_{N}(A),\,\mathcal{\mathcal{E}}_{N}(B))\;\;\text{and\;\;}\overline{\phi}\in\mathfrak{\mathfrak{\overline{S}}}_{\alpha_{N}}(\mathcal{E}_{N}(A),\,\mathcal{\mathcal{E}}_{N}(B))\;,
\]
where $\alpha_{N}=o_{N}(1)\,N^{-(1+\alpha)}$, and that
\begin{equation}
\overline{\Phi}_{\,\mathbf{\overline{f}}}-\overline{\phi}=\overline{\Psi}_{\mathbf{\overline{V}}_{A,\,B}}-\frac{\overline{\Theta}_{N}^{\,*}-\overline{\Theta}_{N}}{2}\;,\label{e6131}
\end{equation}
where $\overline{\Theta}_{N}$ and $\overline{\Theta}_{N}^{\,*}$
are the collapsed flows of $\Theta_{N}$ and $\Theta_{N}^{*}$ defined
in \eqref{ec2}, respectively. By Lemma \ref{lem67}, we have
\begin{equation}
\bigl\Vert\overline{\Theta}_{N}\bigr\Vert_{\mathcal{C}}^{2}=\left(o_{N}(1)+o_{\epsilon}(1)\right)N^{-(1+\alpha)}\;\;\mbox{and}\;\;\bigl\Vert\overline{\Theta}_{N}^{\,*}\bigr\Vert_{\mathcal{C}}^{2}=\left(o_{N}(1)+o_{\epsilon}(1)\right)N^{-(1+\alpha)}\;.\label{e6132}
\end{equation}
Thus, by Theorem \ref{t04}, Lemma \ref{lem615}, and Lemma \ref{lem617},
we have that
\begin{equation}
\textup{\ensuremath{\overline{\textup{cap}}}}_{N}(\mathcal{E}_{N}(A),\,\mathcal{E}_{N}(B))\le\left(1+o_{N}(1)+o_{\epsilon}(1)\right)N^{-(1+\alpha)}\,\textup{cap}_{Y}(A,\,B)\;.\label{epp1}
\end{equation}
For the reversed inequality, it suffices to take
\[
\overline{\mathbf{g}}\in\overline{C}_{0,\,0}(\mathcal{E}_{N}(A),\,\mathcal{\mathcal{E}}_{N}(B))\;\;\text{and\;\;}\overline{\psi}\in\mathfrak{\overline{\mathfrak{S}}}_{1+o_{N}(1)}(\mathcal{E}_{N}(A),\,\mathcal{\mathcal{E}}_{N}(B))\;,
\]
and then repeat the same arguments so that we obtain
\begin{equation}
\textup{\ensuremath{\overline{\textup{cap}}}}_{N}(\mathcal{E}_{N}(A),\,\mathcal{E}_{N}(B))\ge\left(1+o_{N}(1)+o_{\epsilon}(1)\right)N^{-(1+\alpha)}\,\textup{cap}_{Y}(A,\,B)\;.\label{epp2}
\end{equation}
By \eqref{epp1} and \eqref{epp2}, the proof is completed.
\end{proof}

\subsection{\label{sec66}Estimate of mean jump rate and proof of Theorem \ref{t03}}

In view of \eqref{e611}, the probability $\overline{\mathbb{P}}_{\mathfrak{o}}^{N}[\tau_{\mathcal{E}_{N}^{y}}<\tau_{\breve{\mathcal{E}}_{N}^{x,\,y}}]$
should be estimated to obtain the sharp asymptotics of the mean jump
rate $r_{N}(x,\,y)$. This estimate follows from the following proposition.
\begin{prop}
\label{p618}For two disjoint and non-empty subsets $A,\,B$ of $S_{\star}\setminus\{x\}$
satisfying $A\cup B=S_{\star}\setminus\{x\}$, we have that
\[
\lim_{N\rightarrow\infty}\overline{\mathbb{P}}_{\mathfrak{o}}^{N}\left[\tau_{\mathcal{E}_{N}(A)}<\tau_{\mathcal{E}_{N}(B)}\right]=\mathfrak{h}_{A,\,B}(x)\;.
\]
\end{prop}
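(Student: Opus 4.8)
The plan is to obtain an \emph{exact} quadratic relation for $p_{N}:=\overline{\mathbb{P}}_{\mathfrak{o}}^{N}[\tau_{\mathcal{E}_{N}(A)}<\tau_{\mathcal{E}_{N}(B)}]=\overline{\mathbf{h}}_{\mathcal{E}_{N}(A),\,\mathcal{E}_{N}(B)}(\mathfrak{o})$ whose coefficients are capacities of the collapsed chain, to insert the known asymptotics of those capacities, and finally to pass to the limit. Explicitly, I would first prove that
\begin{equation*}
c_{\mathfrak{o}}^{N}\,p_{N}^{2}+(c_{B}^{N}-c_{A}^{N}-c_{\mathfrak{o}}^{N})\,p_{N}+(c_{A}^{N}-c_{0}^{N})=0\;,
\end{equation*}
where $c_{\mathfrak{o}}^{N}=\overline{\textup{cap}}_{N}(\{\mathfrak{o}\},\,\mathcal{E}_{N}(A)\cup\mathcal{E}_{N}(B))$, $c_{A}^{N}=\overline{\textup{cap}}_{N}(\mathcal{E}_{N}(B)\cup\{\mathfrak{o}\},\,\mathcal{E}_{N}(A))$, $c_{B}^{N}=\overline{\textup{cap}}_{N}(\mathcal{E}_{N}(A)\cup\{\mathfrak{o}\},\,\mathcal{E}_{N}(B))$, and $c_{0}^{N}=\overline{\textup{cap}}_{N}(\mathcal{E}_{N}(A),\,\mathcal{E}_{N}(B))$.

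To derive this identity I would use the strong Markov property to split the equilibrium potentials of the collapsed chain: with $h=\overline{\mathbf{h}}_{\mathcal{E}_{N}(A),\,\mathcal{E}_{N}(B)}$ and $g=\overline{\mathbf{h}}_{\{\mathfrak{o}\},\,\mathcal{E}_{N}(A)\cup\mathcal{E}_{N}(B)}$, one has $\overline{\mathbf{h}}_{\mathcal{E}_{N}(A)\cup\{\mathfrak{o}\},\,\mathcal{E}_{N}(B)}=h+(1-p_{N})g$ and $\overline{\mathbf{h}}_{\mathcal{E}_{N}(B)\cup\{\mathfrak{o}\},\,\mathcal{E}_{N}(A)}=(1-h)+p_{N}g$. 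Taking the collapsed Dirichlet form $\overline{\mathscr{D}}_{N}$ of both identities and expanding, the cross term equals $(1-p_{N})(p_{N}c_{\mathfrak{o}}^{N}+q_{N})$, where $q_{N}=\sum_{\eta\in\mathcal{E}_{N}(A)}\overline{\mu}_{N}(\eta)(-\overline{\mathscr{L}}_{N}g)(\eta)$ is the flux of $g$ into $\mathcal{E}_{N}(A)$; here one uses that $g\equiv0$ on $\mathcal{E}_{N}(A)\cup\mathcal{E}_{N}(B)$ and that $-\overline{\mathscr{L}}_{N}h$ is supported on that same set. This gives $c_{B}^{N}=c_{0}^{N}+(1-p_{N})(c_{\mathfrak{o}}^{N}+q_{N})$ and $c_{A}^{N}=c_{0}^{N}-p_{N}q_{N}$, and eliminating $q_{N}$ produces the quadratic. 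This step is elementary but requires some care, and is close to computations already appearing in \cite[Section 8]{LS1}.

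Next I would identify the asymptotics of the four capacities. Since $A\cup B=S_{\star}\setminus\{x\}$ we have $\mathcal{E}_{N}(A)\cup\mathcal{E}_{N}(B)=\breve{\mathcal{E}}_{N}^{x}$, so Lemma \ref{lem613} gives $c_{\mathfrak{o}}^{N}=\textup{cap}_{N}(\mathcal{E}_{N}^{x},\,\breve{\mathcal{E}}_{N}^{x})$; moreover, by a routine extension of Lemma \ref{lem613} (collapsing a subset of one of the two sets defining a capacity leaves the capacity unchanged, because the equilibrium potential is already constant there), $c_{A}^{N}=\textup{cap}_{N}(\mathcal{E}_{N}(B\cup\{x\}),\,\mathcal{E}_{N}(A))$ and $c_{B}^{N}=\textup{cap}_{N}(\mathcal{E}_{N}(A\cup\{x\}),\,\mathcal{E}_{N}(B))$. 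These three are capacities of the original chain between unions of valleys, so Theorem \ref{t02} yields $N^{1+\alpha}c_{\mathfrak{o}}^{N}\to\textup{cap}_{Y}(\{x\},\,S_{\star}\setminus\{x\})$, $N^{1+\alpha}c_{A}^{N}\to\textup{cap}_{Y}(B\cup\{x\},\,A)$, and $N^{1+\alpha}c_{B}^{N}\to\textup{cap}_{Y}(A\cup\{x\},\,B)$. For the remaining, genuinely collapsed capacity, Proposition \ref{p613} gives $N^{1+\alpha}c_{0}^{N}=(1+o_{N}(1)+o_{\epsilon}(1))\,\textup{cap}_{Y}(A,\,B)$, so $N^{1+\alpha}c_{0}^{N}\to\textup{cap}_{Y}(A,\,B)$ upon letting $N\to\infty$ and then $\epsilon\to0$.

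Multiplying the quadratic by $N^{1+\alpha}$ and extracting a subsequential limit of $p_{N}\in[0,1]$, every limit point $\widehat{p}$ satisfies
\begin{equation*}
\textup{cap}_{Y}(\{x\},S_{\star}\setminus\{x\})\,\widehat{p}^{\,2}+\big(\textup{cap}_{Y}(A\cup\{x\},B)-\textup{cap}_{Y}(B\cup\{x\},A)-\textup{cap}_{Y}(\{x\},S_{\star}\setminus\{x\})\big)\widehat{p}+\big(\textup{cap}_{Y}(B\cup\{x\},A)-\textup{cap}_{Y}(A,B)\big)=0\;.
\end{equation*}
To pin down $\widehat{p}$, I would apply the identity of the second step to the finite Markov chain $\widehat{Y}(\cdot)$ on $S_{\star}$, with the role of $\mathfrak{o}$ played by $x$: since $\{x\}\cup A\cup B=S_{\star}$, the equilibrium potential $\mathfrak{h}_{\{x\},S_{\star}\setminus\{x\}}$ equals $\mathbf{1}_{\{x\}}$, the corresponding flux is therefore explicit, and a short computation shows that the limiting quadratic is a perfect square, namely $\textup{cap}_{Y}(\{x\},S_{\star}\setminus\{x\})\,(t-\mathfrak{h}_{A,B}(x))^{2}=0$. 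As the leading coefficient is strictly positive by irreducibility of $\widehat{Y}$, this forces $\widehat{p}=\mathfrak{h}_{A,B}(x)$; since every subsequential limit equals $\mathfrak{h}_{A,B}(x)$, the full limit exists and equals $\mathfrak{h}_{A,B}(x)$, as claimed. The main obstacle is not in this scheme but upstream, in Proposition \ref{p613}: $c_{0}^{N}$ is the one capacity in the quadratic with no elementary reduction to the original chain, and controlling its error terms is the substantial point; everything else here is bookkeeping, together with the fortunate collapse of the limiting quadratic to a perfect square.
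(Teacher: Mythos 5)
Your proposal is correct, and it takes a genuinely different route from the paper. The paper proves the statement by directly comparing the collapsed equilibrium potential $\overline{\mathbf{h}}_{\mathcal{E}_N(A),\,\mathcal{E}_N(B)}$ to the approximating function $\overline{\mathbf{V}}_{A,\,B}$: using Lemma \ref{lem615} and Proposition \ref{p613} (together with the cross inner-product estimate), it shows that the Dirichlet form of the difference $\mathbf{u}=\overline{\mathbf{h}}_{\mathcal{E}_N(A),\,\mathcal{E}_N(B)}-\overline{\mathbf{V}}_{A,\,B}$ is $o(N^{-(1+\alpha)})$; since $\mathbf{u}$ vanishes on $\mathcal{E}_N(A\cup B)$, it is a constant multiple $\bigl(\overline{\mathbf{h}}(\mathfrak{o})-\mathfrak{h}_{A,\,B}(x)\bigr)$ of a function in $\overline{C}_{1,\,0}(\{\mathfrak{o}\},\,\mathcal{E}_N(A\cup B))$, whose Dirichlet form is bounded below (Dirichlet principle plus the sector condition Lemma \ref{lem612}, then Lemma \ref{lem613} and Theorem \ref{t02}) by $c\,N^{-(1+\alpha)}\,\textup{cap}_{Y}(x,\,A\cup B)$, forcing the constant to vanish.

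Your approach instead establishes an exact algebraic relation among the four collapsed capacities and $p_N$, and never compares $\overline{\mathbf{h}}$ to any approximating function. I verified the key steps: the Markov-property decompositions $\overline{\mathbf{h}}_{\mathcal{E}_N(A)\cup\{\mathfrak{o}\},\,\mathcal{E}_N(B)}=h+(1-p_N)g$ and $\overline{\mathbf{h}}_{\mathcal{E}_N(B)\cup\{\mathfrak{o}\},\,\mathcal{E}_N(A)}=(1-h)+p_N g$ are correct; taking the (non-reversible) Dirichlet form, the cross term $\langle g,\,-\overline{\mathscr{L}}_N h\rangle$ vanishes (because $g\equiv 0$ where $\overline{\mathscr{L}}_N h\neq 0$), while $\langle h,\,-\overline{\mathscr{L}}_N g\rangle=q_N+p_N\,c_{\mathfrak{o}}^N$ (using $h(\mathfrak{o})=p_N$, $h\equiv 1$ on $\mathcal{E}_N(A)$, $h\equiv 0$ on $\mathcal{E}_N(B)$ and the capacity identity \eqref{cap2}); this gives precisely your two relations $c_B^N=c_0^N+(1-p_N)(c_{\mathfrak{o}}^N+q_N)$, $c_A^N=c_0^N-p_N q_N$, and eliminating $q_N$ (which can be done by combining them linearly, so no division by $p_N$ is needed) yields the quadratic. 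The identification of $c_{\mathfrak{o}}^N,\,c_A^N,\,c_B^N$ with capacities of the uncollapsed chain is legitimate — the paper itself remarks after Lemma \ref{lem613} that the analogous identity holds whenever $\mathfrak{o}$ belongs to one of the two sets — and $c_0^N$ is the one genuinely collapsed quantity, handled by Proposition \ref{p613}. Finally, for the reversible limit chain $\widehat{Y}$, one finds $q=-p\,c_{\mathfrak{o}}$ (since the cross Dirichlet form $\mathscr{D}_Y(\mathfrak{h},\,\mathbf{1}_{\{x\}})$ vanishes by harmonicity of $\mathfrak{h}$ at $x$), so $c_A-c_0=p^2 c_{\mathfrak{o}}$ and $c_B-c_A=(1-2p)c_{\mathfrak{o}}$, and the limiting quadratic is indeed $c_{\mathfrak{o}}(t-p)^2=0$ with $p=\mathfrak{h}_{A,\,B}(x)$ and $c_{\mathfrak{o}}>0$ by irreducibility. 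Both proofs ultimately rest on Proposition \ref{p613}; the paper's argument additionally uses the approximating test function machinery (Lemma \ref{lem615}, Corollary \ref{cor66}) and the sector condition, whereas yours trades those for the algebraic identity plus the reversibility of $\widehat{Y}$. Your route is cleaner once the capacity asymptotics are in hand, at the cost of one more observation (the perfect-square structure) that is specific to the reversible limit; the paper's route is more robust to the structure of the limit chain since it does not use reversibility of $\widehat{Y}$.
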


\begin{proof}
The proof relies on Proposition \ref{p613} and Lemma \ref{lem615}.
Recall the equilibrium potential $\overline{\mathbf{h}}_{\mathcal{E}_{N}(A),\mathcal{\,E}_{N}(B)}$
between $\mathcal{E}_{N}(A)$ and $\mathcal{E}_{N}(B)$, with respect
to the collapsed chain $\overline{\eta}_{N}(\cdot)$. Then, by Proposition
\ref{p613},
\begin{equation}
\left\Vert \,\overline{\Psi}_{\,\overline{\mathbf{h}}_{\mathcal{E}_{N}(A),\,\mathcal{E}_{N}(B)}}\right\Vert _{\mathcal{C}}^{2}=\overline{\textup{cap}}_{N}(\mathcal{E}_{N}(A),\mathcal{\,E}_{N}(B))=\left(1+o_{N}(1)+o_{\epsilon}(1)\right)N^{-(1+\alpha)}\textup{\,cap}_{Y}(A,\,B)\;.\label{ek1}
\end{equation}
By Lemma \ref{lem615},
\begin{equation}
\left\Vert \,\overline{\Psi}_{\overline{\mathbf{V}}_{A,\,B}}\right\Vert _{\mathcal{C}}^{2}=\left(1+o_{N}(1)+o_{\epsilon}(1)\right)N^{-(1+\alpha)}\textup{\,cap}_{Y}(A,\,B)\;.\label{ek2}
\end{equation}
Finally, by \eqref{e6131} and \eqref{e6132}, \begin{equation} \label{e4511} \begin{aligned}
&\left\llangle \,\overline{\Psi}_{\,\overline{\mathbf{V}}_{A,\,B}},\,\overline{\Psi}_{\,\overline{\mathbf{h}}_{\mathcal{E}_{N}(A),\,\mathcal{E}_{N}(B)}}\right\rrangle _{\mathcal{C}}\\
&=\left\llangle \,\overline{\Phi}_{\mathbf{\,\overline{f}}}-\overline{\phi},\,\overline{\Psi}_{\,\overline{\mathbf{h}}_{\mathcal{E}_{N}(A),\,\mathcal{E}_{N}(B)}}\right\rrangle _{\mathcal{C}}+\left(o_{N}(1)+o_{\epsilon}(1)\right)N^{-(1+\alpha)}\;,
\end{aligned} \end{equation}where $\overline{\mathbf{f}}$ and $\overline{\phi}$ are the objects
defined in the proof of Proposition \ref{p613}. By the same computation
as in \eqref{e451}, \begin{equation} \label{e4512} \begin{aligned}
&\left\llangle \,\overline{\Phi}_{\mathbf{\,\overline{f}}}-\overline{\phi},\,\overline{\Psi}_{\,\overline{\mathbf{h}}_{\mathcal{E}_{N}(A),\,\mathcal{E}_{N}(B)}}\right\rrangle _{\mathcal{C}}\\
&=\overline{\textup{cap}}_{N}(\mathcal{E}_{N}(A),\,\mathcal{E}_{N}(B))-\sum_{\eta\in\mathcal{\overline{H}}_{N}\setminus\mathcal{E}_{N}(A\cup B)}\overline{\mathbf{h}}_{\mathcal{E}_{N}(A),\,\mathcal{E}_{N}(B)}(\eta)\,(\textup{div }\overline{\phi})(\eta)
\end{aligned} \end{equation}Thus, by \eqref{e4511}, \eqref{e4512}, and Proposition \ref{p613},
\begin{equation}
\left\llangle \,\overline{\Psi}_{\overline{\mathbf{V}}_{A,\,B}},\,\overline{\Psi}_{\overline{\mathbf{h}}_{\mathcal{E}_{N}(A),\,\mathcal{E}_{N}(B)}}\right\rrangle _{\mathcal{C}}=\left(1+o_{N}(1)+o_{\epsilon}(1)\right)N^{-(1+\alpha)}\,\textup{cap}_{Y}(A,\,B)\label{ek3}
\end{equation}

Define $\mathbf{u}=\overline{\mathbf{h}}_{\mathcal{E}_{N}(A),\,\mathcal{E}_{N}(B)}-\overline{\mathbf{V}}_{A,\,B}$.
Then, by \eqref{ek1}, \eqref{ek2} and \eqref{ek3}, \begin{equation} \label{ekk1} \begin{aligned}
\bigl\Vert\,\overline{\Psi}_{\mathbf{u}}\bigr\Vert_{\mathcal{C}}^{2}&=\bigl\Vert\,\overline{\Psi}_{\,\overline{\mathbf{h}}_{\mathcal{E}_{N}(A),\,\mathcal{E}_{N}(B)}}\bigr\Vert_{\mathcal{C}}^{2}+\bigl\Vert\,\overline{\Psi}_{\,\overline{\mathbf{V}}_{A,\,B}}\bigr\Vert_{\mathcal{C}}^{2}-2\left\llangle \,\overline{\Psi}_{\,\overline{\mathbf{V}}_{A,\,B}},\,\overline{\Psi}_{\,\overline{\mathbf{h}}_{\mathcal{E}_{N}(A),\mathcal{\,E}_{N}(B)}}\right\rrangle _{\mathcal{C}}\\
&=\left(o_{N}(1)+o_{\epsilon}(1)\right)N^{-(1+\alpha)}\;.
\end{aligned} \end{equation} As $\mathbf{u}(\mathfrak{o})=\overline{\mathbf{h}}_{\mathcal{E}_{N}(A),\,\mathcal{E}_{N}(B)}(\mathfrak{o})-\mathfrak{h}_{A,\,B}(x)$
and $\mathbf{u}(\eta)=0$ for all $\eta\in\mathcal{E}_{N}(A\cup B)$,
we can write
\[
\mathbf{u}=\left(\,\overline{\mathbf{h}}_{\mathcal{E}_{N}(A),\,\mathcal{E}_{N}(B)}(\mathfrak{o})-\mathfrak{h}_{A,\,B}(x)\right)\mathbf{u}_{0}
\]
for some $\mathbf{u}_{0}\in\overline{C}_{1,\,0}(\{\mathfrak{o}\},\,\mathcal{E}_{N}(A\cup B))$.
Thus,
\begin{equation}
\left\Vert \overline{\Psi}_{\mathbf{u}}\right\Vert _{\mathcal{C}}^{2}=\overline{\mathscr{D}}_{N}(\mathbf{u})=\left(\,\overline{\mathbf{h}}_{\mathcal{E}_{N}(A),\,\mathcal{E}_{N}(B)}(\mathfrak{o})-\mathfrak{h}_{A,\,B}(x)\right)^{2}\,\overline{\mathscr{D}}_{N}(\mathbf{u}_{0})\;.\label{ekk2}
\end{equation}
By the Dirichlet principle for \textit{reversible} dynamics, Lemma
\ref{lem612}, Lemma \ref{lem613}, and Theorem \ref{t02}, we have
\begin{align}
\overline{\mathscr{D}}_{N}(\mathbf{u}_{0}) & \ge\overline{\textup{cap}}_{N}^{\,s}(\mathfrak{o},\,\mathcal{E}_{N}(A\cup B))\ge C_{0}^{-1}\,\overline{\textup{cap}}_{N}(\mathfrak{o},\,\mathcal{E}_{N}(A\cup B))\nonumber \\
 & =C_{0}^{-1}\,\textup{cap}_{N}(\mathcal{E}_{N}^{x},\,\mathcal{E}_{N}(A\cup B))\label{ekk3}\\
 & =C_{0}^{-1}\left(1+o_{N}(1)+o_{\epsilon}(1)\right)N^{-(1+\alpha)}\textup{\,cap}_{Y}(x,\,A\cup B)\nonumber
\end{align}
Therefore, by \eqref{ekk1}, \eqref{ekk2} and \eqref{ekk3},
\[
\left[\,\overline{\mathbf{h}}_{\mathcal{E}_{N}(A),\,\mathcal{E}_{N}(B)}(\mathfrak{o})-\mathfrak{h}_{A,\,B}(x)\right]^{2}\le o_{N}(1)+o_{\epsilon}(1)\;.
\]
Thus, the proof is completed by taking $\limsup_{N\rightarrow\infty}$
on both sides and then letting $\epsilon\rightarrow0$.
\end{proof}
The following proposition completes the proof of Theorem \ref{t03}.
\begin{prop}
\label{p619}For all $x,\,y\in S_{\star}$, we have that
\[
\lim_{N\rightarrow\infty}N^{1+\alpha}\,r_{N}(x,\,y)=a(x,\,y)\;.
\]
\end{prop}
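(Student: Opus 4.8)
The plan is to combine the exact identities \eqref{e610} and \eqref{e611}. Rearranging \eqref{e611} gives $r_{N}(x,\,y)=\lambda_{N}(x)\,\overline{\mathbb{P}}_{\mathfrak{o}}^{N}\bigl[\tau_{\mathcal{E}_{N}^{y}}<\tau_{\breve{\mathcal{E}}_{N}^{x,\,y}}\bigr]$, where $\mathfrak{o}$ denotes the collapse of the valley $\mathcal{E}_{N}^{x}$. I would then establish the scaling limits of the two factors $N^{1+\alpha}\lambda_{N}(x)$ and $\overline{\mathbb{P}}_{\mathfrak{o}}^{N}[\tau_{\mathcal{E}_{N}^{y}}<\tau_{\breve{\mathcal{E}}_{N}^{x,\,y}}]$ separately and multiply them.

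For the holding rate, \eqref{e610} gives $N^{1+\alpha}\lambda_{N}(x)=N^{1+\alpha}\textup{cap}_{N}(\mathcal{E}_{N}^{x},\,\breve{\mathcal{E}}_{N}^{x})/\mu_{N}(\mathcal{E}_{N}^{x})$. Since $\mathcal{E}_{N}^{x}=\mathcal{E}_{N}(\{x\})$ and $\breve{\mathcal{E}}_{N}^{x}=\mathcal{E}_{N}(S_{\star}\setminus\{x\})$, the pair $(\{x\},\,S_{\star}\setminus\{x\})$ partitions $S_{\star}$, so Corollary \ref{c02} applies to the numerator, while Theorem \ref{t01} gives $\mu_{N}(\mathcal{E}_{N}^{x})\to1/\kappa_{\star}$ for the denominator. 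Together with \eqref{eax} this yields
\[
\lim_{N\to\infty}N^{1+\alpha}\lambda_{N}(x)=\frac{1}{M_{\star}\,\Gamma(\alpha)\,I_{\alpha}}\sum_{z\in S_{\star}\setminus\{x\}}\textup{cap}_{X}(x,\,z)=\sum_{z\in S_{\star}\setminus\{x\}}a(x,\,z)=:\lambda(x)\;,
\]
which is the holding rate of the limiting chain $\widehat{Y}(\cdot)$ at $x$.

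For the hitting probability, suppose first $\kappa_{\star}\ge3$. Then $A=\{y\}$ and $B=S_{\star}\setminus\{x,\,y\}$ are disjoint and non-empty, contained in $S_{\star}\setminus\{x\}$, with $A\cup B=S_{\star}\setminus\{x\}$; since $\mathcal{E}_{N}^{y}=\mathcal{E}_{N}(A)$ and $\breve{\mathcal{E}}_{N}^{x,\,y}=\mathcal{E}_{N}(B)$, Proposition \ref{p618} gives $\overline{\mathbb{P}}_{\mathfrak{o}}^{N}[\tau_{\mathcal{E}_{N}^{y}}<\tau_{\breve{\mathcal{E}}_{N}^{x,\,y}}]\to\mathfrak{h}_{\{y\},\,S_{\star}\setminus\{x,\,y\}}(x)=\widehat{\mathbf{Q}}_{x}[\tau_{\{y\}}<\tau_{S_{\star}\setminus\{x,\,y\}}]$. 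As all the rates $a(x,\,z)$, $z\in S_{\star}\setminus\{x\}$, are positive, the first jump of $\widehat{Y}(\cdot)$ out of $x$ already lands in $\{y\}\cup(S_{\star}\setminus\{x,\,y\})$, so this probability equals the probability that the first jump is to $y$, namely $a(x,\,y)/\lambda(x)$. If $\kappa_{\star}=2$, then $S_{\star}=\{x,\,y\}$ and $\breve{\mathcal{E}}_{N}^{x,\,y}=\emptyset$, so irreducibility of the collapsed chain makes the probability $1$, which again equals $a(x,\,y)/\lambda(x)$ because $\lambda(x)=a(x,\,y)$ in that case. Multiplying the two limits gives $\lim_{N\to\infty}N^{1+\alpha}r_{N}(x,\,y)=\lambda(x)\cdot a(x,\,y)/\lambda(x)=a(x,\,y)$, as required.

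At this level there is no real obstacle: the content sits in Corollary \ref{c02} (hence Theorem \ref{t02}) and in Proposition \ref{p618} (hence Proposition \ref{p613} and the collapsed-chain analysis of Sections \ref{sec64} and \ref{sec65}). The only points requiring some care are the degenerate case $\kappa_{\star}=2$ and the elementary identity $\lambda(x)\,\widehat{\mathbf{Q}}_{x}[\tau_{\{y\}}<\tau_{S_{\star}\setminus\{x,\,y\}}]=a(x,\,y)$, valid because $\widehat{Y}(\cdot)$ is a long-range chain on $S_{\star}$, so no return to $x$ can precede the first jump.
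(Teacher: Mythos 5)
Your proof is correct and follows essentially the same route as the paper: estimate $N^{1+\alpha}\lambda_{N}(x)$ from \eqref{e610}, Corollary \ref{c02}, and Theorem \ref{t01}, estimate $r_{N}(x,\,y)/\lambda_{N}(x)$ from \eqref{e611} and Proposition \ref{p618}, identify $\mathfrak{h}_{y,\,S_{\star}\setminus\{x,\,y\}}(x)$ with the first-jump probability $a(x,\,y)/\lambda(x)$ of the long-range chain $\widehat{Y}(\cdot)$, and multiply. You are slightly more careful than the paper in isolating the case $\kappa_{\star}=2$ (where $B=S_{\star}\setminus\{x,\,y\}=\emptyset$ and Proposition \ref{p618} does not literally apply, but \eqref{e611} already gives $r_{N}(x,\,y)=\lambda_{N}(x)$), which is a worthwhile clarification but not a different method.
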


\begin{proof}
By Theorem \ref{t01}, \eqref{e610}, and Corollary \ref{c02}, we
have
\begin{equation}
\lambda_{N}(x)=\frac{\textup{cap}_{N}(\mathcal{E}_{x}^{N},\,\breve{\mathcal{E}}_{x}^{N})}{\mu(\mathcal{E}_{x}^{N})}=\left(1+o_{N}(1)\right)N^{-(1+\alpha)}\,\frac{1}{M_{\star}\,\Gamma(\alpha)\,I_{\alpha}}\,\sum_{y\in S_{\star}\setminus\{x\}}\textup{cap}_{X}(x,\,y)\;.\label{e6191}
\end{equation}
Recall from \eqref{hu4} the definition of $\mathfrak{h}_{y,\,S_{\star}\setminus\{x,\,y\}}$
and from \eqref{hu3} the definition of chain $\widehat{Y}(\cdot)$.
Write $\tau=\inf\left\{ t:\widehat{Y}(t)\neq\widehat{Y}(0)\right\} $.
Then, one can observe that
\[
\mathfrak{h}_{y,\,S_{\star}\setminus\{x,\,y\}}(x)=\widehat{\mathbf{Q}}_{x}\left(\widehat{Y}(\tau)=y\right)=\,\frac{\textup{cap}_{X}(x,\,y)}{\sum_{y\in S_{\star}\setminus\{x\}}\textup{cap}_{X}(x,\,y)}\;.
\]
Thus, by Proposition \ref{p618},
\begin{equation}
\frac{r_{N}(x,\,y)}{\lambda_{N}(x)}=\left(1+o_{N}(1)\right)\mathfrak{h}_{y,\,S_{\star}\setminus\{x,\,y\}}(x)=\left(1+o_{N}(1)\right)\,\frac{\textup{cap}_{X}(x,\,y)}{\sum_{y\in S_{\star}\setminus\{x\}}\textup{cap}_{X}(x,\,y)}\;.\label{e6192}
\end{equation}
The proof is completed by multiplying \eqref{e6191} and \eqref{e6192}.
\end{proof}

\section{\label{sec7}Approximation of equilibrium potentials}

The construction of the approximation of the equilibrium potential
between valleys, in reversible set-up was carried out in \cite{BL3}.
The corresponding construction is presented in this section. The following
comments are valid throughout the remaining of the paper.
\begin{itemize}
\item The dependency on $N$ and $\epsilon$ will be ignored for the subsets
of $\mathcal{H}_{N}$, functions on $\mathcal{H}_{N}$, and flows
on $\mathcal{H}_{N}^{\otimes}$ when there is no risk of confusion.
For instance, the notation $\mathcal{E}^{x}$ will be used instead
of $\mathcal{E}_{N}^{x}$ or $\mathcal{D}^{x}$ instead of $\mathcal{D}_{N,\,\epsilon}^{x}$
(cf. \eqref{dx}). Of course, the sets such as $\mathcal{H}_{N}$
or $\mathcal{H}_{N\,,S_{0}}$ defined in Section \ref{sec72}, the
subscript is retained to stress the dependency, as $N$ is occasionally
replaced with some other number such as $N\epsilon$.
\item We shall assume that $N$ is sufficiently large so that $N\epsilon>\pi_{N}>\ell_{N}$.
Recall that $\pi_{N}=\lfloor N^{\frac{1}{\alpha}+\frac{1}{2}}\rfloor\ll N$.
For notational simplicity, it will be assumed that $N\epsilon$ is
an integer. Of course, all the arguments are valid without this assumption.
\end{itemize}
This section is organized as follows. In Section \ref{sec72}, several
basic properties of invariant measure that are frequently used are
investigated. In Section \ref{sec71}, a global geometry of $\mathcal{H}_{N}$
is presented that is well-suited for describing the metastability
of non-reversible zero-range processes. In Section \ref{sec73} certain
auxiliary functions are introduced that play a fundamental role in
the construction of test flows. In Section \ref{sec74} the approximation
of equilibrium potential on tubes is constructed, and is finally extended
into a global object in Section \ref{sec75}, thus completing the
construction. The proof of Proposition \ref{p63} is also given in
that subsection as well.

\subsection{\label{sec72}Estimates related to invariant measure}

For a non-empty set $S_{0}\subseteq S$ and $k\in\mathbb{N}$, let
$\mathcal{H}_{k,\,S_{0}}$ be the set of particle configuration on
$S_{0}$ with $k$ particles, i.e.,
\[
\mathcal{H}_{k,\,S_{0}}=\Bigl\{\zeta=(\zeta_{x})_{x\in S_{0}}\in\mathbb{N}^{S_{0}}:\sum_{x\in S_{0}}\xi_{x}=k\Bigr\}\;.
\]
Using the notations introduced in \eqref{rec1}, let
\[
Z_{k,\,S_{0}}=k^{\alpha}\,\sum_{\zeta\in\mathcal{H}_{k,\,S_{0}}}\,\prod_{x\in S_{0}}\frac{m_{\star}(x)^{\zeta_{x}}}{a(\zeta_{x})}=k^{\alpha}\sum_{\zeta\in\mathcal{H}_{k\,,S_{0}}}\frac{m_{\star}^{\zeta}}{a(\zeta)}\;.
\]
Hence, $\mathcal{H}_{N,\,S}=\mathcal{H}_{N}$ and $Z_{N,\,S}=Z_{N}$.
By the same principle as in Proposition \ref{e21}, the following
result is obtained
\begin{lem}
\label{le71}For all non-empty set $S_{0}\subseteq S$, we have
\[
\lim_{k\rightarrow\infty}Z_{k,\,S_{0}}=|S_{0}\cap S_{\star}|\,\Gamma(\alpha)^{|S_{0}\cap S_{\star}|-1}\prod_{x\in S_{0}\setminus S_{\star}}\Gamma_{x}\;.
\]
\end{lem}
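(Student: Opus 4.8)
The plan is to reproduce, with the subset $S_0$ in place of $S$, the argument behind Proposition \ref{e21} (that is, \cite[Proposition 2.1]{BL3}). Write $S_0^\star=S_0\cap S_\star$, $\kappa_0^\star=|S_0^\star|$ and $S_0'=S_0\setminus S_\star$, so that $m_\star(x)=1$ for $x\in S_0^\star$ and $m_\star(x)\in(0,1)$ for $x\in S_0'$. I would decompose $\mathcal{H}_{k,\,S_0}$ according to the site carrying the most particles: for $y\in S_0$ let $\mathcal{A}_y$ be the set of $\zeta\in\mathcal{H}_{k,\,S_0}$ with $\zeta_y=\max_{x\in S_0}\zeta_x$, ties broken by a fixed ordering of $S_0$, so that the $\mathcal{A}_y$, $y\in S_0$, are disjoint and exhaust $\mathcal{H}_{k,\,S_0}$; note $\zeta\in\mathcal{A}_y$ forces $\zeta_y\ge k/|S_0|$. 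Two elementary facts will be used throughout. First, since $m_\star\le1$, one has $m_\star^\zeta\le m_\star(y)^{\zeta_y}$ for every $y$. Second, the identity $\big(\sum_{j\ge0}t^j/a(j)\big)^{|S_0|}=\sum_{k\ge0}\big(\sum_{\zeta\in\mathcal{H}_{k,\,S_0}}\prod_{x\in S_0}1/a(\zeta_x)\big)t^k$, valid at $t=1$ because $\sum_{j\ge0}1/a(j)=\Gamma(\alpha)<\infty$, yields the $k$-uniform bound $\sum_{\zeta\in\mathcal{H}_{k,\,S_0}}\prod_{x\in S_0}1/a(\zeta_x)\le\Gamma(\alpha)^{|S_0|}$.

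First I would dispose of the non-star sites: for $y\in S_0'$, the two facts together with $\zeta_y\ge k/|S_0|$ bound the contribution of $\mathcal{A}_y$ to $Z_{k,\,S_0}$ by $k^\alpha\,m_\star(y)^{k/|S_0|}\,\Gamma(\alpha)^{|S_0|}$, which tends to $0$ as $k\to\infty$ since $m_\star(y)<1$. Hence only the sets $\mathcal{A}_y$ with $y\in S_0^\star$ survive in the limit; in particular $Z_{k,\,S_0}\to0$ when $\kappa_0^\star=0$, matching the formula. Fix $y\in S_0^\star$. Parametrizing $\zeta\in\mathcal{A}_y$ by $\xi=\zeta|_{S_0\setminus\{y\}}$ (so $\zeta_y=k-|\xi|$) and using $m_\star(y)=1$, the contribution of $\mathcal{A}_y$ to $Z_{k,\,S_0}$ equals, up to the asymptotically irrelevant tie-breaking,
\[
k^\alpha\sum_{\xi}\frac{1}{a(k-|\xi|)}\prod_{x\in S_0\setminus\{y\}}\frac{m_\star(x)^{\xi_x}}{a(\xi_x)}\;,
\]
the sum running over $\xi\in\mathbb{N}^{S_0\setminus\{y\}}$ with $k-|\xi|\ge\xi_x$ for all $x\in S_0\setminus\{y\}$, so that $k-|\xi|\ge k/|S_0|$ on this range. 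The goal is to show this converges to $\prod_{x\in S_0\setminus\{y\}}\Gamma_x=\Gamma(\alpha)^{\kappa_0^\star-1}\prod_{x\in S_0'}\Gamma_x$, using $\Gamma_x=\Gamma(\alpha)$ for $x\in S_\star$.

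For this last limit I would truncate in $|\xi|$: fix $M\in\mathbb{N}$ and split the sum into $\{|\xi|\le M\}$ and $\{|\xi|>M\}$. On $\{|\xi|\le M\}$ the constraint $k-|\xi|\ge\xi_x$ is vacuous once $k\ge2M$, and each factor $k^\alpha/a(k-|\xi|)=\big(k/(k-|\xi|)\big)^\alpha$ lies in $[1,(k/(k-M))^\alpha]$; as there are finitely many such $\xi$, this part is squeezed between $\Sigma_M:=\sum_{|\xi|\le M}\prod_{x\in S_0\setminus\{y\}}m_\star(x)^{\xi_x}/a(\xi_x)$ and $(k/(k-M))^\alpha\,\Sigma_M$, while $\Sigma_M$ increases to $\prod_{x\in S_0\setminus\{y\}}\Gamma_x$ as $M\to\infty$. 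On $\{|\xi|>M\}$ one has $k-|\xi|\ge k/|S_0|$, hence $k^\alpha/a(k-|\xi|)\le|S_0|^\alpha$, so this part is at most $|S_0|^\alpha$ times the $k$-independent tail $\sum_{|\xi|>M}\prod_{x\in S_0\setminus\{y\}}m_\star(x)^{\xi_x}/a(\xi_x)$, which vanishes as $M\to\infty$. Taking $\limsup_{k\to\infty}$ then $M\to\infty$ (respectively $\liminf_{k\to\infty}$ then $M\to\infty$) shows the contribution of each $\mathcal{A}_y$, $y\in S_0^\star$, tends to $\prod_{x\in S_0\setminus\{y\}}\Gamma_x$; summing over the $\kappa_0^\star$ such sites gives the claim. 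The only genuinely delicate point is the decoupling of the prefactor $k^\alpha$ against the factor $1/a(\zeta_y)=1/(k-|\xi|)^\alpha$ produced by the bulk site $y$; the truncation is exactly what makes this legitimate uniformly in $k$, and everything else is routine bookkeeping.
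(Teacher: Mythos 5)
Your proof is correct and follows the same route that the paper implicitly invokes: it reproduces, with $S_0$ in place of $S$, the decomposition-by-dominant-site argument behind Proposition \ref{e21} (i.e.\ \cite[Proposition 2.1]{BL3}), discarding the non-maximal-measure sites via the exponential decay of $m_\star(y)^{k/|S_0|}$ and handling the $k^\alpha/a(\zeta_y)$ prefactor by a truncation in $|\xi|$. The tie-breaking remark is harmless since ties require $|\xi|\ge k/2$ and so only affect the tail already absorbed by the $M\to\infty$ step.
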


In this subsection, let us fix a sequence $(d_{N})_{N\in\mathbb{N}}$
of positive integer satisfying $1\ll d_{N}\ll N$, namely $\lim_{N\rightarrow\infty}d_{N}=+\infty$
and $\lim_{N\rightarrow\infty}d_{N}/N=0$. In the applications, $d_{N}$
is either $\pi_{N}$ or $\ell_{N}$.
\begin{lem}
\label{le72}For all non-empty sets $S_{0}\subseteq S$, we have
\[
\lim_{N\rightarrow\infty}\sum_{k=0}^{d_{N}}\,\,\sum_{\zeta\in\mathcal{H}_{k,\,S_{0}}}\frac{m_{\star}^{\zeta}}{a(\zeta)}=\Gamma(\alpha)^{|S_{0}\cap S_{\star}|}\prod_{x\in S_{0}\setminus S_{\star}}\Gamma_{x}\;.
\]
\end{lem}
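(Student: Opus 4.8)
The plan is to recognize the displayed double sum as a partial sum of an absolutely convergent series that factorizes over the sites of $S_{0}$. First I would note that the sets $\mathcal{H}_{k,\,S_{0}}$, $k\ge0$, partition $\mathbb{N}^{S_{0}}$ (each $\zeta\in\mathbb{N}^{S_{0}}$ lies in exactly one $\mathcal{H}_{k,\,S_{0}}$, namely for $k=\sum_{x\in S_{0}}\zeta_{x}$), so that for every fixed integer $d\ge0$,
\[
\sum_{k=0}^{d}\,\sum_{\zeta\in\mathcal{H}_{k,\,S_{0}}}\frac{m_{\star}^{\zeta}}{a(\zeta)}\;\le\;\sum_{\zeta\in\mathbb{N}^{S_{0}}}\frac{m_{\star}^{\zeta}}{a(\zeta)}\;=\;\prod_{x\in S_{0}}\Bigl(\,\sum_{j=0}^{\infty}\frac{m_{\star}(x)^{j}}{a(j)}\Bigr)\;=\;\prod_{x\in S_{0}}\Gamma_{x}\;,
\]
where the factorization is Tonelli's theorem (all summands are nonnegative) and each factor is finite because $\alpha>2$ forces $\Gamma_{x}\le\Gamma(\alpha)<\infty$.

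Next, the left-hand side above is nondecreasing in $d$ and, as $d\rightarrow\infty$, exhausts the full sum over $\mathbb{N}^{S_{0}}$; hence
\[
\lim_{d\rightarrow\infty}\,\sum_{k=0}^{d}\,\sum_{\zeta\in\mathcal{H}_{k,\,S_{0}}}\frac{m_{\star}^{\zeta}}{a(\zeta)}\;=\;\prod_{x\in S_{0}}\Gamma_{x}\;.
\]
Since the standing hypothesis $1\ll d_{N}$ means precisely that $d_{N}\rightarrow\infty$ as $N\rightarrow\infty$, composing with this limit yields $\lim_{N\rightarrow\infty}\sum_{k=0}^{d_{N}}\sum_{\zeta\in\mathcal{H}_{k,\,S_{0}}}m_{\star}^{\zeta}/a(\zeta)=\prod_{x\in S_{0}}\Gamma_{x}$. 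Finally I would split this product according to membership in $S_{\star}$: for $x\in S_{0}\cap S_{\star}$ one has $m_{\star}(x)=1$, hence $\Gamma_{x}=\Gamma(\alpha)$, while the remaining factors are exactly $\prod_{x\in S_{0}\setminus S_{\star}}\Gamma_{x}$; this produces the asserted value $\Gamma(\alpha)^{|S_{0}\cap S_{\star}|}\prod_{x\in S_{0}\setminus S_{\star}}\Gamma_{x}$.

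I do not expect a genuine obstacle here: the statement is in essence a monotone convergence argument for a convergent series, and the only point needing (minor) care is the interchange of the two summations, which is automatic by nonnegativity. It is worth remarking that only $d_{N}\rightarrow\infty$ is used in this lemma; the sharper restriction $d_{N}\ll N$ from the standing assumption of the subsection plays no role here and enters only in the subsequent estimates.
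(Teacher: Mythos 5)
Your proof is correct. The main computation—recognizing $\sum_{\zeta\in\mathbb{N}^{S_{0}}}m_{\star}^{\zeta}/a(\zeta)$ as the Tonelli factorization $\prod_{x\in S_{0}}\Gamma_{x}$ and then splitting by membership in $S_{\star}$—is precisely what the paper does. Where you diverge is in controlling the tail $\sum_{k>d_{N}}$: the paper invokes Lemma~\ref{le71} to observe that $Z_{k,\,S_{0}}$ stays bounded, then bounds the tail by $C\sum_{k>d_{N}}k^{-\alpha}=o_{N}(1)$, while you bypass Lemma~\ref{le71} entirely and rely on pure monotone convergence of the nonnegative partial sums to the finite total. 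Your route is marginally more elementary and cleanly isolates the only hypothesis used ($d_{N}\rightarrow\infty$), as you correctly remark; the paper's route, by contrast, implicitly delivers a quantitative rate $O(d_{N}^{-(\alpha-1)})$ for the tail, which is of the same flavor as the estimate in Lemma~\ref{le73} but is not needed for the statement of Lemma~\ref{le72} itself. Both are valid; the choice is a matter of economy versus uniformity of technique across the subsection.
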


\begin{proof}
By Lemma \ref{le71},
\[
\sum_{k=d_{N}+1}^{\infty}\,\,\sum_{\zeta\in\mathcal{H}_{k,\,S_{0}}}\frac{m_{\star}^{\zeta}}{a(\zeta)}=\sum_{k=d_{N}+1}^{\infty}\frac{Z_{k,\,S_{0}}}{k^{\alpha}}\le C\sum_{k=d_{N}+1}^{\infty}\frac{1}{k^{\alpha}}=o_{N}(1)\;.
\]
Therefore, it suffices to verify that
\[
\sum_{k=0}^{\infty}\,\,\sum_{\zeta\in\mathcal{H}_{k,\,S_{0}}}\frac{m_{\star}^{\zeta}}{a(\zeta)}=\Gamma(\alpha)^{|S_{0}\cap S_{\star}|}\prod_{x\in S_{0}\setminus S_{\star}}\Gamma_{x}=\prod_{x\in S_{0}}\Gamma_{x}\;.
\]
This is obvious because if we express $\Gamma(\alpha)$ and $\Gamma_{x}$
as infinite series, and expand the right-hand side, then the left-hand
side is obtained.
\end{proof}
For $d<k$, let
\[
\mathcal{H}_{k,\,S_{0}}(d)=\left\{ \zeta\in\mathcal{H}_{k,\,S_{0}}:\zeta_{x}<k-d\;\;\text{for all }x\in S_{0}\cap S_{\star}\right\} \;.
\]

\begin{lem}
\label{le73}For a non-empty set $S_{0}\subseteq S$ and sufficiently
large $N$, we have
\[
\sum_{\zeta\in\mathcal{H}_{N,\,S_{0}}(d_{N})}\frac{m_{\star}^{\zeta}}{a(\zeta)}<\frac{C}{N^{\alpha}\,d_{N}^{\alpha-1}}\;.
\]
\end{lem}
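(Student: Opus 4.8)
The plan is to decompose the sum over $\mathcal{H}_{N,\,S_0}(d_N)$ according to which site carries the most particles, and to treat separately the case that this ``leading site'' belongs to $S_\star$ and the case that it does not. For $\zeta\in\mathcal{H}_{N,\,S_0}$ fix a site $y=y(\zeta)\in S_0$ with $\zeta_y=\max_{x\in S_0}\zeta_x$; since $\sum_x\zeta_x=N$ and $|S_0|\le\kappa$, this forces $\zeta_y\ge N/\kappa$. Hence
\[
\sum_{\zeta\in\mathcal{H}_{N,\,S_0}(d_N)}\frac{m_\star^\zeta}{a(\zeta)}\ \le\ \sum_{y\in S_0}\ \sum_{\substack{\zeta\in\mathcal{H}_{N,\,S_0}(d_N)\\ \zeta_y=\max_x\zeta_x}}\frac{m_\star^\zeta}{a(\zeta)}\;,
\]
and it suffices to bound each inner sum, distinguishing $y\in S_0\setminus S_\star$ from $y\in S_0\cap S_\star$.

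If $y\in S_0\setminus S_\star$, a crude estimate suffices: on the corresponding set $m_\star^\zeta\le m_\star(y)^{\zeta_y}\le\rho^{N/\kappa}$ with $\rho:=\max_{x\in S\setminus S_\star}m_\star(x)\in(0,1)$, while $a(\zeta)\ge1$, so the inner sum is at most $|\mathcal{H}_{N,\,S_0}|\,\rho^{N/\kappa}\le(N+1)^\kappa\rho^{N/\kappa}$, which is exponentially small in $N$ and therefore below $N^{-\alpha}d_N^{-(\alpha-1)}$ (a quantity exceeding $N^{-(2\alpha-1)}$ for large $N$, since $d_N\le N$) once $N$ is large. If $y\in S_0\cap S_\star$, I use two properties of the set $\{\zeta\in\mathcal{H}_{N,\,S_0}(d_N):\zeta_y=\max_x\zeta_x\}$: first, $\zeta_y\ge N/\kappa$ gives $1/a(\zeta_y)=\zeta_y^{-\alpha}\le\kappa^\alpha N^{-\alpha}$; second, the defining condition $\zeta_y<N-d_N$ (valid because $y\in S_\star$) forces $\sum_{x\in S_0\setminus\{y\}}\zeta_x>d_N$. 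Factoring out the $y$-coordinate (using $m_\star(y)=1$) and discarding the remaining constraints gives
\[
\sum_{\substack{\zeta\in\mathcal{H}_{N,\,S_0}(d_N)\\ \zeta_y=\max_x\zeta_x}}\frac{m_\star^\zeta}{a(\zeta)}\ \le\ \frac{\kappa^\alpha}{N^\alpha}\,T(d_N)\;,\qquad T(d):=\sum_{r>d}\ \sum_{\xi\in\mathcal{H}_{r,\,S_0\setminus\{y\}}}\frac{m_\star^\xi}{a(\xi)}\;,
\]
so everything reduces to $T(d_N)=O(d_N^{-(\alpha-1)})$. Here I split on whether $S_0\setminus\{y\}$ still contains a maximal-mass site. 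If not, each summand is at most $(r+1)^{|S_0|-1}\rho^r$, so $T(d_N)$ is exponentially small in $d_N$ and a fortiori $O(d_N^{-(\alpha-1)})$ since $d_N\to\infty$. If so, then $\sum_{\xi\in\mathcal{H}_{r,\,S_0\setminus\{y\}}}m_\star^\xi/a(\xi)=Z_{r,\,S_0\setminus\{y\}}\,r^{-\alpha}$ by the definition of the partition function, and $Z_{r,\,S_0\setminus\{y\}}$ is bounded uniformly in $r$ because it converges by Lemma \ref{le71}; hence $T(d_N)\le C\sum_{r>d_N}r^{-\alpha}\le C(\alpha-1)^{-1}d_N^{-(\alpha-1)}$, using $\alpha>2$. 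Summing the at most $\kappa$ contributions yields the claim, with $C$ depending only on $S$, $X(\cdot)$ and $\alpha$, and the threshold on $N$ coming entirely from the exponentially small terms.

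The crux — and the reason a naive estimate fails — is that the geometric factor $1/a(N-\zeta_y)$ attached to the almost-full site can be as large as $1$; the observation that unblocks the argument is that on the event ``$y$ is a maximizer'' one automatically has $\zeta_y\ge N/|S_0|$, so one always extracts a clean $N^{-\alpha}$ from the leading coordinate, while the surplus of more than $d_N$ particles sitting on the complementary sites produces the $d_N^{-(\alpha-1)}$ through a convergent-tail estimate, with no delicate splitting of the range of $\zeta_y$ required. I would also note that the degenerate configurations — $S_0\cap S_\star=\emptyset$, or $S_0\subseteq S_\star$ with $|S_0|=1$, or $S_0\subseteq S_\star$ — need no special treatment: in the first two the relevant inner sums are empty or already exponentially small, and in the third $S_0\setminus\{y\}$ may be empty so that $T(d_N)=0$, all consistent with the displayed bounds.
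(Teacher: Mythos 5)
Your proof is correct, and it takes a genuinely different route from the paper's. The paper's argument splits $\mathcal{H}_{N,S_0}(d_N)$ into the ``spread'' set $\widetilde{\mathcal{H}}_{N,S_0}(d_N)$ (no site carries $\ge N-d_N$ particles) and, for each $y\in S_0\setminus S_\star$, the ``$y$ nearly full'' set $\{\zeta:\zeta_y\ge N-d_N\}$; the first is handled by citing \cite[Lemma 3.2]{BL3} as a black box, and the second is exponentially small. Your argument instead classifies configurations by a \emph{maximizing coordinate} $y(\zeta)$, which is automatically $\ge N/\kappa$, so the leading factor $1/a(\zeta_y)$ always yields a clean $\kappa^\alpha N^{-\alpha}$. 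When that maximizer lies in $S_\star$ you exploit that the defining constraint $\zeta_y<N-d_N$ pushes more than $d_N$ particles onto the remaining sites, and you convert the resulting sum into a tail of $\sum_r Z_{r,\,S_0\setminus\{y\}}\,r^{-\alpha}$, bounded via the uniform boundedness of $Z_{r,\,S_0\setminus\{y\}}$ (a consequence of the already-proved Lemma \ref{le71}); when the maximizer lies outside $S_\star$ you get exponential decay directly. This is strictly more self-contained than the paper's proof — it avoids the external citation — at the modest cost of a case analysis on the maximizer. The degenerate cases you flag ($|S_0|=1$ with $S_0\subseteq S_\star$, $S_0\cap S_\star=\emptyset$, $S_0\setminus\{y\}=\emptyset$) are indeed harmless, as you note.
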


\begin{proof}
It should be noted that
\[
\mathcal{H}_{N,\,S_{0}}(d_{N})\subseteq\widetilde{\mathcal{H}}_{N,\,S_{0}}(d_{N})\cup\Bigl(\bigcup_{y\in S_{0}\setminus S_{\star}}\mathcal{H}_{N,\,S_{0}}^{y}(d_{N})\Bigr)\;,
\]
where
\begin{align*}
 & \widetilde{\mathcal{H}}_{N,\,S_{0}}(d_{N})=\left\{ \zeta\in\mathcal{H}_{N,\,S_{0}}:\zeta_{x}<N-d_{N}\;\;\text{for all }x\in S_{0}\right\} \;,\\
 & \mathcal{H}_{N,\,S_{0}}^{y}(d_{N})=\left\{ \zeta\in\mathcal{H}_{N,\,S_{0}}:\zeta_{y}\ge N-d_{N}\right\} \;\;;\;y\in S_{0}\setminus S_{\star}\;.
\end{align*}
The set $\widetilde{\mathcal{H}}_{N,\,S_{0}}(d_{N})$ differs from
$\mathcal{H}_{N,\,S_{0}}(d_{N})$ as it is additionally imposed that
$\zeta_{x}<N-d_{N}$ for $x\in S_{0}\setminus S_{\star}$ on this
set. By \cite[Lemma 3.2]{BL3},
\begin{equation}
\sum_{\zeta\in\widetilde{\mathcal{H}}_{N,\,S_{0}}(d_{N})}\frac{m_{\star}^{\zeta}}{a(\zeta)}<\frac{C}{N^{\alpha}\,d_{N}^{\alpha-1}}\;.\label{dec3}
\end{equation}
Let $\widehat{m}_{\star}=\max\left\{ m_{\star}(y):y\in S\setminus S_{\star}\right\} <1$.
As $d_{N}\ll N$, there exists sufficiently large $N_{0}$ such that
for all $N>N_{0}$,
\[
\frac{\widehat{m}_{\star}^{N-d_{N}}}{(N-d_{N})^{\alpha}}<\frac{1}{N^{\alpha}\,d_{N}^{\alpha-1}}
\]
Hence, for $y\in S_{0}\setminus S_{\star}$ and $N>N_{0}$,
\begin{equation}
\sum_{\zeta\in\mathcal{H}_{N,\,S_{0}}^{y}(d_{N})}\frac{m_{\star}^{\zeta}}{a(\zeta)}\le\frac{m_{\star}(y)^{\zeta_{y}}}{a(\zeta_{y})}\le\frac{m_{\star}(y)^{N-d_{N}}}{a(N-d_{N})}<\frac{1}{N^{\alpha\,}d_{N}^{\alpha-1}}\;.\label{dec2}
\end{equation}
The proof is completed by \eqref{dec3} and \eqref{dec2}.
\end{proof}

\subsection{\label{sec71}Enlarged valleys and saddle tubes}

Herein several subsets of $\mathcal{H}_{N}$ are defined that are
suitably designed to capture typical metastable transitions among
valley and in turn play a central role in the construction of approximations
of equilibrium potentials and optimal flows. Figure \ref{fig1} shows
a visualization of the sets that are defined below.
\begin{figure}
\includegraphics[scale=0.073]{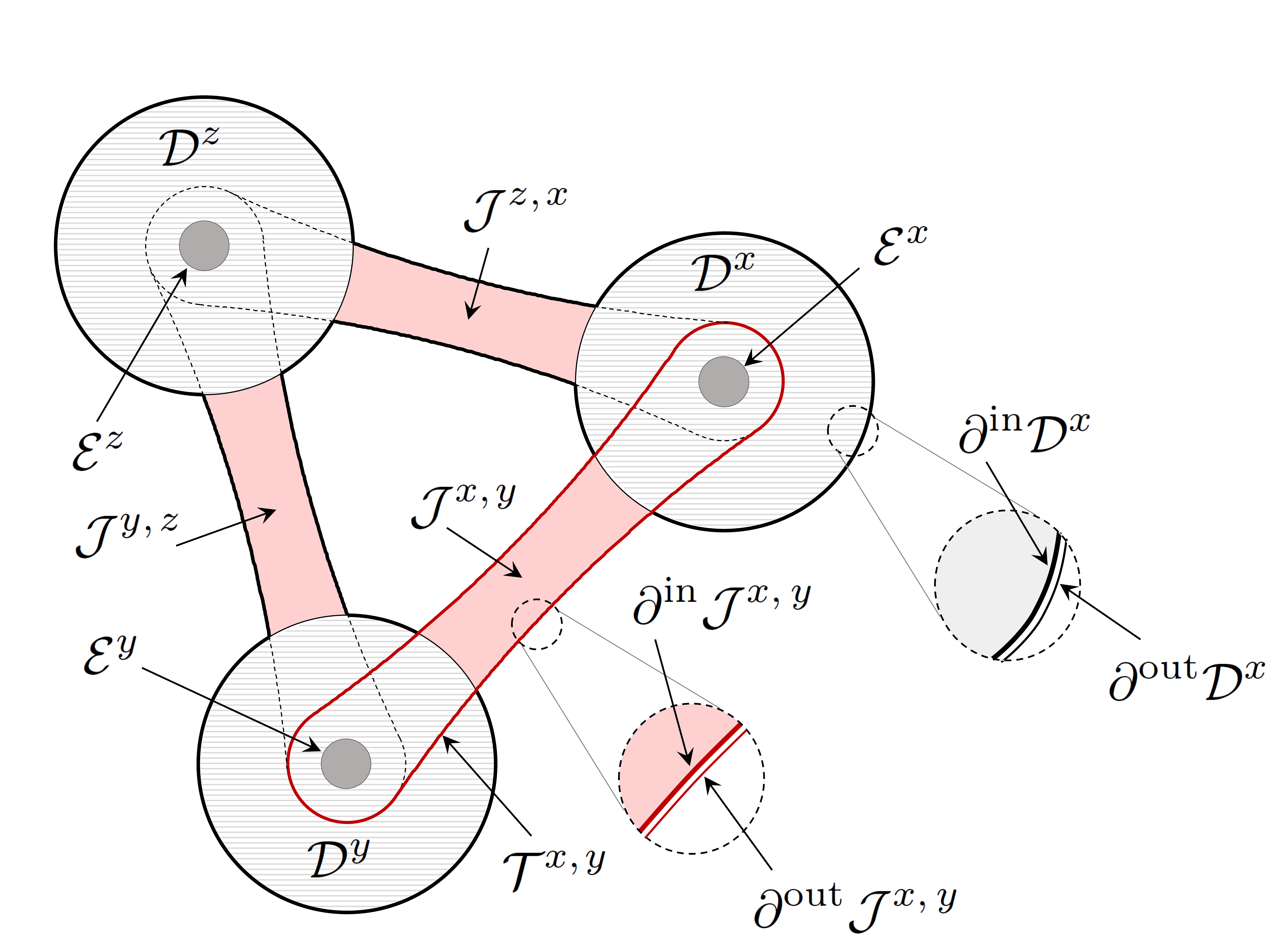}\caption{Structure of metastable valleys, wells, and saddle tubes for the case
$S_{\star}=\{x,\,y,\,z\}.$}
\label{fig1}
\end{figure}

The enlarged valley is defined by
\begin{equation}
\mathcal{D}^{x}=\{\eta\in\mathcal{H}_{N}:\eta_{x}\ge N(1-2\epsilon)\}\;\;;\;x\in S_{\star}\;.\label{dx}
\end{equation}
Thus, $\mathcal{E}^{x}\subset\mathcal{D}^{x}$ for all $x$ as it
is assumed that $N$ is sufficiently large so that $N\epsilon>\ell_{N}$.
The set $\mathcal{D}^{x}$ can be regarded as a metastable well corresponding
to $\mathcal{E}^{x}$ in the sense of \cite{BL1}. For $x,\,y\in S_{\star}$,
the tube between valleys $\mathcal{E}^{x}$ and $\mathcal{E}^{y}$
is defined by
\[
\mathcal{T}^{x,\,y}=\left\{ \eta\in\mathcal{H}_{N}:\eta_{x}+\eta_{y}\ge N-\pi_{N}\right\} \;,
\]
and the saddle tube is defined by
\begin{align*}
\mathcal{J}^{x,\,y} & =\mathcal{T}^{x,\,y}\setminus(\mathcal{D}^{x}\cup\mathcal{D}^{y})\;.\\
 & =\left\{ \eta\in\mathcal{H}_{N}:\eta_{x}+\eta_{y}\ge N-\pi_{N}\mbox{ and}\;\eta_{x},\,\eta_{y}<N(1-2\epsilon)\right\} \;.
\end{align*}
As $\pi_{N}<N\epsilon$, one can observe that
\begin{equation}
\eta_{x},\,\eta_{y}\in[N\epsilon,\,N(1-2\epsilon)]\;\;\text{for all }\eta\in\mathcal{J}^{x,y}\;.\label{obe1}
\end{equation}
We claim that $\mathcal{J}^{x,\,y}\cap\mathcal{J}^{x',\,y'}=\emptyset$
unless $\{x,\,y\}=\{x',\,y'\}$. To prove this claim, it suffices
to verify that for three points $x,\,y,\,z\in S_{\star}$, we have
\[
\mathcal{T}^{x,\,y}\cap\mathcal{T}^{x,\,z}\subset\mathcal{D}^{x}\;.
\]
This is obvious because if $\eta\in\mathcal{T}^{x,\,y}\cap\mathcal{T}^{x,\,z}$,
then
\[
2(N-\pi_{N})\le(\eta_{x}+\eta_{y})+(\eta_{x}+\eta_{z})\le\eta_{x}+N\;,
\]
and thus $\eta_{x}\ge N-2\pi_{N}>N(1-2\epsilon)$.

Let
\[
\mathcal{G}=\Bigl(\bigcup_{x\in S_{\star}}\mathcal{D}^{x}\Bigr)\bigcup\Bigl(\bigcup_{\{x,\,y\}\subset S_{\star}}\mathcal{J}^{x,\,y}\Bigr)\;.
\]
It should be noticed that the unions in the previous definition are
disjoint. The construction of approximating functions and optimal
flows is focused on $\mathcal{G}$, particularly on each of its components.
The definition of approximating objects outside $\mathcal{G}$ hardly
affects the computation except for the discontinuity along the boundary
of $\mathcal{G}$. Hence, the boundary of $\mathcal{G}$ is carefully
analyzed. It can be decomposed into several parts. The inner and outer
boundaries of the saddle tube $\mathcal{J}^{x,\,y}$, $x,\,y\in S_{\star}$,
are defined by
\begin{align*}
 & \partial^{\textrm{\,in}}\mathcal{J}^{x,\,y}=\left\{ \eta\in\mathcal{J}^{x,\,y}:\eta_{x}+\eta_{y}=N-\pi_{N}\right\} \;,\\
 & \partial^{\,\textrm{out}}\mathcal{J}^{x,\,y}=\left\{ \eta\in\mathcal{G}^{c}:\eta_{x}+\eta_{y}=N-\pi_{N}-1\right\} \;,
\end{align*}
respectively. The corresponding boundaries of the enlarged valleys
$\mathcal{D}^{x}$, $x\in S_{\star}$, are defined by
\begin{align*}
 & \partial^{\textrm{\,in}}\mathcal{D}^{x}=\left\{ \eta\in\mathcal{D}^{x}:\eta_{x}=N(1-2\epsilon)\right\} \setminus\Bigl(\bigcup_{y\in S_{\star}\setminus\{x\}}\mathcal{T}^{x,\,y}\Bigr)\;,\\
 & \partial^{\textrm{\,out}}\mathcal{D}^{x}=\left\{ \eta\in\mathcal{G}^{c}:\eta_{x}=N(1-2\epsilon)-1\right\} \setminus\Bigl(\bigcup_{y\in S_{\star}\setminus\{x\}}\mathcal{T}^{x,\,y}\Bigr)\;.
\end{align*}
Finally, the inner and outer boundaries of $\mathcal{G}$ are defined
by \begin{equation} \label{de1} \begin{aligned}
&\partial^{\textrm{in}}\mathcal{G}=\Bigl(\bigcup_{x\in S_{\star}}\partial^{\textrm{in}}\mathcal{D}^{x}\Bigr)\bigcup\Bigl(\bigcup_{\{x,\,y\}\subset S_{\star}}\partial^{\textrm{in}}\mathcal{J}^{x,\,y}\Bigr)\;,\\
&\partial^{\textrm{out}}\mathcal{G}=\Bigl(\bigcup_{x\in S_{\star}}\partial^{\textrm{out}}\mathcal{D}^{x}\Bigr)\bigcup\Bigl(\bigcup_{\{x,\,y\}\subset S_{\star}}\partial^{\textrm{out}}\mathcal{J}^{x,\,y}\Bigr)\;,
\end{aligned} \end{equation} respectively. In addition, the interior of $\mathcal{J}^{x,\,y}$,
$x,\,y\in S_{\star}$, and of $\mathcal{D}^{x}$, $x\in S_{\star}$,
are defined by
\[
\mathcal{J}_{\textrm{int }}^{x,\,y}=\mathcal{J}^{x,\,y}\setminus\partial^{\textrm{\,in}}\mathcal{J}^{x,\,y}\;\;\text{and\;\;\ensuremath{\mathcal{D}_{\textrm{int}}^{x}}=\ensuremath{\mathcal{D}^{x}}\ensuremath{\ensuremath{\setminus\partial^{\textrm{\,in}}\mathcal{D}^{x}}}}\;,
\]
respectively. Thus, the set $\mathcal{G}$ can be further decomposed
as
\begin{equation}
\mathcal{G}=\Bigl(\bigcup_{x\in S_{\star}}\mathcal{D}_{\textrm{int}}^{x}\Bigr)\,\bigcup\,\Bigl(\bigcup_{\{x,\,y\}\subset S_{\star}}\mathcal{J}_{\textrm{int }}^{x,\,y}\Bigr)\bigcup\partial^{\textrm{\,in}}\mathcal{G}\;.\label{decompg}
\end{equation}
The interior of the set $\mathcal{G}^{c}=\mathcal{H}_{N}\setminus\mathcal{G}$
is defined by
\begin{equation}
(\mathcal{G}^{c})_{\textrm{int}}=\mathcal{G}^{c}\setminus\partial^{\textrm{\,out}}\mathcal{G}\;.\label{decompgc}
\end{equation}
To control the discontinuity of approximating objects along the boundary,
the following lemma is required.
\begin{lem}
\label{lem71}We have that
\[
\mu_{N}(\partial^{\textrm{\textup{\,in}}}\mathcal{G})=o_{N}(1)\,N^{-(1+\alpha)}\;\;\text{and\;\;}\mu_{N}(\partial^{\textrm{\textup{\,out}}}\mathcal{G})=o_{N}(1)\,N^{-(1+\alpha)}\;.
\]
\end{lem}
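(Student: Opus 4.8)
The plan is to use the decomposition \eqref{de1} of $\partial^{\textrm{\,in}}\mathcal{G}$ and $\partial^{\textrm{\,out}}\mathcal{G}$ into the finitely many pieces $\partial^{\textrm{\,in}}\mathcal{D}^{x}$, $\partial^{\textrm{\,out}}\mathcal{D}^{x}$ ($x\in S_{\star}$) and $\partial^{\textrm{\,in}}\mathcal{J}^{x,\,y}$, $\partial^{\textrm{\,out}}\mathcal{J}^{x,\,y}$ ($\{x,\,y\}\subset S_{\star}$), and to bound the $\mu_{N}$-measure of each piece by $o_{N}(1)\,N^{-(1+\alpha)}$; since $S_{\star}$ is fixed and finite, this suffices. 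Throughout I write $\mu_{N}(\eta)=(N^{\alpha}/Z_{N})\,m_{\star}^{\eta}/a(\eta)$ and use $Z_{N}\to Z>0$ (Proposition \ref{e21}) to absorb the factor $Z_{N}^{-1}$; all constants are allowed to depend on $\epsilon$.

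For $\partial^{\textrm{\,in}}\mathcal{D}^{x}$, every $\eta$ in it has $\eta_{x}=N(1-2\epsilon)$, so $m_{\star}(x)^{\eta_{x}}/a(\eta_{x})=a(N(1-2\epsilon))^{-1}\le C\,N^{-\alpha}$, while the residual configuration $\zeta=(\eta_{z})_{z\in S\setminus\{x\}}$ carries $2N\epsilon$ particles. The defining condition $\eta\notin\bigcup_{y\in S_{\star}\setminus\{x\}}\mathcal{T}^{x,\,y}$ forces $\eta_{y}<N-\pi_{N}-\eta_{x}=2N\epsilon-\pi_{N}$ for every $y\in(S\setminus\{x\})\cap S_{\star}$, i.e.\ $\zeta\in\mathcal{H}_{2N\epsilon,\,S\setminus\{x\}}(\pi_{N})$. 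Applying Lemma \ref{le73} with $N$ replaced by $2N\epsilon$ and auxiliary sequence $\pi_{N}$ (legitimate since $1\ll\pi_{N}\ll N$) gives $\sum_{\zeta}m_{\star}^{\zeta}/a(\zeta)\le C\,N^{-\alpha}\pi_{N}^{-(\alpha-1)}$, whence $\mu_{N}(\partial^{\textrm{\,in}}\mathcal{D}^{x})\le C\,N^{-\alpha}\pi_{N}^{-(\alpha-1)}$. Since $\pi_{N}=\lfloor N^{1/\alpha+1/2}\rfloor$, the product $N^{\alpha}\pi_{N}^{\alpha-1}$ is of order $N^{\,\alpha+(\alpha-1)(1/\alpha+1/2)}$, and its exponent exceeds $1+\alpha$ exactly because $(\alpha-1)/2>1/\alpha$, equivalently $(\alpha-2)(\alpha+1)>0$, which holds since $\alpha>2$; thus $\mu_{N}(\partial^{\textrm{\,in}}\mathcal{D}^{x})=o_{N}(1)\,N^{-(1+\alpha)}$. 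The piece $\partial^{\textrm{\,out}}\mathcal{D}^{x}$ is treated identically: $\eta_{x}=N(1-2\epsilon)-1$, the residual mass is $2N\epsilon+1$, the ``not in any tube'' condition again places the residue in $\mathcal{H}_{2N\epsilon+1,\,S\setminus\{x\}}(\pi_{N})$, and the same estimate follows.

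For $\partial^{\textrm{\,in}}\mathcal{J}^{x,\,y}$, I sum over the admissible splits $\eta_{x}+\eta_{y}=N-\pi_{N}$; by \eqref{obe1} both $\eta_{x},\,\eta_{y}\in[N\epsilon,\,N(1-2\epsilon)]$, and since $m_{\star}(x)=m_{\star}(y)=1$ the $x,y$-part of the mass is $a(\eta_{x})^{-1}a(\eta_{y})^{-1}$. A symmetric (in $\eta_{x}\leftrightarrow\eta_{y}$) application of the tail bound $\sum_{n\ge N\epsilon}n^{-\alpha}\le C\,N^{-(\alpha-1)}$ (valid as $\alpha>1$), combined with $a(\cdot)^{-1}\le C\,N^{-\alpha}$ on the larger coordinate, gives $\sum_{\eta_{x}}a(\eta_{x})^{-1}a(\eta_{y})^{-1}\le C\,N^{-(2\alpha-1)}$. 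For each split the remaining $\pi_{N}$ particles range over $\mathcal{H}_{\pi_{N},\,S\setminus\{x,\,y\}}$, and Lemma \ref{le71} gives $\sum_{\zeta\in\mathcal{H}_{\pi_{N},\,S\setminus\{x,\,y\}}}m_{\star}^{\zeta}/a(\zeta)=Z_{\pi_{N},\,S\setminus\{x,\,y\}}/\pi_{N}^{\alpha}\le C\,\pi_{N}^{-\alpha}$. Multiplying by $N^{\alpha}$ yields $\mu_{N}(\partial^{\textrm{\,in}}\mathcal{J}^{x,\,y})\le C\,N^{-(\alpha-1)}\pi_{N}^{-\alpha}$, which is of order $N^{-3\alpha/2}$ and hence $o_{N}(1)\,N^{-(1+\alpha)}$ exactly when $\alpha/2>1$. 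The outer piece $\partial^{\textrm{\,out}}\mathcal{J}^{x,\,y}$ differs only in that $\eta_{x}+\eta_{y}=N-\pi_{N}-1$ with $\pi_{N}+1$ residual particles, and in that membership in $\mathcal{G}^{c}$ still forces $\eta_{x},\,\eta_{y}<N(1-2\epsilon)$ (hence $\eta_{x},\,\eta_{y}\ge N\epsilon$ for large $N$) while only shrinking the set; the same bound applies. Summing the finitely many pieces finishes the proof.

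The main obstacle is the bookkeeping that identifies where $\alpha>2$ is actually used. Without exploiting the ``complement of all tubes'' constraint, the naive estimate of $\mu_{N}(\partial^{\textrm{\,in}}\mathcal{D}^{x})$ is only $O(N^{-\alpha})$, which is off by a factor $N$; it is precisely the extra factor $\pi_{N}^{-(\alpha-1)}$ supplied by Lemma \ref{le73}, together with $\pi_{N}=\lfloor N^{1/\alpha+1/2}\rfloor$, that converts $\alpha>2$ into the decisive inequality $(\alpha-2)(\alpha+1)>0$. For the saddle tubes the corresponding gain is the factor $\pi_{N}^{-\alpha}$ from spreading $\pi_{N}$ particles over $S\setminus\{x,\,y\}$, and $\pi_{N}^{\alpha}$ being of order $N^{1+\alpha/2}\gg N^{2}$ again hinges on $\alpha>2$.
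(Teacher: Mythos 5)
Your proof is correct and follows essentially the same route as the paper: the same decomposition of $\partial^{\textrm{\,in}}\mathcal{G}$ and $\partial^{\textrm{\,out}}\mathcal{G}$ into well and saddle-tube boundary pieces, the same reduction of $\partial^{\textrm{\,in}}\mathcal{D}^{x}$ to $\mathcal{H}_{2N\epsilon,\,S\setminus\{x\}}(\pi_{N})$ handled by Lemma \ref{le73}, and the same factorization of $\partial^{\textrm{\,in}}\mathcal{J}^{x,\,y}$ into the $\{x,y\}$-sum and the $\mathcal{H}_{\pi_{N},\,S\setminus\{x,\,y\}}$-sum handled by Lemma \ref{le71}, with the identical exponent bookkeeping showing $\pi_{N}^{\alpha-1}\gg N$ and $\pi_{N}^{\alpha}\gg N^{2}$ when $\alpha>2$. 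The only (inconsequential) difference is that you spell out the outer-boundary modifications, which the paper dismisses as identical.
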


\begin{proof}
Only the first estimate will be proved; the proof of the second is
identical. By \eqref{de1},
\[
\mu_{N}(\partial^{\textrm{\,in}}\mathcal{G})=\sum_{x\in S_{\star}}\mu_{N}(\partial^{\,\textrm{in}}\mathcal{D}^{x})+\sum_{\{x,\,y\}\subset S_{\star}}\mu_{N}(\partial^{\textrm{\,in}}\mathcal{J}^{x,\,y})\;.
\]
For the first summation, let us fix $x\in S_{\star}$ and let us temporarily
denote $\zeta=\zeta(\eta)\in\mathbb{N}^{S\setminus\{x\}}$ the particle
configuration of $\eta$ conditioned on $S\setminus\{x\}$, i.e.,
$\zeta_{y}=\eta_{y}$ for all $y\in S\setminus\{x\}$. Then, for $\eta\in\partial^{\,\textrm{in}}\mathcal{D}^{x}$,
it follows from the definition of $\partial^{\textrm{\,in}}\mathcal{D}^{x}$
that
\[
|\zeta|=\sum_{y\in S\setminus\{x\}}\zeta_{y}=N-\eta_{x}=2N\epsilon\;.
\]
That is, $\zeta\in\mathcal{H}_{2N\epsilon,\,S\setminus\{x\}}$. Moreover,
as $\eta_{x}+\eta_{y}<N-\pi_{N}$ for all $\eta\in\partial^{\,\textrm{in}}\mathcal{D}^{x}$
and $y\in S_{\star}$, we have
\[
\zeta_{y}=\eta_{y}<N-\eta_{x}-\pi_{N}<2N\epsilon-\pi_{N}\;\;;\;y\in S_{\star}\setminus\{x\}\;.
\]
Therefore, $\zeta\in\mathcal{H}_{2N\epsilon,\,S\setminus\{x\}}(\pi_{N}).$
Hence, by Proposition \ref{e21} and Lemma \ref{le73},
\[
\mu_{N}(\partial^{\textrm{\,in}}\mathcal{D}^{x})=\frac{N^{\alpha}}{Z_{N}}\,\frac{1}{a(N-2N\epsilon)}\,\,\sum_{\zeta\in\mathcal{H}_{2N\epsilon,\,S\setminus\{x\}}(\pi_{N})}\frac{m_{\star}^{\zeta}}{a(\zeta)}\le\frac{C}{(2N\epsilon)^{\alpha}(\pi_{N})^{\alpha-1}}\;.
\]
This proves that $\mu_{N}(\partial^{\textrm{\,in}}\mathcal{D}^{x})=o_{N}(1)\,N^{-(1+\alpha)}$,
as $\pi_{N}^{\alpha-1}\gg N$.

For the second estimate of the lemma, it suffices to demonstrate that
$\mu_{N}(\partial^{\textrm{\,in}}\mathcal{J}^{x,\,y})=o_{N}(1)\,N^{-(1+\alpha)}$
for all $x,\,y\in S_{\star}$. Let us fix $x,\,y\in S_{\star}$, and
let us temporarily write $\xi=\xi(\eta)\in\mathbb{N}^{S\setminus\{x,\,y\}}$
the particle configuration of $\eta$ on $S\setminus\{x,\,y\}$. Then,
by the definition of $\partial^{\textrm{\,in}}\mathcal{J}^{x,\,y}$,
we have that
\[
|\xi|=\sum_{z\in S\setminus\{x,\,y\}}\xi_{z}=N-\eta_{x}-\eta_{y}=\pi_{N}\;.
\]
That is, $\xi\in\mathcal{H}_{\pi_{N},\,S\setminus\{x,\,y\}}.$ Hence,
by Proposition \ref{e21} and \eqref{obe1}, we have
\begin{equation}
\mu_{N}(\partial^{\textrm{\,in}}\mathcal{J}^{x,\,y})\le C\,N^{\alpha}\,\sum_{\xi\in\mathcal{H}_{\pi_{N},\,S\setminus\{x,\,y\}}}\frac{m_{\star}^{\xi}}{a(\xi)}\,\sum_{i=N\epsilon}^{N(1-2\epsilon)}\frac{1}{a(i)\,a(N-\pi_{N}-i)}\;.\label{obe21}
\end{equation}
By Lemma \ref{le1}, the first summation is bounded above by $C\,\pi_{N}^{-\alpha}$,
whereas the second summation is bounded by
\begin{equation}
\sum_{i=N\epsilon}^{N(1-2\epsilon)}\frac{1}{a(i)\,a(N-\pi_{N}-i)}\le\frac{N(1-3\epsilon)}{a(N\epsilon)\,a(N\epsilon)}=\frac{C}{N^{2\alpha-1}}\;.\label{obe22}
\end{equation}
Consequently, $\mu_{N}(\partial^{\textrm{\,in}}\mathcal{J}^{x,\,y})\le C\,\pi_{N}^{-\alpha}\,N^{-(\alpha-1)}$.
Thus the proof is completed as $\pi_{N}^{\alpha}\gg N^{2}$.
\end{proof}
The mass of $\mathcal{J}^{x,\,y}$ satisfies the following estimate.
\begin{lem}
\label{lem710}For all $x,\,y\in S_{\star}$, there exists a constant
$C>0$ such that
\[
\mu_{N}(\mathcal{J}^{x,\,y})\le CN^{-(\alpha-1)}\;.
\]
\end{lem}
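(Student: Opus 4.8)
The plan is to bound $\mu_N(\mathcal{J}^{x,\,y}) = \sum_{\eta \in \mathcal{J}^{x,\,y}} \mu_N(\eta)$ directly, imitating the estimate of $\mu_N(\partial^{\textrm{\,in}}\mathcal{J}^{x,\,y})$ carried out in the proof of Lemma \ref{lem71}. First I would fix $x,\,y \in S_{\star}$ and, for $\eta \in \mathcal{J}^{x,\,y}$, record the restricted configuration $\xi = \xi(\eta) \in \mathbb{N}^{S\setminus\{x,\,y\}}$ (so $\xi_z = \eta_z$ for $z \neq x,\,y$); by the definition of $\mathcal{J}^{x,\,y}$ its total mass $k := |\xi| = N - \eta_x - \eta_y$ lies in $\{0,\,1,\,\dots,\,\pi_N\}$. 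Since $x,\,y \in S_{\star}$ we have $m_{\star}(x) = m_{\star}(y) = 1$, whence $m_{\star}^{\eta} = m_{\star}^{\xi}$ and $a(\eta) = a(\eta_x)\,a(\eta_y)\,a(\xi)$, and therefore
\[
\mu_N(\mathcal{J}^{x,\,y}) = \frac{N^{\alpha}}{Z_N}\sum_{\eta \in \mathcal{J}^{x,\,y}} \frac{m_{\star}^{\xi}}{a(\eta_x)\,a(\eta_y)\,a(\xi)}\;.
\]

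Next I would exploit \eqref{obe1}, which gives $\eta_x,\,\eta_y \ge N\epsilon$ on $\mathcal{J}^{x,\,y}$, so that $a(\eta_x),\,a(\eta_y) \ge (N\epsilon)^{\alpha}$. Grouping the sum according to the value of $\xi$, and noting that for each $\xi \in \mathcal{H}_{k,\,S\setminus\{x,\,y\}}$ there are at most $N+1$ configurations $\eta \in \mathcal{J}^{x,\,y}$ with $\xi(\eta) = \xi$ (the pair $(\eta_x,\,\eta_y)$ solves $\eta_x + \eta_y = N - k$), one obtains
\[
\mu_N(\mathcal{J}^{x,\,y}) \le \frac{(N+1)\,N^{\alpha}}{Z_N\,(N\epsilon)^{2\alpha}}\sum_{k=0}^{\pi_N}\ \sum_{\xi \in \mathcal{H}_{k,\,S\setminus\{x,\,y\}}} \frac{m_{\star}^{\xi}}{a(\xi)}\;.
\]

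To conclude, I would apply Lemma \ref{le72} with $d_N = \pi_N$ (admissible since $1 \ll \pi_N \ll N$) and $S_0 = S\setminus\{x,\,y\}$ to bound the double sum by a finite constant; in the degenerate case $\kappa = 2$ one has $S\setminus\{x,\,y\} = \emptyset$ and the double sum reduces to its single $k = 0$ term, namely $1$, so the bound is immediate. Together with Proposition \ref{e21}, which gives $Z_N \ge c > 0$ for $N$ large, this yields $\mu_N(\mathcal{J}^{x,\,y}) \le C\,N^{\alpha}\,N\,N^{-2\alpha} = C\,N^{-(\alpha-1)}$, with $C$ depending on $\epsilon$ as permitted by the standing conventions.

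There is no real obstacle here; the only point demanding attention is that the lower bound $\eta_x,\,\eta_y \ge N\epsilon$ — supplied precisely by \eqref{obe1} — must be kept in force throughout, since it is the factor $1/(a(\eta_x)\,a(\eta_y)) \le (N\epsilon)^{-2\alpha}$ that produces the decisive gain of $N^{-2\alpha}$ against the at most $N^{\alpha+1}$ surviving terms.
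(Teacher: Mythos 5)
Your argument is correct and is essentially the same as the paper's: you bound $1/(a(\eta_x)\,a(\eta_y))$ by $(N\epsilon)^{-2\alpha}$ using \eqref{obe1}, count at most $N+1$ pairs $(\eta_x,\eta_y)$ per restricted configuration, and invoke Lemma~\ref{le72} together with Proposition~\ref{e21}, which is exactly the computation the paper carries out (phrased there as a sum over $i=\eta_x$ and using the one-line estimate corresponding to \eqref{obe22}). The only presentational difference is that you make the $(N+1)$ count and the product form of $\mu_N$ explicit rather than referring back to \eqref{obe21}--\eqref{obe22}; the substance is identical.
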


\begin{proof}
By Proposition \ref{e21}, \eqref{obe1}, and computations as in \eqref{obe21} and \eqref{obe22},
we obtain
\begin{align*}
\mu_{N}(\mathcal{J}^{x,\,y}) & \le CN^{\alpha}\sum_{k=0}^{\pi_{N}}\left[\sum_{i=N\epsilon}^{N(1-2\epsilon)}\frac{1}{a(i)a(N-k-i)}\sum_{\zeta\in\mathcal{H}_{k,\,S\setminus\{x,\,y\}}}\frac{m_{\star}^{\zeta}}{a(\zeta)}\right]\\
 & \le CN^{-(\alpha-1)}\sum_{k=0}^{\pi_{N}}\,\sum_{\zeta\in\mathcal{H}_{k,\,S\setminus\{x,\,y\}}}\frac{m_{\star}^{\zeta}}{a(\zeta)}\;.
\end{align*}
Therefore, the proof is completed by Lemma \ref{le72}.
\end{proof}

\subsection{\label{sec73}Auxiliary functions}

To introduce approximations of equilibrium potentials, an important
function is
\[
V(t)=\frac{1}{I_{\alpha}}\int_{0}^{t}s^{\alpha}(1-s)^{\alpha}
\]
which essentially captures the one-dimensional projection of the equilibrium
potential along the tube. However, this function cannot be used in
its own form, and thus an approximated version is required. We introduce
this object below.

For $\epsilon\in(0,\,1/8)$, a continuous piece-wise linear function
$\widehat{\gamma}_{\epsilon}:\mathbb{R}\rightarrow[0,1]$ is defined
by
\[
\widehat{\gamma}_{\epsilon}(t)=\begin{cases}
0 & \text{if }t\in(-\infty,\,4\epsilon]\\
(t-3\epsilon)/(1-6\epsilon) & \text{if }t\in[4\epsilon,\,1-4\epsilon]\\
1 & \text{if }t\in[1-4\epsilon,\,\infty)\;.
\end{cases}
\]
where $I_{\alpha}$ has been introduced in \eqref{ialpha}. Let $\phi:\mathbb{R}\rightarrow\mathbb{R}$
be a smooth, symmetric, non-negative function supported on $[-1,\,1]$
such that $\int_{-1}^{1}\phi(t)dt=1$, and let $\phi_{\delta}(t)=(1/\delta)\,\phi(t/\delta)$
for $\delta>0$. Namely, $(\phi_{\delta})_{\delta>0}$ is a sequence
of standard smooth mollifiers. Furthermore, let $\gamma_{\epsilon}:\mathbb{R}\rightarrow[0,\,1]$
be defined by $\gamma_{\epsilon}=\widehat{\gamma}_{\epsilon}*\phi_{\epsilon}$.
\begin{lem}
\label{bg}There exists $\epsilon_{0}>0$ such that the following
properties of $\gamma$ hold for all $\epsilon\in(0,\,\epsilon_{0})$:
\begin{enumerate}
\item $\gamma_{\epsilon}$ is a smooth increasing function and satisfies
\[
\gamma_{\epsilon}(t)=\begin{cases}
0 & \text{if }t\in(-\infty,\,3\epsilon]\\
(t-3\epsilon)/(1-6\epsilon) & \text{if }t\in[5\epsilon,\,1-5\epsilon]\\
1 & \text{if }t\in[1-3\epsilon,\,\infty)\;.
\end{cases}
\]
\item $\gamma_{\epsilon}(1-t)=1-\gamma_{\epsilon}(t)$.
\item $\gamma_{\epsilon}'(t)\le1+\epsilon^{1/2}$ for all $t\in[0,\,1]$.
\item $\gamma_{\epsilon}(t)/t\le1+\epsilon^{1/2}$ for all $t\in(0,\,1]$.
\item $\gamma_{\epsilon}(t)/t\ge1-4\epsilon^{1/2}>0$ for all $t\in[\epsilon^{1/2},\,1]$.
\end{enumerate}
\end{lem}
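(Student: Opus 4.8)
The plan is to read off each property from standard mollification estimates, using that $\gamma_{\epsilon}=\widehat{\gamma}_{\epsilon}*\phi_{\epsilon}$ with $\phi_{\epsilon}$ a nonnegative, even, $C^{\infty}$ kernel supported on $[-\epsilon,\epsilon]$ satisfying $\int\phi_{\epsilon}=1$ and $\int s\,\phi_{\epsilon}(s)\,ds=0$. For item (1): smoothness is immediate since $\gamma_{\epsilon}^{(k)}=\widehat{\gamma}_{\epsilon}*\phi_{\epsilon}^{(k)}$; $\gamma_{\epsilon}$ is nondecreasing because $\widehat{\gamma}_{\epsilon}$ is and $\phi_{\epsilon}\ge0$, and strictly increasing on the open linear region because there $\phi_{\epsilon}>0$ is convolved against a strictly increasing $\widehat{\gamma}_{\epsilon}$. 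The explicit three-case formula rests on the observation that whenever $\widehat{\gamma}_{\epsilon}$ coincides with an affine map $u\mapsto au+b$ on an interval $J$ with $[t-\epsilon,\,t+\epsilon]\subset J$, one has $\gamma_{\epsilon}(t)=\int(a(t-s)+b)\,\phi_{\epsilon}(s)\,ds=at+b$ by the unit mass and vanishing first moment of $\phi_{\epsilon}$. Applying this to the three pieces of $\widehat{\gamma}_{\epsilon}$ (the two constant pieces and the linear ramp) and shrinking each region by the kernel radius $\epsilon$ produces exactly the three cases displayed in (1).

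For (2) one checks directly, piece by piece, that $\widehat{\gamma}_{\epsilon}(1-u)=1-\widehat{\gamma}_{\epsilon}(u)$ for every $u$; evenness of $\phi_{\epsilon}$ and the substitution $s\mapsto-s$ then give $\gamma_{\epsilon}(1-t)=\int\widehat{\gamma}_{\epsilon}(1-(t+s))\phi_{\epsilon}(s)\,ds=\int(1-\widehat{\gamma}_{\epsilon}(t+s))\phi_{\epsilon}(s)\,ds=1-\int\widehat{\gamma}_{\epsilon}(t-s)\phi_{\epsilon}(s)\,ds=1-\gamma_{\epsilon}(t)$. For (3), since $\widehat{\gamma}_{\epsilon}$ is continuous and piecewise linear, its derivative has no atomic part and equals a constant on the linear piece and $0$ elsewhere, that constant being $(1-6\epsilon)^{-1}$ by the formula in (1); hence $0\le\gamma_{\epsilon}'(t)=(\widehat{\gamma}_{\epsilon}'*\phi_{\epsilon})(t)\le(1-6\epsilon)^{-1}$, and I then fix $\epsilon_{0}$ small enough that $(1-6\epsilon)^{-1}\le1+\epsilon^{1/2}$ for all $\epsilon<\epsilon_{0}$. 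Item (4) follows by integrating from $\gamma_{\epsilon}(0)=0$: $\gamma_{\epsilon}(t)=\int_{0}^{t}\gamma_{\epsilon}'(s)\,ds\le t\,(1+\epsilon^{1/2})$.

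The only point requiring a little care is (5). After shrinking $\epsilon_{0}$ so that $\epsilon^{1/2}>5\epsilon$, the interval $[\epsilon^{1/2},\,1]$ lies in $[5\epsilon,\,1]$ and decomposes as $[\epsilon^{1/2},\,1-5\epsilon]\cup[1-5\epsilon,\,1]$. On the first part the linear formula from (1) applies, giving $\gamma_{\epsilon}(t)\ge t-3\epsilon$ and hence $\gamma_{\epsilon}(t)/t\ge1-3\epsilon/t\ge1-3\epsilon^{1/2}\ge1-4\epsilon^{1/2}$; on the second part, monotonicity and (1) give $\gamma_{\epsilon}(t)\ge\gamma_{\epsilon}(1-5\epsilon)=(1-8\epsilon)/(1-6\epsilon)\ge1-4\epsilon^{1/2}$ after one further shrinking of $\epsilon_{0}$, while $t\le1$, so $\gamma_{\epsilon}(t)/t\ge1-4\epsilon^{1/2}>0$. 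Collecting the finitely many smallness requirements on $\epsilon_{0}$ completes the argument. I do not anticipate any genuinely hard step: everything is elementary, and the only mild obstacle is the bookkeeping of how the breakpoints of $\widehat{\gamma}_{\epsilon}$ migrate under convolution, together with the choice of a single $\epsilon_{0}$ compatible with both the ramp-slope bound used in (3)--(4) and the $\epsilon^{1/2}$ cutoff used in (5).
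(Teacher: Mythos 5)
The paper does not actually give a proof of this lemma (it says the proof is elementary and left to the reader), so your proposal is, in effect, the proof. Your overall plan—reading each property off from standard mollification estimates, using the vanishing first moment and unit mass of $\phi_{\epsilon}$ to propagate the affine pieces inward by the kernel radius, proving the symmetry in (2) via the change of variables and evenness of $\phi_{\epsilon}$, and deducing (4) from (3) by integration—is exactly the natural route and is largely correct. Your treatments of (1), (2), (4) and (5) are sound, with the convenient direct observation in (4) that $\gamma_\epsilon(0)=0$.

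There is, however, a real gap in your argument for (3), and it inherits an inconsistency already present in the paper. You write that ``since $\widehat{\gamma}_{\epsilon}$ is continuous and piecewise linear, its derivative has no atomic part and equals a constant on the linear piece''. But the formula the paper gives for $\widehat{\gamma}_{\epsilon}$ is not continuous: at $t=4\epsilon$ the middle branch evaluates to $\epsilon/(1-6\epsilon)\neq 0$, and similarly at $t=1-4\epsilon$ it evaluates to $(1-7\epsilon)/(1-6\epsilon)\neq 1$, so $\widehat{\gamma}_{\epsilon}$ has two jumps, each of size $\epsilon/(1-6\epsilon)$. The distributional derivative $\widehat{\gamma}_{\epsilon}'$ therefore does have an atomic part; after convolution with $\phi_{\epsilon}$, whose supremum is of order $\epsilon^{-1}$, each atom contributes a term of size comparable to $\phi(0)/(1-6\epsilon)$ to $\gamma_{\epsilon}'$ near the breakpoints. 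This is $O(1)$, not $o_{\epsilon}(1)$, so (3) fails for the formula as literally written. Your proof of (1) uses that literal formula (you shrink exactly the intervals $(-\infty,4\epsilon]$, $[4\epsilon,1-4\epsilon]$, $[1-4\epsilon,\infty)$ by $\epsilon$), while your proof of (3) uses continuity; these two assumptions cannot both hold for the same $\widehat{\gamma}_{\epsilon}$. The resolution is to acknowledge a typo in the paper's definition of $\widehat{\gamma}_{\epsilon}$—for instance, replacing the ramp formula by $(t-4\epsilon)/(1-8\epsilon)$ makes $\widehat{\gamma}_{\epsilon}$ genuinely continuous with slope $(1-8\epsilon)^{-1}$—and then your estimates for (3)--(5) go through after adjusting the explicit constants (e.g.\ $(1-8\epsilon)^{-1}\le 1+\epsilon^{1/2}$ for small $\epsilon$, and $\gamma_{\epsilon}(t)\ge t-4\epsilon$ on the ramp region, which still yields $1-4\epsilon^{1/2}$ for $t\ge\epsilon^{1/2}$). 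You should flag this and state which fixed version of $\widehat{\gamma}_{\epsilon}$ you use throughout, so that (1) and (3) are proved for the same object.
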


The proof is elementary and left to the reader. Henceforth, we shall
assume that $\epsilon\in(0,\,\epsilon_{0})$, so that the above properties
hold, and the notation $\gamma$ will be used instead of $\gamma_{\epsilon}$.
Let a non-decreasing smooth function $H=H_{\epsilon}:[0,\,1]\rightarrow[0,\,1]$
be defined by
\begin{equation}
H(t)=\frac{1}{I_{\alpha}}\int_{0}^{\gamma(t)}s^{\alpha}(1-s)^{\alpha}ds=V(\gamma(t))\;.\label{ck00}
\end{equation}
This function is an approximation of $V(\cdot)$. By (1) of Lemma
\ref{bg}, $H(\cdot)$ satisfies
\begin{equation}
H(t)=\begin{cases}
0 & \text{if }t\in[0,\,3\epsilon]\\
1 & \text{if }t\in[1-3\epsilon,\,1]\;.
\end{cases}\label{ck0}
\end{equation}
Let
\[
U(t)=(1/I_{\alpha})\,t^{\alpha}(1-t)^{\alpha}=V'(t)\;.
\]
The following basic results will be useful later.
\begin{lem}
\label{lemH}For all $t\in[0,\,1]$, it holds that
\[
U(\gamma(t))\le(1+o_{\epsilon}(1))\,U(t)\;\;\text{and\;}\;H'(t)\le(1+o_{\epsilon}(1))\,U(t)\;.
\]
\end{lem}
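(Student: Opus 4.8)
The plan is to reduce both assertions to the elementary pointwise properties of $\gamma=\gamma_{\epsilon}$ recorded in Lemma \ref{bg}. For the first inequality, I would exploit that $U(s)=(1/I_{\alpha})\,s^{\alpha}(1-s)^{\alpha}$, so that the prefactor $1/I_{\alpha}$ cancels and, for $t\in(0,\,1)$,
\[
\frac{U(\gamma(t))}{U(t)}=\left(\frac{\gamma(t)}{t}\right)^{\alpha}\left(\frac{1-\gamma(t)}{1-t}\right)^{\alpha}\;.
\]
Using the symmetry identity (2) of Lemma \ref{bg}, $1-\gamma(t)=\gamma(1-t)$, the second factor equals $(\gamma(1-t)/(1-t))^{\alpha}$. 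Since both $t$ and $1-t$ lie in $(0,\,1]$ for $t\in(0,\,1)$, property (4) of Lemma \ref{bg} bounds each of $\gamma(t)/t$ and $\gamma(1-t)/(1-t)$ by $1+\epsilon^{1/2}$, hence $U(\gamma(t))\le(1+\epsilon^{1/2})^{2\alpha}\,U(t)$; and $(1+\epsilon^{1/2})^{2\alpha}=1+o_{\epsilon}(1)$ because $\alpha$ is a fixed constant. For the two remaining points $t\in\{0,\,1\}$, property (1) of Lemma \ref{bg} gives $\gamma(0)=0$ and $\gamma(1)=1$, so $U(\gamma(t))=U(t)=0$ and the inequality is trivial.

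For the second inequality I would differentiate the defining relation $H(t)=V(\gamma(t))$ from \eqref{ck00}; since $V'=U$, the chain rule yields $H'(t)=U(\gamma(t))\,\gamma'(t)$. Combining the bound just established with property (3) of Lemma \ref{bg}, namely $\gamma'(t)\le1+\epsilon^{1/2}$ on $[0,\,1]$, gives
\[
H'(t)=U(\gamma(t))\,\gamma'(t)\le(1+\epsilon^{1/2})^{2\alpha+1}\,U(t)=(1+o_{\epsilon}(1))\,U(t)\;,
\]
which is the claim.

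There is essentially no obstacle here: the entire content of the lemma is encoded in the elementary estimates (2), (3), and (4) of Lemma \ref{bg}, and the only point requiring a moment's attention is to treat the degenerate endpoints $t\in\{0,\,1\}$, where $U$ vanishes, separately from the interior, where the ratio manipulation above is legitimate.
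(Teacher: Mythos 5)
Your proof is correct and follows essentially the same route as the paper: differentiate $H=V\circ\gamma$ to get $H'=\gamma'\cdot(U\circ\gamma)$, use the symmetry $1-\gamma(t)=\gamma(1-t)$ to split $U(\gamma(t))$ into a product of two like factors, and bound each by the ratio estimate $\gamma(s)/s\le1+\epsilon^{1/2}$. Your version is slightly more careful than the paper's (you form the ratio $U(\gamma(t))/U(t)$ so the $1/I_\alpha$ factor cancels cleanly, and you check the degenerate endpoints explicitly, and you correctly cite property (4) where the paper's reference to (3) for the first inequality is a small miscitation), but these are cosmetic refinements, not a different argument.
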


\begin{proof}
The first inequality follows from (2) and (3) of Lemma \ref{bg},
namely,
\[
U(\gamma(t))=\gamma(t)^{\alpha}\,\gamma(1-t)^{\alpha}\le\left\{ (1+\epsilon^{1/2})t\right\} ^{\alpha}\left\{ (1+\epsilon^{1/2})(1-t)\right\} ^{\alpha}=(1+o_{\epsilon}(1))\,U(t)\;.
\]
The second inequality is now obvious by this and (3) of Lemma \ref{bg},
since
\begin{equation}
H'(t)=\gamma'(t)\,U(\gamma(t))\;.\label{e733}
\end{equation}
\end{proof}
\begin{lem}
\label{lemH1}For all $t\in[\,\epsilon^{1/2},\,1-\epsilon^{1/2}\,]$,
it holds that
\[
H'(t)=\left(1+o_{\epsilon}(1)\right)U(t)\;.
\]
\end{lem}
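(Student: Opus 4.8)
The plan is to start from the identity $H'(t)=\gamma'(t)\,U(\gamma(t))$ recorded in \eqref{e733} and to control the two factors separately on the restricted interval $[\,\epsilon^{1/2},\,1-\epsilon^{1/2}\,]$. The first observation is that, after shrinking $\epsilon_{0}$ if necessary so that $\epsilon^{1/2}>5\epsilon$ for all $\epsilon\in(0,\,\epsilon_{0})$, we have the inclusion $[\,\epsilon^{1/2},\,1-\epsilon^{1/2}\,]\subseteq[5\epsilon,\,1-5\epsilon]$. On this set, part (1) of Lemma \ref{bg} states that $\gamma$ is exactly the affine function $t\mapsto(t-3\epsilon)/(1-6\epsilon)$, so that $\gamma'(t)=1/(1-6\epsilon)=1+o_{\epsilon}(1)$ uniformly on the interval. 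This handles the factor $\gamma'(t)$ from both sides.

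For the factor $U(\gamma(t))$, I would write $U(\gamma(t))=(1/I_{\alpha})\,\gamma(t)^{\alpha}\,\gamma(1-t)^{\alpha}$, using $1-\gamma(t)=\gamma(1-t)$ from part (2) of Lemma \ref{bg}. Since $t\in[\,\epsilon^{1/2},\,1-\epsilon^{1/2}\,]$, both $t$ and $1-t$ lie in $[\,\epsilon^{1/2},\,1\,]$, so parts (4) and (5) of Lemma \ref{bg} give $1-4\epsilon^{1/2}\le\gamma(t)/t\le1+\epsilon^{1/2}$ and likewise $1-4\epsilon^{1/2}\le\gamma(1-t)/(1-t)\le1+\epsilon^{1/2}$. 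Raising to the power $\alpha$ and multiplying the two bounds yields $(1-4\epsilon^{1/2})^{2\alpha}\,U(t)\le U(\gamma(t))\le(1+\epsilon^{1/2})^{2\alpha}\,U(t)$, that is, $U(\gamma(t))=(1+o_{\epsilon}(1))\,U(t)$. Combining this with the estimate for $\gamma'(t)$ from the previous paragraph gives $H'(t)=(1+o_{\epsilon}(1))\,U(t)$, as claimed.

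The point worth emphasizing — and essentially the only content beyond bookkeeping — is why this sharper two-sided estimate is available only on $[\,\epsilon^{1/2},\,1-\epsilon^{1/2}\,]$ and not on all of $[0,\,1]$, as was the case in Lemma \ref{lemH}: the \emph{lower} bound for $U(\gamma(t))$ requires a lower bound on $\gamma(t)/t$, which part (5) of Lemma \ref{bg} supplies only for $t\ge\epsilon^{1/2}$, and through the symmetry $\gamma(1-t)=1-\gamma(t)$ one simultaneously needs $1-t\ge\epsilon^{1/2}$. There is no genuine obstacle here; the only thing to be careful about is ensuring the interval inclusion $[\,\epsilon^{1/2},\,1-\epsilon^{1/2}\,]\subseteq[5\epsilon,\,1-5\epsilon]$, which is harmless since $\epsilon_{0}$ may be decreased freely.
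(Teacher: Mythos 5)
Your proof is correct and follows essentially the same route as the paper: both start from the identity $H'(t)=\gamma'(t)\,U(\gamma(t))$ from \eqref{e733} and control the two factors on $[\,\epsilon^{1/2},\,1-\epsilon^{1/2}\,]$ using the properties of $\gamma$ in Lemma \ref{bg}. The only cosmetic difference is that the paper obtains the lower bound from $\gamma'(t)\ge 1$ together with part (5) of Lemma \ref{bg} and then invokes Lemma \ref{lemH} for the upper bound, while you compute $\gamma'(t)=1/(1-6\epsilon)$ exactly via part (1) and derive both bounds on $U(\gamma(t))$ from parts (4)--(5); the underlying content is the same.
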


\begin{proof}
By (5) of Lemma \ref{bg}, for all $t\in[\,\epsilon^{1/2},\,1-\epsilon^{1/2}\,]$,
we have
\[
U(\gamma(t))=\gamma(t)^{\alpha}\,\gamma(1-t)^{\alpha}\ge\left[(1-4\,\epsilon^{1/2})\,t\right]^{\alpha}\left[(1-4\,\epsilon^{1/2})(1-t)\right]^{\alpha}=\left(1+o_{\epsilon}(1)\right)U(t)\;.
\]
As $\gamma'(t)\ge1$, by the above computation and \eqref{e733},
\[
H'(t)-U(t)\ge U(\gamma(t))-U(t)\ge o_{\epsilon}(1)\,U(t)\;.
\]
For the upper bound of $H'(t)-U(t)$, it suffices to use the second
inequality of Lemma \ref{lemH}.
\end{proof}
\begin{lem}
\label{lem750}For all $x,\,y\in S_{\star}$ and for all $\eta\in\mathcal{T}^{x,\,y}$,
it holds that
\[
0\le U\left(\frac{\eta_{x}}{N}\right)-\frac{a(\eta_{x})\,a(\eta_{y})}{N^{2\alpha}\,I_{\alpha}}\le C\frac{\pi_{N}}{N}\;.
\]
\end{lem}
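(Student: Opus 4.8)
The plan is to reduce the claim to an elementary estimate for the power function $t\mapsto t^{\alpha}$. First I would note that, since $a(n)=n^{\alpha}$ for $n\ge 1$, it suffices to treat configurations with $\eta_{x}\ge 1$ and $\eta_{y}\ge 1$; the only configurations in $\mathcal{T}^{x,\,y}$ with $\eta_{x}=0$ or $\eta_{y}=0$ are dealt with separately (see below), and in any event they do not occur in the applications, where the lemma is used only on $\mathcal{J}^{x,\,y}$ and \eqref{obe1} forces $\eta_{x},\eta_{y}\in[N\epsilon,\,N(1-2\epsilon)]$. Assuming $\eta_{x},\eta_{y}\ge 1$, the two quantities in question can be written exactly as
\[
U\!\left(\frac{\eta_{x}}{N}\right)=\frac{\eta_{x}^{\alpha}\,(N-\eta_{x})^{\alpha}}{N^{2\alpha}\,I_{\alpha}}\qquad\text{and}\qquad\frac{a(\eta_{x})\,a(\eta_{y})}{N^{2\alpha}\,I_{\alpha}}=\frac{\eta_{x}^{\alpha}\,\eta_{y}^{\alpha}}{N^{2\alpha}\,I_{\alpha}}\;,
\]
so that their difference equals $\frac{\eta_{x}^{\alpha}}{N^{2\alpha}\,I_{\alpha}}\,[\,(N-\eta_{x})^{\alpha}-\eta_{y}^{\alpha}\,]$.

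Next I would invoke the defining inequality of the tube. Since $\sum_{z}\eta_{z}=N$ and $\eta\in\mathcal{T}^{x,\,y}$ means $\eta_{x}+\eta_{y}\ge N-\pi_{N}$, we obtain
\[
\eta_{y}\;\le\;N-\eta_{x}\;\le\;\eta_{y}+\pi_{N}\;.
\]
The left inequality, together with $\eta_{x}^{\alpha}\ge 0$, immediately yields $(N-\eta_{x})^{\alpha}-\eta_{y}^{\alpha}\ge 0$, which is the lower bound. For the upper bound, I would apply the mean value theorem to $f(t)=t^{\alpha}$ on the interval $[\eta_{y},\,N-\eta_{x}]$, whose length is at most $\pi_{N}$; since $\alpha>1$ the derivative $f'(t)=\alpha t^{\alpha-1}$ is increasing, hence $(N-\eta_{x})^{\alpha}-\eta_{y}^{\alpha}\le\alpha\,(N-\eta_{x})^{\alpha-1}\,\pi_{N}$. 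Substituting this and using $\eta_{x}^{\alpha}\,(N-\eta_{x})^{\alpha-1}\le N^{\alpha}\cdot N^{\alpha-1}=N^{2\alpha-1}$ gives
\[
0\;\le\;U\!\left(\frac{\eta_{x}}{N}\right)-\frac{a(\eta_{x})\,a(\eta_{y})}{N^{2\alpha}\,I_{\alpha}}\;\le\;\frac{\alpha}{I_{\alpha}}\cdot\frac{\pi_{N}}{N}\;,
\]
which is the assertion, with the constant taken to be $C=\alpha/I_{\alpha}$.

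There will be no genuine obstacle here, the argument being pure calculus; the only points requiring a little care will be keeping the normalization $N^{2\alpha}$ correctly aligned when identifying $U(\eta_{x}/N)$ with the ratio of $a$-values, and the degenerate configurations with $\eta_{x}=0$ or $\eta_{y}=0$, where $a(0)=1\neq 0^{\alpha}$. For the latter one observes that the difference then has absolute value at most $N^{-\alpha}/I_{\alpha}$, and since $\alpha>2$ we have $N^{-\alpha}\ll\pi_{N}/N=N^{1/\alpha-1/2}$, so the stated upper bound still holds, and such configurations are irrelevant to every subsequent use of the lemma. An alternative route, perhaps slightly cleaner to write out, is to factor the difference as $U(\eta_{x}/N)\,[\,1-(\eta_{y}/(N-\eta_{x}))^{\alpha}\,]$, bounding the bracket by $\alpha\,\pi_{N}/(N-\eta_{x})$ through $1-(1-s)^{\alpha}\le\alpha s$ when $N-\eta_{x}>\pi_{N}$, and using the crude estimate $U(\eta_{x}/N)\le\pi_{N}^{\alpha}/(N^{\alpha}I_{\alpha})\le\pi_{N}/(N\,I_{\alpha})$ otherwise; this gives the same conclusion.
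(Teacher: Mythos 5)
Your argument is correct and follows essentially the same route as the paper's one-line proof: factor out $\eta_x^{\alpha}/(N^{2\alpha}I_{\alpha})$, apply the mean value theorem to the power map on $[\eta_y,\,N-\eta_x]$, and use the tube constraint $N-\eta_x-\eta_y\le\pi_N$. Your observation that the left inequality can actually fail when $\eta_x=0$ (because $a(0)=1\neq 0^{\alpha}$, so the factorization $a(\eta_x)a(\eta_y)=\eta_x^{\alpha}\eta_y^{\alpha}$ breaks down) is a sharp catch that the paper's ``the left inequality is trivial'' glosses over; as you note, this is harmless because the lemma is only ever invoked on $\mathcal{J}^{x,\,y}$ or with $\eta_x\ge N\epsilon^{1/2}$, where \eqref{obe1} and the tube constraint force $\eta_x,\eta_y\ge1$.
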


\begin{proof}
The left inequality is trivial. For the right inequality, by the mean-value
theorem,
\[
U\left(\frac{\eta_{x}}{N}\right)-\frac{a(\eta_{x})\,a(\eta_{y})}{N^{2\alpha}\,I_{\alpha}}=\frac{\eta_{x}^{\alpha}}{N^{\alpha}\,I_{\alpha}}\left[\left(\frac{N-\eta_{x}}{N}\right)^{\alpha}-\left(\frac{\eta_{y}}{N}\right)^{\alpha}\right]\le C\,\frac{(N-\eta_{x})-\eta_{y}}{N}=C\,\frac{\pi_{N}}{N}\;.
\]
\end{proof}
\begin{lem}
\label{lem751}We have that
\[
\sum_{\eta\in\mathcal{J}^{x,\,y}}\mu_{N}(\eta)\,U\left(\frac{\eta_{x}}{N}\right)^{2}\le\left(1+o_{N}(1)\right)N^{-(\alpha-1)}\frac{1}{\kappa_{\star}\,I_{\alpha}\,\Gamma(\alpha)}\;.
\]
\end{lem}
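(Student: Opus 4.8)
The plan is to evaluate the sum by parametrizing each $\eta\in\mathcal{J}^{x,\,y}$ through the occupation numbers $(\eta_x,\,\eta_y)$ at the two distinguished sites together with the restriction $\zeta=(\eta_z)_{z\in S\setminus\{x,\,y\}}$; writing $k=|\zeta|$, membership in $\mathcal{J}^{x,\,y}$ forces $0\le k\le\pi_N$, $\eta_x+\eta_y=N-k$, and, by \eqref{obe1}, $\eta_x,\eta_y\in[N\epsilon,\,N(1-2\epsilon)]$. Since $x,\,y\in S_\star$ we have $m_\star(x)=m_\star(y)=1$, so $m_\star^\eta=m_\star^\zeta$ and $a(\eta)=a(\eta_x)\,a(\eta_y)\,a(\zeta)$, whence $\mu_N(\eta)=(N^\alpha/Z_N)\,m_\star^\zeta/\big(a(\eta_x)\,a(\eta_y)\,a(\zeta)\big)$. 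The key input is Lemma~\ref{lem750}: for $\eta\in\mathcal{T}^{x,\,y}\supseteq\mathcal{J}^{x,\,y}$ it gives $U(\eta_x/N)\le a(\eta_x)\,a(\eta_y)/(N^{2\alpha}I_\alpha)+C\,\pi_N/N$, hence
\[
U\!\left(\frac{\eta_x}{N}\right)^{2}\le U\!\left(\frac{\eta_x}{N}\right)\left[\frac{a(\eta_x)\,a(\eta_y)}{N^{2\alpha}\,I_\alpha}+C\,\frac{\pi_N}{N}\right].
\]

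The contribution of the $C\pi_N/N$ term is discarded immediately: since $U$ is bounded on $[0,\,1]$, it is at most $C\,(\pi_N/N)\,\mu_N(\mathcal{J}^{x,\,y})$, which by Lemma~\ref{lem710} is $O\!\big((\pi_N/N)\,N^{-(\alpha-1)}\big)=o_N(1)\,N^{-(\alpha-1)}$ because $\pi_N\ll N$. For the leading term, the factor $a(\eta_x)\,a(\eta_y)$ cancels the corresponding denominator of $\mu_N(\eta)$, leaving
\[
\sum_{\eta\in\mathcal{J}^{x,\,y}}\mu_N(\eta)\,U\!\left(\frac{\eta_x}{N}\right)\frac{a(\eta_x)\,a(\eta_y)}{N^{2\alpha}\,I_\alpha}=\frac{1}{Z_N\,I_\alpha\,N^{\alpha}}\sum_{k=0}^{\pi_N}\ \sum_{\zeta\in\mathcal{H}_{k,\,S\setminus\{x,\,y\}}}\frac{m_\star^\zeta}{a(\zeta)}\ \sum_{\eta_x}U\!\left(\frac{\eta_x}{N}\right),
\]
where the innermost sum runs over those integers $\eta_x$ with $(\eta_x,\,N-k-\eta_x,\,\zeta)\in\mathcal{J}^{x,\,y}$.

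All such $\eta_x$ lie in $\{0,\,1,\,\dots,\,N\}$, so since $U\ge0$ the innermost sum is at most $\sum_{j=0}^{N}U(j/N)$; and since $U$ is continuous on $[0,\,1]$ with $\int_0^1 U(t)\,dt=\frac{1}{I_\alpha}\int_0^1 t^\alpha(1-t)^\alpha\,dt=1$, a Riemann-sum estimate gives $\sum_{j=0}^{N}U(j/N)=(1+o_N(1))\,N$. Substituting this, and applying Lemma~\ref{le72} with $S_0=S\setminus\{x,\,y\}$ and $d_N=\pi_N$---noting $(S\setminus\{x,\,y\})\cap S_\star=S_\star\setminus\{x,\,y\}$ has $\kappa_\star-2$ elements and $(S\setminus\{x,\,y\})\setminus S_\star=S\setminus S_\star$, so the double sum over $k,\zeta$ converges to $\Gamma(\alpha)^{\kappa_\star-2}\prod_{z\in S\setminus S_\star}\Gamma_z$---together with $Z_N\to Z=\kappa_\star\,\Gamma(\alpha)^{\kappa_\star-1}\prod_{z\in S\setminus S_\star}\Gamma_z$ from Proposition~\ref{e21} and \eqref{e20}, the leading term equals
\[
(1+o_N(1))\,\frac{N^{-(\alpha-1)}}{I_\alpha}\cdot\frac{\Gamma(\alpha)^{\kappa_\star-2}}{\kappa_\star\,\Gamma(\alpha)^{\kappa_\star-1}}=(1+o_N(1))\,\frac{N^{-(\alpha-1)}}{\kappa_\star\,I_\alpha\,\Gamma(\alpha)},
\]
and adding the negligible $o_N(1)\,N^{-(\alpha-1)}$ error finishes the proof.

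There is no deep obstacle; the one point to watch is the sum over $\eta_x$. It is tempting to compute its Riemann limit over the physical range $[N\epsilon,\,N(1-2\epsilon)]$, but that would produce $\int_\epsilon^{1-2\epsilon}U$ and hence an unwanted $o_\epsilon(1)$ defect. Since only an upper bound is needed and $U\ge0$, one instead enlarges the range to all of $\{0,\dots,N\}$, where $\int_0^1 U=1$ delivers the clean constant with error $o_N(1)$ only, exactly as in the statement. Everything else is routine bookkeeping with the partition-function asymptotics.
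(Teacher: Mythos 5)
Your proof is correct and follows the same strategy as the paper's: parametrize $\mathcal{J}^{x,y}$ by $(\eta_x,\,\eta_y,\,\zeta)$, use $m_\star(x)=m_\star(y)=1$ to factor $\mu_N$, invoke Lemma \ref{lem750} to approximate $U(\eta_x/N)$ by $a(\eta_x)\,a(\eta_y)/(N^{2\alpha}I_\alpha)$ at the cost of an $o_N(1)N^{-(\alpha-1)}$ error controlled via Lemma \ref{lem710}, reduce the remaining sum to a Riemann sum giving $I_\alpha$, and close with Lemma \ref{le72}, Proposition \ref{e21}, and \eqref{e20}. The only cosmetic difference is that you substitute the polynomial approximation into one factor of $U(\eta_x/N)^2$ and keep the other as $U$, so your inner Riemann sum is $\sum_j U(j/N)\sim N$, whereas the paper replaces both factors and evaluates $\sum_i i^\alpha(N-k-i)^\alpha\sim N^{2\alpha+1}I_\alpha$; both devices produce the same leading constant and the same error structure.
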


\begin{proof}
By Lemmas \ref{lem710} and \ref{lem750}, we obtain
\[
\left|\sum_{\eta\in\mathcal{J}^{x,\,y}}\mu_{N}(\eta)\left[U\left(\frac{\eta_{x}}{N}\right)^{2}-\frac{a(\eta_{x})^{2}\,a(\eta_{y})^{2}}{N^{4\alpha}\,I_{\alpha}^{2}}\right]\right|\le C\,\frac{\pi_{N}}{N}\sum_{\eta\in\mathcal{J}^{x,\,y}}\mu_{N}(\eta)=o_{N}(1)\,N^{-(\alpha-1)}\;.
\]
By Proposition \ref{e21}, we have
\begin{equation}
\sum_{\eta\in\mathcal{J}^{x,y}}\mu_{N}(\eta)\frac{a(\eta_{x})^{2}\,a(\eta_{y})^{2}}{N^{4\alpha}\,I_{\alpha}^{2}}\le\frac{1+o_{N}(1)}{Z\,I_{\alpha}^{2}\,N^{3\alpha}}\,\sum_{k=0}^{\pi_{N}}\left[\sum_{i=0}^{N-k}i^{\alpha}\,(N-k-i)^{\alpha}\sum_{\zeta\in\mathcal{H}_{k,\,S\setminus\{x,\,y\}}}\frac{m_{\star}^{\zeta}}{a(\zeta)}\right]\;.\label{e7501}
\end{equation}
For $k\le\pi_{N}\ll N$, we have that
\[
\sum_{i=0}^{N-k}i^{\alpha}\,(N-k-i)^{\alpha}=\left(1+o_{N}(1)\right)N^{2\alpha+1}\,\int_{0}^{1}t^{\alpha}(1-t)^{\alpha}dt=\left(1+o_{N}(1)\right)N^{2\alpha+1}I_{\alpha}\;.
\]
Therefore, the right-hand side of \eqref{e7501} is bounded above
by
\[
\left(1+o_{N}(1)\right)\frac{1}{Z\,I_{\alpha}N^{\alpha-1}}\,\sum_{k=0}^{\pi_{N}}\,\sum_{\zeta\in\mathcal{H}_{k,\,S\setminus\{x,\,y\}}}\frac{m_{\star}^{\zeta}}{a(\zeta)}\;.
\]
The proof is completed by Lemma \ref{le72} and the definition \eqref{e20}
of the constant $Z$.
\end{proof}

\subsection{\label{sec74}Construction on tubes}

Throughout this subsection, we fix two points $x,\,y\in S_{\star}$.
Then, we shall define a function $\mathbf{W}_{x,\,y}(\cdot)$ corresponding
to the approximation of the equilibrium potential $\mathbf{h}_{\mathcal{E}^{x},\,\mathcal{E}^{y}}(\cdot)$
on the tube $\mathcal{T}^{x,\,y}$. Indeed, this task has been carried
out in \cite{BL3} for the reversible case, and the definitions as
well as concomitant estimates for the non-reversible case are similar
to those for the reversible case.

Recall that the function $h_{x,\,y}(\cdot)$ represents the equilibrium
potential between two points $x$ and $y$ with respect to the random
walk $X(\cdot)$. Enumerate points of $S$ by $x=z_{1},\,z_{2},\,\cdots,\,z_{\kappa}=y$
in such a manner that
\[
1=h_{x,\,y}(z_{1})\ge h_{x,\,y}(z_{2})\ge\cdots\ge h_{x,\,y}(z_{\kappa})=0\;.
\]
For $\eta\in\mathcal{H}_{N}$ and $1\le i\le\kappa$, define
\[
\eta^{(i)}=\eta_{z_{1}}+\eta_{z_{2}}+\cdots+\eta_{z_{i}}\;.
\]
The function $\mathbf{W}_{x,\,y}=\mathbf{W}_{x,\,y}^{N,\,\epsilon}:\mathcal{T}^{x,\,y}\rightarrow\mathbb{R}$
is defined by
\begin{align*}
\mathbf{W}_{x,\,y}(\eta) & =\sum_{i=1}^{\kappa-1}\left[h_{x,\,y}(z_{i})-h_{x,\,y}(z_{i+1})\right]H\left(\frac{\eta^{(i)}}{N}\right)\;\;;\;\eta\in\mathcal{T}^{x,\,y}\;,
\end{align*}
where $H=H_{\epsilon}$ is the function introduced in \eqref{ck00}.
By \eqref{ck0}, it is obvious that
\begin{equation}
\mathbf{W}_{x,\,y}(\eta)=\begin{cases}
1 & \text{if }\text{\ensuremath{\eta\in\mathcal{D}^{x}\cap\mathcal{T}^{x,\,y}}\;,}\\
0 & \text{if }\eta\in\mathcal{D}^{y}\cap\mathcal{T}^{x,\,y}\;.
\end{cases}\label{bdw1}
\end{equation}
The following lemma is useful.
\begin{lem}
\label{lem75}For $\eta\in\mathcal{J}^{x,\,y}$, we have
\[
\left|H\left(\frac{\eta^{(u)}}{N}\right)-H\left(\frac{\eta^{(u)}\pm1}{N}\right)\right|\le(1+o_{\epsilon}(1))\,\frac{a(\eta_{x})\,a(\eta_{y})}{N^{2\alpha}\,I_{\alpha}}+C\,\frac{\pi_{N}}{N^{2}}\;.
\]
\end{lem}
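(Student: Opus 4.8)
The quantity $H(\eta^{(u)}/N) - H((\eta^{(u)}\pm1)/N)$ is a discrete increment of the smooth function $H$, so the natural first move is to apply the mean-value theorem: there is $\theta$ between $\eta^{(u)}/N$ and $(\eta^{(u)}\pm1)/N$ with
\[
\left|H\left(\frac{\eta^{(u)}}{N}\right)-H\left(\frac{\eta^{(u)}\pm1}{N}\right)\right| = \frac{1}{N}\,H'(\theta)\;.
\]
By the second inequality of Lemma \ref{lemH}, $H'(\theta) \le (1+o_\epsilon(1))\,U(\theta)$, so it remains to compare $U(\theta)$ with $U(\eta^{(u)}/N)$ and then with $a(\eta_x)a(\eta_y)/N^{2\alpha}$. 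Since $|\theta - \eta^{(u)}/N| \le 1/N$ and $U$ is Lipschitz on $[0,1]$, we get $U(\theta) \le U(\eta^{(u)}/N) + C/N$; and since $U$ vanishes to order $\alpha$ at both endpoints while $U'$ is bounded, the $C/N$ error is of the right (subleading) order. So the task reduces to estimating $U(\eta^{(u)}/N)$ from above by $(1+o_\epsilon(1))\,a(\eta_x)a(\eta_y)/(N^{2\alpha}I_\alpha) + C\pi_N/N^2$.

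The key observation for the last step is that on $\mathcal{J}^{x,\,y}$ we have $\eta_x + \eta_y \ge N - \pi_N$, so the "extra" coordinates $\eta_z$, $z \ne x,y$, sum to at most $\pi_N$. Recalling $z_1 = x$ and $z_\kappa = y$, the partial sum $\eta^{(u)} = \eta_{z_1} + \cdots + \eta_{z_u}$ satisfies $\eta_x \le \eta^{(u)} \le N - \eta_y$ for every $1 \le u \le \kappa-1$, hence $\eta^{(u)}/N \in [\eta_x/N,\, 1 - \eta_y/N]$, an interval of length $(N - \eta_x - \eta_y)/N \le \pi_N/N$. Since $U$ is non-decreasing on $[0, 1/2]$ and non-increasing on $[1/2,1]$, and by \eqref{obe1} both $\eta_x/N$ and $\eta_y/N$ lie in $[\epsilon, 1-2\epsilon]$ on $\mathcal{J}^{x,\,y}$, the value $U(\eta^{(u)}/N)$ is squeezed between $U(\eta_x/N)$ and $U(1-\eta_y/N)$ up to a Lipschitz error $C\pi_N/N$; more precisely, $U(\eta^{(u)}/N) \le \max\{U(\eta_x/N), U(\eta_y/N)\} + C\pi_N/N$ using $U(1-t) = U(t)$. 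Finally Lemma \ref{lem750} gives $U(\eta_x/N) \le a(\eta_x)a(\eta_y)/(N^{2\alpha}I_\alpha) + C\pi_N/N$, and the same bound holds with $x$ and $y$ swapped since $U(\eta_y/N) - a(\eta_y)a(\eta_x)/(N^{2\alpha}I_\alpha)$ is controlled identically; combining, $U(\eta^{(u)}/N) \le a(\eta_x)a(\eta_y)/(N^{2\alpha}I_\alpha) + C\pi_N/N$.

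Assembling the pieces: the mean-value step contributes the factor $1/N$ and the factor $(1+o_\epsilon(1))$; then $U(\theta) \le U(\eta^{(u)}/N) + C/N \le a(\eta_x)a(\eta_y)/(N^{2\alpha}I_\alpha) + C\pi_N/N$, and multiplying through by $1/N$ gives exactly
\[
\left|H\left(\frac{\eta^{(u)}}{N}\right)-H\left(\frac{\eta^{(u)}\pm1}{N}\right)\right| \le (1+o_\epsilon(1))\,\frac{a(\eta_x)\,a(\eta_y)}{N^{2\alpha}\,I_\alpha} + C\,\frac{\pi_N}{N^2}\;,
\]
after absorbing the $(1+o_\epsilon(1))$ into the leading term. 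The only mildly delicate point — and the step I would be most careful about — is keeping the various error terms in their correct normalization: the $C/N$ Lipschitz error from the mean value point must be shown to be dominated by (or absorbed into) the $C\pi_N/N$ error already present, which is immediate since $\pi_N \to \infty$, but one should verify that no error of order $1/N$ survives \emph{undivided} by an additional $1/N$, i.e. that every such term ultimately appears as $C\pi_N/N^2$ after the global $1/N$ prefactor. No genuine obstacle is expected; the lemma is a routine consequence of Lemmas \ref{lemH} and \ref{lem750} together with the geometric constraint defining $\mathcal{J}^{x,\,y}$.
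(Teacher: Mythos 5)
Your proof is correct and follows essentially the same route as the paper: apply the mean-value theorem to $H$ at the discrete step, observe that on $\mathcal{J}^{x,\,y}$ the point of evaluation lies within $\pi_N/N$ of $\eta_x/N$ (since $\eta_{z_2}+\cdots+\eta_{z_{\kappa-1}}=N-\eta_x-\eta_y\le\pi_N$), shift the evaluation point to $\eta_x/N$ at the cost of a $C\pi_N/N$ error, then invoke Lemma \ref{lemH} and Lemma \ref{lem750}. The only cosmetic difference is where you pay the re-centering cost: the paper applies the mean-value theorem a second time to $H'$ (error controlled by $\|H''\|_\infty$, which depends on $\epsilon$) before using Lemma \ref{lemH} at $\eta_x/N$, whereas you first bound $H'(\theta)$ by $(1+o_\epsilon(1))\,U(\theta)$ via Lemma \ref{lemH} and then use Lipschitz continuity of $U$; both give $C\pi_N/N^2$. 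Your digression about $U$ being monotone on each half of $[0,1]$ is unnecessary — the Lipschitz bound alone already controls $|U(\eta^{(u)}/N)-U(\eta_x/N)|$ since the relevant interval has length $\le\pi_N/N$ — so you can drop it.
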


\begin{proof}
The minus sign on the left-hand side is only considered, as the proof
for the plus sign is the same. By the mean-value theorem, there exists
$\delta\in[0,\,1]$ such that
\[
H\left(\frac{\eta^{(u)}}{N}\right)-H\left(\frac{\eta^{(u)}-1}{N}\right)=\frac{1}{N}H'\left(\frac{\eta^{(u)}-\delta}{N}\right)\;.
\]
By the mean-value theorem again, the fact that
\[
\left|(\eta^{(u)}-\delta)-\eta_{x}\right|\le\max\{\delta,\,\eta_{z_{2}}+\cdots+\eta_{z_{\kappa-1}}\}<\pi_{N}\;,
\]
and Lemma \ref{lemH}, we obtain that
\begin{equation}
\left|H\left(\frac{\eta^{(u)}}{N}\right)-H\left(\frac{\eta^{(u)}-1}{N}\right)\right|\le\frac{1}{N}\,H'\left(\frac{\eta_{x}}{N}\right)+C\,\frac{\pi_{N}}{N^{2}}\le\frac{1+o_{\epsilon}(1)}{N}U\left(\frac{\eta_{x}}{N}\right)+C\frac{\pi_{N}}{N^{2}}\;,\label{eW2}
\end{equation}
where the constant $C$ appeared in this expression is the $L^{\infty}$
norm of $H''$, which depends on $\epsilon$. Finally, by Lemma \ref{lem750}
the term $U(\eta_{x}/N)$ can be replaced with $a(\eta_{x})\,a(\eta_{y})/(N^{2\alpha}\,I_{\alpha})$,
without changing the order of the error term. This completes the proof.
\end{proof}
The neighborhood of a set $\mathcal{A}\subseteq\mathcal{H}_{N}$ is
defined by
\[
\mathcal{\mathcal{\overline{A}}=}\{\eta:\eta=\sigma^{z,\,w}\zeta\;\text{for some\;}\zeta\in\mathcal{A}\;\text{and\;}z,\,w\in S\}\;.
\]
For $\mathbf{f}:\mathcal{\overline{A}\rightarrow\mathbb{R}}$, the
Dirichlet form of $\mathbf{f}$ on $\mathcal{A}$ is defined by
\[
\mathscr{D}_{N}(\mathbf{f};\mathcal{A})=\frac{1}{2}\sum_{\eta\in\mathcal{A}}\sum_{z,\,w\in S}\mu_{N}(\eta)\,g(\eta_{z})\,r(z,\,w)\left[\mathbf{f}(\sigma^{z,\,w}\eta)-\mathbf{f}(\eta)\right]^{2}\;.
\]
The right-hand side can be evaluated since $\mathbf{f}$ is defined
on $\overline{\mathcal{A}}$.
\begin{lem}
\label{lem72}For all $x,\,y\in S_{\star}$, we have
\[
\mathscr{D}_{N}(\mathbf{W}_{x,\,y};\mathcal{\mathcal{J}}_{\textrm{int}}^{x,\,y})\le\left(1+o_{N}(1)+o_{\epsilon}(1)\right)N^{-(1+\alpha)}\,\frac{\textup{cap}_{X}(x,\,y)}{M_{\star}\,\kappa_{\star}\,I_{\alpha}\,\Gamma(\alpha)}\;.
\]
\end{lem}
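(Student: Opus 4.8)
The plan is to pass to the $\mathcal{H}_{N-1}$-representation of the restricted Dirichlet form and then control the one-particle displacement of $\mathbf{W}_{x,y}$ by a telescoping argument. Arguing exactly as in the derivation of \eqref{edr}, but retaining the restriction to $\mathcal{J}^{x,y}_{\textrm{int}}$ and using \eqref{u1}, one obtains
\[
\mathscr{D}_N(\mathbf{W}_{x,y};\mathcal{J}^{x,y}_{\textrm{int}})=\frac{a_N}{2}\sum_{z,w\in S}m(z)\,r(z,w)\sum_{\substack{\xi\in\mathcal{H}_{N-1}:\\ \xi+\omega^z\in\mathcal{J}^{x,y}_{\textrm{int}}}}\mu_{N-1}(\xi)\,\bigl[\mathbf{W}_{x,y}(\xi+\omega^w)-\mathbf{W}_{x,y}(\xi+\omega^z)\bigr]^2 ,
\]
which makes sense because $\overline{\mathcal{J}^{x,y}_{\textrm{int}}}\subseteq\mathcal{T}^{x,y}$. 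To estimate $\mathbf{W}_{x,y}(\sigma^{z,w}\eta)-\mathbf{W}_{x,y}(\eta)$ for $\eta\in\mathcal{J}^{x,y}_{\textrm{int}}\subseteq\mathcal{J}^{x,y}$ and $z=z_a$, $w=z_b$, note that a jump from $z_a$ to $z_b$ changes each partial sum $\eta^{(i)}$ with $i$ strictly between $a$ and $b$ by $\pm1$ and leaves the others fixed, so that $\mathbf{W}_{x,y}(\sigma^{z,w}\eta)-\mathbf{W}_{x,y}(\eta)=\sum_i[h_{x,y}(z_i)-h_{x,y}(z_{i+1})]\bigl[H((\eta^{(i)}\mp1)/N)-H(\eta^{(i)}/N)\bigr]$, the sum over the affected indices. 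Since every $\eta^{(i)}$, $1\le i\le\kappa-1$, differs from $\eta_x$ by at most $\pi_N$ on $\mathcal{J}^{x,y}$, Lemma \ref{lem75} bounds each bracket by $B(\eta):=(1+o_\epsilon(1))\,a(\eta_x)a(\eta_y)/(N^{2\alpha+1}I_\alpha)+C\pi_N/N^2$, uniformly in $i$; as the increments $h_{x,y}(z_i)-h_{x,y}(z_{i+1})$ are nonnegative and telescope, this gives
\[
\bigl|\mathbf{W}_{x,y}(\sigma^{z,w}\eta)-\mathbf{W}_{x,y}(\eta)\bigr|\le\bigl|h_{x,y}(z)-h_{x,y}(w)\bigr|\,B(\eta)\qquad(\eta\in\mathcal{J}^{x,y},\ z,w\in S).
\]

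Squaring and using $a(\eta_x)a(\eta_y)\le N^{2\alpha}$ gives $B(\eta)^2\le(1+o_\epsilon(1))\,(a(\eta_x)a(\eta_y))^2/(N^{4\alpha+2}I_\alpha^2)+o_N(1)\,N^{-2}$, since both the cross term of order $\pi_N/N^3$ and the term of order $\pi_N^2/N^4$ are $o_N(1)N^{-2}$ by $\pi_N=\lfloor N^{1/\alpha+1/2}\rfloor$ and $\alpha>2$. Inserting this into the displayed representation and using $|h_{x,y}|\le1$, $\sum_{z,w}m(z)r(z,w)<\infty$, and the bound $\mu_{N-1}(\widetilde{\mathcal{J}})=O(N^{-(\alpha-1)})$ (the level-$(N-1)$ analogue of Lemma \ref{lem710}), where $\widetilde{\mathcal{J}}:=\{\xi\in\mathcal{H}_{N-1}:\xi_x+\xi_y\ge N-\pi_N-1,\ \xi_x,\xi_y<N(1-2\epsilon)\}$ contains every $\xi$ with $\xi+\omega^z\in\mathcal{J}^{x,y}_{\textrm{int}}$, the contribution of the $o_N(1)N^{-2}$ term to the Dirichlet form is $o_N(1)\,N^{-(1+\alpha)}$.

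It remains to estimate the principal term
\[
\frac{a_N(1+o_\epsilon(1))}{2I_\alpha^2N^{4\alpha+2}}\sum_{z,w\in S}m(z)\,r(z,w)\,|h_{x,y}(z)-h_{x,y}(w)|^2\sum_{\substack{\xi\in\mathcal{H}_{N-1}:\\ \xi+\omega^z\in\mathcal{J}^{x,y}_{\textrm{int}}}}\mu_{N-1}(\xi)\,\bigl(a((\xi+\omega^z)_x)\,a((\xi+\omega^z)_y)\bigr)^2 .
\]
Because $\eta_x,\eta_y\ge N\epsilon$ on $\mathcal{J}^{x,y}$, one has $(a((\xi+\omega^z)_x)a((\xi+\omega^z)_y))^2\le(1+o_N(1))\,(a(\xi_x)a(\xi_y))^2$, and enlarging the $\xi$-range to $\widetilde{\mathcal{J}}$ decouples the inner sum from $(z,w)$. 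The $(z,w)$-sum is then $2D_X(h_{x,y})=2\,\textup{cap}_X(x,y)$, while for the $\xi$-sum one unrolls $\mu_{N-1}(\xi)=(N-1)^\alpha Z_{N-1}^{-1}m_\star^\xi/a(\xi)$, uses $m_\star(x)=m_\star(y)=1$ and the Riemann-sum estimate $\sum_i i^\alpha(N-1-k-i)^\alpha=(1+o_N(1))N^{2\alpha+1}I_\alpha$ for $k\le\pi_N$, together with Proposition \ref{e21}, Lemma \ref{le72} for $S\setminus\{x,y\}$, and the identity \eqref{e20} for $Z$, to obtain $\sum_{\xi\in\widetilde{\mathcal{J}}}\mu_{N-1}(\xi)(a(\xi_x)a(\xi_y))^2=(1+o_N(1))\,I_\alpha N^{3\alpha+1}/(\kappa_\star\Gamma(\alpha))$ (alternatively this follows from Lemma \ref{lem750}--\ref{lem751} applied at level $N-1$). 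Combining these with $a_N=(1+o_N(1))M_\star^{-1}$ from \eqref{u2}, the principal term is at most $(1+o_N(1)+o_\epsilon(1))\,N^{-(1+\alpha)}\,\textup{cap}_X(x,y)/(M_\star\kappa_\star I_\alpha\Gamma(\alpha))$, and adding the error bound from the previous paragraph finishes the proof.

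The main obstacle is the bookkeeping in this last step: one must verify that the telescoping bound is tight enough that, after squaring and summing over the tube, it isolates precisely the principal term of order $N^{-(1+\alpha)}$, which rests on the exact interplay between the scaling $\pi_N=\lfloor N^{1/\alpha+1/2}\rfloor$, the hypothesis $\alpha>2$, and the mass bound of Lemma \ref{lem710}. The asymptotic evaluation of the weighted sum $\sum_\xi\mu_{N-1}(\xi)(a(\xi_x)a(\xi_y))^2$, which essentially recycles the proof of Lemma \ref{lem751}, is the only other computation requiring care.
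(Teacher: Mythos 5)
Your proof is correct and follows essentially the same route as the paper: both start from the telescoping decomposition of $\mathbf{W}_{x,y}(\eta)-\mathbf{W}_{x,y}(\sigma^{z_i,z_j}\eta)$ along the enumeration (the paper's \eqref{eW}), bound each $H$-increment via Lemma \ref{lem75}, square, isolate the error term using $\pi_N=\lfloor N^{1/\alpha+1/2}\rfloor$, $\alpha>2$ and the mass estimate of Lemma \ref{lem710}, and evaluate the principal term by the same $\sum\mu(\cdot)\,a(\cdot_x)^2a(\cdot_y)^2$ computation (the Lemma \ref{lem751}-type Riemann-sum argument with Proposition \ref{e21}, Lemma \ref{le72}, and the identity for $Z$), finishing with $\sum_{z,w}m(z)r(z,w)[h_{x,y}(z)-h_{x,y}(w)]^2=2\,\textup{cap}_X(x,y)$. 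The only presentational difference is that you pass to the $\mathcal{H}_{N-1}$-representation at the outset, whereas the paper makes that change of variable only inside the final weighted sum (\eqref{eWW2}); this is a cosmetic reorganization and the quantitative content is identical.
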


\begin{proof}
By the definition of $\mathbf{W}_{x,\,y}$, we can write \begin{equation} \label{eW} \begin{aligned}
&\mathbf{W}_{x,\,y}(\eta)-\mathbf{W}_{x,\,y}(\sigma^{z_{i},\,z_{j}}\eta)\\
&\,=\begin{cases}\sum_{u=i}^{j-1}\left[h_{x,\,y}(z_{u})-h_{x,\,y}(z_{u+1})\right]\left[H(\eta^{(u)}/N)-H\bigl((\eta^{(u)}-1)/N\bigr)\right] & \mbox{if }i<j\;,\\\sum_{u=j}^{i-1}\left[h_{x,\,y}(z_{u})-h_{x,\,y}(z_{u+1})\right]\left[H(\eta^{(u)}/N)-H\bigl((\eta^{(u)}-1)/N\bigr)\right] & \mbox{if }i>j\;,\end{cases}\end{aligned} \end{equation}provided that $\eta_{z_{i}}\ge1$. Of course, this quantity is equal
to $0$ if $\eta_{z_{i}}=0$.

The case $i<j$ is first considered. By Lemma \ref{lem75}, and \eqref{eW},
\begin{equation}
\left|\mathbf{W}_{x,\,y}(\eta)-\mathbf{W}_{x,\,y}(\sigma^{z_{i},\,z_{j}}\eta)\right|\le(1+o_{\epsilon}(1))\,\frac{a(\eta_{x})\,a(\eta_{y})}{N^{2\alpha+1}\,I_{\alpha}}(h_{x,\,y}(z_{i})-h_{x,\,y}(z_{j}))+C\frac{\pi_{N}}{N^{2}}\;.\label{eW3}
\end{equation}
Therefore, by Lemma \ref{lem710},\begin{equation} \label{eWW} \begin{aligned}
&\frac{1}{2}\sum_{\eta\in\mathcal{\mathcal{J}}_{\textrm{int }}^{x,\,y}}\mu_{N}(\eta)\,g(\eta_{z_{i}})\,r(z_{i},\,z_{j})\left[\mathbf{W}_{x,\,y}(\eta)-\mathbf{W}_{x,\,y}(\sigma^{z_{i},\,z_{j}}\eta)\right]^{2}\\
&\le\,\frac{1+o_{\epsilon}(1)}{2}\sum_{\eta\in\mathcal{\mathcal{J}}_{\textrm{int}}^{x,\,y},\,\eta_{z_{i}}\ge1}\mu_{N}(\eta)\,g(\eta_{z_{i}})\,r(z_{i},\,z_{j})\left[\frac{a(\eta_{x})\,a(\eta_{y})}{N^{2\alpha+1}\,I_{\alpha}}(h_{x,\,y}(z_{i})-h_{x,\,y}(z_{j}))\right]^{2}\\
&\quad\;+C\sum_{\eta\in\mathcal{\mathcal{J}}_{\textrm{int }}^{x,y}}\mu_{N}(\eta)\frac{\pi_{N}}{N^{3}}\\
&\le\,\frac{1+o_{\epsilon}(1)}{2Z_{N}\,I_{\alpha}^{2}\,N^{3\alpha+2}}\,r(z_{i},\,z_{j})\,(h_{x,\,y}(z_{i})-h_{x,\,y}(z_{j}))^{2}\sum_{\eta\in\mathcal{\mathcal{J}}_{\textrm{int}}^{x,\,y},\,\eta_{z_{i}}\ge1}\frac{m_{\star}^{\eta}}{a(\eta-\omega^{z_{i}})}\,a(\eta_{x})^{2}\,a(\eta_{y})^{2}\\
&\quad\;+o_{N}(1)\,N^{-(\alpha+1)}\;.
\end{aligned} \end{equation}By the change of variable $\eta-\omega^{z_{i}}=\zeta$ and by an argument
similar to that in Lemma \ref{lem751}, it can be verified that
\begin{equation}
\sum_{\eta\in\mathcal{\mathcal{J}}_{\textrm{int}}^{x,\,y},\,\eta_{z_{i}}\ge1}\frac{m_{\star}^{\eta-\omega^{z_{i}}}}{a(\eta-\omega^{z_{i}})}\,a(\eta_{x})^{2}a(\eta_{y})^{2}\le\left(1+o_{N}(1)\right)N^{2\alpha+1}\,I_{\alpha}\,\Gamma(\alpha)^{\kappa_{\star}-2}\prod_{x\in S\setminus S_{\star}}\Gamma_{x}\;.\label{eWW2}
\end{equation}
By inserting \eqref{eWW2} into the line of \eqref{eWW}, and applying
Proposition \ref{e21}, we obtain
\begin{align*}
 & \frac{1}{2}\sum_{\eta\in\mathcal{\mathcal{J}}_{\textrm{int }}^{x,\,y}}\mu_{N}(\eta)\,g(\eta_{z_{i}})\,r(z_{i},\,z_{j})\left[\mathbf{W}_{x,\,y}(\eta)-\mathbf{W}_{x,\,y}(\sigma^{z_{i},\,z_{j}}\eta)\right]^{2}\\
 & \le\left(1+o_{N}(1)+o_{\epsilon}(1)\right)\frac{1}{2M_{\star}\,\kappa_{\star}\,I_{\alpha}\,\Gamma(\alpha)\,N^{\alpha+1}}\,m(z_{i})\,r(z_{i},\,z_{j})\,\left[h_{x,y}(z_{i})-h_{x,y}(z_{j})\right]^{2}\;.
\end{align*}
The case $i>j$ can be similarly treated and the same form of estimate
is obtained. Hence, by summing this result over all $1\le i,\,j\le\kappa$,
and by using
\[
\frac{1}{2}\sum_{i,\,j=1}^{\kappa}m(z_{i})\,r(z_{i},\,z_{j})\,(h_{x,\,y}(z_{i})-h_{x,\,y}(z_{j}))^{2}=\text{\textup{cap}}_{X}(x,\,y)\;,
\]
we complete the proof.
\end{proof}
\begin{rem}[Construction of $\mathbf{W}_{y,\,x}$]
\label{rm74}Suppose that the enumeration $x=z_{1},\,z_{2},\,\cdots,\,z_{\kappa}=y$
is used in the construction of $\mathbf{W}_{x,\,y}$. If all $h_{x,\,y}(z_{i})$,
$1\le i\le\kappa$, are different, then the construction of $\mathbf{W}_{y,\,x}$
is unambiguous as $h_{y,\,x}=1-h_{x,\,y}$, and we obtain $\mathbf{W}_{y,\,x}=1-\mathbf{W}_{x,\,y}$.
By contrast, if $h_{x,\,y}(z_{i})=h_{x,\,y}(z_{i+1})$ for some $i$,
then there are several possibilities in the selection of the enumeration
for the construction of $\mathbf{W}_{y,\,x}$. In this case, the rule
is to select $y=w_{1},\,w_{2},\,\cdots,\,w_{\kappa}=x$ for the enumeration,
where $w_{i}=z_{\kappa+1-i}$, $1\le i\le\kappa$. It can be thereby
verified that $\mathbf{W}_{y,\,x}=1-\mathbf{W}_{x,\,y}$; therefore,
\[
\mathscr{D}_{N}(\mathbf{W}_{x,\,y};\mathcal{\mathcal{J}}_{\textrm{int }}^{x,\,y})=\mathscr{D}_{N}(\mathbf{W}_{y,\,x};\mathcal{\mathcal{J}}_{\textrm{int }}^{y,\,x})\;.
\]
\end{rem}

\subsubsection*{Construction for adjoint dynamics}

The function $\mathbf{W}_{x,\,y}^{*}$ on $\mathcal{T}^{x,\,y}$ is
similarly defined. Recall $h_{x,\,y}^{*}$ the equilibrium potential
between $x$ and $y$ with respect to the adjoint random walk $X^{*}(\cdot)$,
and enumerate points of $S$ by $x=z_{1}^{*},\,\,\cdots,\,z_{\kappa}^{*}=y$
in such a manner that
\[
1=h_{x,\,y}^{*}(z_{1}^{*})\ge h_{x,\,y}^{*}(z_{2}^{*})\ge\cdots\ge h_{x,y\,}^{*}(z_{\kappa}^{*})=0\;.
\]
Then, let
\[
\mathbf{W}_{x,\,y}^{*}(\eta)=\sum_{i=1}^{\kappa-1}\left[h_{x,\,y}^{*}(z_{i}^{*})-h_{x,\,y}^{*}(z_{i+1}^{*})\right]H\left(\frac{\eta^{(i)}}{N}\right)\;.
\]
This function $\mathbf{W}_{x,\,y}^{*}$ also satisfies the property
\eqref{bdw1}, and the following variant of Lemma \ref{lem72}, whose
proof is identical to that of Lemma \ref{lem72}.
\begin{lem}
\label{lem73}For all $x,\,y\in S_{\star}$, we have
\[
\mathscr{D}_{N}(\mathbf{W}_{x,\,y}^{*};\mathcal{\mathcal{J}}_{\textup{\textrm{int}}}^{x,\,y})\le\left(1+o_{N}(1)+o_{\epsilon}(1)\right)N^{-(1+\alpha)}\,\frac{\textup{cap}_{X}(x,\,y)}{M_{\star}\,\kappa_{\star}\,I_{\alpha}\,\Gamma(\alpha)}\;.
\]
\end{lem}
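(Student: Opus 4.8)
The plan is to repeat the argument of Lemma~\ref{lem72} essentially verbatim, replacing the equilibrium potential $h_{x,\,y}$ and the enumeration $x=z_{1},\dots,z_{\kappa}=y$ throughout by their adjoint counterparts $h_{x,\,y}^{*}$ and $x=z_{1}^{*},\dots,z_{\kappa}^{*}=y$. First I would record the telescoping identity for $\mathbf{W}_{x,\,y}^{*}(\eta)-\mathbf{W}_{x,\,y}^{*}(\sigma^{z_{i}^{*},\,z_{j}^{*}}\eta)$, which is obtained exactly as in \eqref{eW} because the same function $H$ enters the definition of $\mathbf{W}_{x,\,y}^{*}$. The point that makes the transcription legitimate is that Lemma~\ref{lem75} is insensitive to the choice of enumeration: its proof uses only that the first and last sites of the ordering are $x$ and $y$, so that $|\eta^{(u)}-\eta_{x}|\le\sum_{z\neq x,\,y}\eta_{z}<\pi_{N}$ for $1\le u\le\kappa-1$ on the tube, and this holds equally for the adjoint ordering. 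Consequently Lemma~\ref{lem75} applies unchanged and one obtains, just as in \eqref{eW3},
\[
\bigl|\mathbf{W}_{x,\,y}^{*}(\eta)-\mathbf{W}_{x,\,y}^{*}(\sigma^{z_{i}^{*},\,z_{j}^{*}}\eta)\bigr|\le(1+o_{\epsilon}(1))\,\frac{a(\eta_{x})\,a(\eta_{y})}{N^{2\alpha+1}\,I_{\alpha}}\,\bigl|h_{x,\,y}^{*}(z_{i}^{*})-h_{x,\,y}^{*}(z_{j}^{*})\bigr|+C\,\frac{\pi_{N}}{N^{2}}
\]
for every $\eta\in\mathcal{J}^{x,\,y}$ with $\eta_{z_{i}^{*}}\ge1$.

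Next I would insert this bound into $\mathscr{D}_{N}(\mathbf{W}_{x,\,y}^{*};\mathcal{J}_{\textrm{int}}^{x,\,y})$ and follow the remaining steps of Lemma~\ref{lem72} line by line: the $C\pi_{N}/N^{2}$ errors are absorbed using the mass bound $\mu_{N}(\mathcal{J}^{x,\,y})\le CN^{-(\alpha-1)}$ of Lemma~\ref{lem710}; the change of variables $\eta-\omega^{z_{i}^{*}}=\zeta$ is performed; the resulting weighted sum $\sum\frac{m_{\star}^{\eta-\omega^{z_{i}^{*}}}}{a(\eta-\omega^{z_{i}^{*}})}\,a(\eta_{x})^{2}\,a(\eta_{y})^{2}$ is estimated by $(1+o_{N}(1))\,N^{2\alpha+1}\,I_{\alpha}\,\Gamma(\alpha)^{\kappa_{\star}-2}\prod_{x\in S\setminus S_{\star}}\Gamma_{x}$ exactly as in \eqref{eWW2} (via the computation used for Lemma~\ref{lem751}, Lemma~\ref{le72}, and the definition \eqref{e20} of $Z$); and Proposition~\ref{e21} is applied. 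This yields, for each ordered pair $i,\,j$, the per-edge bound
\[
\tfrac{1}{2}\sum_{\eta\in\mathcal{J}_{\textrm{int}}^{x,\,y}}\mu_{N}(\eta)\,g(\eta_{z_{i}^{*}})\,r(z_{i}^{*},\,z_{j}^{*})\,\bigl[\mathbf{W}_{x,\,y}^{*}(\eta)-\mathbf{W}_{x,\,y}^{*}(\sigma^{z_{i}^{*},\,z_{j}^{*}}\eta)\bigr]^{2}\le\frac{1+o_{N}(1)+o_{\epsilon}(1)}{2\,M_{\star}\,\kappa_{\star}\,I_{\alpha}\,\Gamma(\alpha)\,N^{\alpha+1}}\,m(z_{i}^{*})\,r(z_{i}^{*},\,z_{j}^{*})\,\bigl[h_{x,\,y}^{*}(z_{i}^{*})-h_{x,\,y}^{*}(z_{j}^{*})\bigr]^{2}.
\]

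Summing over all $1\le i,\,j\le\kappa$, the asserted estimate follows once $\tfrac12\sum_{i,\,j}m(z_{i}^{*})\,r(z_{i}^{*},\,z_{j}^{*})\bigl(h_{x,\,y}^{*}(z_{i}^{*})-h_{x,\,y}^{*}(z_{j}^{*})\bigr)^{2}$ is identified with $\textup{cap}_{X}(x,\,y)$; this is the only point at which the adjoint case genuinely departs from Lemma~\ref{lem72}, and I expect it to be the main (albeit very mild) point requiring care. It is handled by the elementary identity $\tfrac12\sum_{z,\,w}m(z)\,r(z,\,w)[f(w)-f(z)]^{2}=\tfrac12\sum_{z,\,w}m(z)\,r^{*}(z,\,w)[f(w)-f(z)]^{2}$ (a consequence of $m(z)\,r^{*}(z,\,w)=m(w)\,r(w,\,z)$ together with a relabelling of indices), valid for every $f:S\to\mathbb{R}$; taking $f=h_{x,\,y}^{*}$, the displayed sum equals $\tfrac12\sum_{z,\,w}m(z)\,r^{*}(z,\,w)(h_{x,\,y}^{*}(w)-h_{x,\,y}^{*}(z))^{2}=\textup{cap}_{X}^{*}(x,\,y)$, which coincides with $\textup{cap}_{X}(x,\,y)$ by the identity $\textup{cap}_{X}^{*}=\textup{cap}_{X}$ recalled in Section~\ref{sec4}. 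Since all substantive estimates have already been established in Lemmas~\ref{lem72}, \ref{lem75}, and \ref{lem751}, no new obstacle is anticipated beyond this bookkeeping with the adjoint conductances.
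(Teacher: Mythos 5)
Your proposal is correct and matches the paper exactly: the paper's own proof of Lemma~\ref{lem73} is literally declared ``identical to that of Lemma~\ref{lem72},'' and you have carried out precisely that transcription, correctly noting that Lemma~\ref{lem75} only uses $z_1=x$, $z_\kappa=y$ and the tube constraint, and closing the one genuine loose end via $D_X(h^*_{x,y})=D^*_X(h^*_{x,y})=\textup{cap}^*_X(x,y)=\textup{cap}_X(x,y)$.
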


The rule for selecting the enumeration corresponding to $\mathbf{W}_{y,\,x}^{*}$
is the same as that in Remark \ref{rm74}; hence, $\mathbf{W}_{y,\,x}^{*}=1-\mathbf{W}_{x,\,y}^{*}$.

\subsection{\label{sec75}Global construction of $\mathbf{V}_{A,\,B}$, $\mathbf{V}_{A,\,B}^{*}$
and proof of Proposition \ref{p63}}

For two disjoint non-empty subsets $A,\,B$ of $S_{\star}$, let the
function $\mathbf{V}_{A,\,B}:\mathcal{H}_{N}\rightarrow\mathbb{R}$
be defined as follows:
\[
\mathbf{V}_{A,\,B}(\eta)=\begin{cases}
\mathfrak{h}_{A,\,B}(x) & \text{if }\eta\in\mathcal{D}^{x},\,x\in S_{\star}\;,\\
\mathfrak{h}_{A,\,B}(y)+\left[\mathfrak{h}_{A,\,B}(x)-\mathfrak{h}_{A,\,B}(y)\right]\mathbf{W}_{x,\,y}(\eta) & \text{if }\eta\in\mathcal{J}^{x,\,y}\;,x,\,y\in S_{\star}\;,\\
0 & \text{if }\eta\in\mathcal{G}^{c}\;.
\end{cases}
\]
In this expression, the definition on $\mathcal{J}^{x,\,y}$ does
not depend on the order of $x$ and $y$, owing to Remark \ref{rm74},
in the sense that
\[
\mathfrak{h}_{A,\,B}(y)+\left[\mathfrak{h}_{A,\,B}(x)-\mathfrak{h}_{A,\,B}(y)\right]\mathbf{W}_{x,\,y}(\eta)=\mathfrak{h}_{A,\,B}(x)+\left[\mathfrak{h}_{A,\,B}(y)-\mathfrak{h}_{A,\,B}(x)\right]\mathbf{W}_{y,\,x}(\eta)\;.
\]
The function $\mathbf{V}_{A,\,B}^{*}(\cdot)$ is defined by replacing
$\mathbf{W}_{x,\,y}$ in the definition of $\mathbf{V}_{A,\,B}$ with
$\mathbf{W}_{x,\,y}^{*}$. Then, it is immediate from the definition
that $\mathbf{V}_{A,\,B}$ and $\mathbf{V}_{A,\,B}^{*}$ satisfy part
(1) of Proposition \ref{p63}. Hence, to complete the proof of Proposition
\ref{p63}, it suffices to prove part (2). This will be verified only
for the function $\mathbf{V}_{A,\,B}$, as the proof for $\mathbf{V}_{A,\,B}^{*}$
is essentially the same.
\begin{lem}
For two disjoint non-empty subsets $A,\,B$ of $S_{\star}$, we have
\[
\mathscr{D}_{N}(\mathbf{V}_{A,\,B})\le\left(1+o_{N}(1)+o_{\epsilon}(1)\right)N^{-(1+\alpha)}\,\textup{cap}_{Y}(A,\,B)\;.
\]
\end{lem}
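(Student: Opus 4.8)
The Dirichlet form $\mathscr{D}_N(\mathbf{V}_{A,B})$ decomposes as a sum over unordered edges $\{\eta,\sigma^{z,w}\eta\}$ of $\mathcal{H}_N^\otimes$. The strategy is to split this sum according to the decomposition \eqref{decompg} of $\mathcal{G}$ together with $\mathcal{G}^c$: namely over edges whose initial point lies in some $\mathcal{D}^x_{\textrm{int}}$, some $\mathcal{J}^{x,y}_{\textrm{int}}$, $\partial^{\textrm{in}}\mathcal{G}$, $\partial^{\textrm{out}}\mathcal{G}$, or $(\mathcal{G}^c)_{\textrm{int}}$. On $\mathcal{D}^x_{\textrm{int}}$ the function $\mathbf{V}_{A,B}$ is locally constant (equal to $\mathfrak{h}_{A,B}(x)$ on the whole neighborhood, since $N$ is large enough that $\overline{\mathcal{D}^x_{\textrm{int}}}\subseteq\mathcal{D}^x$), so this contribution vanishes. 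Likewise on $(\mathcal{G}^c)_{\textrm{int}}$ the function is identically $0$ on the neighborhood, so that contribution vanishes too.

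The remaining contributions are the saddle-tube interiors, where $\mathbf{V}_{A,B}$ restricted to $\mathcal{J}^{x,y}$ equals the affine image $\mathfrak{h}_{A,B}(y)+[\mathfrak{h}_{A,B}(x)-\mathfrak{h}_{A,B}(y)]\mathbf{W}_{x,y}$ of the tube function, and the two boundary layers $\partial^{\textrm{in}}\mathcal{G}$ and $\partial^{\textrm{out}}\mathcal{G}$. For the saddle-tube part, since the Dirichlet form is quadratic in increments and the affine transformation scales increments by $\mathfrak{h}_{A,B}(x)-\mathfrak{h}_{A,B}(y)$, one gets
\[
\mathscr{D}_N(\mathbf{V}_{A,B};\mathcal{J}^{x,y}_{\textrm{int}})=\bigl[\mathfrak{h}_{A,B}(x)-\mathfrak{h}_{A,B}(y)\bigr]^2\,\mathscr{D}_N(\mathbf{W}_{x,y};\mathcal{J}^{x,y}_{\textrm{int}})\;,
\]
and Lemma \ref{lem72} bounds this by $(1+o_N(1)+o_\epsilon(1))N^{-(1+\alpha)}[\mathfrak{h}_{A,B}(x)-\mathfrak{h}_{A,B}(y)]^2\,\textup{cap}_X(x,y)/(M_\star\kappa_\star I_\alpha\Gamma(\alpha))$. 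Summing over unordered pairs $\{x,y\}\subset S_\star$ and recalling the expression for $\mathfrak{D}_Y$ in terms of $\textup{cap}_X$, the total of the saddle-tube contributions is exactly $(1+o_N(1)+o_\epsilon(1))N^{-(1+\alpha)}\textup{cap}_Y(A,B)$ (using $\textup{cap}_Y(A,B)=\mathfrak{D}_Y(\mathfrak{h}_{A,B})$).

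For the two boundary layers: on $\partial^{\textrm{in}}\mathcal{G}$ and $\partial^{\textrm{out}}\mathcal{G}$ the increments $[\mathbf{V}_{A,B}(\sigma^{z,w}\eta)-\mathbf{V}_{A,B}(\eta)]^2$ are bounded by a constant ($|\mathbf{V}_{A,B}|\le 1$), and $g(\eta_z)r(z,w)$ is bounded by a constant uniformly (since on $\partial^{\textrm{in}}\mathcal{J}^{x,y}$ all occupation numbers stay $O(N)$, so $g\le C$, and on $\partial^{\textrm{in}}\mathcal{D}^x$ similarly), so these pieces are bounded by $C\,\mu_N(\partial^{\textrm{in}}\mathcal{G})+C\,\mu_N(\partial^{\textrm{out}}\mathcal{G})$, which is $o_N(1)\,N^{-(1+\alpha)}$ by Lemma \ref{lem71}. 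Assembling everything gives the claimed bound.

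**Main obstacle.** The genuinely delicate point is the saddle-tube estimate itself, but that has been absorbed into Lemma \ref{lem72}; what remains for \emph{this} lemma is mostly bookkeeping. The one subtlety to watch is that edges crossing between a saddle tube and an enlarged valley (i.e. with one endpoint in $\partial^{\textrm{in}}\mathcal{J}^{x,y}$ or $\partial^{\textrm{in}}\mathcal{D}^x$) are precisely where $\mathbf{V}_{A,B}$ could be discontinuous relative to the tube formula; these edges are exactly collected in the $\partial^{\textrm{in}}\mathcal{G}$ layer, and it is their small measure (Lemma \ref{lem71}) that saves the day. A second, routine subtlety is checking that $\mathscr{D}_N(\mathbf{W}_{x,y};\mathcal{J}^{x,y}_{\textrm{int}})$ — which is defined using occupation-dependent rates and the neighborhood $\overline{\mathcal{J}^{x,y}_{\textrm{int}}}$ — really does control all edges emanating from $\mathcal{J}^{x,y}_{\textrm{int}}$ that are not already counted in a boundary layer; this follows because $\mathbf{V}_{A,B}$ agrees with the tube formula on the full neighborhood of $\mathcal{J}^{x,y}_{\textrm{int}}$ once the crossing edges have been peeled off into $\partial^{\textrm{in}}\mathcal{G}$.
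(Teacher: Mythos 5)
Your proof follows the paper's argument essentially line by line: the same decomposition of the Dirichlet form via \eqref{decompg}--\eqref{decompgc}, vanishing of the contributions from $\mathcal{D}_{\textrm{int}}^{x}$ and $(\mathcal{G}^{c})_{\textrm{int}}$, Lemma \ref{lem71} for the two boundary layers, Lemma \ref{lem72} on each saddle tube, and the identity $\mathfrak{D}_{Y}(\mathfrak{h}_{A,\,B})=\textup{cap}_{Y}(A,\,B)$ to reassemble.

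One imprecision is worth flagging, although the paper's own write-up has the same one. Your parenthetical justification that the $\mathcal{D}_{\textrm{int}}^{x}$ contribution vanishes rests on the inclusion $\overline{\mathcal{D}_{\textrm{int}}^{x}}\subseteq\mathcal{D}^{x}$, which is not literally true. Because $\partial^{\textrm{\,in}}\mathcal{D}^{x}$ removes from the shell $\{\eta_{x}=N(1-2\epsilon)\}$ precisely those configurations lying in some tube $\mathcal{T}^{x,\,y}$, the set $\mathcal{D}_{\textrm{int}}^{x}$ still contains the ``corner'' configurations with $\eta_{x}=N(1-2\epsilon)$ and $\eta_{x}+\eta_{y}=N-\pi_{N}$. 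For such an $\eta$ the jump $\sigma^{x,\,w}\eta$ with $w\neq y$ has $(\sigma^{x,\,w}\eta)_{x}+(\sigma^{x,\,w}\eta)_{y}=N-\pi_{N}-1$ and $(\sigma^{x,\,w}\eta)_{x}=N(1-2\epsilon)-1$, so it lands in $\mathcal{G}^{c}$, where $\mathbf{V}_{A,\,B}=0\neq\mathfrak{h}_{A,\,B}(x)$; the increment is not zero. This does not endanger the bound: the corner set has measure of order $N^{-\alpha}\pi_{N}^{-\alpha}$, and since $\pi_{N}^{\alpha}\gg N^{2}$ this is $o_{N}(1)\,N^{-(1+\alpha)}$, so those edges can be absorbed into the negligible error term in the same way as the boundary layers. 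Strictly speaking, though, they should be accounted for separately rather than claimed to vanish identically.
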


\begin{proof}
By \eqref{decompg} and \eqref{decompgc} the Dirichlet form $\mathscr{D}_{N}(\mathbf{V}_{A,\,B})$
can be decomposed as\begin{equation} \label{e741} \begin{aligned}
&\sum_{x\in S_{\star}}\mathscr{D}_{N}(\mathbf{V}_{A,\,B};\mathcal{D}_{\textup{int}}^{x})+\sum_{\{x,\,y\}\subset S_{\star}}\mathscr{D}_{N}(\mathbf{V}_{A,\,B};\mathcal{J}_{\textup{int}}^{x,\,y})+\mathscr{D}_{N}(\mathbf{V}_{A,\,B};\partial^{\textrm{\,in}}\mathcal{G}))\\
&\;+\mathscr{D}_{N}(\mathbf{V}_{A,\,B};(\mathcal{G}^{c})_{\textrm{int}})+\mathscr{D}_{N}(\mathbf{V}_{A,\,B};\partial^{\textrm{\,out}}\mathcal{G}))\;. \end{aligned} \end{equation} It is first observed that for all $z,\,w\in S$, we have $\mathbf{V}_{A,\,B}(\sigma^{z,\,w}\eta)-\mathbf{V}_{A,\,B}(\eta)=0$
for all $\eta\in\mathcal{D}_{\textup{int}}^{x}$, $x\in S_{\star}$,
and for all $\eta\in(\mathcal{G}^{c})_{\textrm{int}}$. Therefore,
\begin{equation}
\sum_{x\in S_{\star}}\mathscr{D}_{N}(\mathbf{V}_{A,\,B};\mathcal{D}_{\textup{int}}^{x})=0\;\;\text{and\;\;}\mathscr{D}_{N}(\mathbf{V}_{A,\,B};(\mathcal{G}^{c})_{\textrm{int}})=0\;.\label{e742}
\end{equation}
Moreover, as $\mathbf{V}_{A,\,B}(\eta)\in[0,\,1]$, and $g(\cdot)$,
$r(\cdot,\,\cdot)$ are bounded, it holds that
\[
\mathscr{D}_{N}(\mathbf{V}_{A,\,B};\partial^{\textrm{\,in}}\mathcal{G}))\le C\,\mu_{N}(\partial^{\textrm{\,in}}\mathcal{G})\;\;\text{and\;\;}\mathscr{D}_{N}(\mathbf{V}_{A,\,B};\partial^{\textrm{\,out}}\mathcal{G}))\le C\,\mu_{N}(\partial^{\textrm{\,out}}\mathcal{G})
\]
for some constant $C$. Hence, by Lemma \ref{lem71},
\begin{equation}
\mathscr{D}_{N}(\mathbf{V}_{A,\,B};\partial^{\textrm{\,in}}\mathcal{G}))+\mathscr{D}_{N}(\mathbf{V}_{A,\,B};\partial^{\textrm{\,out}}\mathcal{G}))=o_{N}(1)\,N^{-(1+\alpha)}\;.\label{e743}
\end{equation}
Finally, by Lemma \ref{lem72}, for $x,\,y\in S_{\star}$,
\begin{align}
 & \mathscr{D}_{N}(\mathbf{V}_{A,\,B};\mathcal{J}_{\textup{int}}^{x,\,y})\nonumber \\
 & =\left[\mathfrak{h}_{A,\,B}(y)-\mathfrak{h}_{A,\,B}(x)\right]^{2}\mathscr{D}_{N}(\mathbf{W}_{x,\,y};\mathcal{J}_{\textup{int}}^{x,\,y})\label{e744}\\
 & \le\left(1+o_{N}(1)+o_{\epsilon}(1)\right)\,N^{-(1+\alpha)}\left[\mathfrak{h}_{A,\,B}(y)-\mathfrak{h}_{A,\,B}(x)\right]^{2}\frac{\textup{cap}_{X}(x,\,y)}{M_{\star}\,\kappa_{\star}\,I_{\alpha}\,\Gamma(\alpha)}\;.\nonumber
\end{align}
The proof is completed by combining \eqref{e741}, \eqref{e742}, \eqref{e743}, \eqref{e744},
and the fact that
\[
\sum_{\{x,\,y\}\subset S_{\star}}\frac{\textup{cap}_{X}(x,\,y)}{M_{\star}\,\kappa_{\star}\,I_{\alpha}\,\Gamma(\alpha)}\left[\mathfrak{h}_{A,\,B}(y)-\mathfrak{h}_{A,\,B}(x)\right]^{2}=\mathfrak{D}_{Y}(\mathfrak{h}_{A,\,B})=\textup{cap}_{Y}(A,\,B)\;.
\]
\end{proof}

\section{\label{sec8}Correction procedure for test flows}

In this section, for two disjoint non-empty subsets $A$ and $B$
of $S_{\star}$, we shall construct suitable approximations of the
optimal flows $\Phi_{\mathbf{h}_{\mathcal{E}(A),\,\mathcal{E}(B)}}^{*}$
and $\Phi_{\mathbf{h}_{\mathcal{E}(A),\,\mathcal{E}(B)}^{*}}$, denoted
by $\Phi_{A,\,B}$ and $\Phi_{A,\,B}^{*}$, respectively. The focus
is only on the former because the construction of the latter is entirely
parallel. In particular, it is demonstrated that $\Phi_{A,\,B}$ satisfies
four conditions presented in Proposition \ref{p64}. Henceforth, we
fix two disjoint non-empty subsets $A$ and $B$ of $S_{\star}$

\subsection{Analysis of the flow $\Phi_{\mathbf{V}_{A,\,B}}^{*}$}

Recall the definition of $\Phi_{\mathbf{V}_{A,\,B}}^{*}$ from \eqref{flow}.
By (1) of Proposition \ref{pflow}, we have
\begin{equation}
(\textup{div}\,\Phi_{\mathbf{V}_{A,\,B}}^{*})(\eta)=\sum_{z,\,w\in S}\mu_{N}(\eta)g(\eta_{z})r(z,\,w)\left[\mathbf{V}_{A,\,B}(\eta)-\mathbf{V}_{A,\,B}(\sigma^{z,\,w}\eta)\right]\;.\label{div}
\end{equation}
This expression can be used to derive several basic facts about the
flow $\Phi_{\mathbf{V}_{A,\,B}}^{*}$.
\begin{prop}
\label{p81}The flow $\Phi_{\mathbf{V}_{A,\,B}}^{*}$ has the following
properties.
\begin{enumerate}
\item The flow is divergence-free on $\mathcal{D}_{\textup{\textrm{int}}}^{x}$,
$x\in S_{\star}$, and on $(\mathcal{G}^{c})_{\textup{\textrm{int}}}$,
i.e.,
\[
(\textup{div }\Phi_{\mathbf{V}_{A,\,B}}^{*})(\eta)=0\text{\;\;for all \;}\eta\in\bigcup_{x\in S_{\star}}\mathcal{D}_{\textup{\textrm{int}}}^{x}\;\;\text{and for all }\eta\in(\mathcal{G}^{c})_{\textrm{\textup{\textrm{int}}}}\;.
\]
\item The divergence on boundaries is negligible in the sense that
\[
\sum_{\eta\in\partial^{\textrm{\textup{\,in}}}\mathcal{G}}\left|(\textup{div }\Phi_{\mathbf{V}_{A,\,B}}^{*})(\eta)\right|+\sum_{\eta\in\partial^{\text{\textup{\,out}}}\mathcal{G}}\left|(\textup{div }\Phi_{\mathbf{V}_{A,\,B}}^{*})(\eta)\right|=o_{N}(1)\,N^{-(1+\alpha)}\;.
\]
\end{enumerate}
\end{prop}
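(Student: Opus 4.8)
The plan is to read off both assertions directly from the expression \eqref{div} for the divergence, using the explicit piecewise definition of $\mathbf{V}_{A,\,B}$. For part (1), the key observation is that $\mathbf{V}_{A,\,B}$ is \emph{locally constant} on the interiors in question. If $\eta\in\mathcal{D}^{x}_{\textup{int}}$, then by definition $\eta_{x}>N(1-2\epsilon)$ strictly, and removing or adding a single particle keeps the configuration inside $\mathcal{D}^{x}$; moreover, since $\eta\notin\bigcup_{y}\mathcal{T}^{x,\,y}$ is avoided in the interior (or, more simply, since a one-particle move cannot leave $\mathcal{D}^x$ when $\eta_x > N(1-2\epsilon)$), every neighbor $\sigma^{z,\,w}\eta$ lies in $\mathcal{D}^{x}$ as well, so that $\mathbf{V}_{A,\,B}(\sigma^{z,\,w}\eta)=\mathfrak{h}_{A,\,B}(x)=\mathbf{V}_{A,\,B}(\eta)$. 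Hence every summand in \eqref{div} vanishes and $(\textup{div }\Phi_{\mathbf{V}_{A,\,B}}^{*})(\eta)=0$. The same argument applies verbatim on $(\mathcal{G}^{c})_{\textup{int}}$, where $\mathbf{V}_{A,\,B}\equiv 0$ and a single-particle move starting from an interior point of $\mathcal{G}^{c}$ stays in $\mathcal{G}^{c}$ (this is exactly what the definition \eqref{decompgc} of the interior is designed to guarantee). So part (1) reduces to the combinatorial fact that the sets $\mathcal{D}^{x}_{\textup{int}}$ and $(\mathcal{G}^{c})_{\textup{int}}$ are closed under one-particle jumps, which I would state and verify as a short preliminary claim. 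Note this is essentially the content already used implicitly around \eqref{e742}.

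For part (2), the strategy is a crude $L^{\infty}$ bound on the divergence combined with the measure estimates of Lemma \ref{lem71}. From \eqref{div}, and using that $\mathbf{V}_{A,\,B}$ takes values in $[0,\,1]$ while $g(\cdot)$ and $r(\cdot,\,\cdot)$ are bounded by constants depending only on $S$, $X(\cdot)$, $\alpha$ (and $\epsilon$ through the construction), we get
\[
\left|(\textup{div }\Phi_{\mathbf{V}_{A,\,B}}^{*})(\eta)\right|\le C\,\mu_{N}(\eta)
\]
for every $\eta$. Summing over $\eta\in\partial^{\textrm{\,in}}\mathcal{G}$ and over $\eta\in\partial^{\textrm{\,out}}\mathcal{G}$ gives
\[
\sum_{\eta\in\partial^{\textrm{\,in}}\mathcal{G}}\left|(\textup{div }\Phi_{\mathbf{V}_{A,\,B}}^{*})(\eta)\right|+\sum_{\eta\in\partial^{\textrm{\,out}}\mathcal{G}}\left|(\textup{div }\Phi_{\mathbf{V}_{A,\,B}}^{*})(\eta)\right|\le C\left[\mu_{N}(\partial^{\textrm{\,in}}\mathcal{G})+\mu_{N}(\partial^{\textrm{\,out}}\mathcal{G})\right]\;,
\]
and Lemma \ref{lem71} bounds the right-hand side by $o_{N}(1)\,N^{-(1+\alpha)}$. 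This completes part (2).

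I do not expect any serious obstacle here; both parts are bookkeeping. The only point requiring a little care is the preliminary claim in part (1) that one-particle jumps preserve the relevant interiors — in particular that from $\eta\in\mathcal{D}^{x}_{\textup{int}}$ one cannot accidentally land in some saddle tube $\mathcal{J}^{x,\,y}$ or on $\partial^{\textrm{\,in}}\mathcal{D}^{x}$. This follows because $\partial^{\textrm{\,in}}\mathcal{D}^{x}$ and the tubes $\mathcal{T}^{x,\,y}$ were already removed, so on $\mathcal{D}^{x}_{\textup{int}}$ one has $\eta_{x}\ge N(1-2\epsilon)+1$ strictly and $\eta_x+\eta_y < N-\pi_N$ with $\pi_N\ge 2$, leaving room for a single jump without crossing either threshold; a one-line estimate suffices. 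The genuinely substantive work — correcting the divergence of $\Phi_{\mathbf{V}_{A,\,B}}^{*}$ on the saddle tubes $\mathcal{J}^{x,\,y}_{\textup{int}}$, where it is \emph{not} negligible — is deferred to the later construction of $\Phi_{A,\,B}$ in this section and is not part of this proposition.
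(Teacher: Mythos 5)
Your overall plan is the same as the paper's: part (1) follows from the local constancy of $\mathbf{V}_{A,\,B}$ on the interiors (so every bracket in \eqref{div} vanishes), and part (2) follows from the crude bound $\left|(\textup{div}\,\Phi^{*}_{\mathbf{V}_{A,\,B}})(\eta)\right|\le C\,\mu_{N}(\eta)$ combined with Lemma \ref{lem71}. Part (2) matches the paper exactly and is correct.

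Your justification of part (1), however, rests on a misreading of the definition of $\mathcal{D}^{x}_{\textup{int}}$. You assert that on $\mathcal{D}^{x}_{\textup{int}}$ one has $\eta_{x}>N(1-2\epsilon)$ strictly (indeed $\eta_x\ge N(1-2\epsilon)+1$), and that the tubes $\mathcal{T}^{x,\,y}$ have already been removed; neither is true. What is removed from $\mathcal{D}^{x}$ is
\[
\partial^{\,\textrm{in}}\mathcal{D}^{x}=\left\{\eta\in\mathcal{D}^{x}:\eta_{x}=N(1-2\epsilon)\right\}\setminus\Bigl(\bigcup_{y\in S_{\star}\setminus\{x\}}\mathcal{T}^{x,\,y}\Bigr)\;,
\]
which by design \emph{avoids} the tubes, so $\mathcal{D}^{x}_{\textup{int}}$ still contains every configuration with $\eta_{x}=N(1-2\epsilon)$ that lies in some $\mathcal{T}^{x,\,y}$. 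From such an $\eta$ the jump $\sigma^{x,\,w}\eta$ leaves $\mathcal{D}^{x}$ and lands in $\mathcal{J}^{x,\,y}$ (or, at the corner $\eta_{x}+\eta_{y}=N-\pi_{N}$, even in $\mathcal{G}^{c}$), so your claim that every neighbor $\sigma^{z,\,w}\eta$ lies in $\mathcal{D}^{x}$ is false, and the preliminary claim you propose to establish cannot be proved. What makes $\mathbf{V}_{A,\,B}$ nonetheless constant across the $\mathcal{D}^{x}/\mathcal{J}^{x,\,y}$ interface is not the geometry of $\mathcal{D}^{x}_{\textup{int}}$ but the cutoff \eqref{ck0}: since $(\sigma^{x,\,w}\eta)^{(i)}\ge(\sigma^{x,\,w}\eta)_{x}=N(1-2\epsilon)-1\ge N(1-3\epsilon)$, every $H$-factor in $\mathbf{W}_{x,\,y}(\sigma^{x,\,w}\eta)$ equals $1$, hence $\mathbf{W}_{x,\,y}(\sigma^{x,\,w}\eta)=1$ and $\mathbf{V}_{A,\,B}(\sigma^{x,\,w}\eta)=\mathfrak{h}_{A,\,B}(x)$. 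Your proposal misses this mechanism entirely; the argument as written would only be valid if $\mathcal{D}^{x}_{\textup{int}}$ were obtained by deleting the whole slab $\{\eta_{x}=N(1-2\epsilon)\}$, which is not what the paper does.
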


\begin{proof}
Part (1) is obvious, as for all $z,\,w\in S$, we have $\mathbf{V}_{A,\,B}(\eta)=\mathbf{V}_{A,\,B}(\sigma^{z,\,w}\eta)$
for all $\eta$ belonging to $\mathcal{D}_{\textrm{int}}^{x}$, $x\in S_{\star}$,
or $(\mathcal{G}^{c})_{\textrm{int}}$. For part (2), it suffices
to observe from \eqref{div} that $|(\textup{div }\Phi_{\mathbf{V}_{A,\,B}}^{*})(\eta)|\le C\mu_{N}(\eta)$
and use Lemma \ref{lem71}.
\end{proof}
The previous proposition reveals a crucial drawback regarding the
flow $\Phi_{\mathbf{V}_{A,\,B}}^{*}$, namely,
\[
(\textup{div }\Phi_{\mathbf{V}_{A,\,B}}^{*})(\mathcal{E}(A))=(\textup{div }\Phi_{\mathbf{V}_{A,\,B}}^{*})(\mathcal{E}(B))=0\;,
\]
as one of the main requirement of the test flow in the application
of Theorem \ref{t04} is that the total divergence on $\mathcal{E}(A)$
of the test flow is approximately $N^{-(1+\alpha)}\,\textup{cap}_{Y}(A,\,B)$.
In addition, one can easily verify that the divergence of $\Phi_{\mathbf{V}_{A,\,B}}^{*}$
on $\mathcal{J}^{x,\,y}$, $x,\,y\in S_{\star}$, is not negligible,
i.e., not of order $o_{N}(1)\,N^{-(1+\alpha)}$. This is the second
serious problem, as one would hope that the divergence on $\Delta_{N}$
is of order $o_{N}(1)\,N^{-(1+\alpha)}$, but $\mathcal{J}^{x,\,y}\subset\Delta_{N}$.

These two defects are closely related. The non-negligible divergences
on the saddle tube $\mathcal{J}^{x,\,y}$ should be carefully sent
to the valleys $\mathcal{E}^{x}$ and $\mathcal{E}^{y}$, so that
after this correction procedure, the divergence on saddle tubes is
negligible whereas the divergence on valleys $\mathcal{E}(A)$ is
close to the desired values.

Under the assumption \eqref{ass1}, this correction procedure is first
carried out for each tube $\mathcal{T}^{x,\,y}$ in Section \ref{sec82}
and then globally in Section \ref{sec83}. The proof of Proposition
\ref{p64} is also presented in Section \ref{sec83}. Several technical
computations are summarized in Section \ref{sec84}. The special assumption
\eqref{ass1} is imposed to simplify cumbersome notations and does
not affect the validity of the main arguments. In Section \ref{sec85}
the general case without this special assumption is considered.

\subsection{\label{sec82}Correction on tube: special case}

In Sections \ref{sec82}, \ref{sec83}, and \ref{sec84}, it will be
assumed that
\begin{equation}
r(u,\,v)>0\;\;\text{for all\;}u,\,v\in S\;.\label{ass1}
\end{equation}
The general result without this redundant assumption will be explained
in Section \ref{sec85}, as mentioned earlier. We fix two points $x,\,y\in S_{\star}$
throughout this subsection in order to focus on the construction on
the tube $\mathcal{T}^{x,\,y}$.

\subsubsection*{Localization of the correction procedure}

Recall the enumeration $x=z_{1},\,z_{2},\,\cdots,\,z_{\kappa}=y$
and the function $\mathbf{W}_{x,\,y}$ from Section \ref{sec74}.
As the function $\mathbf{W}_{x,\,y}$ is defined only on the tube
$\mathcal{T}^{x,\,y}$, the flow $\Phi_{\mathbf{W}_{x,\,y}}^{*}$
cannot be defined in the usual manner. Hence, let us first extend
$\mathbf{W}_{x,\,y}$ to a function on $\mathcal{H}_{N}$ by
\[
\mathbf{\widetilde{W}}_{x,\,y}(\eta)=\mathbf{W}_{x,\,y}(\eta)\,\mathbf{1}\left\{ \eta\in\mathcal{T}^{x,\,y}\right\} \;.
\]
Then, the flow $\Phi_{\mathbf{\widetilde{W}}_{x,\,y}}^{*}$ can be
defined. With a slight abuse of notation, this flow can be written
as $\Phi_{\mathbf{W}_{x,\,y}}^{*}$. Then, by the definition of $\mathbf{V}_{A,\,B}$,
the flow $\Phi_{\mathbf{V}_{A,\,B}}^{*}$ satisfies
\begin{equation}
\Phi_{\mathbf{V}_{A,\,B}}^{*}(\eta,\,\zeta)=\left[\mathfrak{h}_{A,\,B}(x)-\mathfrak{h}_{A,\,B}(y)\right]\Phi_{\mathbf{W}_{x,\,y}}^{*}(\eta,\,\zeta)\;\;\text{for all\;}\eta,\,\zeta\in\mathcal{T}^{x,\,y}\;.\label{e91}
\end{equation}
It should be noted that this relation is valid not only for $\eta,\,\zeta\in\mathcal{J}^{x,\,y}$
but also for $\eta,\,\zeta\in\mathcal{T}^{x,\,y}$, as both sides
are equal to $0$ if either $\eta\in\mathcal{T}^{x,\,y}$ or $\zeta\in\mathcal{T}^{x,\,y}$
does not belong to $\mathcal{J}^{x,\,y}$. The interior of the tube
is defined by
\[
\mathcal{T}_{\textrm{int}}^{x,\,y}=\{\eta:\eta_{x}+\eta_{y}>N(1-\epsilon)\}\;.
\]
Then, by \eqref{e91},
\begin{equation}
(\textup{div\,}\Phi_{\mathbf{V}_{A,\,B}}^{*})(\eta)=\left[\mathfrak{h}_{A,\,B}(x)-\mathfrak{h}_{A,\,B}(y)\right](\textup{div\,}\Phi_{\mathbf{W}_{x,\,y}}^{*})(\eta)\;\;\text{for all }\eta\in\mathcal{T}_{\textrm{int}}^{x,\,y}\;.\label{num}
\end{equation}
Therefore, the correction procedure for the divergence of the flow
$\Phi_{\mathbf{V}_{A,\,B}}^{*}$ on the tube $\mathcal{T}_{\textrm{int}}^{x,\,y}$
is reduced to that of $\Phi_{\mathbf{W}_{x,\,y}}^{*}$ on $\mathcal{T}_{\textrm{int}}^{x,\,y}$.

The divergence of the flow $\Phi_{\mathbf{W}_{x,\,y}}^{*}$ on $\mathcal{T}_{\textrm{int}}^{x,\,y}$
is now investigated. By \eqref{u1}, the divergence $\textup{div\,}\Phi_{\mathbf{W}_{x,\,y}}^{*}$
at $\eta\in\mathcal{T}_{\textrm{int}}^{x,\,y}$ can be written as\begin{equation} \label{e321} \begin{aligned}
(\textup{div\,}\Phi_{\mathbf{W}_{x,\,y}}^{*})(\eta)&=\sum_{z,\,w\in S}\mu_{N}(\eta)\,g(\eta_{z})\,r(z,\,w)\left[\mathbf{W}_{x,\,y}(\eta)-\mathbf{W}_{x,\,y}(\sigma^{z,\,w}\eta)\right]\;\\&=a_{N}\sum_{z,\,w\in S}\mu_{N-1}(\eta-\omega^{z})\,m(z)\,r(z,\,w)\left[\mathbf{W}_{x,\,y}(\eta)-\mathbf{W}_{x,\,y}(\sigma^{z,\,w}\eta)\right]\;.
\end{aligned} \end{equation}By \eqref{eW}, the last expression can be written as
\begin{equation}
(\textup{div\,}\Phi_{\mathbf{W}_{x,\,y}}^{*})(\eta)=a_{N}\sum_{i=1}^{\kappa}\mu_{N-1}(\eta-\omega^{z_{i}})\,m(z_{i})\mathbf{\,B}(\eta;z_{i})\mathbf{\,1}\{\eta_{z_{i}}\ge1\}\;,\label{ed1}
\end{equation}
where \begin{equation} \label{ed2} \begin{aligned}
\mathbf{B}(\eta;z_{i})=&\,\sum_{j:i<j}r(z_{i},\,z_{j})\sum_{u=i}^{j-1}\left[h_{x,\,y}(z_{u})-h_{x,\,y}(z_{u+1})\right]\left[H\left(\frac{\eta^{(u)}}{N}\right)-H\left(\frac{\eta^{(u)}-1}{N}\right)\right]\\&\;+\sum_{j:i>j}r(z_{i},\,z_{j})\sum_{u=j}^{i-1}\left[h_{x,\,y}(z_{u})-h_{x,\,y}(z_{u+1})\right]\left[H\left(\frac{\eta^{(u)}}{N}\right)-H\left(\frac{\eta^{(u)}+1}{N}\right)\right]\;.\end{aligned} \end{equation} Estimates on $\mathbf{B}(\cdot,\cdot)$ will now be provided.
\begin{lem}
\label{lem84}For $\eta\in\mathcal{T}^{x,\,y}$, there exists a constant
$C\ge0$ such that
\begin{align*}
 & \left|\mathbf{B}(\eta;z_{i})\right|\le C\frac{\pi_{N}}{N^{2}}\;\;;\;2\le i\le\kappa-1\;,\\
 & \left|\mathbf{B}(\eta;z_{1})-\frac{1}{N}\,H'\left(\frac{\eta_{x}}{N}\right)\frac{1}{M_{\star}}\,\textup{cap}_{X}(x,\,y)\right|\le C\,\frac{\pi_{N}}{N^{2}}\;,\;\text{and}\\
 & \left|\mathbf{B}(\eta;z_{\kappa})+\frac{1}{N}\,H'\left(\frac{\eta_{x}}{N}\right)\frac{1}{M_{\star}}\,\textup{cap}_{X}(x,\,y)\right|\le C\,\frac{\pi_{N}}{N^{2}}\;.
\end{align*}
In particular, the constant $C$ can be chosen to be $0$ if $\eta\notin\mathcal{J}^{x,\,y}$.
\end{lem}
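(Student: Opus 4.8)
The plan is to analyze the quantity $\mathbf{B}(\eta;z_i)$ defined in \eqref{ed2} by combining the pointwise estimate of Lemma \ref{lem75} on the increments $H(\eta^{(u)}/N)-H((\eta^{(u)}\pm1)/N)$ with the approximation, valid for $\eta\in\mathcal{T}^{x,\,y}$, that $\eta^{(u)}$ is close to $\eta_x$ for $1\le u\le\kappa-1$ (the error being at most $\pi_N$, since $\eta_x+\eta_y\ge N-\pi_N$ forces $\eta_{z_2}+\cdots+\eta_{z_{\kappa-1}}\le\pi_N$). The first step is to record that, by Lemma \ref{lem75} and then Lemma \ref{lem750} (or directly by the mean value argument as in \eqref{eW2}), one has
\[
H\Bigl(\frac{\eta^{(u)}}{N}\Bigr)-H\Bigl(\frac{\eta^{(u)}\mp1}{N}\Bigr)=\pm\frac{1}{N}\,H'\Bigl(\frac{\eta_x}{N}\Bigr)+O\Bigl(\frac{\pi_N}{N^2}\Bigr)
\]
uniformly in $u$, with the understanding that the whole expression is $O(\pi_N/N^2)$ when $\eta\notin\mathcal{J}^{x,\,y}$ (in which case $H$ is flat there, by \eqref{ck0} and the definition of $\mathcal{T}^{x,\,y}\setminus\mathcal{J}^{x,\,y}\subset\mathcal{D}^x\cup\mathcal{D}^y$). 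Substituting this into \eqref{ed2} turns each inner sum $\sum_{u}[h_{x,\,y}(z_u)-h_{x,\,y}(z_{u+1})]$ into a telescoping sum, which collapses to $h_{x,\,y}(z_i)-h_{x,\,y}(z_j)$.

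The second step is the telescoping/bookkeeping. After collapsing, modulo the $O(\pi_N/N^2)$ error,
\[
\mathbf{B}(\eta;z_i)\approx\frac{1}{N}\,H'\Bigl(\frac{\eta_x}{N}\Bigr)\Bigl[\sum_{j:i<j}r(z_i,\,z_j)\bigl(h_{x,\,y}(z_i)-h_{x,\,y}(z_j)\bigr)+\sum_{j:i>j}r(z_i,\,z_j)\bigl(h_{x,\,y}(z_i)-h_{x,\,y}(z_j)\bigr)\Bigr],
\]
the two sums combining (since both contribute $r(z_i,z_j)(h_{x,\,y}(z_i)-h_{x,\,y}(z_j))$, with the sign of the bracket $H(\cdot)-H(\cdot\mp1)$ matching the sign of $h_{x,\,y}(z_i)-h_{x,\,y}(z_j)$ by the monotone ordering $z_1,\dots,z_\kappa$) into $\frac{1}{N}H'(\eta_x/N)\sum_{j\neq i}r(z_i,z_j)(h_{x,\,y}(z_i)-h_{x,\,y}(z_j))$. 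For intermediate indices $2\le i\le\kappa-1$, this sum is precisely $-(L_X h_{x,\,y})(z_i)$, which vanishes by \eqref{eqp1} since $z_i\notin\{x,y\}$; hence $\mathbf{B}(\eta;z_i)=O(\pi_N/N^2)$, as claimed. For $i=1$, i.e. $z_1=x$, the sum equals $-(L_X h_{x,\,y})(x)$, and by \eqref{eqp11} we have $-m(x)(L_X h_{x,\,y})(x)=\mathrm{cap}_X(x,y)$; since $m(x)=M_\star$ for $x\in S_\star$, the factor $m(z_1)$ appearing later in \eqref{ed1} pairs with this, but at the level of $\mathbf{B}$ itself one gets $\mathbf{B}(\eta;z_1)=\frac{1}{N}H'(\eta_x/N)\,\frac{1}{M_\star}\,\mathrm{cap}_X(x,y)+O(\pi_N/N^2)$. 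The case $i=\kappa$, $z_\kappa=y$, is identical up to sign, using $-m(y)(L_X h_{x,\,y})(y)=-\mathrm{cap}_X(x,y)$ from \eqref{eqp11} and $m(y)=M_\star$.

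The main technical point — and the only place requiring care — is controlling the accumulated error uniformly. One must check that the $O(\pi_N/N^2)$ increments from Lemma \ref{lem75} can be summed over the $O(1)$ inner indices $u$ and $O(1)$ outer indices $j$ without degrading the order, which is fine since $\kappa=|S|$ is fixed; and one must verify that replacing $H'(\eta^{(u)}-\delta)/N$ by $H'(\eta_x)/N$ costs only $O(\pi_N/N^2)$, which follows from the boundedness of $H''$ (a constant depending on $\epsilon$, absorbed into $C$) together with $|\eta^{(u)}-\eta_x|\le\pi_N$. The dichotomy "$C$ can be taken to be $0$ if $\eta\notin\mathcal{J}^{x,\,y}$" is immediate from the previous observation that on $\mathcal{T}^{x,\,y}\setminus\mathcal{J}^{x,\,y}$ all the relevant values $\eta^{(u)}/N$ lie in the flat regions $[0,3\epsilon]$ or $[1-3\epsilon,1]$ of $H$ by \eqref{ck0} and \eqref{dx}, so every increment $H(\eta^{(u)}/N)-H((\eta^{(u)}\pm1)/N)$ is exactly zero, and $H'(\eta_x/N)=0$ as well, making all three displayed quantities vanish identically. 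I do not anticipate any genuine obstacle beyond careful sign-tracking in the telescoping step.
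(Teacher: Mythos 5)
Your proposal is correct and follows essentially the same route as the paper's (very terse) proof: use the mean-value-theorem estimate $|\{H(\eta^{(u)}/N)-H((\eta^{(u)}\pm1)/N)\}\mp\tfrac{1}{N}H'(\eta_x/N)|\le C\pi_N/N^2$, substitute into \eqref{ed2}, telescope the inner sums to $h_{x,y}(z_i)-h_{x,y}(z_j)$, and then read off the result from the harmonicity \eqref{eqp1} at interior sites and from \eqref{eqp11} (together with $m(x)=m(y)=M_\star$) at the endpoints, while noting that $H\equiv0$ or $H\equiv1$ on $\mathcal{T}^{x,y}\setminus\mathcal{J}^{x,y}$ so that all increments and $H'(\eta_x/N)$ vanish there. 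The parenthetical about "signs matching by monotone ordering" is a harmless aside (the cancellation really comes from the paired $\mp$ signs in the $H$-increments and the reversed telescoping range for $j<i$), and the invocation of Lemma \ref{lem750} is unnecessary since the mean-value argument of \eqref{eW2} already gives what you need; neither affects correctness.
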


\begin{proof}
The argument presented in the proof of Lemma \ref{lem75} based on
the mean-value theorem yields
\[
\left|\left\{ H\left(\frac{\eta^{(u)}}{N}\right)-H\left(\frac{\eta^{(u)}\pm1}{N}\right)\right\} \mp\frac{1}{N}H'\left(\frac{\eta_{x}}{N}\right)\right|\le C\,\frac{\pi_{N}}{N^{2}}\;.
\]
Applying this bound to each term in $\mathbf{B}(\eta;z_{i})$ and
using \eqref{eqp1} and \eqref{eqp11} provide the desired estimates.
For $\eta\in\mathcal{J}^{x,\,y}$, both $\mathbf{B}(\eta;z_{i})$,
$1\le i\le\kappa$, and $H'(\eta_{x}/N)$ are equal to $0$; therefore
we can select $C=0$.
\end{proof}
At first glance, these estimates imply that the right-hand side of
\eqref{ed1} is \textit{small}; hence, the divergence of $\Phi_{\mathbf{W}_{x,\,y}}^{*}$
on $\mathcal{J}^{x,\,y}$ is small. As the flow $\Phi_{\mathbf{h}_{\mathcal{E}^{x},\mathcal{\,E}^{y}}}^{*}$
is divergence-free on $\mathcal{J}^{x,\,y}$, this heuristic observation
supports the claim that $\mathbf{W}_{x,\,y}$ approximates the equilibrium
potential $\mathbf{h}_{\mathcal{E}^{x},\,\mathcal{E}^{y}}$ on the
saddle tube. However, the word \textit{small} used here is not quite
correct in some sense. To be more precise, these estimates along with
the expression \eqref{ed1} imply that $(\textup{div\,}\Phi_{\mathbf{W}_{x,\,y}}^{*})(\eta)$
is of order $\mu_{N}(\eta)\,(\pi_{N}/N^{2})$ for $\eta\in\mathcal{J}^{x,\,y}$.
Therefore, in view of Lemma \ref{lem710}, the divergence on $\mathcal{J}^{x,\,y}$
is not negligible, i.e., not of order $o_{N}(1)\,N^{-(1+\alpha)}$.

The essence of the correction procedure hereafter presented is to
send these small, but not negligible divergences on $\mathcal{J}^{x,\,y}$
to $\mathcal{E}^{x}$ and $\mathcal{E}^{y}$, without excessively
perturbing the flow $\Phi_{\mathbf{W}_{x,\,y}}^{*}$, in the sense
of the flow norm. This procedure is carried out through the correction
flow $\chi_{x,\,y}$ defined below.

\subsubsection*{Correction flow $\chi_{x,\,y}$ and corrected flow $\Phi_{x,\,y}$.}

Two subsets of $\mathcal{H}_{N}$ are now defined by
\[
\mathcal{V}^{x}=\{\eta\in\mathcal{T}_{\textrm{ }}^{x,\,y}:\eta_{y}=0\}\;\;\text{and\;\;}\mathcal{V}^{y}=\{\eta\in\mathcal{T}^{x,\,y}:\eta_{x}=0\}\;,
\]
so that $\mathcal{V}^{x}\subset\mathcal{D}^{x}$ and $\mathcal{V}^{y}\subset\mathcal{D}^{y}$.
For $\eta\in\mathcal{H}_{N}$, let $\widehat{\eta}\in\mathbb{N}^{S\setminus\{x,\,y\}}$
be the configuration on $S\setminus\{x,\,y\}$ obtained from $\eta$
by neglecting two sites $x$ and $y$. Then, let
\begin{equation}
\mathbf{C}(\eta):=\frac{\textup{cap}_{X}(x,\,y)}{N^{\alpha+1\,}Z_{N}\,M_{\star}\,I_{\alpha}}\frac{m_{\star}^{\eta}}{a(\widehat{\eta})}=\frac{\textup{cap}_{X}(x,\,y)\,m_{\star}^{\eta}}{N^{\alpha+1\,}Z_{N}\,M_{\star}\,I_{\alpha}}\frac{a(\eta_{x})\,a(\eta_{y})}{a(\eta)}\;.\label{sj3}
\end{equation}
The correction flow $\chi_{x,y}$ will now be defined. Recall $\mathbf{B}(\eta;z_{i})$
from \eqref{ed2}.

We first define a flow $\chi_{x,\,y}^{(1)}$. If $\eta\in\mathcal{T}^{x,\,y}$
satisfies $\eta_{z_{i}}\ge1$

for some $2\le i\le\kappa-1$ and $\zeta=\sigma^{z_{i},\,x}\eta$
or $\sigma^{z_{i},\,y}\eta$ , then
\[
\chi_{x,\,y}^{(1)}(\eta,\,\zeta)=-\chi_{x,\,y}^{(1)}(\zeta,\,\eta)=-\frac{1}{2}a_{N}\mu_{N}(\eta-\omega^{z_{i}})\,m(z_{i})\,\mathbf{B}(\eta;z_{i})\;.
\]
 Otherwise, $\chi_{x,\,y}^{(1)}(\eta,\,\zeta)=0$.

Now we define a flow $\chi_{x,\,y}^{(2)}$. If $\eta\in\mathcal{T}^{x,\,y}$
satisfies $\eta_{y}\ge1$ and $\zeta=\sigma^{y,\,x}\eta$, then
\begin{align*}
 & \chi_{x,\,y}^{(2)}(\eta,\,\zeta)=-\chi_{x,\,y}^{(2)}(\zeta,\,\eta)\\
 & \;=\frac{1}{2}a_{N}\,\mu_{N-1}(\eta-\omega^{z_{\kappa}})\left[m(x)\,\mathbf{B}(\sigma^{z_{\kappa},\,z_{1}}\eta;z_{1})-m(y)\,\mathbf{B}(\eta;z_{\kappa})\right]-\mathbf{C}(\eta)\;.
\end{align*}
Otherwise,t $\chi_{x,\,y}^{(2)}(\eta,\,\zeta)=0$. Finally,
\[
\chi_{x,\,y}=\chi_{x,\,y}^{(1)}+\chi_{x,\,y}^{(2)}\;.
\]
It should be noted here that $\chi_{x,\,y}(\eta,\,\zeta)=0$ unless
$\eta,\,\zeta\in\mathcal{T}^{x,\,y}$.
\begin{prop}
\label{p82}The flow $\chi_{x,\,y}$ is negligible in the sense that
\[
||\chi_{x,\,y}||^{2}=\left(o_{N}(1)+o_{\epsilon}(1)\right)N^{-(1+\alpha)}\;.
\]
\end{prop}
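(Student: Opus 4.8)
The plan is to treat the two pieces separately via $\|\chi_{x,y}\|^2\le 2\|\chi_{x,y}^{(1)}\|^2+2\|\chi_{x,y}^{(2)}\|^2$ and, for each, to reduce the flow norm to a weighted sum of $\mu_N$ over the saddle tube $\mathcal{J}^{x,y}$ and the boundaries of $\mathcal{G}$. Two reductions, both consequences of the standing assumption \eqref{ass1}, will be used throughout: for any edge $(\eta,\zeta)$ with $\eta=\xi+\omega^z$ and $\zeta=\xi+\omega^w$, the formula \eqref{cond2} gives $c_N^s(\eta,\zeta)\asymp a_N\,\mu_{N-1}(\xi)$ with constants independent of $N$ and $\epsilon$, and \eqref{u1} gives $a_N\,\mu_{N-1}(\xi)\,m(z)=\mu_N(\eta)\,g(\eta_z)$ whenever $\eta_z\ge1$, with $g$ bounded; together these turn every ratio $\chi_{x,y}^{(k)}(\eta,\zeta)^2/c_N^s(\eta,\zeta)$ into a bounded multiple of $\mu_N(\eta)$ times the square of the relevant $\mathbf{B}$-type quantity. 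Applying this to $\chi_{x,y}^{(1)}$: by Lemma \ref{lem84} it is supported on edges incident to $\mathcal{J}^{x,y}$ (since $\mathbf{B}(\eta;z_i)=0$ for $2\le i\le\kappa-1$ off $\mathcal{J}^{x,y}$), where $|\mathbf{B}(\eta;z_i)|\le C\pi_N/N^2$, so $\chi_{x,y}^{(1)}(\eta,\zeta)^2/c_N^s(\eta,\zeta)\le C\,\mu_N(\eta)\,(\pi_N/N^2)^2$; summing over $\eta\in\mathcal{J}^{x,y}$ and the $O(\kappa)$ incident edges with $\mu_N(\mathcal{J}^{x,y})\le CN^{-(\alpha-1)}$ (Lemma \ref{lem710}) yields $\|\chi_{x,y}^{(1)}\|^2\le C(\pi_N/N^2)^2N^{-(\alpha-1)}$, whose ratio to $N^{-(1+\alpha)}$ is $\asymp\pi_N^2/N^2\asymp N^{2/\alpha-1}\to0$ as $\alpha>2$.

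For $\chi_{x,y}^{(2)}$, supported on edges $(\eta,\sigma^{y,x}\eta)$ with $\eta\in\mathcal{T}^{x,y}$ and $\eta_y\ge1$, I would split $\mathcal{T}^{x,y}$ into $\mathcal{J}^{x,y}$, $\mathcal{D}^x\cap\mathcal{T}^{x,y}$ and $\mathcal{D}^y\cap\mathcal{T}^{x,y}$. On the latter two sets $\eta_x/N$ lies in $[1-2\epsilon,1]$ respectively $[0,2\epsilon]$, where $H$ is constant, so $H'$ vanishes at the arguments occurring in $\mathbf{B}$, and Lemma \ref{lem84} forces $\mathbf{B}(\sigma^{y,x}\eta;z_1)=\mathbf{B}(\eta;z_\kappa)=0$; hence $\chi_{x,y}^{(2)}(\eta,\sigma^{y,x}\eta)=-\mathbf{C}(\eta)$ there. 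Using $x,y\in S_\star$, so that $\mathbf{C}(\eta)$ depends only on $\widehat\eta$ and is $\asymp N^{-(\alpha+1)}\,m_\star^{\widehat\eta}/a(\widehat\eta)$, together with $a(\eta_x)\asymp N^\alpha$ on $\mathcal{D}^x$ (resp. $a(\eta_y)\asymp N^\alpha$ on $\mathcal{D}^y$) in the denominator of $c_N^s\asymp a_N\mu_{N-1}(\eta-\omega^y)$, this part of $\|\chi_{x,y}^{(2)}\|^2$ reduces to $CN^{-2(\alpha+1)}\bigl(\sum_{\widehat\eta}m_\star^{\widehat\eta}/a(\widehat\eta)\bigr)\bigl(\sum_{k=0}^{2N\epsilon}a(k)\bigr)$, where the first factor is a finite constant (a product of the $\Gamma_z$, $z\notin\{x,y\}$) and the second is $O((N\epsilon)^{\alpha+1})$; hence it is of order $\epsilon^{\alpha+1}N^{-(\alpha+1)}=o_\epsilon(1)\,N^{-(1+\alpha)}$.

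The hard part will be the estimate on $\mathcal{J}^{x,y}$, where $\chi_{x,y}^{(2)}$ is the residual of an intended cancellation. Substituting the principal terms of Lemmas \ref{lem84} and \ref{lem750} and using $m(x)=m(y)=M_\star$, $g(\eta_y)=1+O(1/(N\epsilon))=1+o_N(1)$ (as $\eta_y\ge N\epsilon$ on $\mathcal{J}^{x,y}$) and $H'((\eta_x+1)/N)=H'(\eta_x/N)+O(1/N)$, one finds that $\chi_{x,y}^{(2)}(\eta,\sigma^{y,x}\eta)$ equals $\frac{\textup{cap}_X(x,y)}{N\,M_\star}\,\mu_N(\eta)\bigl[H'(\eta_x/N)-U(\eta_x/N)\bigr]+N^{-1}\,o_N(1)\,\mu_N(\eta)$. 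The crux is that $|H'(t)-U(t)|=o_\epsilon(1)$ uniformly for $t=\eta_x/N\in[\epsilon,1-2\epsilon]$: on the bulk $t\in[\epsilon^{1/2},1-\epsilon^{1/2}]$ this is exactly Lemma \ref{lemH1}, while on the complementary strip $\min\{t,1-t\}<\epsilon^{1/2}$ both $U(t)$ (from its explicit formula) and $H'(t)\le(1+o_\epsilon(1))U(t)$ (Lemma \ref{lemH}) are themselves $o_\epsilon(1)$. Feeding this in gives $|\chi_{x,y}^{(2)}(\eta,\sigma^{y,x}\eta)|\le\mu_N(\eta)\,(o_N(1)+o_\epsilon(1))/N$ on $\mathcal{J}^{x,y}$; since $c_N^s\asymp\mu_N(\eta)$ there, the $\mathcal{J}^{x,y}$-contribution to $\|\chi_{x,y}^{(2)}\|^2$ is at most $(o_N(1)+o_\epsilon(1))^2N^{-2}\mu_N(\mathcal{J}^{x,y})=(o_N(1)+o_\epsilon(1))\,N^{-(1+\alpha)}$ by Lemma \ref{lem710}. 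The finitely many edges on which $\eta$ and $\sigma^{y,x}\eta$ sit on opposite sides of $\partial^{\textrm{in}}\mathcal{G}$ or $\partial^{\textrm{out}}\mathcal{G}$ are absorbed via the crude bound $\chi_{x,y}^{(2)}(\eta,\zeta)^2/c_N^s(\eta,\zeta)\le C\mu_N(\eta)$ and Lemma \ref{lem71}. Collecting the three contributions proves the proposition.
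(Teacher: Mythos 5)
Your proof is correct, and for $\chi^{(2)}_{x,y}$ it takes a genuinely different route from the paper. For $\chi^{(1)}_{x,y}$ you reproduce the paper's argument. For $\chi^{(2)}_{x,y}$, however, you decompose the tube geometrically into the saddle $\mathcal{J}^{x,y}$ and the two well caps $\mathcal{D}^x\cap\mathcal{T}^{x,y}$, $\mathcal{D}^y\cap\mathcal{T}^{x,y}$, whereas the paper works from the single formula \eqref{e234} and splits into $\chi^{(2a)},\chi^{(2b)},\chi^{(2c)}$ according to whether $\eta_x/N$ lies in the bulk window $[\epsilon^{1/2},1-\epsilon^{1/2}]$. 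On the well caps you observe that the $\mathbf{B}$-terms vanish identically (all $\eta^{(u)}/N$ land where $H$ is constant), so $\chi^{(2)}_{x,y}(\eta,\sigma^{y,x}\eta)=-\mathbf{C}(\eta)$ exactly, and you bound the flow norm by an explicit geometric sum of order $\epsilon^{\alpha+1}N^{-(1+\alpha)}$. On $\mathcal{J}^{x,y}$ you use the uniform bound $\sup_{t\in[\epsilon,1-2\epsilon]}|H'(t)-U(t)|=o_\epsilon(1)$ (Lemma \ref{lemH1} in the bulk, plus the observation that both $H'$ and $U$ are $\le C\epsilon^{\alpha/2}$ when $\min\{t,1-t\}<\epsilon^{1/2}$), and then simply invoke $\mu_N(\mathcal{J}^{x,y})\le CN^{-(\alpha-1)}$ (Lemma \ref{lem710}). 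This bypasses the $U^2$-weighted sum of Lemma \ref{lem751} and the $\epsilon$-uniform integral tail estimate that the paper uses for $\chi^{(2b)}$. Your version is more streamlined; the paper's weighted bound is sharper pointwise but buys nothing for the statement actually claimed. Two very minor points: the closing remark about edges straddling $\partial^{\text{in}}\mathcal{G}$ or $\partial^{\text{out}}\mathcal{G}$ is unnecessary, since the three sets partition $\mathcal{T}^{x,y}$ and the identity $\chi^{(2)}_{x,y}=-\mathbf{C}$ on the caps holds without boundary corrections; and the phrase ``Lemma \ref{lem84} forces $\mathbf{B}(\sigma^{y,x}\eta;z_1)=\mathbf{B}(\eta;z_\kappa)=0$'' is slightly imprecise, as the vanishing follows directly from $H$ being locally constant rather than from that lemma's $O(\pi_N/N^2)$ estimate (for instance, $\sigma^{y,x}\eta$ may lie in $\mathcal{J}^{x,y}$ at the boundary of $\mathcal{D}^y$, so the lemma alone would only give a small error, not exact vanishing) — but your underlying computation is the correct one.
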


\begin{proof}
The estimate for $\chi_{x,\,y}^{(1)}$ is not complicated. By Lemma
\ref{lem84}, it is seen that
\[
|\chi_{x,\,y}^{(1)}(\eta,\,\zeta)|\le C\,\mu_{N}(\eta)\,\frac{\pi_{N}}{N^{2}}\;\;\text{for all }\eta\in\mathcal{J}^{x,\,y}\;,
\]
and that $\chi_{x,\,y}^{(1)}(\eta,\,\zeta)=0$ for all $\eta\notin\mathcal{J}^{x,\,y}$.
As $c_{N}^{s}(\eta,\,\zeta)\ge C\mu_{N}(\eta)$ for all $(\eta,\,\zeta)\in\mathcal{H}_{N}^{\otimes}$,
we have
\begin{equation}
\left\Vert \chi_{x,\,y}^{(1)}\right\Vert ^{2}=\frac{1}{2}\sum_{\eta\in\mathcal{H}_{N}}\,\sum_{\zeta:\zeta\sim\eta}\frac{\chi_{x,\,y}^{(1)}(\eta,\,\zeta)^{2}}{c_{N}^{s}(\eta,\,\zeta)}\le C\sum_{\eta\in\mathcal{J}^{x,\,y}}\mu_{N}(\eta)\,\frac{\pi_{N}^{2}}{N^{4}}\;.\label{e111}
\end{equation}
Hence, by Lemma \ref{lem710},
\begin{equation}
\left\Vert \chi_{x,\,y}^{(1)}\right\Vert ^{2}=o_{N}(1)\,N^{-(1+\alpha)}\;.\label{ecc1}
\end{equation}

The estimate for $\chi_{x,\,y}^{(2)}$ is rather complicated. For
$\eta\in\mathcal{T}^{x,\,y}$, by Lemma \ref{lem84},\begin{equation} \label{sj1} \begin{aligned}
&\chi_{x,\,y}^{(2)}(\eta,\,\sigma^{y,\,x}\eta)\\
&\;=\mu_{N}(\eta)\,\frac{1}{N}\frac{\textup{cap}_{X}(x,\,y)}{M_{\star}}\left[H'\left(\frac{\eta_{x}}{N}\right)\,g(\eta_{x})-\frac{a(\eta_{x})\,a(\eta_{y})}{N^{2\alpha}\,I_{\alpha}}\right]+\mu_{N}(\eta)\,\frac{o_{N}(1)}{N}\mathbf{\,1}\{\eta\in\mathcal{J}^{x,y}\}\;.
\end{aligned} \end{equation}Since $H'(\eta_{x}/N)=0$ for $\eta\notin\mathcal{J}^{x,\,y}$, and
since $g(\eta_{x})=1+o_{N}(1)$ for $\eta\in\mathcal{J}^{x,\,y}$,
we have
\[
H'\left(\frac{\eta_{x}}{N}\right)\,g(\eta_{x})=H'\left(\frac{\eta_{x}}{N}\right)\,+o_{N}(1)\mathbf{\,1}\{\eta\in\mathcal{J}^{x,y}\}\;.
\]
Therefore, the term $g(\eta_{x})$ in \eqref{sj1} can be replaced
with $1$, without changing the type of the error term. Thus, for
$\eta\in\mathcal{T}^{x,\,y}$, we have \begin{equation} \label{e234} \begin{aligned}
&\chi_{x,\,y}^{(2)}(\eta,\,\sigma^{y,\,x}\eta)\\
&\;=\mu_{N}(\eta)\,\frac{1}{N}\frac{\textup{cap}_{X}(x,\,y)}{M_{\star}}\left[H'\left(\frac{\eta_{x}}{N}\right)-\frac{a(\eta_{x})\,a(\eta_{y})}{N^{2\alpha}\,I_{\alpha}}\right]+\mu_{N}(\eta)\,\frac{\beta_{N}}{N}\mathbf{1}\{\eta\in\mathcal{J}^{x,\,y}\}\;,
\end{aligned} \end{equation} where $\beta_{N}=o_{N}(1)$.

The flow norm of $\chi_{x,\,y}^{(2)}$ is now considered by decomposing
it into three flows. The first flow is defined by
\[
\chi_{x,\,y}^{(2\textrm{a})}(\eta,\,\sigma^{y,\,x}\eta)=\chi_{x,\,y}^{(2)}(\eta,\,\sigma^{y,\,x}\eta)\,\mathbf{1}\left\{ \eta\in\mathcal{J}^{x,\,y}\;,\eta_{x}\in[\,N\epsilon^{1/2},\,N(1-\epsilon^{1/2})]\right\} \;.
\]
If $\eta\in\mathcal{T}^{x,\,y}$ and $\eta_{x}\in[\,N\epsilon^{1/2},\,N(1-\epsilon^{1/2})]$,
then by Lemmas \ref{lem750} and \ref{lemH1}
\begin{align*}
\left|\chi_{x,\,y}^{(2\textrm{a})}(\eta,\,\sigma^{y,\,x}\eta)\right| & \le\mu_{N}(\eta)\,\frac{1}{N}\frac{\textup{cap}_{X}(x,\,y)}{M_{\star}}\left[H'\left(\frac{\eta_{x}}{N}\right)-U\left(\frac{\eta_{x}}{N}\right)\right]+\mu_{N}(\eta)\,\frac{o_{N}(1)}{N}\\
 & \le\frac{1}{N}\left(o_{N}(1)+o_{\epsilon}(1)\,U\left(\frac{\eta_{x}}{N}\right)\,\right)\,\mu_{N}(\eta)\;.
\end{align*}
Thus, by the same argument as in \eqref{e111},
\[
\left\Vert \chi_{x,\,y}^{(2\textrm{a})}\right\Vert ^{2}\le\,\frac{1}{N^{2}}\sum_{\eta\in\mathcal{J}^{x,\,y}}\left(o_{N}(1)+o_{\epsilon}(1)U\left(\frac{\eta_{x}}{N}\right)^{2}\,\right)\mu_{N}(\eta).
\]
Hence, by Lemmas \ref{lem710} and \ref{lem751},
\begin{equation}
\left\Vert \chi_{x,\,y}^{(2\textrm{a})}\right\Vert ^{2}\le\left(o_{N}(1)+o_{\epsilon}(1)\right)\,N^{-(\alpha+1)}\;.\label{ecc2}
\end{equation}
For $\eta\in\mathcal{T}^{x,\,y}$ with $\eta_{x}\notin[\,N\epsilon^{1/2},\,N(1-\epsilon^{1/2})]$,
the second flow is defined by,
\[
\chi_{x,\,y}^{(2\textrm{b})}(\eta,\,\sigma^{y,\,x}\eta)=\mu_{N}(\eta)\,\frac{1}{N}\frac{\textup{cap}_{X}(x,\,y)}{M_{\star}}\left[H'\left(\frac{\eta_{x}}{N}\right)-\frac{a(\eta_{x})\,a(\eta_{y})}{N^{2\alpha}\,I_{\alpha}}\right]\;,
\]
and $\chi_{x,\,y}^{(2\textrm{b})}\equiv0$ otherwise. By the trivial
bound
\[
\left|H'\left(\frac{\eta_{x}}{N}\right)-\frac{a(\eta_{x})\,a(\eta_{y})}{N^{2\alpha}\,I_{\alpha}}\right|\le H'\left(\frac{\eta_{x}}{N}\right)+\frac{a(\eta_{x})\,a(\eta_{y})}{N^{2\alpha}\,I_{\alpha}}\le H'\left(\frac{\eta_{x}}{N}\right)+U\left(\frac{\eta_{x}}{N}\right)
\]
and by Lemma \ref{lemH}, we obtain
\[
\left|\chi_{x,\,y}^{(2\textrm{b})}(\eta,\,\zeta)\right|\le C\,\frac{\mu_{N}(\eta)}{N}\,U\left(\frac{\eta_{x}}{N}\right)\;.
\]
Therefore, by the same argument as in \eqref{e111} and in the proof
of Lemma \ref{lem751},
\begin{align*}
\left\Vert \chi_{x,\,y}^{(2\textrm{b})}\right\Vert ^{2} & \le\frac{C}{N^{2}}\sum_{\eta\in\mathcal{T}_{\textrm{ }}^{x,\,y},\,\eta_{x}\notin[N\epsilon^{1/2},\,N(1-\epsilon^{1/2})]}\mu_{N}(\eta)\,U^{2}\left(\frac{\eta_{x}}{N}\right)\\
 & \le\frac{C}{N^{\alpha+1}}\left[\int_{0}^{\epsilon^{1/2}}t^{\alpha}(1-t)^{\alpha}dt+\int_{1-\epsilon^{1/2}}^{1}t^{\alpha}(1-t)^{\alpha}dt\right]\;.
\end{align*}
It should be noticed that, in the last bound, the constant $C$ can
be chosen to be the one independent of $\epsilon$. Consequently,
\begin{equation}
\left\Vert \chi_{x,\,y}^{(2\textrm{b})}\right\Vert ^{2}\le o_{\epsilon}(1)\,N^{-(\alpha+1)}\;.\label{ecc3}
\end{equation}
Finally, the third flow is defined by
\[
\chi_{x,\,y}^{(2\textrm{c})}(\eta,\,\sigma^{y,\,x}\eta)=\mu_{N}(\eta)\,\frac{\beta_{N}}{N}\,\mathbf{1}\{\eta\in\mathcal{J}^{x,\,y},\,\eta_{x}\notin[N\epsilon^{1/2},\,N(1-\epsilon^{1/2})]\}\;.
\]
Then, by the same computations as before,
\begin{equation}
\left\Vert \chi_{x,\,y}^{(2\textrm{c})}\right\Vert ^{2}\le\frac{o_{N}(1)}{N^{2}}\,\mu_{N}(\mathcal{J}^{x,\,y})=o_{N}(1)\,N^{-(\alpha+1)}\;.\label{ecc4}
\end{equation}
Since
\[
\chi_{x,\,y}^{(2)}=\chi_{x,\,y}^{(2\textrm{a})}+\chi_{x,\,y}^{(2\textrm{b})}+\chi_{x,\,y}^{(2\textrm{c})}\;,
\]
the estimate of the flow norm of $\chi_{x,\,y}^{(2)}$ can be completed
by combining \eqref{ecc2}, \eqref{ecc3}, and \eqref{ecc4}.
\end{proof}
The divergence of $\chi_{x,\,y}$ is now considered.
\begin{prop}
\label{p84}The correction flow $\chi_{x,\,y}$ has the followings
properties:
\begin{enumerate}
\item The flow $\chi_{x,\,y}$ is divergence-free on $\text{\ensuremath{\left(\mathcal{V}^{x}\cup\mathcal{V}^{y}\cup\mathcal{J}^{x,\,y}\right)}}^{c}$.
\item The divergence of $\chi_{x,\,y}$ on $\partial^{\textup{\textrm{\,in}}}\mathcal{J}^{x,\,y}$,
$\mathcal{V}^{x}\setminus\mathcal{E}^{x}$, and $\mathcal{V}^{y}\setminus\mathcal{E}^{y}$
is negligible in the sense that
\[
\Bigl(\sum_{\eta\in\partial^{\textup{\textrm{\,in}}}\mathcal{J}^{x,\,y}}+\sum_{\eta\in\mathcal{V}^{x}\setminus\mathcal{E}^{x}}+\sum_{\eta\in\mathcal{V}^{y}\setminus\mathcal{E}^{y}}\Bigr)\bigl|(\textup{div }\chi_{x,\,y})(\eta)\bigr|=o_{N}(1)\,N^{-(1+\alpha)}\;.
\]
\item The divergence of $\chi_{x,\,y}$ on $\mathcal{E}^{x}$ and $\mathcal{\mathcal{E}}^{y}$
satisfies
\begin{align*}
 & (\textup{div }\chi_{x,\,y})(\mathcal{E}^{x})=\left(1+o_{N}(1)\right)\,N^{-(1+\alpha)}\,\frac{\textup{cap}_{X}(x,\,y)}{M_{\star}\,\Gamma(\alpha)\,I_{\alpha}}\;\;\text{and}\\
 & (\textup{div }\chi_{x,\,y})(\mathcal{E}^{y})=-\left(1+o_{N}(1)\right)\,N^{-(1+\alpha)}\,\frac{\textup{cap}_{X}(x,\,y)}{M_{\star}\,\Gamma(\alpha)\,I_{\alpha}}\;.
\end{align*}
In addition, $(\textup{div }\chi_{x,\,y})(\eta)>0$ for all $\eta\in\mathcal{E}^{x}$
and $(\textup{div }\chi_{x,\,y})(\eta)<0$ for all $\eta\in\mathcal{E}^{y}$.
\item The divergence of $\chi_{x,\,y}$ on $\mathcal{J}^{x,\,y}$ satisfies
\[
(\textup{div }\chi_{x,\,y})(\eta)=-(\textup{div }\Phi_{\mathbf{W}_{x,\,y}}^{*})(\eta)\;\;\text{for all }\eta\in\mathcal{J}_{\textup{\textrm{int}}}^{x,\,y}\;.
\]
\end{enumerate}
\end{prop}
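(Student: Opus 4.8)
The plan is to establish all four assertions by computing $(\textup{div}\,\chi_{x,\,y})(\eta)=(\textup{div}\,\chi_{x,\,y}^{(1)})(\eta)+(\textup{div}\,\chi_{x,\,y}^{(2)})(\eta)$ configuration by configuration, using \eqref{u1} in the form $a_{N}\,\mu_{N-1}(\eta-\omega^{z})\,m(z)=\mu_{N}(\eta)\,g(\eta_{z})$ to match the individual terms of $\chi_{x,\,y}^{(1)},\chi_{x,\,y}^{(2)}$ against the terms of \eqref{ed1}--\eqref{ed2} for $(\textup{div}\,\Phi_{\mathbf{W}_{x,\,y}}^{*})$. Two structural facts will be used throughout. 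First, $\mathbf{B}(\eta;z_{i})$ involves only the values $H(\eta^{(u)}/N)$ with $1\le u\le\kappa-1$, and these all equal $1$ on $\mathcal{D}^{x}$ (where $\eta^{(u)}\ge\eta_{x}\ge N(1-2\epsilon)$) and all equal $0$ on $\mathcal{D}^{y}$ (where $\eta^{(u)}\le\eta^{(\kappa-1)}=N-\eta_{y}\le 2N\epsilon$), by \eqref{ck0}; hence $\mathbf{B}\equiv0$ on $\mathcal{D}^{x}\cup\mathcal{D}^{y}$, so $\chi_{x,\,y}^{(1)}$ is supported on edges both of whose endpoints lie in $\mathcal{J}^{x,\,y}$ and the $\mathbf{B}$-part of $\chi_{x,\,y}^{(2)}$ on edges meeting $\mathcal{J}^{x,\,y}$. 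Second, since $m_{\star}(x)=m_{\star}(y)=1$, formula \eqref{sj3} gives $\mathbf{C}(\sigma^{x,\,y}\eta)=\mathbf{C}(\eta)$, so the uniform part $-\mathbf{C}$ of $\chi_{x,\,y}^{(2)}$, carried by the $(y\to x)$-edges, is divergence-free at every $\eta$ with $\eta_{x},\eta_{y}\ge1$, while its divergence equals $+\mathbf{C}(\eta)$ when $\eta_{y}=0$ and $-\mathbf{C}(\eta)$ when $\eta_{x}=0$.

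For Property (1): if $\eta\notin\mathcal{T}^{x,\,y}$ then no $\chi_{x,\,y}$-edge meets $\eta$, since every endpoint of such an edge satisfies $\eta_{x}+\eta_{y}\ge N-\pi_{N}$; and if $\eta\in\mathcal{T}^{x,\,y}\setminus(\mathcal{V}^{x}\cup\mathcal{V}^{y}\cup\mathcal{J}^{x,\,y})$, then $\eta\in(\mathcal{D}^{x}\cup\mathcal{D}^{y})\cap\mathcal{T}^{x,\,y}$ with $\eta_{x},\eta_{y}\ge1$, so $\chi_{x,\,y}^{(1)}$ contributes nothing, the $-\mathbf{C}$ part of $\chi_{x,\,y}^{(2)}$ contributes nothing, and the $\mathbf{B}$-part of $\chi_{x,\,y}^{(2)}$ vanishes because at $\eta$ and at both of its $(y\to x)$-neighbours the partial sums $\eta^{(u)}$, $1\le u\le\kappa-1$, stay in a region where $H$ is constant. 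For Property (4): I would fix $\eta\in\mathcal{J}_{\textrm{int}}^{x,\,y}$ and expand $(\textup{div}\,\chi_{x,\,y}^{(1)})(\eta)$ as the sum of the ``outgoing'' half-packets created at $\eta$ — which amount to $-\sum_{i=2}^{\kappa-1}a_{N}\mu_{N-1}(\eta-\omega^{z_{i}})m(z_{i})\mathbf{B}(\eta;z_{i})$ and thus cancel the terms $i\notin\{1,\kappa\}$ of \eqref{ed1} — plus the ``incoming'' half-packets from $\sigma^{x,\,z_{j}}\eta$ and $\sigma^{y,\,z_{j}}\eta$; and expand $(\textup{div}\,\chi_{x,\,y}^{(2)})(\eta)$ as the pairwise-cancelling $\pm\mathbf{C}(\eta)$ contributions, $-\tfrac{1}{2}$ of the $i=1$ and $i=\kappa$ terms of \eqref{ed1}, and incoming half-packets from $\sigma^{y,\,x}\eta$ and $\sigma^{x,\,y}\eta$. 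It then remains to verify the algebraic identity stating that all of these incoming half-packets reassemble, after substituting \eqref{ed2} and using that $h_{x,\,y}$ is harmonic off $\{x,y\}$, into precisely the other half of $(\textup{div}\,\Phi_{\mathbf{W}_{x,\,y}}^{*})(\eta)$. This identity — which is what the coefficients $\tfrac{1}{2}$ and the shift $\sigma^{z_{\kappa},\,z_{1}}\eta$ in the definition of $\chi_{x,\,y}^{(2)}$ were engineered to produce, making $\Phi_{\mathbf{W}_{x,\,y}}^{*}+\chi_{x,\,y}$ exactly divergence-free on $\mathcal{J}_{\textrm{int}}^{x,\,y}$ — is the main obstacle; it reduces to a cancellation among telescoping sums of single-site increments of $H$.

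For Properties (2) and (3): on $\mathcal{E}^{x}\subset\mathcal{D}^{x}$, which is disjoint from $\mathcal{J}^{x,\,y}$, neither $\chi_{x,\,y}^{(1)}$ nor the $\mathbf{B}$-part of $\chi_{x,\,y}^{(2)}$ is active, so $(\textup{div}\,\chi_{x,\,y})(\eta)$ equals $\mathbf{C}(\eta)$ when $\eta_{y}=0$ and vanishes otherwise; hence $(\textup{div}\,\chi_{x,\,y})(\mathcal{E}^{x})=\sum_{\eta\in\mathcal{E}^{x},\,\eta_{y}=0}\mathbf{C}(\eta)$, which I would evaluate asymptotically by summing the product form of \eqref{sj3} over the configuration on $S\setminus\{x,y\}$ and invoking Proposition \ref{e21} and Lemma \ref{le72}, the constraints $\eta_{z}\le b_{N}(z)$, $z\notin S_{\star}$, and $\sum_{z\ne x}\eta_{z}\le\ell_{N}$ removing only an $o_{N}(1)$-fraction of the total mass; the asserted limit follows, and the sign assertions follow from $\mathbf{C}>0$ since the divergence on $\mathcal{E}^{x}$ equals $\mathbf{C}(\eta)$ where $\eta_{y}=0$ and vanishes elsewhere (symmetrically with $-\mathbf{C}(\eta)$ on $\mathcal{E}^{y}$). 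Finally, the same reasoning gives $(\textup{div}\,\chi_{x,\,y})\equiv\mathbf{C}$ on all of $\mathcal{V}^{x}$ and $\equiv-\mathbf{C}$ on $\mathcal{V}^{y}$, so $\sum_{\eta\in\mathcal{V}^{x}\setminus\mathcal{E}^{x}}|(\textup{div}\,\chi_{x,\,y})(\eta)|$ is the difference of two sums converging to the same multiple of $N^{-(1+\alpha)}$ and hence is $o_{N}(1)\,N^{-(1+\alpha)}$ (similarly on $\mathcal{V}^{y}\setminus\mathcal{E}^{y}$), while on $\partial^{\textrm{\,in}}\mathcal{J}^{x,\,y}$ the crude bound $|(\textup{div}\,\chi_{x,\,y})(\eta)|\le C\mu_{N}(\eta)$ together with $\mu_{N}(\partial^{\textrm{\,in}}\mathcal{J}^{x,\,y})=o_{N}(1)\,N^{-(1+\alpha)}$ from Lemma \ref{lem71} completes Property (2).
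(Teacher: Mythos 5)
Your arguments for parts (1), (2), and (3) are essentially the paper's: part (1) reduces to $\mathbf{B}\equiv0$ on $\mathcal{D}^x\cup\mathcal{D}^y$ together with $\mathbf{C}(\sigma^{y,x}\eta)=\mathbf{C}(\eta)$, and parts (2)--(3) reduce to the explicit formula $(\textup{div}\,\chi_{x,y})(\eta)=\pm\mathbf{C}(\eta)\mathbf{1}\{\eta\in\mathcal{V}^{x}\ \text{resp.}\ \mathcal{V}^{y}\}$ plus the summation estimates of Proposition~\ref{e21} and Lemma~\ref{le72}; your alternative tactic of writing $\sum_{\mathcal{V}^{x}\setminus\mathcal{E}^{x}}\mathbf{C}$ as a difference of two convergent sums is a legitimate variant of the paper's direct tail estimate via Lemma~\ref{le71}.

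The genuine gap is in part (4), which you yourself flag as ``the main obstacle.'' You correctly describe how the outgoing half-packets of $\chi_{x,y}^{(1)}$ cancel the $2\le i\le\kappa-1$ terms of \eqref{ed1}, but you stop short of establishing the exact cancellation of what remains, and you misidentify the structural fact that makes it work. The paper proves, in Lemma~\ref{le1}, the \emph{exact} identity
\[
\sum_{i=1}^{\kappa}m(z_{i})\,\mathbf{B}(\sigma^{x,\,z_{i}}\eta;z_{i})=0\qquad(\eta_{x}\ge1)\;,
\]
and then in Lemma~\ref{lem4} shows by a bookkeeping of edges that $(\textup{div}\,\Phi_{x,y})(\eta)$ reduces to a sum of two terms of exactly this form (one for $\eta$, one for $\sigma^{y,x}\eta$). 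The identity in Lemma~\ref{le1} is \emph{not} a consequence of harmonicity of $h_{x,y}$ off $\{x,y\}$: after collecting coefficients of $\mathbf{Q}_{u}(\eta)=[h_{x,y}(z_{u})-h_{x,y}(z_{u+1})][H(\eta^{(u)}/N)-H((\eta^{(u)}-1)/N)]$, one obtains $\sum_{i\le u}\sum_{j>u}[m(z_{i})r(z_{i},z_{j})-m(z_{j})r(z_{j},z_{i})]$, which vanishes by anti-symmetry plus the stationarity condition \eqref{inv} of $m(\cdot)$ — the particular values $h_{x,y}(z_u)$ play no role beyond being arbitrary scalars. Harmonicity of $h_{x,y}$ is used only in Lemma~\ref{lem84}, which gives \emph{approximate} estimates of $\mathbf{B}$; it cannot yield the exact divergence-free statement of part (4). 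You would need to state and prove the invariance identity to close this step; the ``telescoping'' intuition you invoke does not on its own produce the required cancellation.
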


The proof is postponed to Section \ref{sec84}. The corrected flow
is defined by
\[
\Phi_{x,\,y}=\Phi_{\mathbf{W}_{x,\,y}}^{*}+\chi_{x,\,y}\;.
\]
An interpretation of the previous proposition is now given in terms
of the correction procedure. By (3) of Proposition \ref{p84} and
the fact that $\Phi_{\mathbf{W}_{x,\,y}}^{*}$ is divergence-free
on $\mathcal{E}^{x}$ and $\mathcal{E}^{y}$, it follows that \begin{equation} \label{corr1} \begin{aligned}
&(\textup{div }\Phi_{x,\,y})(\mathcal{E}^{x})=\left(1+o_{N}(1)\right)N^{-(1+\alpha)}\,\frac{\textup{cap}_{X}(x,\,y)}{\kappa_{\star}\,M_{\star}\,\Gamma(\alpha)\,I_{\alpha}}\;\;\text{and}\\
&(\textup{div }\Phi_{x,\,y})(\mathcal{E}^{y})=-\left(1+o_{N}(1)\right)N^{-(1+\alpha)}\,\frac{\textup{cap}_{X}(x,\,y)}{\kappa_{\star}\,M_{\star}\,\Gamma(\alpha)\,I_{\alpha}}\;.
 \end{aligned} \end{equation} Moreover, by (4) of Proposition \ref{p84}, the flow $\Phi_{x,\,y}$
is divergence-free on $\mathcal{J}^{x,\,y}$. Hence, the divergence
of $\Phi_{\mathbf{W}_{x,\,y}}^{*}$ on $\mathcal{J}^{x,\,y}$ was
cleaned out by sending it to $\mathcal{E}^{x}$ and $\mathcal{E}^{y}$.
By (1) and (2) of Proposition \ref{p84}, this procedure has negligible
effect on the divergence of the remaining part, and by Proposition
\ref{p82} it does not essentially change the flow norm.

\subsection{\label{sec83}Global correction of $\Phi_{\mathbf{V}_{A,\,B}}^{*}$
and proof of Proposition \ref{p64}: special case}

Herein, the global correction for the flow $\Phi_{\mathbf{V}_{A,\,B}}^{*}$
is carried out. This procedure relies on the correction flows $\{\chi_{x,\,y}:x,\,y\in S_{\star}\}$
defined in the previous subsection.

By following the rule stated in Remark \ref{rm74}, it can be verified
that $\chi_{x,\,y}=-\chi_{y,\,x}$; hence, the following summation
is well-defined:
\begin{equation}
\chi_{A,\,B}=\sum_{\{x,\,y\}\in S_{\star}}\left[\mathfrak{h}_{A,\,B}(x)-\mathfrak{h}_{A,\,B}(y)\right]\chi_{x,\,y}\;.\label{chi}
\end{equation}
The test flow is finally defined by
\begin{equation}
\Phi_{A,\,B}=\Phi_{\mathbf{V}_{A,\,B}}^{*}+\chi_{A,\,B}\;.\label{pab}
\end{equation}
The following lemma is believed to hold in typical metastability situations.
Recall from Section \ref{sec61} the notation $\xi_{N}^{x}\in\mathcal{E}_{x}$,
$x\in S_{\star}$, which indicates the configuration for which all
the particles are located at site $x$.
\begin{lem}
\label{lem85}Suppose that $A$ and $B$ are disjoint non-empty subsets
of $S_{\star}$ and that $x\in S_{\star}\setminus(A\cup B)$. Then,
it holds that
\[
\lim_{N\rightarrow\infty}\sup_{\eta\in\mathcal{E}^{x}}\left|\mathbf{\mathbf{h}}_{\mathcal{E}(A),\,\mathcal{E}(B)}(\eta)-\mathbf{\mathbf{h}}_{\mathcal{E}(A),\,\mathcal{E}(B)}(\xi_{N}^{x})\right|=0\;.
\]
\end{lem}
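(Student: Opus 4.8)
The plan is to reduce the statement to the fact that, uniformly over $\eta\in\mathcal{E}^{x}$, the zero-range process started at $\eta$ visits the ground configuration $\xi_{N}^{x}$ before leaving the valley $\mathcal{E}^{x}$, and then to read this escape estimate off directly from hypothesis \eqref{H1}. Abbreviate $\mathbf{h}=\mathbf{h}_{\mathcal{E}(A),\,\mathcal{E}(B)}$, $\mathcal{C}=\mathcal{E}(A)\cup\mathcal{E}(B)$, and $\sigma=\tau_{\xi_{N}^{x}}$, and recall $\mathbf{h}(\eta)=\mathbb{P}_{\eta}^{N}[\tau_{\mathcal{E}(A)}<\tau_{\mathcal{E}(B)}]$. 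Splitting this probability according to whether $\sigma<\tau_{\mathcal{C}}$ and applying the strong Markov property at $\sigma$ on the event $\{\sigma<\tau_{\mathcal{C}}\}$ gives
\[
\mathbf{h}(\eta)=\mathbb{P}_{\eta}^{N}[\sigma<\tau_{\mathcal{C}}]\,\mathbf{h}(\xi_{N}^{x})+E_{N}(\eta)\,,\qquad 0\le E_{N}(\eta)\le\mathbb{P}_{\eta}^{N}[\tau_{\mathcal{C}}<\sigma]\,,
\]
and since $0\le\mathbf{h}\le1$ this yields $|\mathbf{h}(\eta)-\mathbf{h}(\xi_{N}^{x})|\le2\,\mathbb{P}_{\eta}^{N}[\tau_{\mathcal{C}}<\sigma]$. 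As $x\notin A\cup B$ we have $\mathcal{C}\subseteq\breve{\mathcal{E}}^{x}$, hence $\mathbb{P}_{\eta}^{N}[\tau_{\mathcal{C}}<\sigma]\le\mathbb{P}_{\eta}^{N}[\tau_{\breve{\mathcal{E}}^{x}}<\tau_{\xi_{N}^{x}}]=\mathbf{h}_{\breve{\mathcal{E}}^{x},\,\xi_{N}^{x}}(\eta)$, so it suffices to prove $\lim_{N\to\infty}\sup_{\eta\in\mathcal{E}^{x}}\mathbf{h}_{\breve{\mathcal{E}}^{x},\,\xi_{N}^{x}}(\eta)=0$.

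To bound this equilibrium potential I would invoke two elementary facts valid for any irreducible (not necessarily reversible) Markov chain. First, for a single state $\eta\notin\mathcal{D}$ one has $\textup{cap}_{N}(\eta,\,\mathcal{D})=\mu_{N}(\eta)\,R_{N}(\eta)\,\mathbb{P}_{\eta}^{N}[\tau_{\mathcal{D}}<\tau_{\eta}^{+}]$, where $R_{N}(\eta)=\sum_{\zeta}R_{N}(\eta,\zeta)$ is the total jump rate out of $\eta$ and $\tau_{\eta}^{+}$ the first return time to $\eta$. Second, decomposing the trajectory from $\eta$ into successive excursions away from $\eta$ gives $\mathbf{h}_{\breve{\mathcal{E}}^{x},\,\xi_{N}^{x}}(\eta)=r_{A}/(r_{A}+r_{B})$ with $r_{A}=\mathbb{P}_{\eta}^{N}[\tau_{\breve{\mathcal{E}}^{x}}<\min(\tau_{\xi_{N}^{x}},\tau_{\eta}^{+})]$ and $r_{B}=\mathbb{P}_{\eta}^{N}[\tau_{\xi_{N}^{x}}<\min(\tau_{\breve{\mathcal{E}}^{x}},\tau_{\eta}^{+})]$. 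Writing $c_{A}=\textup{cap}_{N}(\eta,\,\breve{\mathcal{E}}^{x})$ and $c_{B}=\textup{cap}_{N}(\eta,\,\xi_{N}^{x})$, the elementary bounds $r_{A}\le\mathbb{P}_{\eta}^{N}[\tau_{\breve{\mathcal{E}}^{x}}<\tau_{\eta}^{+}]$ and $r_{B}\ge\mathbb{P}_{\eta}^{N}[\tau_{\xi_{N}^{x}}<\tau_{\eta}^{+}]-\mathbb{P}_{\eta}^{N}[\tau_{\breve{\mathcal{E}}^{x}}<\tau_{\eta}^{+}]$ then give $\mathbf{h}_{\breve{\mathcal{E}}^{x},\,\xi_{N}^{x}}(\eta)\le c_{A}/(c_{B}-c_{A})$ whenever $c_{B}>c_{A}$. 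By monotonicity of the capacity in its first argument, $c_{A}\le\textup{cap}_{N}(\mathcal{E}^{x},\,\breve{\mathcal{E}}^{x})$, and hypothesis \eqref{H1} states precisely that $\textup{cap}_{N}(\mathcal{E}^{x},\,\breve{\mathcal{E}}^{x})/c_{B}\to0$ uniformly over $\eta\in\mathcal{E}^{x}\setminus\{\xi_{N}^{x}\}$. Hence $\sup_{\eta}c_{A}/c_{B}\to0$; in particular $c_{B}>c_{A}$ for all large $N$ and $\sup_{\eta\in\mathcal{E}^{x}}\mathbf{h}_{\breve{\mathcal{E}}^{x},\,\xi_{N}^{x}}(\eta)\le\sup_{\eta}c_{A}/(c_{B}-c_{A})\to0$, the value at $\eta=\xi_{N}^{x}$ being $0$.

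The strong Markov splitting of the first step and the excursion identity $\mathbf{h}=r_{A}/(r_{A}+r_{B})$ are routine; the only point deserving care is the capacity--hitting-time identity together with the resulting inequality $\mathbf{h}_{\breve{\mathcal{E}}^{x},\,\xi_{N}^{x}}(\eta)\le c_{A}/(c_{B}-c_{A})$, which however is a general fact about irreducible Markov chains and uses no reversibility and hence no sector-condition input. Thus there is no genuine obstacle here: all the model-dependent content is already packaged in \eqref{H1}, established earlier in the paper, which quantifies the separation between the capacity governing escape from the valley $\mathcal{E}^{x}$ and the capacity internal to it.
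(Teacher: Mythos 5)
Your proof is correct and follows essentially the same route as the paper: a Markov-property split at the first of $\tau_{\xi_{N}^{x}}$ and $\tau_{\mathcal{E}(A\cup B)}$ reduces the problem to bounding the escape probability $\mathbb{P}_{\eta}^{N}[\tau_{\mathcal{E}(A\cup B)}<\tau_{\xi_{N}^{x}}]$ by a capacity ratio, after which monotonicity of the capacity and hypothesis \eqref{H1} finish the argument. The only difference is cosmetic: where the paper cites \cite[display (3.2)]{LL} for the bound $\mathbb{P}_{\eta}^{N}[\tau_{\mathcal{D}}<\tau_{\xi_{N}^{x}}]\le \textup{cap}_{N}(\eta,\mathcal{D})/\textup{cap}_{N}(\eta,\xi_{N}^{x})$, you rederive a slightly weaker (but equally sufficient) inequality $c_{A}/(c_{B}-c_{A})$ from the escape-probability identity $\textup{cap}_{N}(\eta,\mathcal{D})=\mu_{N}(\eta)R_{N}(\eta)\mathbb{P}_{\eta}^{N}[\tau_{\mathcal{D}}<\tau_{\eta}^{+}]$ and a renewal decomposition into excursions from $\eta$.
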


\begin{proof}
Fix $A,\,B$, and $x\in S_{\star}\setminus(A\cup B)$. For $\eta\in\mathcal{E}^{x}\setminus\{\xi_{N}^{x}\}$,
define
\[
q_{N}(\eta)=\mathbb{P}_{\eta}^{N}\left[\tau_{\mathcal{E}(A\cup B)}<\tau_{\xi_{N}^{x}}\right]\;\;\text{and}\;\;p_{N}(\eta)=\mathbb{P}_{\eta}^{N}\left[\tau_{\mathcal{E}(A)}<\tau_{\mathcal{E}(B)}|\tau_{\mathcal{E}(A\cup B)}<\tau_{\xi_{N}^{x}}\right]\;.
\]
As $\mathbf{\mathbf{h}}_{\mathcal{E}(A),\mathcal{\,E}(B)}(\eta)=\mathbb{P}_{\eta}^{N}[\tau_{\mathcal{E}(A)}<\tau_{\mathcal{E}(B)}]$,
by the Markov property,
\begin{align*}
\mathbf{\mathbf{h}}_{\mathcal{E}(A),\mathcal{\,E}(B)}(\eta)=\, & \mathbb{P}_{\eta}^{N}\left[\tau_{\mathcal{E}(A)}<\tau_{\mathcal{E}(B)}|\tau_{\xi_{N}^{x}}<\tau_{\mathcal{E}(A\cup B)}\right](1-q_{N}(\eta))+p_{N}(\eta)\,q_{N}(\eta)\\
=\, & \mathbf{\mathbf{h}}_{\mathcal{E}(A),\,\mathcal{E}(B)}(\xi_{N}^{x})\,(1-q_{N}(\eta))+p_{N}(\eta)\,q_{N}(\eta)\;.
\end{align*}
Therefore,
\begin{equation}
\left|\mathbf{\mathbf{h}}_{\mathcal{E}(A),\mathcal{\,E}(B)}(\eta)-\mathbf{\mathbf{h}}_{\mathcal{E}(A),\mathcal{\,E}(B)}(\xi_{N}^{x})\right|=q_{N}(\eta)\left|p_{N}(\eta)-\mathbf{\mathbf{h}}_{\mathcal{E}(A),\mathcal{\,E}(B)}(\xi_{N}^{x})\right|\le q_{N}(\eta)\;.\label{sicn}
\end{equation}
It is well known (for instance, \cite[display (3.2)]{LL}) that
\begin{equation}
q_{N}(\eta)\le\frac{\textup{cap}_{N}(\eta,\,\mathcal{E}(A\cup B))}{\textup{cap}_{N}(\eta,\,\xi_{N}^{x})}\;.\label{sj2}
\end{equation}
By the monotonicity of the capacity, we have $\textup{cap}_{N}(\eta,\,\mathcal{E}(A\cup B))\le\textup{cap}_{N}(\mathcal{E}^{x},\,\breve{\mathcal{E}}^{x})$.
Hence, by \eqref{H1} and \eqref{sj2},
\[
\lim_{N\rightarrow\infty}\sup_{\eta\in\mathcal{E}^{x}}q_{N}(\eta)=0\;.
\]
In view of \eqref{sicn}, the last estimate completes the proof.
\end{proof}
Now we are ready to prove Proposition \ref{p64}.
\begin{proof}[Proof of Proposition \ref{p64}]
We claim that $\Phi_{A,\,B}$ defined in \eqref{pab} fulfills all
the requirements presented in the statement of the proposition. By
\eqref{chi} and the Cauchy-Schwarz inequality,
\[
||\chi_{A,\,B}||^{2}\le\frac{\kappa_{\star}(\kappa_{\star}-1)}{2}\sum_{\{x,\,y\}\subset S_{\star}}\left[\mathfrak{h}_{A,\,B}(x)-\mathfrak{h}_{A,\,B}(y)\right]^{2}||\chi_{x,\,y}||^{2}\;.
\]
Hence, by \eqref{pab} and Proposition \ref{p81},
\[
\left\Vert \Phi_{A,\,B}-\Phi_{\mathbf{V}_{A,\,B}}^{*}\right\Vert ^{2}=||\chi_{A,\,B}||^{2}=\left(o_{N}(1)+o_{\epsilon}(1)\right)N^{-(1+\alpha)}\;.
\]
This proves part (1).

Now we consider part (2). The set $\Delta_{N}$ can be decomposed
as
\[
(\mathcal{G}^{c})_{\textrm{int}}\cup\partial^{\text{\,out}}\mathcal{G}\cup\partial^{\textrm{\,in}}\mathcal{G}\cup\Bigl(\bigcup_{x,\,y\in S_{\star}}\mathcal{J}_{\text{int}}^{x,\,y}\Bigr)\cup\Bigl(\bigcup_{x\in S_{\star}}(\mathcal{D}_{\textrm{int}}^{x}\setminus\mathcal{E}^{x})\Bigr)\;.
\]
On $(\mathcal{G}^{c})_{\textrm{int}}$, both $\text{\ensuremath{\Phi_{\mathbf{V}_{A,\,B}}^{*}} and \ensuremath{\chi_{A,\,B}}}$
are divergence-free, by (1) of Proposition \ref{p81} and (1) of Proposition
\ref{p84}, respectively. Thus,
\begin{equation}
(\mbox{div }\Phi_{A,\,B})(\eta)=0\;\;\text{for all\;}\eta\in(\mathcal{G}^{c})_{\textrm{int}}\;.\label{pp21}
\end{equation}
On $\partial^{\text{\,out}}\mathcal{G}$, the flow $\chi_{A,\,B}$
is divergence-free by (1) of Proposition \ref{p84}; hence,
\[
(\mbox{div }\Phi_{A,\,B})(\eta)=(\mbox{div }\Phi_{\mathbf{V}_{A,\,B}}^{*})(\eta)\text{ for all }\eta\in\partial^{\text{out}}\mathcal{G}\;.
\]
Thus, by (2) of Proposition \ref{p84}
\begin{equation}
\sum_{\eta\in\partial^{\text{out}}\mathcal{G}}\left|(\textup{div }\Phi_{A,\,B})(\eta)\right|=o_{N}(1)\,N^{-(1+\alpha)}\;.\label{pp22}
\end{equation}
For $\partial^{\textrm{\,in}}\mathcal{G}$, by the triangle inequality,
by (2) of Proposition \ref{p81}, and by (1), (2) of Proposition \ref{p84},
we have
\begin{equation}
\sum_{\eta\in\partial^{\,\text{in}}\mathcal{G}}\left|(\textup{div }\Phi_{A,\,B})(\eta)\right|\le\sum_{\eta\in\partial^{\text{\,in}}\mathcal{G}}\left|(\textup{div }\Phi_{\mathbf{V}_{A,\,B}}^{*})(\eta)\right|+\sum_{\eta\in\partial^{\text{\,in}}\mathcal{G}}\left|(\textup{div }\chi_{x,\,y})(\eta)\right|=o_{N}(1)\,N^{-(1+\alpha)}\;.\label{pp23}
\end{equation}
On $\mathcal{J}_{\text{int}}^{x,y}$, by \eqref{e91}, the flow $\Phi_{A,B}$
can be written as
\[
\Phi_{A,\,B}(\eta,\,\zeta)=\left[\mathfrak{h}_{A,\,B}(y)-\mathfrak{h}_{A,\,B}(x)\right]\left(\Phi_{\mathbf{W}_{x,\,y}}^{*}+\chi_{x,\,y}\right)(\eta,\,\zeta)\;\;;\;\eta\in\mathcal{J}_{\textrm{int }}^{x,\,y}\;.
\]
Hence, by (4) of Proposition \ref{p84},
\begin{equation}
(\mbox{div }\Phi_{A,\,B})(\eta)=0\;\;\text{for all\;}\eta\in\mathcal{J}_{\textrm{int }}^{x,\,y},\;x,\,y\in S_{\star}\;.\label{pp24}
\end{equation}
Finally, on $\mathcal{D}_{\textrm{int}}^{x}\setminus\mathcal{E}^{x}$,
the flow $\Phi_{\mathbf{V}_{A,\,B}}^{*}$ is divergence-free by (1)
of Proposition \ref{p81}. Hence, by (1) and (2) of Proposition \ref{p84},
for all $x\in S_{\star}$, we have
\begin{equation}
\sum_{\eta\in\mathcal{D}_{\textrm{int}}^{x}\setminus\mathcal{E}^{x}}\left|(\textup{div }\Phi_{A,\,B})(\eta)\right|=\sum_{\eta\in\mathcal{V}^{x}\setminus\mathcal{E}^{x}}\left|(\textup{div }\chi_{A,\,B})(\eta)\right|=o_{N}(1)\,N^{-(1+\alpha)}\;.\label{pp25}
\end{equation}
Combining \eqref{pp21}--\eqref{pp25} yields the proof of part (2).

Part (3) is now considered. As $\Phi_{\mathbf{V}_{A,\,B}}^{*}$ is
divergence-free on $\mathcal{E}(S_{\star})$, it follows from (1)
and (3) of Proposition \ref{p84} that for $x\in S_{\star}\setminus(A\cup B)$,
\begin{equation} \label{sim} \begin{aligned}
(\textup{div }\Phi_{A,\,B})(\mathcal{E}^{x})&=\sum_{y\in S_{\star}\setminus\{x\}}\left(\mathfrak{h}_{A,\,B}(x)-\mathfrak{h}_{A,\,B}(y)\right)(\textup{div }\chi_{x,\,y})(\mathcal{E}^{x})\\&=N^{-(1+\alpha)}\left[o_{N}(1)+\sum_{y\in S_{\star}\setminus\{x\}}\left(\mathfrak{h}_{A,\,B}(x)-\mathfrak{h}_{A,\,B}(y)\right)\frac{\textup{cap}_{X}(x,\,y)}{\kappa_{\star}\,M_{\star}\,\Gamma(\alpha)\,I_{\alpha}}\right]\\&=N^{-(1+\alpha)}\left[o_{N}(1)-\mu(x)\,(\mathfrak{L}_{Y}\mathfrak{h}_{A,\,B})(x)\right]=o_{N}(1)\,N^{-(1+\alpha)}\;,
\end{aligned} \end{equation}where the last equality follows from the fact that $\mathfrak{L}_{Y}\mathfrak{h}_{A,B}\equiv0$
on $S_{\star}\setminus(A\cup B)$. This proves the first identity.
To prove the second identity, it is first claimed that
\begin{equation}
\sum_{\eta\in\mathcal{E}^{x}}\left|(\textup{div }\Phi_{A,\,B})(\eta)\right|\le C\,N^{-(1+\alpha)}\;.\label{cl1}
\end{equation}
To prove this, based on the trivial bound $|\mathfrak{h}_{A,\,B}(y)-\mathfrak{h}_{A,\,B}(x)|\le1$,
we have
\[
\sum_{\eta\in\mathcal{E}^{x}}\left|(\textup{div }\Phi_{A,\,B})(\eta)\right|\le\sum_{y\in S_{\star}\setminus\{x\}}\,\sum_{\eta\in\mathcal{E}^{x}}\left|(\textup{div }\chi_{x,\,y})(\eta)\right|\;.
\]
It should be noted that $(\textup{div }\chi_{x,\,y})(\eta)$, $y\in S_{\star}\setminus\{x\}$,
is positive for all $\eta\in\mathcal{E}^{x}$ by (3) of Proposition
\ref{p84}. Thus, the bound \eqref{cl1} is a direct consequence of
the estimate in (3) of Proposition \ref{p84}. Now we prove part (3).
By the triangle inequality and Lemma \ref{lem85},
\begin{align*}
 & \Bigl|\sum_{\eta\in\mathcal{E}_{N}^{x}}\mathbf{\mathbf{h}}_{\mathcal{E}(A),\mathcal{\,E}(B)}(\eta)\,(\mbox{div }\Phi_{A,\,B})(\eta)\Bigr|\\
 & \le o_{N}(1)\sum_{\eta\in\mathcal{E}_{N}^{x}}\left|(\mbox{div }\Phi_{A,\,B})(\eta)\right|+\mathbf{\mathbf{h}}_{\mathcal{E}(A),\mathcal{\,E}(B)}(\xi_{N}^{x})\Bigl|\sum_{\eta\in\mathcal{E}^{x}}(\mbox{div }\Phi_{A,\,B})(\eta)\Bigr|\;.
\end{align*}
The first term of the right-hand side is $o_{N}(1)\,N^{-(1+\alpha)}$
by \eqref{cl1}, whereas the second term is $o_{N}(1)\,N^{-(1+\alpha)}$
by \eqref{sim}. This verifies the second identity of part (3).

For part (4), by a computation as in \eqref{sim},
\begin{align*}
(\textup{div }\Phi_{A,B})(\mathcal{E}(A)) & =\sum_{x\in A}\,\sum_{y\in S_{\star}\setminus\{x\}}\left[\mathfrak{h}_{A,B}(x)-\mathfrak{h}_{A,B}(y)\right](\textup{div }\chi_{x,\,y})(\mathcal{E}^{x})\\
 & =\left(1+o_{N}(1)\right)N^{-(1+\alpha)}\,\sum_{x\in A}\mu(x)(-\mathfrak{L}_{Y}\mathfrak{h}_{A,\,B})(x)\\
 & =\left(1+o_{N}(1)\right)N^{-(1+\alpha)}\,\textup{cap}_{Y}(A,\,B)\;,
\end{align*}
where the last line follows from elementary properties of the capacity.
The proof of the estimate for $(\textup{div }\Phi_{A,\,B})(\mathcal{E}(B))$
is identical.
\end{proof}
It should be remarked that, the approximation $\Phi_{A,\,B}^{*}$
of $\Phi_{\mathbf{h}_{\mathcal{E}^{x},\,\mathcal{E}^{y}}^{*}}$ can
be constructed by an identical procedure and it can be verified that
this flow enjoys all the corresponding properties in Proposition \ref{p64}.

\subsection{\label{sec84}Proof of Proposition \ref{p84}}

The proof of Proposition \ref{p84} is divided into a series of lemmas.
The correspondence between these lemmas and Proposition \ref{p84}
will be explained at the end of the this subsection.
\begin{lem}
\label{lem1}The flow $\chi_{x,\,y}$ is divergence-free on $\text{\ensuremath{\left(\mathcal{V}^{x}\cup\mathcal{V}^{y}\cup\mathcal{J}^{x,\,y}\right)}}^{c}$.
\end{lem}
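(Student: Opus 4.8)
The plan is to compute $(\textup{div }\chi_{x,\,y})(\eta)=\sum_{\zeta\sim\eta}\chi_{x,\,y}(\eta,\,\zeta)$ directly for $\eta\in(\mathcal{V}^{x}\cup\mathcal{V}^{y}\cup\mathcal{J}^{x,\,y})^{c}$ and to verify that every incident edge contributes $0$. I would first dispose of the case $\eta\notin\mathcal{T}^{x,\,y}$: since $\chi_{x,\,y}(\eta,\,\zeta)=0$ unless both endpoints lie in $\mathcal{T}^{x,\,y}$, the divergence is an empty sum. Hence I may assume $\eta\in\mathcal{T}^{x,\,y}$; as $\eta\notin\mathcal{J}^{x,\,y}=\mathcal{T}^{x,\,y}\setminus(\mathcal{D}^{x}\cup\mathcal{D}^{y})$ and $\mathcal{D}^{x}\cap\mathcal{D}^{y}=\emptyset$ (because $\epsilon<1/4$), I may assume, after possibly interchanging $x$ and $y$, that $\eta\in\mathcal{D}^{x}$; the hypothesis $\eta\notin\mathcal{V}^{x}$ then forces $\eta_{y}\ge1$ (and $\eta\notin\mathcal{V}^{y}$ is automatic).

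The key reduction is a vanishing statement for $\mathbf{B}$: if $\zeta\in\mathcal{T}^{x,\,y}$ satisfies $\zeta_{x}\ge N(1-3\epsilon)+1$, then $\mathbf{B}(\zeta;z_{i})=0$ for all $1\le i\le\kappa$ (and symmetrically if $\zeta_{y}\ge N(1-3\epsilon)+1$). Indeed every partial sum $\zeta^{(u)}$ occurring in \eqref{ed2} has index $u\ge1$, so $\zeta^{(u)}\ge\zeta_{z_{1}}=\zeta_{x}\ge N(1-3\epsilon)+1$, and then \eqref{ck0} forces $H$ to equal $1$ at all of $\zeta^{(u)}/N$ and $(\zeta^{(u)}\pm1)/N$; since each summand of $\mathbf{B}(\zeta;z_{i})$ is a difference of two such values, it vanishes. (The case $\zeta_{y}$ large is identical with $H\equiv0$, using $\zeta^{(u)}\le N-\zeta_{y}<3N\epsilon$ for $u\le\kappa-1$.) For $N$ large and $\eta\in\mathcal{D}^{x}$ we have $\eta_{x}\ge N(1-2\epsilon)\ge N(1-3\epsilon)+2$, so $\eta$ and every configuration reachable from it by one particle move — in particular $\sigma^{x,\,z_{i}}\eta$ and $\sigma^{y,\,z_{i}}\eta$ for $2\le i\le\kappa-1$, as well as $\sigma^{y,\,x}\eta$ and $\sigma^{x,\,y}\eta$ — has $x$-coordinate at least $\eta_{x}-1\ge N(1-3\epsilon)+1$.

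Next I would list the edges at $\eta$ on which $\chi_{x,\,y}$ is supported. From $\chi_{x,\,y}^{(1)}$ these are $\{\eta,\sigma^{z_{i},\,x}\eta\}$, $\{\eta,\sigma^{z_{i},\,y}\eta\}$ (with $\eta$ the $z_{i}$-heavy endpoint) and $\{\sigma^{x,\,z_{i}}\eta,\eta\}$, $\{\sigma^{y,\,z_{i}}\eta,\eta\}$, $2\le i\le\kappa-1$; on each, the defining formula is a scalar multiple of $\mathbf{B}(\,\cdot\,;z_{i})$ evaluated at $\eta$, $\sigma^{x,\,z_{i}}\eta$ or $\sigma^{y,\,z_{i}}\eta$, hence $0$ by the previous step (edges whose distinguished endpoint falls outside $\mathcal{T}^{x,\,y}$ carry zero flow in any case). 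From $\chi_{x,\,y}^{(2)}$ the only incident edges are $\{\eta,\sigma^{y,\,x}\eta\}$ and $\{\eta,\sigma^{x,\,y}\eta\}$, both lying in $\mathcal{T}^{x,\,y}$ because $\eta_{x}+\eta_{y}$ is invariant under these moves. On the first, the two $\mathbf{B}$-terms in the defining formula vanish, leaving $\chi_{x,\,y}^{(2)}(\eta,\sigma^{y,\,x}\eta)=-\mathbf{C}(\eta)$; on the second, writing $\nu=\sigma^{x,\,y}\eta$ for the $y$-heavy endpoint, the same cancellation gives $\chi_{x,\,y}^{(2)}(\nu,\eta)=-\mathbf{C}(\nu)$, hence $\chi_{x,\,y}^{(2)}(\eta,\nu)=\mathbf{C}(\nu)=\mathbf{C}(\sigma^{x,\,y}\eta)$. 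The one substantive point is that $\mathbf{C}(\sigma^{x,\,y}\eta)=\mathbf{C}(\eta)$, which is immediate from \eqref{sj3}: since $x,y\in S_{\star}$ we have $m_{\star}(x)=m_{\star}(y)=1$, so $m_{\star}^{\sigma^{x,\,y}\eta}=m_{\star}^{\eta}$, and the restriction to $S\setminus\{x,\,y\}$ is unchanged, so $a(\widehat{\sigma^{x,\,y}\eta})=a(\widehat{\eta})$. Thus the $\chi^{(2)}$-contributions cancel and $(\textup{div }\chi_{x,\,y})(\eta)=0$.

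The case $\eta\in\mathcal{D}^{y}$ is handled identically after interchanging $x$ and $y$ and using the second half of the vanishing statement. The only real difficulty is bookkeeping: in each of the two clauses defining $\chi_{x,\,y}^{(1)}$ and $\chi_{x,\,y}^{(2)}$ one must keep track of which endpoint of an edge is the distinguished one, so that the flow value on an edge entering $\eta$ is read off with the correct sign and argument. Once the reduction $\mathbf{B}\equiv0$ near $\mathcal{D}^{x}$ and the identity $\mathbf{C}(\sigma^{x,\,y}\eta)=\mathbf{C}(\eta)$ are in place, the vanishing of the divergence drops out.
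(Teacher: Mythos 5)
Your proof is correct and follows the same route as the paper's: identify that the only nontrivial divergences can occur on $(\mathcal{T}_{\textrm{int}}^{x,\,y}\cap\mathcal{D}^{x})\setminus\mathcal{V}^{x}$ and its $y$-counterpart, show that all $\mathbf{B}$-contributions vanish there because $H$ is locally constant near the ends of the tube (so the $\chi^{(1)}_{x,\,y}$ contributions and the $\mathbf{B}$-terms of $\chi^{(2)}_{x,\,y}$ drop out), and then cancel the two remaining $\pm\mathbf{C}$ terms using that $\mathbf{C}(\cdot)$ depends on $\eta$ only through $\widehat{\eta}$ (because $m_{\star}(x)=m_{\star}(y)=1$). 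You merely spell out in more detail two points the paper treats as evident, namely the quantitative vanishing of $\mathbf{B}(\zeta;z_{i})$ when $\zeta_{x}$ or $\zeta_{y}$ exceeds $N(1-3\epsilon)+1$ (needed also for the one-step neighbors of $\eta$, not just $\eta$ itself) and the invariance of $\mathbf{C}$ under $\sigma^{x,\,y}$.
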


\begin{proof}
In view of the definition of $\chi_{x,\,y}$, the only part that should
be verified is $(\mathcal{\mathcal{T}}_{\textrm{int }}^{x,\,y}\cap\mathcal{D}^{x})\setminus\mathcal{V}^{x}$
and $(\mathcal{\mathcal{T}}_{\textrm{int }}^{x,\,y}\cap\mathcal{D}^{y})\setminus\mathcal{V}^{y}$,
as $\mathbf{B}(\eta;z_{i})=0$ for $\eta\in\mathcal{D}^{x}$ or $\eta\in\mathcal{D}^{y}$
for all $1\le i\le\kappa$. Hence, for $\eta\in(\mathcal{\mathcal{T}}_{\textrm{int }}^{x,\,y}\cap\mathcal{D}^{x})\setminus\mathcal{V}^{x}$
or $(\mathcal{\mathcal{T}}_{\textrm{int }}^{x,\,y}\cap\mathcal{D}^{y})\setminus\mathcal{V}^{y}$,
we have
\[
(\textup{div }\chi_{x,\,y})(\eta)=\chi_{x,\,y}(\eta,\,\sigma^{x,\,y}\eta)+\chi_{x,\,y}(\eta,\,\sigma^{y,\,x}\eta)=\mathbf{C}(\eta)-\mathbf{C}(\sigma^{y,\,x}\eta)\;.
\]
The last expression equals to $0$ since ,it the definition \eqref{sj3},
$\mathbf{C}(\cdot)$ is a function of $\widehat{\eta}$ only.
\end{proof}
\begin{lem}
\label{lem2-1}It holds that
\[
\sum_{\eta\in\partial^{\textup{\textrm{\,in}}}\mathcal{J}^{x,\,y}}\left|(\textup{div }\chi_{x,\,y})(\eta)\right|=o_{N}(1)N^{-(1+\alpha)}\;.
\]
\end{lem}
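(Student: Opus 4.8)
The plan is to prove the estimate by a crude pointwise bound on the divergence combined with the mass estimate of Lemma~\ref{lem71}. Since $\partial^{\textrm{\,in}}\mathcal{J}^{x,\,y}\subset\partial^{\textrm{\,in}}\mathcal{G}$, Lemma~\ref{lem71} gives $\mu_{N}(\partial^{\textrm{\,in}}\mathcal{J}^{x,\,y})=o_{N}(1)\,N^{-(1+\alpha)}$, so it suffices to show that $\left|(\textup{div }\chi_{x,\,y})(\eta)\right|\le C\,\mu_{N}(\eta)$ for every $\eta\in\partial^{\textrm{\,in}}\mathcal{J}^{x,\,y}$ (recall $C$ may depend on $\epsilon$), and then sum.

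First I would determine the support of $\chi_{x,\,y}$ near a fixed $\eta\in\partial^{\textrm{\,in}}\mathcal{J}^{x,\,y}$. By \eqref{obe1} we have $\eta_{x},\eta_{y}\in[N\epsilon,\,N(1-2\epsilon)]$, and $\eta_{x}+\eta_{y}=N-\pi_{N}$ by definition; hence any neighbour $\zeta$ of $\eta$ obtained by moving a particle off the pair $\{x,\,y\}$ satisfies $\zeta_{x}+\zeta_{y}=N-\pi_{N}-1$, so $\zeta\notin\mathcal{T}^{x,\,y}$. Inspecting the definitions of $\chi_{x,\,y}^{(1)}$ and $\chi_{x,\,y}^{(2)}$, this forces the only edges $(\eta,\,\zeta)$ carrying nonzero $\chi_{x,\,y}$-flow to be the forward edges $(\eta,\,\sigma^{z_{i},\,x}\eta)$, $(\eta,\,\sigma^{z_{i},\,y}\eta)$ for $2\le i\le\kappa-1$ with $\eta_{z_{i}}\ge1$ (from $\chi_{x,\,y}^{(1)}$), together with the two edges $(\eta,\,\sigma^{y,\,x}\eta)$ and $(\eta,\,\sigma^{x,\,y}\eta)$ (from $\chi_{x,\,y}^{(2)}$); all backward edges into $\eta$ vanish because their tails lie outside $\mathcal{T}^{x,\,y}$.

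Then I would estimate each surviving term. For the $\chi_{x,\,y}^{(1)}$-edges, \eqref{u1} turns $a_{N}\mu_{N-1}(\eta-\omega^{z_{i}})\,m(z_{i})$ into $\mu_{N}(\eta)\,g(\eta_{z_{i}})\le C\mu_{N}(\eta)$, and Lemma~\ref{lem84} gives $\left|\mathbf{B}(\eta;z_{i})\right|\le C\pi_{N}/N^{2}$ for the middle indices, so the $\chi_{x,\,y}^{(1)}$-part of $(\textup{div }\chi_{x,\,y})(\eta)$ is of order $\mu_{N}(\eta)\,\pi_{N}/N^{2}$. For the $\chi_{x,\,y}^{(2)}$-edges I would again use \eqref{u1}, noting that the factors carrying $m(x)$ rather than $m(z_{i})$ correspond to $\mu_{N}$ evaluated at $\sigma^{y,\,x}\eta$ or $\sigma^{x,\,y}\eta$, together with the elementary ratio bound $\mu_{N}(\sigma^{y,\,x}\eta)/\mu_{N}(\eta)=a(\eta_{x})a(\eta_{y})/\bigl(a(\eta_{x}+1)a(\eta_{y}-1)\bigr)\le C$, which is valid precisely because \eqref{obe1} keeps $\eta_{x},\eta_{y}$ bounded away from $0$ and $N$; combined with $\left|\mathbf{B}(\eta';z_{1})\right|,\left|\mathbf{B}(\eta';z_{\kappa})\right|\le C/N$ for $\eta'\in\mathcal{T}^{x,\,y}$ (from Lemma~\ref{lem84} and boundedness of $H'$) and the trivial bound $\mathbf{C}(\eta)\le C\mu_{N}(\eta)$ (which follows from $a(\eta_{x})a(\eta_{y})\le N^{2\alpha}$ in \eqref{sj3}), this yields $\left|(\textup{div }\chi_{x,\,y})(\eta)\right|\le C\mu_{N}(\eta)$. (One may also observe that, since $\mathbf{C}$ depends only on $\widehat{\eta}$, so $\mathbf{C}(\sigma^{x,\,y}\eta)=\mathbf{C}(\eta)$, the two $\mathbf{C}$-contributions to the divergence cancel, but the crude bound already suffices.) Summing over $\partial^{\textrm{\,in}}\mathcal{J}^{x,\,y}$ and applying Lemma~\ref{lem71} completes the argument. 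The only point requiring care is the bookkeeping of base points when invoking \eqref{u1}, i.e.\ not conflating $\mu_{N}(\eta)$ with $\mu_{N}(\sigma^{y,\,x}\eta)$ in the $\chi_{x,\,y}^{(2)}$-terms; this is harmless on $\partial^{\textrm{\,in}}\mathcal{J}^{x,\,y}$ exactly because of \eqref{obe1}, and there is no genuine obstacle.
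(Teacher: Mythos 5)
Your proof is correct and takes the same route as the paper. The paper's own proof is a single line: it asserts that $\left|(\textup{div }\chi_{x,\,y})(\eta)\right|\le C\,\mu_{N}(\eta)$ follows "immediately" from the definition of $\chi_{x,\,y}$ and then invokes Lemma~\ref{lem71}; what you have written is precisely the verification of that immediate bound (identifying the surviving edges at $\eta$, converting $a_{N}\,\mu_{N-1}(\eta-\omega^{z_{i}})\,m(z_{i})$ to $\mu_{N}(\eta)\,g(\eta_{z_{i}})$ via \eqref{u1}, controlling the cross terms $\mu_{N}(\sigma^{y,\,x}\eta)/\mu_{N}(\eta)$, bounding $\mathbf{B}$ via Lemma~\ref{lem84} and $\mathbf{C}$ via \eqref{sj3}), so you are elaborating rather than departing from the paper's argument.
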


\begin{proof}
From the definition of $\chi_{x,\,y}$, it is immediate that $\left|(\textup{div }\chi_{x,\,y})(\eta)\right|\le C\mu_{N}(\eta)$
for some $C>0$. Hence, the lemma is a direct consequence of Lemma
\ref{lem71}.
\end{proof}
\begin{lem}
\label{lem2-2}It holds that
\[
\sum_{\eta\in\mathcal{V}^{x}\setminus\mathcal{E}^{x}}\left|(\textup{div }\chi_{x,\,y})(\eta)\right|=o_{N}(1)\,N^{-(1+\alpha)}\;\;\text{and\;\;}\sum_{\eta\in\mathcal{V}^{y}\setminus\mathcal{E}^{y}}\left|(\textup{div }\chi_{x,\,y})(\eta)\right|=o_{N}(1)\,N^{-(1+\alpha)}\;.
\]
\end{lem}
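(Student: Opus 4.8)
The plan is to compute $(\textup{div }\chi_{x,\,y})(\eta)$ exactly at each $\eta$ of $\mathcal{V}^{x}$ and $\mathcal{V}^{y}$ and then sum the outcome, using the decay estimates for the invariant measure from Section \ref{sec72}. The guiding observation is that $\mathcal{V}^{x}\subset\mathcal{D}^{x}$ — every $\eta\in\mathcal{V}^{x}$ has $\eta_{x}\ge N-\pi_{N}>N(1-2\epsilon)$ for $N$ large — and that on $\mathcal{T}^{x,\,y}\cap\mathcal{D}^{x}$ the function $\mathbf{W}_{x,\,y}$ is constant and $H'(\eta_{x}/N)=0$, so by Lemma \ref{lem84} one gets $\mathbf{B}(\zeta;z_{i})=0$ for all $1\le i\le\kappa$ whenever $\zeta\in\mathcal{T}^{x,\,y}\cap\mathcal{D}^{x}$ (and symmetrically on $\mathcal{D}^{y}$). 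Consequently every term built from $\mathbf{B}(\cdot;\cdot)$ drops out of the divergence at a vertex of $\mathcal{V}^{x}$, and only the $\mathbf{C}(\cdot)$-contribution of $\chi_{x,\,y}^{(2)}$ survives. I expect to show $(\textup{div }\chi_{x,\,y})(\eta)=\mathbf{C}(\eta)$ on $\mathcal{V}^{x}$ and $(\textup{div }\chi_{x,\,y})(\eta)=-\mathbf{C}(\eta)$ on $\mathcal{V}^{y}$.

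The first step is a careful enumeration of the edges incident to a fixed $\eta\in\mathcal{V}^{x}$ on which $\chi_{x,\,y}^{(1)}$ or $\chi_{x,\,y}^{(2)}$ can be non-zero. For $\chi_{x,\,y}^{(1)}$ these are the edges $(\eta,\,\sigma^{z_{i},\,x}\eta)$, $(\eta,\,\sigma^{z_{i},\,y}\eta)$ with $2\le i\le\kappa-1$ and $\eta_{z_{i}}\ge1$, together with the edges $(\eta,\,\sigma^{x,\,z_{i}}\eta)$ (the analogous $\sigma^{y,\,z_{i}}$-type edges are absent since $\eta_{y}=0$); in each case either the configuration carrying the flow fails to lie in $\mathcal{T}^{x,\,y}$ or it lies in $\mathcal{D}^{x}$ (note $\sigma^{x,\,z_{i}}\eta$ has $x$-coordinate at least $N-\pi_{N}-1$), so the associated $\mathbf{B}$-value vanishes and $(\textup{div }\chi_{x,\,y}^{(1)})(\eta)=0$. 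For $\chi_{x,\,y}^{(2)}$, the constraint $\eta_{y}=0$ forbids $\eta$ from being a source, and the unique incident edge is $(\eta,\,\sigma^{x,\,y}\eta)$; since $\sigma^{x,\,y}\eta\in\mathcal{T}^{x,\,y}\cap\mathcal{D}^{x}$ and $\sigma^{y,\,x}\sigma^{x,\,y}\eta=\eta\in\mathcal{D}^{x}$, the two $\mathbf{B}$-terms in the definition of $\chi_{x,\,y}^{(2)}(\sigma^{x,\,y}\eta,\,\eta)$ vanish, leaving $\chi_{x,\,y}^{(2)}(\eta,\,\sigma^{x,\,y}\eta)=\mathbf{C}(\sigma^{x,\,y}\eta)$. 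Because $\mathbf{C}$ depends only on the restriction $\widehat{\eta}$ of $\eta$ to $S\setminus\{x,\,y\}$ (see \eqref{sj3}), this equals $\mathbf{C}(\eta)$, giving the claimed identity on $\mathcal{V}^{x}$; the computation on $\mathcal{V}^{y}$ is the same with $x$ and $y$ interchanged.

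It then remains to bound $\sum_{\eta\in\mathcal{V}^{x}\setminus\mathcal{E}^{x}}\mathbf{C}(\eta)$. For $\eta\in\mathcal{V}^{x}$ one has $\eta_{y}=0$ and $m_{\star}(x)=m_{\star}(y)=1$, so \eqref{sj3} reduces to $\mathbf{C}(\eta)=\frac{\textup{cap}_{X}(x,\,y)}{N^{\alpha+1}\,Z_{N}\,M_{\star}\,I_{\alpha}}\,\frac{m_{\star}^{\widehat{\eta}}}{a(\widehat{\eta})}$. Parametrising $\eta\in\mathcal{V}^{x}$ by $\widehat{\eta}\in\mathbb{N}^{S\setminus\{x,\,y\}}$ with $|\widehat{\eta}|\le\pi_{N}$ (and $\eta_{x}=N-|\widehat{\eta}|$, $\eta_{y}=0$), the condition $\eta\notin\mathcal{E}^{x}$ becomes: $|\widehat{\eta}|>\ell_{N}$, or $\widehat{\eta}_{z}>b_{N}(z)$ for some $z\in S\setminus S_{\star}$. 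Splitting the sum accordingly, the first part is at most $\sum_{k>\ell_{N}}\sum_{\widehat{\eta}\in\mathcal{H}_{k,\,S\setminus\{x,\,y\}}}\frac{m_{\star}^{\widehat{\eta}}}{a(\widehat{\eta})}=\sum_{k>\ell_{N}}\frac{Z_{k,\,S\setminus\{x,\,y\}}}{k^{\alpha}}\le C\sum_{k>\ell_{N}}k^{-\alpha}=o_{N}(1)$ by Lemma \ref{le71} and $\alpha>2$ (the tail estimate already used in the proof of Lemma \ref{le72}), while the second part, for each fixed $z\in S\setminus S_{\star}$, is at most $\bigl(\sum_{j>b_{N}(z)}\frac{m_{\star}(z)^{j}}{a(j)}\bigr)\,\prod_{w\in S\setminus\{x,\,y,\,z\}}\Gamma_{w}\le\bigl(\sum_{j>b_{N}(z)}m_{\star}(z)^{j}\bigr)\,\prod_{w\in S\setminus\{x,\,y,\,z\}}\Gamma_{w}=o_{N}(1)$ since $m_{\star}(z)<1$ and $b_{N}(z)\to\infty$. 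As $Z_{N}\to Z>0$, combining these yields $\sum_{\eta\in\mathcal{V}^{x}\setminus\mathcal{E}^{x}}\mathbf{C}(\eta)=o_{N}(1)\,N^{-(1+\alpha)}$, and the estimate for $\mathcal{V}^{y}\setminus\mathcal{E}^{y}$ is identical.

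The main obstacle is the edge bookkeeping of the first step: one must identify precisely which, and how many, edges incident to a vertex of $\mathcal{V}^{x}$ carry $\chi_{x,\,y}^{(1)}$- or $\chi_{x,\,y}^{(2)}$-flow, keep track of the configuration playing the distinguished role in each summand of $\chi_{x,\,y}$, and combine the inclusion $\mathcal{V}^{x}\subset\mathcal{D}^{x}$ with the invariance of $\mathbf{C}$ under transporting a particle between $x$ and $y$. Once the identities $(\textup{div }\chi_{x,\,y})(\eta)=\mathbf{C}(\eta)$ on $\mathcal{V}^{x}$ and $(\textup{div }\chi_{x,\,y})(\eta)=-\mathbf{C}(\eta)$ on $\mathcal{V}^{y}$ are in place, the remaining summation is a routine application of Lemmas \ref{le71} and \ref{le72}.
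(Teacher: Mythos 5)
Your proof is correct and follows essentially the same two-step route as the paper: first establish the exact identity $(\textup{div }\chi_{x,\,y})(\eta)=\mathbf{C}(\eta)$ on $\mathcal{V}^{x}$ (and $-\mathbf{C}(\eta)$ on $\mathcal{V}^{y}$) by checking that the $\mathbf{B}$-terms of $\chi_{x,\,y}^{(1)}$ and $\chi_{x,\,y}^{(2)}$ all vanish since the relevant configurations lie in $\mathcal{D}^{x}$ (or $\mathcal{D}^{y}$), then sum $\mathbf{C}(\eta)$ over $\mathcal{V}^{x}\setminus\mathcal{E}^{x}$ using the invariant-measure tail estimates from Section \ref{sec72}. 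In fact your treatment of the second step is slightly more careful than the paper's: display \eqref{mn2} in the paper tacitly replaces the condition $\eta\notin\mathcal{E}^{x}$ by the single condition $|\widehat{\eta}|>\ell_{N}$, which omits the contribution of configurations with $|\widehat{\eta}|\le\ell_{N}$ but $\eta_{z}>b_{N}(z)$ for some $z\in S\setminus S_{\star}$ (a nonempty set once $\ell_{N}\gg b_{N}(z)$, as in the given example). You correctly split $\mathcal{V}^{x}\setminus\mathcal{E}^{x}$ into both cases and control the second one by the geometric tail $\sum_{j>b_{N}(z)}m_{\star}(z)^{j}\to0$, which closes that gap. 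Everything else (the identification $\mathbf{C}(\sigma^{x,\,y}\eta)=\mathbf{C}(\eta)$ using $m_{\star}(x)=m_{\star}(y)=1$, the use of Lemma \ref{le71} and $\alpha>2$ for the $k^{-\alpha}$ tail, and the appeal to $Z_{N}\to Z>0$) matches the paper.
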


\begin{proof}
By the same reasoning as in the proof of Lemma \ref{lem1},
\begin{equation}
(\textup{div }\chi_{x,\,y})(\eta)=\begin{cases}
\chi_{x,\,y}(\eta,\,\sigma^{x,\,y}\eta)=\mathbf{C}(\eta) & \text{if }\eta\in\mathcal{V}_{x}\;,\\
\chi_{x,\,y}(\eta,\,\sigma^{y,\,x}\eta)=-\mathbf{C}(\sigma^{y,\,x}\eta)=-\mathbf{C}(\eta) & \text{if }\eta\in\mathcal{V}_{y}\;.
\end{cases}\label{sim2}
\end{equation}
Thus, by the definition of $\mathbf{C}(\cdot)$,
\begin{equation}
\sum_{\eta\in\mathcal{V}^{x}\setminus\mathcal{E}^{x}}\left|(\textup{div }\chi_{x,\,y})(\eta)\right|\le C\,N^{-(\alpha+1)}\sum_{\eta\in\mathcal{V}^{x}\setminus\mathcal{E}^{x}}\frac{m_{\star}^{\eta}}{a(\widehat{\eta})}\;.\label{mn1}
\end{equation}
For the configurations $\eta\in\mathcal{V}^{x}$, we have that $\eta_{y}=0$;
hence, by Lemma \ref{le71},
\begin{equation}
\sum_{\eta\in\mathcal{V}^{x}\setminus\mathcal{E}^{x}}\frac{m_{\star}^{\eta}}{a(\widehat{\eta})}=\sum_{k=\ell_{N}+1}^{\pi_{N}}\,\,\sum_{\zeta\in\mathcal{H}_{k,\,S\setminus\{x,y\}}}\frac{m_{\star}^{\zeta}}{a(\zeta)}\le C\sum_{k=\ell_{N}+1}^{\pi_{N}}\frac{1}{k^{\alpha}}\le\frac{C}{\ell_{N}^{\alpha-1}}=o_{N}(1)\label{mn2}
\end{equation}
It should be noted that the fact that $m_{\star}(x)=m_{\star}(y)=1$
was used at the first equality of \eqref{mn2}. Thus, the first estimate
of the lemma is obtained from \eqref{mn1} and \eqref{mn2}. The proof
for the second estimate is identical.
\end{proof}
\begin{lem}
\label{lem3}We have that
\begin{align*}
 & (\textup{div }\chi_{x,\,y})(\mathcal{E}^{x})=\left(1+o_{N}(1)\right)N^{-(1+\alpha)}\,\frac{\textup{cap}_{X}(x,\,y)}{\kappa_{\star}\,M_{\star}\,\Gamma(\alpha)\,I_{\alpha}}\;\;\text{and}\\
 & (\textup{div }\chi_{x,\,y})(\mathcal{E}^{y})=-\left(1+o_{N}(1)\right)N^{-(1+\alpha)}\frac{\textup{cap}_{X}(x,\,y)}{\kappa_{\star}\,M_{\star}\,\Gamma(\alpha)\,I_{\alpha}}\;.
\end{align*}
\end{lem}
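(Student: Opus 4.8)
The plan is to reduce $(\textup{div }\chi_{x,\,y})(\mathcal{E}^{x})$ to an explicit sum of the quantities $\mathbf{C}(\eta)$ over a simple set of configurations, and then to evaluate that sum with the invariant-measure asymptotics of Section \ref{sec72}. Since $\chi_{x,\,y}=-\chi_{y,\,x}$ and $\textup{cap}_{X}(x,\,y)=\textup{cap}_{X}(y,\,x)$, it is enough to treat $\mathcal{E}^{x}$, and the estimate for $\mathcal{E}^{y}$ then follows (either by repeating the argument with the roles of $x$ and $y$ exchanged, or by this antisymmetry).

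The crucial step is to pin down $\chi_{x,\,y}$ on the edges incident to $\mathcal{E}^{x}$. Since $\mathcal{E}^{x}\subset\mathcal{D}^{x}$ for $N$ large, and since any configuration obtained from a point of $\mathcal{E}^{x}$ by a single jump still has $x$-coordinate at least $N-\ell_{N}-1>N(1-2\epsilon)$ and hence still lies in $\mathcal{D}^{x}$, Lemma \ref{lem84} forces $\mathbf{B}(\,\cdot\,;z_{i})=0$ at all of these configurations. Therefore $\chi_{x,\,y}^{(1)}$ vanishes on every edge meeting $\mathcal{E}^{x}$, and on the relevant edges $\chi_{x,\,y}^{(2)}$ collapses to its constant part $-\mathbf{C}(\,\cdot\,)$. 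I would then compute, exactly as in the derivation of \eqref{sim2}, that for $\eta\in\mathcal{E}^{x}$ the divergence equals $\mathbf{C}(\eta)$ when $\eta_{y}=0$ and equals $0$ when $\eta_{y}\ge1$; in the latter case the outgoing edge $(\eta,\sigma^{y,x}\eta)$ carries $-\mathbf{C}(\eta)$ while the incoming edge $(\sigma^{x,y}\eta,\eta)$ carries $-\mathbf{C}(\sigma^{x,y}\eta)=-\mathbf{C}(\eta)$, and these cancel because $\mathbf{C}(\,\cdot\,)$ depends only on the coordinates outside $\{x,\,y\}$. Summing over $\mathcal{E}^{x}$ yields $(\textup{div }\chi_{x,\,y})(\mathcal{E}^{x})=\sum_{\eta\in\mathcal{E}^{x}\cap\mathcal{V}^{x}}\mathbf{C}(\eta)$; in particular the divergence concentrates on $\mathcal{E}^{x}\cap\mathcal{V}^{x}$, where $\mathbf{C}>0$, which also accounts for the positivity claim in part (3) of Proposition \ref{p84}.

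To evaluate the sum, I would use $m_{\star}(x)=m_{\star}(y)=1$ to write $\mathbf{C}(\eta)=\textup{cap}_{X}(x,\,y)\,\bigl(N^{\alpha+1}Z_{N}M_{\star}I_{\alpha}\bigr)^{-1}\,m_{\star}^{\widehat{\eta}}/a(\widehat{\eta})$, and observe that as $\eta$ runs over $\mathcal{E}^{x}\cap\mathcal{V}^{x}$ its restriction $\widehat{\eta}$ to $S\setminus\{x,\,y\}$ runs over all $\zeta\in\mathcal{H}_{k,\,S\setminus\{x,\,y\}}$ with $0\le k\le\ell_{N}$ subject to $\zeta_{z}\le b_{N}(z)$ for $z\in S\setminus S_{\star}$. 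Dropping the constraint $\zeta_{z}\le b_{N}(z)$ changes $\sum_{k\le\ell_{N}}\sum_{\zeta}m_{\star}^{\zeta}/a(\zeta)$ by only $o_{N}(1)$, since $b_{N}(z)\to\infty$ and $m_{\star}(z)<1$ for $z\in S\setminus S_{\star}$; and Lemma \ref{le72}, applied with $S_{0}=S\setminus\{x,\,y\}$ and $d_{N}=\ell_{N}$, shows the unconstrained sum converges to $\Gamma(\alpha)^{\kappa_{\star}-2}\prod_{z\in S\setminus S_{\star}}\Gamma_{z}$. Dividing by $Z_{N}\to Z=\kappa_{\star}\,\Gamma(\alpha)^{\kappa_{\star}-1}\prod_{z\in S\setminus S_{\star}}\Gamma_{z}$ (Proposition \ref{e21} and \eqref{e20}) makes the $\Gamma$-products cancel and leaves the factor $\bigl(\kappa_{\star}\,\Gamma(\alpha)\bigr)^{-1}$, which gives exactly $(\textup{div }\chi_{x,\,y})(\mathcal{E}^{x})=(1+o_{N}(1))\,N^{-(1+\alpha)}\,\textup{cap}_{X}(x,\,y)/(\kappa_{\star}\,M_{\star}\,\Gamma(\alpha)\,I_{\alpha})$, and the estimate for $\mathcal{E}^{y}$ follows with the obvious sign change.

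The only genuinely delicate point is the bookkeeping in the second paragraph: one must verify that $\chi_{x,\,y}^{(1)}$ and the $\mathbf{B}$-terms of $\chi_{x,\,y}^{(2)}$ really vanish on \emph{every} edge — incoming as well as outgoing — that touches $\mathcal{E}^{x}$, so that the net divergence collapses onto $\mathcal{E}^{x}\cap\mathcal{V}^{x}$ with the single surviving term $\mathbf{C}$. Everything after that, in particular the truncated-sum estimate, is a routine consequence of Lemma \ref{le72} and Proposition \ref{e21}.
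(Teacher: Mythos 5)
Your proposal is correct and follows essentially the same route as the paper: reduce to \eqref{sim2} to see that the divergence on $\mathcal{E}^{x}$ concentrates on $\mathcal{E}^{x}\cap\mathcal{V}^{x}$ with value $\mathbf{C}(\eta)$ there, then evaluate $\sum\mathbf{C}(\eta)$ via Lemma \ref{le72} and Proposition \ref{e21}. You are in fact slightly more careful than the paper's terse one-line argument, spelling out both the edge-by-edge cancellation for $\eta\in\mathcal{E}^{x}$ with $\eta_{y}\ge1$ and the harmless removal of the $\zeta_{z}\le b_{N}(z)$ constraint.
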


\begin{proof}
By \eqref{sim2} and an argument similar to that in the previous lemma,
\begin{align*}
(\textup{div }\chi_{x,\,y})(\mathcal{E}^{x}) & =\sum_{\eta\in\mathcal{E}^{x}}\mathbf{C}(\eta)=\frac{\textup{cap}_{X}(x,\,y)}{N^{\alpha+1}\,Z_{N}\,M_{\star}\,I_{\alpha}}\sum_{k=0}^{\ell_{N}}\,\sum_{\zeta\in\mathcal{H}_{k,S\setminus\{x,y\}}}\frac{m_{\star}^{\zeta}}{a(\zeta)}\;.
\end{align*}
Hence, by Proposition \ref{e21} and Lemma \ref{lem72}, the first
identity of the lemma is proved. The proof for the second identity
is identical.
\end{proof}
An identity for the function $\mathbf{B}(\cdot;\cdot)$ is now established.
\begin{lem}
\label{le1}For all $\eta\in\mathcal{H}_{N}$ such that $\eta_{x}\ge1$,
\[
\sum_{i=1}^{\kappa}m(z_{i})\mathbf{\,B}(\sigma^{x,\,z_{i}}\eta;z_{i})=0\;.
\]
\end{lem}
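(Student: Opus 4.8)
The plan is to recognize that, up to the weighting by the jump rates, $\mathbf{B}(\,\cdot\,;z_{i})$ is a discrete gradient of a single scalar function on $\mathcal{H}_{N}$, and then to collapse the resulting double sum by the stationarity relation \eqref{inv} --- exactly the vanishing identity used in the proof of Proposition \ref{psector}. First I would extend $\mathbf{W}_{x,\,y}$ to a function $\mathbf{W}^{\sharp}$ on \emph{all} of $\mathcal{H}_{N}$ by the same defining formula $\mathbf{W}^{\sharp}(\xi)=\sum_{u=1}^{\kappa-1}[h_{x,\,y}(z_{u})-h_{x,\,y}(z_{u+1})]\,H(\xi^{(u)}/N)$, which agrees with $\mathbf{W}_{x,\,y}$ on $\mathcal{T}^{x,\,y}$. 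The first step is then the identity
\[
\mathbf{B}(\xi;z_{i})=\sum_{j=1}^{\kappa}r(z_{i},\,z_{j})\left[\mathbf{W}^{\sharp}(\xi)-\mathbf{W}^{\sharp}(\sigma^{z_{i},\,z_{j}}\xi)\right]\qquad\text{whenever }\xi_{z_{i}}\ge1\;,
\]
which follows by comparing \eqref{ed2} and \eqref{eW} with the elementary observation that moving one particle from $z_{i}$ to $z_{j}$ changes the partial sum $\xi^{(u)}$ by $-\mathbf{1}\{i\le u<j\}+\mathbf{1}\{j\le u<i\}$. Note this identity is genuinely false when $\xi_{z_{i}}=0$, since then $\sigma^{z_{i},\,z_{j}}\xi=\xi$ while the right-hand side of \eqref{ed2} need not vanish; this is where the hypothesis $\eta_{x}\ge1$ will enter.

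Next I would fix $\eta$ with $\eta_{x}\ge1$ and set $\xi_{i}:=\sigma^{x,\,z_{i}}\eta$. Two bookkeeping facts are needed: (i) $(\xi_{i})_{z_{i}}\ge1$ for every $i$ --- indeed $(\xi_{i})_{z_{i}}=\eta_{z_{i}}+1$ when $z_{i}\ne x$, and $(\xi_{i})_{z_{i}}=\eta_{x}\ge1$ when $z_{i}=x$; and (ii) $\sigma^{z_{i},\,z_{j}}\xi_{i}=\xi_{j}$ for all $i,\,j$, because composing the jump $x\to z_{i}$ (legitimate since $\eta_{x}\ge1$) with the jump $z_{i}\to z_{j}$ amounts to moving one particle from $x$ to $z_{j}$, the degenerate cases $z_{i}=x$, $z_{j}=x$, $z_{i}=z_{j}$ being immediate from the convention that $\sigma^{z,\,z}$ is the identity. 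Inserting (i) and (ii) into the displayed identity of the first step yields, for each $i$,
\[
\mathbf{B}(\sigma^{x,\,z_{i}}\eta;z_{i})=\sum_{j=1}^{\kappa}r(z_{i},\,z_{j})\left[\mathbf{W}^{\sharp}(\xi_{i})-\mathbf{W}^{\sharp}(\xi_{j})\right]\;.
\]

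Finally, multiplying by $m(z_{i})$ and summing over $i$ gives
\[
\sum_{i=1}^{\kappa}m(z_{i})\,\mathbf{B}(\sigma^{x,\,z_{i}}\eta;z_{i})=\sum_{i,\,j=1}^{\kappa}m(z_{i})\,r(z_{i},\,z_{j})\left[\mathbf{W}^{\sharp}(\xi_{i})-\mathbf{W}^{\sharp}(\xi_{j})\right]\;,
\]
which is exactly $A(\mathbf{W}^{\sharp},\,\mathbf{1};\,\eta-\omega^{x})$ in the notation of the proof of Proposition \ref{psector} --- here $\eta-\omega^{x}$ is a bona fide configuration in $\mathcal{H}_{N-1}$ because $\eta_{x}\ge1$, and $(\eta-\omega^{x})+\omega^{z_{i}}=\xi_{i}$. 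By the identity recorded right after the definition of $A(\,\cdot\,,\,\cdot\,;\,\cdot\,)$ in that proof (a direct consequence of \eqref{inv}, equivalently of $\sum_{i}m(z_{i})\,r(z_{i},\,z_{j})=m(z_{j})\sum_{w\in S}r(z_{j},\,w)$, which makes the two halves of the double sum coincide), this vanishes, completing the proof. I do not anticipate a real obstacle here; the only point demanding care is the composition rule $\sigma^{z_{i},\,z_{j}}\sigma^{x,\,z_{i}}=\sigma^{x,\,z_{j}}$ together with the requirement $(\xi_{i})_{z_{i}}\ge1$, which jointly isolate exactly the hypothesis $\eta_{x}\ge1$.
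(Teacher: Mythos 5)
Your proof is correct, and it repackages the same underlying cancellation more conceptually. Both arguments bottom out at the stationarity relation \eqref{inv} applied at the one-particle-removed configuration $\eta-\omega^{x}$, but they reach it differently. The paper's proof substitutes $\sigma^{x,\,z_{i}}\eta$ into \eqref{ed2}, expresses $\mathbf{B}(\sigma^{x,\,z_{i}}\eta;z_{i})$ as a double sum over $(j,\,u)$ of the auxiliary quantities $\mathbf{Q}_{u}(\eta)$, assembles the triple sum over $(i,\,j,\,u)$, and reindexes until the bracket $\sum_{i=1}^{u}\sum_{j=1}^{\kappa}\bigl[m(z_{i})\,r(z_{i},\,z_{j})-m(z_{j})\,r(z_{j},\,z_{i})\bigr]$ is exhibited, which is then killed by \eqref{inv} together with a symmetry observation. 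You instead isolate the gradient identity $\mathbf{B}(\xi;z_{i})=\sum_{j}r(z_{i},\,z_{j})\bigl[\mathbf{W}^{\sharp}(\xi)-\mathbf{W}^{\sharp}(\sigma^{z_{i},\,z_{j}}\xi)\bigr]$ (valid precisely when $\xi_{z_{i}}\ge1$) and the composition rule $\sigma^{z_{i},\,z_{j}}\sigma^{x,\,z_{i}}=\sigma^{x,\,z_{j}}$, both of which make the role of the hypothesis $\eta_{x}\ge1$ completely transparent, and then recognize the weighted sum as exactly the already-established vanishing quantity $A(\mathbf{W}^{\sharp},\,\mathbf{1};\,\eta-\omega^{x})$ from the proof of Proposition \ref{psector}. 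Your route avoids the combinatorial reindexing and makes visible the structural reason the sum vanishes (it is a stationarity-weighted divergence of a gradient), at the modest cost of extending $\mathbf{W}_{x,\,y}$ to all of $\mathcal{H}_{N}$; the paper's route is self-contained within the lemma. Mathematically the two are equivalent, and your bookkeeping facts (i) and (ii) are each verified correctly.
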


\begin{proof}
For $1\le u<\kappa$, let
\[
\mathbf{Q}_{u}(\eta)=\left[h_{x,\,y}(z_{u})-h_{x,\,y}(z_{u+1})\right]\left[H\left(\frac{\eta^{(u)}}{N}\right)-H\left(\frac{\eta^{(u)}-1}{N}\right)\right]\;.
\]
Then,
\[
\mathbf{B}(\sigma^{x,\,z_{i}}\eta;z_{i})=\sum_{j:j>i}r(z_{i},\,z_{j})\sum_{u=i}^{j-1}\mathbf{Q}_{u}(\eta)-\sum_{j:j<i}r(z_{i},\,z_{j})\sum_{u=j}^{i-1}\mathbf{Q}_{u}(\eta)\;.
\]
Therefore,
\begin{align*}
 & \sum_{i=1}^{\kappa}m(z_{i})\,\mathbf{B}(\sigma^{x,\,z_{i}}\eta;z_{i})\\
 & =\sum_{i=1}^{\kappa}\sum_{j:j>i}m(z_{i})\,r(z_{i},\,z_{j})\sum_{u=i}^{j-1}\mathbf{Q}_{u}(\eta)-\sum_{i=1}^{\kappa}\sum_{j:j<i}m(z_{i})\,r(z_{i},\,z_{j})\sum_{u=j}^{i-1}\mathbf{Q}_{u}(\eta)\\
 & =\sum_{u=1}^{\kappa-1}\mathbf{Q}_{u}(\eta)\sum_{i=1}^{u}\sum_{j=u+1}^{\kappa}\left[m(z_{i})\,r(z_{i},\,z_{j})-m(z_{j})\,r(z_{j},\,z_{i})\right]\\
 & =\sum_{u=1}^{\kappa-1}\mathbf{Q}_{u}(\eta)\sum_{i=1}^{u}\sum_{j=1}^{\kappa}\left[m(z_{i})\,r(z_{i},\,z_{j})-m(z_{j})\,r(z_{j},\,z_{i})\right]=0\;.
\end{align*}
It should be remarked that the last equality is a consequence of \eqref{inv},
whereas the third equality holds owing to the following which in turn
holds by the symmetry of the summation:
\[
\sum_{i=1}^{u}\sum_{j=1}^{u}\left[m(z_{i})\,r(z_{i},\,z_{j})-m(z_{j})\,r(z_{j},\,z_{i})\right]=0\;\;;\;1\le u\le\kappa\;.
\]
\end{proof}
\begin{lem}
\label{lem4}The flow $\Phi_{x,\,y}$ is divergence-free on $\mathcal{J}_{\textrm{int }}^{x,\,y}$.
\end{lem}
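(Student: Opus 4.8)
Throughout this part assumption \eqref{ass1} is in force. Since $\Phi_{x,\,y}=\Phi_{\mathbf{W}_{x,\,y}}^{*}+\chi_{x,\,y}$, the assertion is equivalent to the identity $(\textup{div }\chi_{x,\,y})(\eta)=-(\textup{div }\Phi_{\mathbf{W}_{x,\,y}}^{*})(\eta)$ for every $\eta\in\mathcal{J}_{\textup{int}}^{x,\,y}$, where the right-hand side is the explicit expression \eqref{ed1}. So I would fix $\eta\in\mathcal{J}_{\textup{int}}^{x,\,y}$ and compute $(\textup{div }\chi_{x,\,y})(\eta)$ directly from the definition of $\chi_{x,\,y}=\chi_{x,\,y}^{(1)}+\chi_{x,\,y}^{(2)}$. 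The structural facts I would set up first are: by \eqref{obe1} one has $\eta_{x},\eta_{y}\ge N\epsilon\ge1$, so Lemma \ref{le1} is applicable to $\eta$ at either endpoint; since $\eta\notin\partial^{\textup{\,in}}\mathcal{J}^{x,\,y}$ one has $\eta_{x}+\eta_{y}\ge N-\pi_{N}+1$, whence each of $\sigma^{x,z_{i}}\eta$, $\sigma^{y,z_{i}}\eta$ and $\sigma^{x,y}\eta$ still lies in $\mathcal{T}^{x,\,y}$, so under \eqref{ass1} every edge that will enter the computation is a genuine edge of $\mathcal{H}_{N}^{\otimes}$; and I would repeatedly use the translation identities $\sigma^{z,w}\eta-\omega^{w}=\eta-\omega^{z}$ (valid when $\eta_{z}\ge1$) together with the fact, visible from \eqref{sj3}, that $\mathbf{C}(\cdot)$ depends on $\widehat{\cdot}$ only, so that $\mathbf{C}(\sigma^{x,y}\eta)=\mathbf{C}(\eta)$.

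The main step is a careful enumeration of the neighbours $\zeta\sim\eta$ carrying a nonzero $\chi_{x,\,y}$-value, with its value on each edge. These are exactly: the outgoing $\chi_{x,\,y}^{(1)}$-edges to $\sigma^{z_{i},x}\eta$ and $\sigma^{z_{i},y}\eta$ for $2\le i\le\kappa-1$ (with $\eta_{z_{i}}\ge1$), each carrying $-\tfrac12 a_{N}\mu_{N-1}(\eta-\omega^{z_{i}})\,m(z_{i})\,\mathbf{B}(\eta;z_{i})$; the incoming $\chi_{x,\,y}^{(1)}$-edges from $\sigma^{x,z_{i}}\eta$ and $\sigma^{y,z_{i}}\eta$ for $2\le i\le\kappa-1$, which by anti-symmetry of $\chi_{x,\,y}^{(1)}$ and the translation identity carry $\tfrac12 a_{N}\mu_{N-1}(\eta-\omega^{x})\,m(z_{i})\,\mathbf{B}(\sigma^{x,z_{i}}\eta;z_{i})$ and $\tfrac12 a_{N}\mu_{N-1}(\eta-\omega^{y})\,m(z_{i})\,\mathbf{B}(\sigma^{y,z_{i}}\eta;z_{i})$; and the two $\chi_{x,\,y}^{(2)}$-edges $(\eta,\sigma^{y,x}\eta)$ and $(\eta,\sigma^{x,y}\eta)$, carrying $\tfrac12 a_{N}\mu_{N-1}(\eta-\omega^{y})\bigl[m(x)\mathbf{B}(\sigma^{y,x}\eta;x)-m(y)\mathbf{B}(\eta;y)\bigr]-\mathbf{C}(\eta)$ and $-\tfrac12 a_{N}\mu_{N-1}(\eta-\omega^{x})\bigl[m(x)\mathbf{B}(\eta;x)-m(y)\mathbf{B}(\sigma^{x,y}\eta;y)\bigr]+\mathbf{C}(\eta)$ respectively; the two copies of $\mathbf{C}(\eta)$ cancel. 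Summing all of these, the part proportional to $\mu_{N-1}(\eta-\omega^{z_{i}})$ for $2\le i\le\kappa-1$ comes only from the outgoing $\chi_{x,\,y}^{(1)}$-edges and equals $-a_{N}\sum_{i=2}^{\kappa-1}\mu_{N-1}(\eta-\omega^{z_{i}})m(z_{i})\mathbf{B}(\eta;z_{i})$, which reproduces exactly the corresponding terms of \eqref{ed1}.

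It then remains to show that the $\mu_{N-1}(\eta-\omega^{x})$-part sums to $-a_{N}\mu_{N-1}(\eta-\omega^{x})m(x)\mathbf{B}(\eta;x)$ and the $\mu_{N-1}(\eta-\omega^{y})$-part to $-a_{N}\mu_{N-1}(\eta-\omega^{y})m(y)\mathbf{B}(\eta;y)$, i.e.\ the $i=1$ and $i=\kappa$ terms of \eqref{ed1} (the indicators there being automatic as $\eta_{x},\eta_{y}\ge1$). For the $x$-part, I would write $\mathbf{B}(\eta;x)=\mathbf{B}(\sigma^{x,z_{1}}\eta;z_{1})$ and $\mathbf{B}(\sigma^{x,y}\eta;y)=\mathbf{B}(\sigma^{x,z_{\kappa}}\eta;z_{\kappa})$, so that the coefficient of $\tfrac12 a_{N}\mu_{N-1}(\eta-\omega^{x})$ becomes $\sum_{i=2}^{\kappa-1}m(z_{i})\mathbf{B}(\sigma^{x,z_{i}}\eta;z_{i})-m(z_{1})\mathbf{B}(\sigma^{x,z_{1}}\eta;z_{1})+m(z_{\kappa})\mathbf{B}(\sigma^{x,z_{\kappa}}\eta;z_{\kappa})$; applying Lemma \ref{le1} to replace $\sum_{i=2}^{\kappa-1}$ by $-m(z_{1})\mathbf{B}(\sigma^{x,z_{1}}\eta;z_{1})-m(z_{\kappa})\mathbf{B}(\sigma^{x,z_{\kappa}}\eta;z_{\kappa})$, the bracket collapses to $-2m(x)\mathbf{B}(\eta;x)$. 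The $y$-part is handled identically once one has the analogue of Lemma \ref{le1} with $x$ replaced by $y$, i.e.\ $\sum_{i=1}^{\kappa}m(z_{i})\mathbf{B}(\sigma^{y,z_{i}}\eta;z_{i})=0$ for $\eta_{y}\ge1$; its proof is word for word that of Lemma \ref{le1}, because $\mathbf{B}(\sigma^{y,z_{i}}\eta;z_{i})$ admits the same expansion in terms of the quantities $\mathbf{Q}_{u}$, with $H((\eta^{(u)}-1)/N)$ replaced throughout by $H((\eta^{(u)}+1)/N)$ (since $y=z_{\kappa}$ is never among the first $u$ sites for $u<\kappa$), and the ensuing rearrangement uses only the symmetry of the double sum together with \eqref{inv}. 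Combining the three groups of terms with \eqref{ed1} finishes the proof. I expect the only real difficulty to be organisational: making sure the list of incident edges is complete and non-redundant and that each configuration shift is tracked correctly; the conceptual point is the symmetric treatment of the $z_{1}$- and $z_{\kappa}$-slots built into $\chi_{x,\,y}^{(2)}$, which is precisely what lets the two boundary terms of \eqref{ed1} re-emerge after the two versions of Lemma \ref{le1}.
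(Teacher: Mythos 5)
Your proof is correct and follows essentially the same route as the paper: enumerate the incident $\chi_{x,\,y}$-edges, shift each to the common weight $\mu_{N-1}(\eta-\omega^{z_i})$ via the anti-symmetry of $\chi_{x,\,y}$, cancel the middle $i$-terms against \eqref{ed1}, and invoke Lemma \ref{le1} to annihilate the two boundary groups. The only variation is in the $y$-group: where you prove a symmetric $y$-version of Lemma \ref{le1} by redoing the $\mathbf{Q}_u$-expansion with $H((\eta^{(u)}+1)/N)$ in place of $H((\eta^{(u)}-1)/N)$, the paper reuses Lemma \ref{le1} verbatim through the composition $\sigma^{y,\,z_i}\eta=\sigma^{x,\,z_i}(\sigma^{y,\,x}\eta)$, applying the lemma at the shifted configuration $\sigma^{y,\,x}\eta$; both are equally short and equally valid.
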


\begin{proof}
We fix a configuration $\eta\in\mathcal{J}_{\textrm{int }}^{x,\,y}$
so that $\eta_{x},\,\eta_{y}\ge1$. By the definition of $\chi_{x,\,y}$
we have\begin{equation} \label{ed3} \begin{aligned} (\textup{div }\chi_{x,\,y})(\eta)=&\,\sum_{i=2}^{\kappa-1}\left\{ \chi_{x,\,y}(\eta,\,\sigma^{z_{i},\,x}\eta)+\chi_{x,\,y}(\eta,\,\sigma^{z_{i},\,y}\eta)\right\} \\&+\sum_{i=2}^{\kappa-1}\chi_{x,\,y}(\eta,\,\sigma^{x,\,z_{i}}\eta)+\sum_{i=2}^{\kappa-1}\chi_{x,\,y}(\eta,\,\sigma^{y,\,z_{k}}\eta)\\&+\chi_{x,\,y}(\eta,\,\sigma^{y,\,x}\eta)+\chi_{x,\,y}(\eta,\,\sigma^{x,\,y}\eta)\;.\end{aligned} \end{equation}
The right-hand side can be computed term by term. The terms $\chi_{x,\,y}(\eta,\,\sigma^{z_{i},\,x}\eta)$,
$\chi_{x,\,y}(\eta,\,\sigma^{z_{i},\,y}\eta)$ and $\chi_{x,\,y}(\eta,\,\sigma^{z_{\kappa},\,z_{1}}\eta)$
are immediate from the definition, whereas the term $\chi_{x,\,y}(\eta,\,\sigma^{x,\,z_{i}}\eta)$,
$2\le i\le\kappa-1$, can be evaluated as
\begin{align*}
\chi_{x,\,y}(\eta,\,\sigma^{x,\,z_{i}}\eta) & =-\chi_{x,\,y}(\sigma^{x,\,z_{i}}\eta,\,\eta)=-\chi_{x,\,y}(\sigma^{x,\,z_{i}}\eta,\,\sigma^{z_{i},\,x}(\sigma^{x,\,z_{i}}\eta))\\
 & =\frac{1}{2}\sum_{i=2}^{\kappa-1}a_{N}\,\mu_{N-1}(\sigma^{x,\,z_{i}}\eta-\omega^{z_{i}})\,m(z_{i})\,\mathbf{B}(\sigma^{x,\,z_{i}}\eta;z_{i})\\
 & =\frac{1}{2}\sum_{i=2}^{\kappa-1}a_{N}\,\mu_{N-1}(\eta-\omega^{x})\,m(z_{i})\,\mathbf{B}(\sigma^{x,\,z_{i}}\eta;z_{i})\;.
\end{align*}
It should be noted that the fact that $\eta_{x}\ge1$ was implicitly
used in this computation. Similarly, we can express $\chi_{x,\,y}(\eta,\,\sigma^{y,\,z_{i}}\eta)$
and $\chi_{x,\,y}(\eta,\,\sigma^{x,\,y}\eta)$. By inserting these
results into \eqref{ed3}, we obtain
\begin{align*}
(\textup{div }\chi_{x,\,y})(\eta)= & -\sum_{i=2}^{\kappa-1}a_{N}\,\mu_{N-1}(\eta-\omega^{z_{i}})\,m(z_{i})\,\mathbf{B}(\eta;z_{i})\mathbf{1}\{\eta_{z_{i}}\ge1\}\\
 & +\frac{1}{2}\sum_{i=2}^{\kappa-1}a_{N}\,\mu_{N-1}(\eta-\omega^{x})\,m(z_{i})\,\mathbf{B}(\sigma^{x,\,z_{i}}\eta;z_{i})\\
 & +\frac{1}{2}\sum_{i=2}^{\kappa-1}a_{N}\,\mu_{N-1}(\eta-\omega^{y})\,m(z_{i})\,\mathbf{B}(\sigma^{y,\,z_{i}}\eta;z_{i})\\
 & +\frac{1}{2}a_{N}\,\mu_{N-1}(\eta-\omega^{y})\left[m(z_{1})\,\mathbf{B}(\sigma^{y,\,x}\eta;z_{1})-m(z_{\kappa})\,\mathbf{B}(\eta;z_{\kappa})\right]-\mathbf{C}(\eta)\\
 & -\left\{ \frac{1}{2}a_{N}\,\mu_{N-1}(\eta-\omega^{x})\left[m(z_{1})\,\mathbf{B}(\eta;z_{1})-m(z_{\kappa})\,\mathbf{B}(\sigma^{x,\,y}\eta;z_{\kappa})\right]-\mathbf{C}(\eta)\right\} \;.
\end{align*}
Hence, by \eqref{ed1},
\begin{align*}
(\textup{div }\Phi_{x,\,y})(\eta)=\, & (\textup{div }\Phi_{\mathbf{W}_{x,\,y}}^{*})(\eta)+(\textup{div }\chi_{x,\,y})(\eta)\\
=\, & \frac{1}{2}a_{N}\,\mu_{N-1}(\eta-\mathfrak{\omega}^{x})\,\sum_{i=1}^{\kappa}m(z_{i})\,\mathbf{B}(\sigma^{x,\,z_{i}}\eta;z_{1})\\
 & +\frac{1}{2}a_{N}\,\mu_{N-1}(\eta-\omega^{y})\,\sum_{i=1}^{\kappa}m(z_{i})\,\mathbf{B}(\sigma^{y,\,z_{i}}\eta;z_{1})\;.
\end{align*}
The last expression is equal to $0$ by Lemma \ref{le1}. In particular,
the second summation can be rewritten as
\[
\sum_{i=1}^{\kappa}m(z_{i})\,\mathbf{B}\left(\sigma^{x,\,z_{i}}\left(\sigma^{y,\,x}\eta\right);z_{1}\right)\;.
\]
Thus, Lemma \ref{le1} can be applied by replacing $\eta$ with $\sigma^{y,\,x}\eta$
to verify that this summation is $0$.
\end{proof}
The proof of Proposition \ref{p84} can be completed by combining
the results obtained above.
\begin{proof}[Proof of Proposition \ref{p84}]
Part (1) is proven in Lemma \ref{lem1}, and part (2) is a direct
consequence of Lemmas \ref{lem2-1} and \ref{lem2-2}. Part (3) is
immediate from Lemma \ref{lem3} and \eqref{sim2}. Part (4) has been
verified in Lemma \ref{lem4}.
\end{proof}

\subsection{\label{sec85}Proof in the general case}

In the previous proof for the special case, the assumption \eqref{ass1}
was used only in the construction of the correction flow $\chi_{x,\,y}$.
Namely, this assumption allowed the definition of $\chi_{x,\,y}(\eta,\,\sigma^{u,\,v}\eta)$
for any $u,\,v\in S$ without any restriction. In the general case,
if $r(u,\,v)=0$, we cannot define $\chi_{x,\,y}(\eta,\,\sigma^{u,\,v}\eta)$;
therefore, the cleaning of divergence on $\eta$ is not immediate.
This can be resolved by the canonical path introduced in Section \ref{sec42}.
This argument is now explained in detail.

Let us fix two points $x,\,y\in S_{\star}$. Previously, for $\eta\in\mathcal{T}_{\textrm{int}}^{x,\,y}$
such that $\eta_{z_{i}}\ge1$, we defined
\begin{equation}
\chi_{x,\,y}(\eta,\,\sigma^{z_{i},\,x}\eta)=-\frac{1}{2}a_{N}\,\mu_{N}(\eta-\omega^{z_{i}})\,m(z_{i})\,\mathbf{B}(\eta;z_{i})\;.\label{ef1}
\end{equation}
As mentioned earlier, this object is meaningless if $r(z_{i},\,x)=0$.
Hence, the canonical path $z_{i}=w_{1},\,w_{2},\,\cdots,\,w_{k}=x$
between $z_{i}$ and $x$ should be invoked. By the property of the
canonical path, we have
\begin{equation}
r(w_{i},\,w_{i+1})>0\;\;\text{for all }1\le i\le k-1\;\;\text{and\;\;}k\le\kappa\;.\label{ef0}
\end{equation}
For each $\eta$ and $1\le i\le k$, let $\eta^{i}=\sigma^{w_{1},\,w_{i}}\eta$,
so that $\eta^{1}=z_{i}$ and $\eta^{k}=\sigma^{z_{i},\,x}\eta$.
With these notations, for $\eta\in\mathcal{T}_{\textrm{int}}^{x,\,y}$
with $\eta_{z_{i}}\ge1$, the previous definition \eqref{ef1} of
the correction flow can be replaced with
\begin{equation}
\widehat{\chi}_{x,\,y}(\eta^{1},\,\eta^{2})=\cdots=\widehat{\chi}_{x,\,y}(\eta^{k-1},\,\eta^{k})=-\frac{1}{2}a_{N}\,\mu_{N}(\eta-\omega^{z_{i}})\,m(z_{i})\,\mathbf{B}(\eta;z_{i})\;.\label{ef2}
\end{equation}
Of course, $\widehat{\chi}_{x,\,y}(\eta^{i+1},\,\eta^{i})$ is defined
by $-\widehat{\chi}_{x,\,y}(\eta^{i},\,\eta^{i+1})$ for all $i$.
The crucial observation here is that
\[
\eta^{i+1}=\sigma^{w_{i},\,w_{i+1}}\eta^{i}\text{ for all }1\le i\le k-1\;\;\text{and\;\;\ensuremath{\eta_{w_{i}}^{i}\ge1\;,}}
\]
so that $\widehat{\chi}_{x,\,y}(\eta^{i},\,\eta^{i+1})$ is meaningful
by \eqref{ef0}. Furthermore, the construction \eqref{ef2} changes
the divergence of $\eta^{1}=z_{i}$ and $\eta^{k}=\sigma^{z_{i}\,,x}\eta$
only; the divergence at $\eta^{i}$, $2\le i\le k-1$, is not affected
by this construction. Thus, as far as divergence is concerned, this
new object plays the exact same role as \eqref{ef1}. The flow $\widehat{\chi}_{x,\,y}(\eta,\,\sigma^{y,\,x}\eta)$
corresponding to $\chi_{x,\,y}(\eta,\,\sigma^{y,\,x}\eta)$ can be
constructed by a similar argument. The construction of the correction
flow $\widehat{\chi}_{x,\,y}$ can be thereby completed in the general
case.

The validity of Propositions \ref{p82} and \ref{p84} should now be
verified for this general object. For Proposition \ref{p82}, the
same argument can be used; the only difference is that the same edge
is used by several canonical paths. Here, the Cauchy-Schwarz inequality
can be used for obtaining an upper bound, as the number of canonical
paths using a certain edge is bounded by a uniform constant. In particular,
the uniform bound on the length of canonical paths, i.e., the condition
$k\le\kappa$ in \eqref{ef0}, is crucially used here. Moreover, by
the observation on the divergence in the previous paragraph, the validity
of Proposition \ref{p84} is immediate.
\begin{acknowledgement*}
I. Seo was supported by the National Research Foundation of Korea
(NRF) grant funded by the Korea government (MSIT) (No. 2018R1C1B6006896
and No. 2017R1A5A1015626). I. Seo wishes to thank Claudio Landim, and
Fraydoun Rezakhanlou for providing valuable ideas through numerous
discussions. Part of this work was done during the author's stay at
the IMPA for the conference ``XXI Escola Brasileira de Probabilidade''.
The author thanks IMPA for the hospitality and support for this visit.
\end{acknowledgement*}


\begin{thebibliography}{10}
\bibitem{AGL}I. Armend\'{a}riz, S. Grosskinsky, M. Loulakis: Metastability
in a condensing zero-range process in the thermodynamic limit. robab.
Theory Related Fields \textbf{169}, 105--175 (2017)

\bibitem{BL1}J. Beltr\'{a}n, C. Landim: Tunneling and metastability   of continuous time Markov chains. J. Stat. Phys. {\bf 140},   1065--1114 (2010)

\bibitem{BL2}J. Beltr\'{a}n, C. Landim: Tunneling and metastability   of continuous time Markov chains II. J. Stat. Phys. {\bf 149},   598--618 (2012)

\bibitem{BL3}J. Beltr\'{a}n, C. Landim:  Metastability of reversible condensed zero range processes on a finite set. Probab. Theory Related Fields {\bf 152},   781--807 (2012)

\bibitem{BG}N. Berglund and B. Gentz: Sharp estimates for metastable
lifetimes in parabolic SPDEs: Kramers' law and beyond. Electron. J.
Probab. \textbf{18}, (2013)

\bibitem{BBI}A. Bianchi, A. Bovier, D. Ioffe: Sharp asymptotics for
metastability in the random field Curie-Weiss model. Electron. J.
Probab. \textbf{14}, 1541--1603 (2009)

\bibitem{BDG}A. Bianchi, S. Dommers, and C. Giardin\`{a}: Metastability
in the reversible inclusion process. Electron. J. Probab. \textbf{22}
(2017)

\bibitem{BEGK1}A. Bovier, M. Eckhoff, V. Gayrard, M. Klein:   Metastability in stochastic dynamics of disordered mean-field   models.  Probab. Theory Relat. Fields {\bf 119}, 99--161 (2001)

\bibitem{BEGK2}A. Bovier, M. Eckhoff, V. Gayrard, M. Klein:   Metastability in reversible diffusion process I. Sharp asymptotics for capacities and exit times. J. Eur. Math. Soc. {\bf 6}, 399--424   (2004)

\bibitem{BdH}A. Bovier, F. den Hollander: {\sl Metastability: a     potential-theoretic approach}.  Grundlehren der mathematischen Wissenschaften {\bf 351}, Springer, Berlin, 2015.

\bibitem{BM}A. Bovier, F. Manzo: Metastability in Glauber Dynamics
in the Low-Temperature Limit: Beyond Exponential Asymptotics. J Stat.
Phys. {\bf 107}, 757--779 (2002)

\bibitem{CGOV}Cassandro, M., Galves, A., Olivieri, E., Vares, M.E.:
Metastable behavior of stochastic dynamics: a pathwise approach. J.
Stat. Phys. \textbf{35}, 603--634 (1984)

\bibitem{EH}M. R. Evans, T. Hanney: Nonequilibrium statistical mechanics
of the zero-range process and related models. J. Phys. A \textbf{38},
195--240 (2005)

\bibitem{FW}M. I. Freidlin, A. D. Wentzell: \textit{Random Perturbations.
In: Random Perturbations of Dynamical Systems.} Grundlehren der mathematischen
Wissenschaften \textbf{260}. Springer, New York, NY, 1998.

\bibitem{GL}A. Gaudilli\`ere, C. Landim: A Dirichlet principle for   non reversible Markov chains and some recurrence theorems.   Probab. Theory Related Fields {\bf 158}, 55--89 (2014)

\bibitem{GoL}C. Godr\`{e}che, J. M. Luck: Dynamics of the condensate
in zero-range processes. J. Phys. A \textbf{38}, 7215--7237 (2005)

\bibitem{GRV1}S. Grosskinsky, F. Redig and K. Vafayi: Condensation
in the inclusion process and related models. J Stat. Phys.\textbf{
142}, 952--974 (2011)

\bibitem{GRV2}S. Grosskinsky, F. Redig and K. Vafayi: Dynamics of
condensation in the symmetric inclusion process. Electron. J. Probab.
\textbf{18}, 1--23 (2013)

\bibitem{GSS}S. Grosskinsky, G. M. Sch\textasciidieresis utz, H.
Spohn. Condensation in the zero range process: stationary and dynamical
properties. J. Statist. Phys. \textbf{113}, 389--410 (2003)

\bibitem{JMP}I. Jeon, P. March, B. Pittel: Size of the largest cluster
under zero-range invariant measures. Ann. Probab. \textbf{28}, 1162--1194
(2000)

\bibitem{Lan1}C. Landim: A topology for limits of Markov chains.   Stoch. Proc. Appl. {\bf 125}, 1058--1098 (2014)

\bibitem{Lan2}C. Landim: Metastability for a Non-reversible Dynamics:   The Evolution of the Condensate in Totally Asymmetric Zero Range   Processes. Commun. Math. Phys. {\bf 330}, 1--32 (2014)

\bibitem{LL} C. Landim, P. Lemire: Metastability of the   Two-Dimensional Blume-Capel Model with Zero Chemical Potential and   Small Magnetic Field. J. Stat. Phys. {\bf 164}, 346--376 (2016).

\bibitem{LLM}C. Landim, M. Loulakis, M. Mourragui: Metastable Markov
chains. arXiv:1703.09481 (2017)

\bibitem{LMS} C. Landim, M. Mariani, I. Seo:. A Dirichlet and a   Thomson principle for non-selfadjoint elliptic operators,   Metastability in non-reversible diffusion processes.   arXiv:1701.00985 (2017)

\bibitem{LMT}C. Landim, R. Misturini, K. Tsunoda: Metastability of   reversible random walks in potential field.  J. Stat. Phys. {\bf     160}, 1449--1482 (2015)

\bibitem{LS1} C. Landim, I. Seo: Metastability of non-reversible random   walks in a potential field, the Eyring-Kramers transition rate   formula. Comm. Pure. Appl. Math. {\bf 71} 203--266   (2018)

\bibitem{LS2}C. Landim, I. Seo: Metastability of non-reversible mean-field Potts model with three spins. J. Stat. Phys. {\bf 165}, 693--726   (2016)

\bibitem{NZ}F. R. Nardi, A. Zocca: Tunneling behavior of Ising and
Potts models in the low-temperature regime. arXiv:1708.09677 (2017)

\bibitem{OV}E. Olivieri and M. E. Vares. \textit{Large deviations
and metastability. Encyclopedia of Mathematics and its Applications},
\textbf{100}. Cambridge University Press, Cambridge, 2005.

\bibitem{Slo}M. Slowik: A note on variational representations of   capacities for reversible and nonreversible Markov chains. unpublished, Technische Universit\"{a}t Berlin, 2012
\end{thebibliography}
\end{document}